\documentclass{amsart}
\usepackage{amsmath,amsthm,amssymb,amsfonts,ifpdf,bbm,calc,mathtools}
\usepackage{mathabx}
\usepackage{enumitem}
\usepackage{graphicx}
\usepackage{ifthen}

\usepackage[usenames,dvipsnames]{color}

\ifpdf
\usepackage[pdftex,pdfstartview=FitH,pdfborderstyle={/S/B/W 1},%
colorlinks=true, linkcolor=blue, urlcolor=blue, citecolor=blue,%
pagebackref=true]{hyperref}
\else
\usepackage[dvips]{hyperref}
\fi
\numberwithin{equation}{section}


\newtheorem {theorem}    {Theorem}[section]
\newtheorem {lemma}      [theorem]    {Lemma}

\newtheorem {proposition}[theorem]    {Proposition}

\theoremstyle{definition}
\newtheorem {definition} [theorem]    {Definition}

\newcounter{AbcT}

\numberwithin{equation}{section}

\newcommand {\goesto} {\longrightarrow}

\newcommand {\C} {{\mathbb C}}

\newcommand {\K} {{\mathbb K}}
\newcommand {\N} {{\mathbb N}}
\newcommand {\Q} {{\mathbb Q}}
\newcommand {\R} {{\mathbb R}}

\newcommand {\T} {{\mathbb T}}
\renewcommand {\H} {{\mathbb H}}
\newcommand {\Z} {{\mathbb Z}}

\renewcommand{\liminf}{\varliminf}

\DeclareMathOperator{\supp}{supp}

\DeclareMathOperator{\SL}{SL}

\DeclareMathOperator{\PGL}{PGL}

\newcommand {\IGNORE}[1]  {}

\newcommand {\absolute}[1] {\left| {#1} \right|}
\newcommand {\norm}[1] {\left\| {#1} \right\|}

\newcommand{\G}{\mathbf{G}}
\newcommand{\GG}{\mathbb{G}}
\newcommand{\TT}{\mathbb{T}}
\newcommand{\Adel}{\mathbb{A}}

\renewcommand{\sl}{\mathfrak{sl}}

\newcommand{\dee}{\operatorname{d}\!}
\newcommand{\fancyX}{\mathcal X}
\newcommand{\fancyY}{\mathcal Y}
\newcommand{\fancyZ}{\Omega}
\newcommand{\order}{\mathcal O_S}
\newcommand\vol{\operatorname{vol}}

\newcommand\cA{\mathcal{A}}

\newcommand{\LL}{\mathbb{L}}
\newcommand{\HH}{\mathbb{H}}

\newcommand{\UU}{\mathbb{U}}

\newcommand{\ZZ}{\mathbb{Z}}

\newcommand{\QQ}{\mathbb{Q}}
\newcommand{\KK}{\mathbb{K}}
\newcommand{\GGK}{{\GG^\KK}}
\newcommand{\sa}[1]{{\mathsf a({#1})}}
\newcommand{\ssa}[1]{{\mathring{\mathsf a}({#1})}}
\newcommand{\myentry}[1]{\ifthenelse{\equal{#1}{.}}{}{#1}}
\newcommand{\twobytwo}[4]{\left(\!\begin{smallmatrix}%
   \myentry{#1} & \myentry{#2} \\ \myentry{#3} & \myentry{#4}%
\end{smallmatrix}\!\right)}

\begin{document}

	\title[Rigidity of torus actions and unipotent recurrence]{Rigidity of 
	non-maximal torus actions, unipotent quantitative recurrence, and Diophantine approximations}
	\author[M. Einsiedler]{Manfred Einsiedler}
	\address[M. E.]{ETH Z\"urich, R\"amistrasse 101
	CH-8092 Z\"urich
	Switzerland}
	\email{manfred.einsiedler@math.ethz.ch}
	\author[E. Lindenstrauss ]{Elon Lindenstrauss}
	\address[E. L.]{The Einstein Institute of Mathematics\\
	Edmond J. Safra Campus, Givat Ram, The Hebrew University of Jerusalem
	Jerusalem, 91904, Israel}
	\email{elon.bl@mail.huji.ac.il}
	\thanks{M.~E.~acknowledges the support by the SNF (Grant 200021-152819 and 200020-178958).
	E.~L.~acknowledges the support by ERC 2020 grant HomDyn (grant no.~833423). The authors gratefully acknowledge the support of
	the Israeli Institute for Advanced Studies at the Hebrew University, where this work originated.}
	\date{\today}
	\begin{abstract}
We present a new argument in the study of positive entropy measures for higher rank diagonalisable actions.  The argument relies on a quantitative form of recurrence along unipotent directions (that are not known to preserve the measure). 
Using this argument we prove a classification of positive entropy measures for any higher rank action on an irreducible arithmetic quotient 
of a form of $\SL_2$. 
We also provide an Adelic version of this classification result where no entropy assumption is needed. These results can also be used to prove new results regarding Diophantine approximations of integer multiples of an arbitrary element $\alpha\in\R$.
\end{abstract}
	\maketitle

\section{Introduction}\label{introduction}

\subsection{Historical background}
Higher rank diagonalizable actions have subtle rigidity properties, which are remarkable because the action of each individual element of the action has no rigidity. 
Furstenberg initiated this area of research in his influential paper \cite{Furstenberg-disjointness-1967} by studying the rigidity properties of the action of a non-lacunary multiplicative semigroup of integers on $\R / \Z$, which is closely connected to the type of actions we consider here, and motivated much of the research in the area.

We concern ourselves here with the problem of classifying invariant measures under higher rank abelian groups. This measure classification question was raised by Furstenberg following his work \cite{Furstenberg-disjointness-1967}, and conjectures in this vein in the context of actions on homogeneous spaces were made by Katok and Spatzier \cite[Main Conj.]{KatokSpatzier96} and in a more explicit form by Margulis \cite[Conj.\ 2]{Margulis-conjectures}; cf.\ also \cite[Conj. 2.4]{Einsiedler-Lindenstrauss-ICM}. The corresponsing question for orbit closures dates even further back to the work of Cassels and Swinnerton-Dyer \cite{Cassels-Swinnerton-Dyer}. Whether in the original context of actions of nonlacunary semigroups of integers or in the case of the action of higher rank abelian groups on homogeneous spaces all progress regarding this measure classification, e.g.~\cite{Rudolph-2-and-3,KatokSpatzier96,Lindenstrauss-03,Einsiedler-Katok-Lindenstrauss},
has been obtained under the assumption of positive entropy. Even assuming positive entropy, the state-of-the-art regarding classifications of invariant measures on quotients of semisimple groups requires --- even for
quotients arising from forms of $\operatorname{SL}_2^k$ --- that the acting group be a maximal split torus in a normal subgroup \cite{full-torus-paper}. Our goal here is to present an argument that does not need this strong maximality assumption on the 
action (or the 
assumption that the invariant measure is a joining as in \cite{Einsiedler-Lindenstrauss-joinings-2}).
We note that the results presented here are new even in the case
of homogeneous spaces of real groups, but as 
many number theoretic
applications use dynamics 
on~$S$-arithmetic locally homogeneous spaces 
we give the results in this setting.

We note that the joining classification results from \cite{Einsiedler-Lindenstrauss-joinings-2}, even for the case of quotients arising from forms of $\operatorname{SL}_2^k$ we consider here, have had several noteworthy number theoretic applications, e.g. \cite{Aka-Einsiedler-Shapira,Aka-Einsiedler-Wieser,Khayutin-CM-joinings}, and we hope this new measure classification theorem will have similar applications.

\subsection{Statement of the main result}

Let~$G$ be a locally compact group, let~$\Gamma<G$ be a lattice, and let $X=\Gamma\backslash G$ be the corresponding quotient space.
The action of~$G$ on~$X$ is defined by~$g.x=xg^{-1}$ for~$x\in X$ and~$g\in G$.
A probability measure~$\mu$ on~$X$ is called \emph{homogeneous} if there exists a
subgroup~$H<G$ and a point~$x\in X$ such that~$\Lambda=\operatorname{Stab}_H(x)$ is a lattice in $H$, and~$\mu$ is the unique~$H$-invariant
probability measure on~$xH\cong\Lambda\backslash H$.

The setting we consider in this paper is that of an~$S$-arithmetic quotient defined using a~$\Q$-group~$\GG$ and a finite non-empty set of places
\[
 S\subseteq\{\infty\}\cup\{p\mid p\in\N\text{ is a prime}\}
\]
as follows. For any~$v\in S$ we let~$G_v=\GG(\Q_v)$ be the set of~$\Q_v$-points of~$\GG$ (where~$\Q_\infty=\R$) and 
define the~$S$-algebraic group
\[
 G=\prod_{v\in S}G_v=\GG(\Q_S),
\]
where~$\Q_S=\prod_{v\in S}\Q_v$.
We require that~$\infty\in S$.
Now let~$\Gamma<G$ be an arithmetic lattice commensurable to~$\GG(\order)$ with $\order=\Z\bigl[\frac{1}p\mid p\in S\bigr]$. In this case we say that~$\Gamma$ is an \emph{arithmetic lattice arising from the~$\Q$-structure}~$\GG$.

\begin{definition}\label{def:algebraic measure}
We say that a measure $\mu$ on an $S$-arithmetic quotient $\Gamma\backslash G$ as above is \emph{algebraic over $\Q$} if it is homogeneous and moreover
there exists 
a~$\Q$-subgroup~$\HH\leq\GG$ and a finite index subgroup~$H\leq\HH(\Q_S)$
such that~$\mu$ is the normalized Haar measure on a single orbit~$\Gamma H g$
for some~$g\in G$.
\end{definition}

Our main concern are probability measures that are invariant under the action of a diagonalizable subgroup of the following type. 

\begin{definition} \label{def:class-A}
A closed commutative subgroup $A < G$ is said to be of \emph{class-$\cA'$} if it can be simultaneously diagonalized 
and
\begin{itemize}
\item
for every $a \in A$ the projection of $a$ to $G_\infty=\GG(\R)$ has positive real eigenvalues,
\item for each prime $p\in S$ there exists some $\lambda_{p}\in\Q_p^{*}$
with $|\lambda_{p}|_{p}\neq 1$ so that the projection of every $a\in A$ 
to $G_p=\GG(\Q _ p)$ satisfies that all of its eigenvalues
are integer powers of $\lambda_{p}$.
\end{itemize} 
An element $g \in G \setminus\{e\}$ is said to be of class-$\cA'$ if the group it generates is of class-$\cA'$. 
A closed subgroup~$A<G$ is of \emph{higher rank} if it contains a subgroup generated by semisimple elements that is
in the category of topological groups isomorphic
to~$\Z^2$. 
\end{definition}

Our definition of class-$\mathcal{A}'$ subgroups and elements is taken 
from \cite{Einsiedler-Lindenstrauss-joinings-2} and is more general than the notion of class-$ \mathcal{A}$ elements introduced by Margulis and Tomanov in \cite{Margulis-Tomanov}, though for the purposes of \cite{Margulis-Tomanov,Margulis-Tomanov-almost-linear} one can use this more general class equally well.

We recall that an algebraic group $\mathbb{G}$ over $\mathbb{Q}$ is \emph{a form of} $\operatorname{SL}_2^k$ (for some $k\in\mathbb{N}$) if it is
isomorphic to $\operatorname{SL}_2^k$ over $\overline{\mathbb{Q}}$. Also recall that  $\Q$-almost simplicity of $\GG$ is equivalent to assuming that the arithmetic lattices arising from the $\Q$-structure $\GG$ are irreducible.
Any $\Q$-almost simple form of $\operatorname{SL}^k$ arises through restriction of scalars from an algebraic group that is a form of $\SL_2$
and is defined over a number field $\KK$ of degree $k$. These in terms can be described very concretely in terms of quaternion algebras as follows. Let $\KK$ be a number field with $k=[\KK:\Q]$, and let $\mathbb D$ be a quaternion algebra over $\KK$, that is either the algebra of $2\times2$ matrices over $\KK$ or a division algebra of dimension 4 over $\KK$. Let $\GGK$ denote the $\KK$-group of elements of $\mathbb D$ of reduced norm 1. Then the $\Q$-group $\mathbb G = \operatorname{Res}_{\KK\mid\Q}(\GGK)$, is a form of~$\SL _ 2 ^ k$.
For any set of places $S \ni \infty$ for $\Q$, we can canonically identify
\[
\GG(\Q_S) = \prod_{v \in S}\prod_{\sigma|v} \GGK (\KK_\sigma),
\]
where in the second product $\sigma$ runs over all places of $\KK$ lying over $v$.
If we take $\Gamma$ to be an arithmetic lattice in $\GG(\Q_S) $, i.e. a lattice commensurable to $\GG(\order)$, then 
$\Gamma \backslash\GG(\Q_S) $ is non-compact iff $\mathbb D$ is isomorphic to the the algebra of $2\times2$ matrices over $\KK$ in which case $\GGK=\SL_2$ and 
$\GG=\operatorname{Res}_{\KK\mid\Q}\SL_2$.

Similarly, if we take $\GGK$ denote the $\KK$-group of invertible elements of $\mathbb D$ divided by scalars, then $\mathbb G =\operatorname{Res}_{\KK\mid\Q}(\GGK)$ is a form of $\PGL _ 2 ^ k$.
Moreover, any $\Q$-almost simple form of $\SL _ 2 ^ k$ or $\PGL _ 2 ^ k$ is isomorphic as a $\Q$-group to such a group; see \cite[Ch.~4]{Platonov-Rapinchuk}. 

The following theorem represents the main result of this paper.

\begin{theorem}[Forms of~$\operatorname{SL}_2^k$]\label{sl2-thm-final}
	Let~$\GG$ be an algebraic group over~$\Q$ that is~$\Q$-almost simple
	and a form of~$\operatorname{SL}_2^k$ or of~$\PGL_2^k$ with~$k\geq 1$, and let~$\Gamma$
	be an arithmetic lattice in~$G=\GG(\Q_S)$ arising from~$\GG$. 
	Let~$A<G$ be a closed abelian class-$\cA'$ subgroup of higher rank.  Let~$\mu$
 be an~$A$-invariant and ergodic probability measure on~$X=\Gamma\backslash G$ such that~$h_\mu(a)>0$
for some~$a\in A$.  Then one of the following holds:
\begin{enumerate}[label=\textup{(\textit{\roman*})},leftmargin=*]
	\item \label{item-algebraic}\textup{\bf (Semi-Simple)} The measure~$\mu$ is algebraic over~$\Q$. Moreover, if $\HH$ is a $\Q$-group so that $\mu$ is the uniform measure on $\Gamma Hg$ for $H\leq \HH(\Q_S)$ as in Definition~\ref{def:algebraic measure}, then $\HH$ is semisimple and $A<g^{-1}\H(\Q_S)g$. Moreover, if $\GG$ is a form of $\SL_2^k$ then $H=\H(\Q_S)$.
	
    \item\label{item-solvable} \textup{\bf (Solvable)} The space~$X$ is non-compact.  
	There exists a nontrivial unipotent subgroup~$L$
such that~$\mu$ is invariant under~$L$
and the measure $\mu$ is
supported on a compact orbit~$xM$ of a solvable subgroup $M<G$ containing $A$ and normalizing $L$.
Moreover, $L=g^{-1}\LL(\Q_S)g$ for a unipotent
$\Q$-group $\LL$ and a representative $g\in G$
of $x=\Gamma g$. 
\end{enumerate}
\end{theorem}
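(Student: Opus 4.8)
The plan is to extract invariance of $\mu$ under a unipotent subgroup from the positive entropy together with the higher-rank assumption, and then to feed this into the measure classification for unipotent flows. First I would use restriction of scalars to write $G=\GG(\Q_S)=\prod_{v\in S}\prod_{\sigma\mid v}\GGK(\KK_\sigma)$. On the anisotropic local factors $A$ acts trivially (an element all of whose eigenvalues are rational powers of a fixed scalar must be central there), so the entropy, the coarse Lyapunov subgroups, and the leafwise measures are all governed by the split factors, each of the form $\SL_2(F_j)$ for a local field $F_j$ of characteristic zero, with $A$ acting through a diagonal torus $\prod_jT(F_j)$. The coarse Lyapunov subgroups are then exactly the root subgroups $U_j^{\pm}<\SL_2(F_j)$, and the hypothesis that $A$ has higher rank ensures that $A$ acts richly enough on the family $\{U_j^{\pm}\}$ for the rigidity machinery to have room to operate. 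Using the relation between entropy and leafwise (conditional) measures for class-$\cA'$ actions (a Margulis--Ruelle/Ledrappier--Young type statement, as in \cite{Einsiedler-Katok-Lindenstrauss}), the hypothesis $h_\mu(a)>0$ forces the leafwise measure $\mu^U_x$ along at least one coarse Lyapunov subgroup $U$ to be non-atomic on a set of positive $\mu$-measure; applying this to $a^{-1}$ gives the same along a stable direction of $a$.

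The heart of the matter is to upgrade ``$\mu^U_x$ is non-trivial'' to ``$\mu$ is invariant under $U$'' \emph{without} assuming that $A$ is a maximal torus, the situation in which the low-entropy method of \cite{Lindenstrauss-03} and its later refinements normally require the full root datum of an ambient group. Here I would use the new quantitative unipotent recurrence: although $U$ itself does not preserve $\mu$, the entropy bound quantifies how much mass $\mu$ must distribute along typical segments of $U$-orbits, and combining this with the $a$-recurrence of $\mu$-generic points --- which, since $a$ normalizes and contracts $U$, shears nearby pieces of $U$-orbits onto one another at a controlled rate --- produces a quantitative return estimate in the $U$-direction. Iterating this estimate and using that $A$ commutes with the renormalizing action on $U$ to average over scales, one concludes that $\mu^U_x$ is Haar on $U$, i.e.\ that $\mu$ is $U$-invariant; since $A$ normalizes $U$, the measure is then invariant under $\langle A,U\rangle$. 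I expect this step to be the main obstacle: the recurrence must be made genuinely quantitative and the renormalization factors kept under control, and it is exactly here that one dispenses with the maximality assumption of \cite{full-torus-paper,Einsiedler-Lindenstrauss-joinings-2}.

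With $\mu$ invariant under a non-trivial unipotent subgroup $L$, I would invoke Ratner's measure classification in the $S$-arithmetic setting \cite{Margulis-Tomanov}: the $L$-ergodic components of $\mu$ are homogeneous, and the $A$-ergodicity of $\mu$ together with positive entropy pins down $\mu$ itself as the homogeneous measure on a single orbit $xH$, with $A$ and $L$ inside the stabilizer $H$. Let $\HH$ be the associated $\Q$-group. If $\HH$ is semisimple one is in case~\ref{item-algebraic}: propagating non-trivial leafwise measures through the $A$-action shows that $\mu$ is in fact invariant under the copy of $\SL_2(F_j)$ generated by a pair of opposite root subgroups, that $A<g^{-1}\HH(\Q_S)g$, and --- using that the local factors of a form of $\SL_2^k$ have no proper finite-index subgroups, in contrast to $\PGL_2$ --- that $H=\HH(\Q_S)$. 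If instead $\HH$ has a non-trivial unipotent radical, the maximal subgroup of $G$ preserving $\mu$ is solvable; calling it $M\supseteq A$ and noting that $M$ normalizes $L$, homogeneity shows $\mu$ is supported on the orbit $xM$, which one proves is compact using non-divergence of the unipotent $L$ in $\Gamma\backslash G$ --- this in particular forces $X$ to be non-compact and $\GGK=\SL_2$, putting one in case~\ref{item-solvable}. Finally, the rationality assertions in both cases (that $\HH$ is a semisimple $\Q$-group with $A<g^{-1}\HH(\Q_S)g$, respectively that $L=g^{-1}\LL(\Q_S)g$ for a unipotent $\Q$-group $\LL$ and a representative $g$ of $x$) follow from a Borel-density/Dani--Margulis argument identifying the homogeneous orbit carrying $\mu$ as defined over $\Q$.
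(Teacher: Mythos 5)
There is a genuine gap at the step you yourself identify as the heart of the matter. Upgrading ``$\mu_x^U$ is nontrivial'' to ``$\mu$ is $U$-invariant'' by iterating a quantitative return estimate and averaging over scales is both too strong and not achievable by the mechanism you describe. It is too strong because the conclusion ``$\mu_x^U$ is Haar on $U$'' is false in cases the theorem allows: when $\mu$ is the algebraic measure on an orbit of a diagonally embedded semisimple $\Q$-subgroup $\HH$ (case~\ref{item-algebraic} with $\HH$ proper), $\mu$ is invariant only under the unipotents inside $g^{-1}\HH(\Q_S)g$, not under the full coarse Lyapunov subgroup $U$, which is a product of root subgroups over several places. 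And the mechanism omits the two ingredients that carry the actual proof. First, polynomial recurrence along $U$ by itself (Proposition~\ref{simplerrecurrence}) is explicitly not enough; what is proved is a dichotomy (Theorem~\ref{zero-or-fast-pol-rec}): either $b$ has non-negligible entropy \emph{conditioned on the $\sigma$-algebra $\mathcal A_V$ generated by $x\mapsto\mu_x^V$}, for a suitable transverse product $V$ of coarse Lyapunov subgroups, or one has fast multiple recurrence into Bowen balls for $b$. The second branch is then eliminated not by dynamics but by arithmetic: fast recurrence manufactures exponentially many nontrivial lattice elements $\gamma\in\Gamma$ of polynomially bounded height which are forced to be semisimple, non-central and mutually commuting (Lemmas~\ref{semisimple-gamma} and~\ref{samecommutator}), hence to lie in a single $\Q$-torus, contradicting the counting bound of Lemma~\ref{fewlementsintorus}. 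Your sketch never invokes the arithmeticity of $\Gamma$ at this point, and without it the recurrence estimate leads to no contradiction. Second, the invariance one extracts is along $V$, not $U$: positive conditional entropy $h_\mu(b,V\mid\mathcal A_V)>0$ gives, via Lemma~\ref{pos-ent-implies-invariance-1}, that $\mu_x^V$ is invariant under a nontrivial subgroup $I_x^V<V$, which by equivariance, Poincar\'e recurrence and ergodicity is a.e.\ constant and yields invariance of $\mu$ under a nontrivial unipotent $I<V$; no claim of Haar leafwise measures along $U$ is made or needed.

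Your endgame also overstates what the unipotent classification yields. In the solvable case $\mu$ need not be homogeneous (a hypothetical Furstenberg-type $\times 2,\times 3$ measure would occur exactly here), so ``pins down $\mu$ itself as the homogeneous measure on a single orbit $xH$'' is not available; the correct conclusion (via Theorem~\ref{MTclassA} and Proposition~\ref{subgroup-L-of-sl2}) is only that $\mu$ is invariant under $L=\LL(\Q_S)$ for a unipotent $\Q$-group $\LL$ and supported on a compact orbit of a solvable group containing $A$, where compactness comes from Dirichlet's $S$-unit theorem applied to the Borel subgroup normalizing the centralizer of $\LL$, not from non-divergence of $L$.
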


\medskip 

In the non-compact case the second possibility can indeed occur.
In fact, on these locally homogeneous spaces
some cases of the measure classification problem
for actions on the~$k$-dimensional
torus subgroup~$(\R/\Z)^k$ via units of the number field~$\K$ 
or~$S$-units on some solenoids (as considered e.g.~in \cite{EL-action-on-torus})
can be embedded in the measure classification
problem for the action of~$A$ on~$X$. 
A hypothetical counterexample to Furstenberg's conjecture regarding $\times 2$, $\times 3$-invariant measure measure on $\R/\Z$ would give an example of an $A$-invariant and ergodic measure on $X$ that is not homogeneous supported on a closed orbit of a solvable group as in Theorem~\ref{sl2-thm-final}\ref{item-solvable}.

We also note that with a bit more work
it is possible to obtain a stronger conclusion in the solvable case described in Theorem~\ref{sl2-thm-final}\ref{item-solvable} either by the methods of this paper or by applying the methods of \cite{EL-action-on-torus}. Specifically, one can show that $L$ may be chosen so that on the quotient of $M/L$ by the appropriate lattice any $a\in A$ will act with zero entropy with respect to the measure induced by $\mu$, i.e.\ all of the entropy for elements of $A$ is ``explained'' by the unipotent subgroup $L$. 
We omit the details as e.g.\ using the techniques of  \cite{EL-action-on-torus} would require a nontrivial increase in the length of this paper, for a rather modest improvement.

\medskip

The requirement that $A$ be of class-$ \mathcal{A} '$ is necessary for Theorem~\ref{sl2-thm-final} to hold, but omitting this requirement gives only a slightly weaker result.

\begin{definition}
Let $G$ be a locally compact group, $A$, $A'$, and $\Gamma$ closed subgroups of $G$. Let $\mu$ resp.\ $\mu'$ be $A$-invariant resp.\ $A'$-inviarnat probability measures on $\Gamma \backslash G $. We say $\mu'$  is \emph{equivalent-by-compact} to $\mu $ if there is a closed subgroup $A''\leq G$ so that
\begin{itemize}
    \item both $A$ and $A'$ are cocompact normal subgroups of $A''$, and 
    \item $\int_{A''/A} a.\mu\, da = \int_{A''/A'} a.\mu' da$,
    where the integrals is w.r.t.\ the normalized Haar measures on $A''/A$ resp.\ $A''/A'$. 
\end{itemize}
\end{definition}
While the definition makes sense more generally, we will only need here the case where $A,A'$ and $A''$ are abelian.

\begin{theorem}[Without the class-$\cA'$ assumption]\label{sl2-thm-without}
	Let~$\GG$ and $\Gamma$ be as in Theorem~\ref{sl2-thm-final}, and
	let~$A<G$ be a closed abelian diagonalizable 
 subgroup of higher rank.  Let~$\mu$
 be an~$A$-invariant and ergodic probability measure on~$X=\Gamma\backslash G$ such that~$h_\mu(a)>0$
for some~$a\in A$.  Then $\mu$ is equivalent-by-compact to an $A'$-invariant measure $\mu'$ with $A'$ of class-$\cA'$ and $\mu'$ satisfying either \ref{item-algebraic} or \ref{item-solvable} of Theorem~\ref{sl2-thm-final} for $A'$.
\end{theorem}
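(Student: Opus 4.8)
The plan is to reduce Theorem~\ref{sl2-thm-without} to Theorem~\ref{sl2-thm-final} by enlarging the (non-class-$\cA'$) acting group $A$ to a group $A''$ that contains a class-$\cA'$ subgroup $A'$ as a cocompact subgroup, and then transporting the measure classification along the inclusion $A \hookrightarrow A''$. So the first step is to analyze, using the concrete description of $G = \GG(\Q_S) = \prod_{v\in S}\prod_{\sigma\mid v}\GGK(\KK_\sigma)$ via quaternion algebras, the structure of a closed abelian diagonalizable higher-rank subgroup $A < G$. In each factor $\GGK(\KK_\sigma)$ (which is a form of $\SL_2$ or $\PGL_2$ over a local field) a diagonalizable element lies in a torus; the projection of $A$ to the Archimedean factors lands in a group whose connected component is a real torus $(\R^*)^r$, and the projection to each $p$-adic factor lands in a $p$-adic torus. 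The issue preventing $A$ from being class-$\cA'$ is exactly (a) that some Archimedean eigenvalues may be negative, and (b) that the $p$-adic eigenvalues need not be integer powers of a single $\lambda_p$ with $|\lambda_p|_p \ne 1$ — for instance the $p$-adic projection could be precompact, or the eigenvalues could generate a rank-$\geq 2$ subgroup of the value group. I would argue that after passing to a finite-index (hence cocompact) subgroup to kill the sign issue at the Archimedean places, and then enlarging $A$ inside the ambient torus by adjoining, for each prime $p$, a suitable one-parameter-like subgroup (e.g.\ the elements whose $p$-adic part is $\diag(\lambda_p,\lambda_p^{-1})$ for a uniformizer $\lambda_p$, if that $p$-adic direction is ``missing''), one obtains a closed abelian group $A''$ containing both $A$ (cocompactly — the quotient is the compact part of the torus plus a finite group) and a class-$\cA'$ subgroup $A'$, with $A''/A$ and $A''/A'$ compact.

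The second step is to push the measure across. Given the $A$-invariant ergodic $\mu$ with $h_\mu(a)>0$, set $\mu'' = \int_{A''/A} a.\mu\, da$; this is $A''$-invariant, and by the choice of $A''$ it is a finite average of translates of $\mu$, so it still has positive entropy for some element (entropy of an average dominates the average of the entropies up to the combinatorial term, and translation preserves entropy). Decompose $\mu''$ into $A''$-ergodic components; each component is again a translate-average of an $A$-ergodic piece of $\mu$, and by ergodicity of $\mu$ under $A$ there is essentially one orbit of such pieces, so $\mu''$ is itself $A''$-ergodic (or a finite sum that one handles componentwise). Now restrict $\mu''$ to the class-$\cA'$ subgroup $A' < A''$: the key point is that $A''/A'$ compact means any $A''$-ergodic measure remains, up to a finite ergodic decomposition under $A'$, controlled — more precisely $\mu''$ is a finite average $\int_{A''/A'} a.\mu'\,da$ of $A'$-ergodic measures $a.\mu'$ which are translates of a single $A'$-invariant ergodic $\mu'$. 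This $\mu'$ is $A'$-invariant, ergodic, has an element of positive entropy (again because entropy is preserved under translation and the averaging only loses the combinatorial $\log(\text{index})$), and $A'$ is class-$\cA'$ and still of higher rank (enlarging $A$ did not decrease the rank of the semisimple $\Z^2$ inside).

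The third step applies Theorem~\ref{sl2-thm-final} to $(A',\mu')$, yielding that $\mu'$ is either algebraic over $\Q$ (case \ref{item-algebraic}) or supported on a compact orbit of a solvable group normalizing a nontrivial $\Q$-unipotent subgroup (case \ref{item-solvable}). Unwinding the definition of equivalent-by-compact, we have by construction $\int_{A''/A} a.\mu\, da = \mu'' = \int_{A''/A'} a.\mu'\, da$ with $A, A'$ cocompact normal in the abelian $A''$, which is exactly the assertion that $\mu$ is equivalent-by-compact to $\mu'$. The main obstacle, and the place requiring the most care, is \textbf{Step 1}: verifying that one can always enlarge an arbitrary abelian diagonalizable higher-rank $A$ to such an $A''$ while keeping $A$ cocompact in it and producing a genuinely class-$\cA'$ $A'$ inside — this is a structural computation with tori over the local fields $\KK_\sigma$ appearing in the restriction of scalars, where one must handle anisotropic tori at $p$-adic places (where no $\lambda_p$ with $|\lambda_p|_p\ne1$ exists inside that factor, so the ``missing'' direction has to be supplied and one must check it commutes with $A$ and is diagonalizable), and one must also ensure ergodicity is not lost in an uncontrolled way when passing between $A$, $A''$, and $A'$. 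A secondary technical point is the bookkeeping of ergodic decompositions under the three groups, which is routine but needs the finiteness of all the relevant indices/quotients that Step 1 provides.
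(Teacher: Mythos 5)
Your overall plan --- embed $A$ and a class-$\cA'$ group $A'$ cocompactly in a common abelian group $A''$, average $\mu$ over $A''/A$, identify the $A'$-ergodic components, and feed the result into Theorem~\ref{sl2-thm-final} --- is the route the paper intends (it is carried out, in adelic language, at the start of the proof of Theorem~\ref{positive entropy one}, which is exactly what the paper cites for this statement). The problem is the concrete construction in your Step~1. Where the projection of $A$ to a $p$-adic factor is precompact or ``missing'' a hyperbolic direction, you propose to \emph{adjoin} the group generated by $\diag(\lambda_p,\lambda_p^{-1})$ with $|\lambda_p|_p\neq 1$. This destroys the very property you need: with such an element adjoined, $A''/A$ contains an infinite discrete subgroup and is therefore not compact, so the normalized Haar average $\int_{A''/A}a.\mu\,da$ in the definition of equivalent-by-compact is not even defined, and $\mu$ has no invariance (not even approximate) in the new direction, so no averaging over that noncompact quotient can produce a probability measure tied to $\mu$. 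It is also unnecessary: Definition~\ref{def:class-A} does not require $A'$ to have an unbounded (or even nontrivial) projection at each $p\in S$ --- if the projection at $p$ is trivial, every eigenvalue there is $\lambda_p^0$ and the condition holds vacuously. The genuine obstruction at a finite place is the opposite one: eigenvalues that are units $\neq 1$ can never be integer powers of a $\lambda_p$ with $|\lambda_p|_p\neq 1$, so the compact parts of the eigenvalues must be \emph{removed}, not complemented.

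The construction that works, and that the paper uses, is: pass to a finite-index subgroup of $A$ so that at every finite place the eigenvalue absolute values lie in $p^{\Z}$ (this handles ramification; taking absolute values also disposes of the sign issue at the real places); let $\Omega$ be the maximal compact subgroup of the ambient diagonalizable group (all eigenvalues of absolute value one at every place); set $A''=A\Omega$; and take $A'$ to be the image of the finite-index subgroup of $A$ under the map replacing each eigenvalue by its absolute value --- the group $\mathring A$ of the proof of Theorem~\ref{positive entropy one}. Then corresponding elements $a$ and $\mathring a$ differ by an element of $\Omega$, so $A$ and $A'$ are both cocompact normal subgroups of the abelian group $A''$, and $A'$ is of class-$\cA'$ with $\lambda_p=p$. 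Two points your write-up then glosses over need (routine) care: higher rank survives because the absolute-value homomorphism has compact kernel, hence is injective with discrete image on any discrete $\Z^2\leq A$; and the entropy and ergodicity bookkeeping should not be phrased as ``finite averages of translates'' --- $A''/A$ is a compact group, not a finite set --- but is done by comparing $\mu$, its image on $X/\Omega$, and the $A'$-invariant lift through the compact isometric extension $X\to X/\Omega$, which gives $h_\mu(a_0)=h_{\mu'}(\mathring a_0)$ exactly as in the proof of Theorem~\ref{positive entropy one}.
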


\noindent
(In the case $\GG=\operatorname{Res}_{\KK\mid\Q}\SL_2$, this follows from Theorem~\ref{positive entropy one} below. The case for general $\GG$ is similar and is left to the reader.)

\subsection{Adelic Measure Classification Theorems}\label{s:adelic}

In the adelic setup we can give a complete classification result avoiding an entropy assumption. Let us write $\Adel_\K$
for the adeles over a number field $\K$, and $S_\K$ for the set of places of $\K$ (including infinite ones). If $v | p$, we normalize the absolute values $|\cdot|_v$ so that they extend $|\cdot|_p$ and let $d_v = [K_v:\Q_p]$; then for any $k\in \KK^\times$
\[
\prod_{v \in S_\K} |k|_v^{d_v}=1. 
\]
Recall that for $k \in \KK$, the logarithmic height is defined by
\[
h (k) = \sum_ {v \in S_\KK} d_v\log ^ + \absolute k _ v
\]
where $\log ^ + (x) = \max (0, \log x)$, hence for $k\in \KK^\times$ \[h(k) = \tfrac12 \sum_ {v \in S_\KK} d_v\absolute{\log |k| _ v}.\]
We extend this definition to any $y =(y_v)_{v\in S_\KK} \in \Adel^\times_\KK$: for such a $y$
\[
h(y) =  \tfrac12 \sum_ {v \in S_\KK} d_v\absolute{\log |y_v| _ v}.
\]

Let $\KK$ be a number field, and let $\Lambda <\Adel_\KK^\times$ be a discrete subgroup so that
\begin{enumerate}[label=\textup{(A\arabic*)}]
\item \label{item:big} There are $C,\kappa>0$ so that for any $t>0$,
\[
|\{ k \in \Lambda: h(k)\leq t \}| \geq C e^{\kappa t}.
\]
\end{enumerate}
We also consider the following two additional possible assumptions on $\Lambda$:
\begin{enumerate}[resume,label=\textup{(A\arabic*)}]
\item \label{item:indep-1} For any (possibly infinite) set of places $S \subsetneq S_\KK$, there is an element $k \in \Lambda$ so that $\prod_{\sigma\in S}|k|_\sigma ^{d_\sigma}\neq 1$;
 \item\label{item:isolate a place-1} there is a $\sigma \in S_\K$ lying over some $v \in S_\Q$ and a coarse Lyapunov subgroup (see \S\ref{sec:dynamicsintro}) $U^{[\alpha]}$ of $\SL_2(\Adel_K)$ so that (\textit{a}) $\KK_\sigma =\Q_v$, \ (\textit{b}) $U^{[\alpha]}$ intersects $\SL_2(\K_\sigma)$ non-trivially, and (\textit{c}) $U^{[\alpha]}$ does \emph{not} intersect $\SL_2(\K_{\sigma'})$ for any other $\sigma' |v$.
\end{enumerate}
For instance, it is easy to see that for any $\ell \geq 1$ the group $\Lambda = \left\{ k ^ \ell: k \in \KK ^ \times \right\}$ 
(diagonally embedded in $\mathbb{A}_\KK^\times$) satisfies the above three conditions.

\begin{theorem}[Adelic measure classification]\label{thm:adelic}
Let $\K$ be a number field, $G=\SL_2(\Adel_\K)$ and $\Gamma=\SL_2(\KK)$. Set
\[
X_{\Adel_\K}=\SL_2(\K)\backslash \SL_2(\Adel_\K).
\]
Let $\TT<\SL_2$ be the group of diagonal matrices; we identify $\TT(\K)\backslash \TT(\Adel_K)$ with its image in $X_{\Adel_\K}$. For $k \in \Adel_\K^\times$, let $\sa k=\left(\!\begin{smallmatrix}
k&\\&k^{-1}
\end{smallmatrix}\!\right)$.
Let $\Lambda < \Adel_\K^\times$ satisfy condition \ref{item:big}, and let $A = \left\{ \sa k: k \in \Lambda \right\}$.
Then any $A$-invariant and ergodic probability measure $\mu$ on $X_{\Adel_\K}$ is equivalent-by-compact to an $A'$-invariant measure $\mu'$ so that either
\begin{enumerate}[label=\textup{(\textit{\roman*})},leftmargin=*]
	\item \label{item-algebraic-pos-adelic-1}There is a semisimple $\Q$ algebraic subgroup $\HH \leq \GG=\operatorname{Res}_{\K|\Q}\SL_2$ and $g \in G$ so that $\mu'$ is the uniform measure on $\Gamma \HH(\Adel_\Q)g$.
	
    \item\label{item-solvable-pos-adelic-1} $\mu'$ is supported on a single orbit $\Gamma N^1_G (\UU) g$, where $\UU$ is the $\KK$-group $\twobytwo1*.1$ and $g \in G$.
    \item There is a $\KK$-torus $\HH < \SL _ 2 $ and $g \in G$ so that 
\begin{equation*}
\supp \mu ' \subseteq \Gamma \HH (\Adel_\KK) g \text{ \ and \ } g  A ' g ^{-1} \subseteq \HH (\Adel_{\KK})
.\end{equation*}
\end{enumerate}
\end{theorem}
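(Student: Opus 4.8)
The plan is to reduce the adelic classification to the positive-entropy statement (Theorem~\ref{positive entropy one}, referenced in the excerpt) for the $\Q$-group $\GG = \operatorname{Res}_{\KK\mid\Q}\SL_2$. First I would observe that under the identification $\SL_2(\Adel_\KK) = \GG(\Adel_\Q)$ and $\SL_2(\KK) = \GG(\Q)$, the space $X_{\Adel_\KK}$ is the adelic quotient attached to $\GG$, and the group $A = \{\sa k : k \in \Lambda\}$ is a closed commutative diagonalizable subgroup. The key input is that assumption \ref{item:big} — exponential growth of the number of lattice points of bounded height — forces $A$ to act with \emph{positive entropy} on $\mu$, or more precisely, to have positive entropy contribution after passing to a class-$\cA'$ representative. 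Indeed, $h(\sa k)$ measures (up to constants) the sum of the logarithmic expansion rates of $\sa k$ across all coarse Lyapunov subgroups, so a subset of $\Lambda$ of size $\gtrsim e^{\kappa t}$ inside the height ball of radius $t$ gives, by a pigeonhole/counting argument on the relevant coarse Lyapunov weights, a single $a \in A$ for which the expansion in some coarse Lyapunov direction is bounded below by a positive multiple of its word length — which by the standard entropy–expansion inequality (and ergodicity) yields $h_{\mu}(a) > 0$ \emph{unless} $\mu$ is supported on a set where all these directions are "trivial", i.e.\ $\mu$ is supported on an orbit of the centralizing torus. The latter alternative is exactly case \ref{item-algebraic-pos-adelic-1}(iii).

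Assuming positive entropy, the next step is to extract from $A$ a higher-rank class-$\cA'$ subgroup up to equivalence-by-compact. Here I would use assumption \ref{item:big} to produce two multiplicatively independent semisimple elements (giving a copy of $\Z^2$), and use \ref{item:isolate a place-1} together with the local-to-global structure of $\Adel_\KK^\times$ to arrange the class-$\cA'$ normalization: at the distinguished place $\sigma$ with $\KK_\sigma = \Q_v$, condition (c) guarantees that the coarse Lyapunov subgroup in question lives purely in one local factor, which is what allows the class-$\cA'$ eigenvalue condition at finite places to be met after adjusting by a compact subgroup (averaging over $A''/A$). This is where the \emph{equivalence-by-compact} in the conclusion comes from: one cannot in general take $A' = A$, but one can take $A'$ to be a class-$\cA'$ group commensurable-up-to-compact with $A$, and replace $\mu$ by the corresponding average $\mu'$. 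Then Theorem~\ref{positive entropy one} (or its analogue for general $\GG$) applies to $(\mu', A')$, yielding that $\mu'$ is either algebraic over $\Q$ with semisimple stabilizer — case \ref{item-algebraic-pos-adelic-1}(i), noting that for $\GGK = \SL_2$ one gets $H = \HH(\Q_S)$ so the orbit is $\Gamma\HH(\Adel_\Q)g$ — or supported on a compact orbit of a solvable group normalizing a unipotent $L$. In the adelic $\SL_2$ setting the only nontrivial unipotent $\Q$-subgroups of $\GG$ up to conjugacy are conjugates of $\UU = \twobytwo1*.1$, and the solvable group containing $A'$ and normalizing $L$ must be contained in the normalizer $N^1_G(\UU)$; this is case \ref{item-algebraic-pos-adelic-1}(ii).

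The main obstacle I expect is the first step: converting the \emph{purely arithmetic} growth hypothesis \ref{item:big} into a \emph{dynamical} positive-entropy statement, with the correct dichotomy (positive entropy versus torus-supported). The subtlety is that exponential growth in height does not immediately give large expansion of a single element — it could a priori be spread over many weakly-expanding directions, or the expanding directions could be ones along which $\mu$ is not known to have positive entropy. Handling this requires a careful analysis of which coarse Lyapunov subgroups $\mu$ can "see": if $\mu$ has zero entropy for every $a\in A$ along every coarse Lyapunov weight, a Ledrappier–Young / leaf-wise measure argument forces $\mu$ to be concentrated on the $A$-centralizer orbit, giving (iii); otherwise one has the positive entropy needed for Theorem~\ref{positive entropy one}. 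Assumption \ref{item:indep-1} is used precisely to rule out degenerate intermediate cases where the "expanding part" of $\Lambda$ lies in a proper subtorus, which would otherwise obstruct the higher-rank hypothesis. The remaining steps — identifying unipotents in adelic $\SL_2$, matching up $\Q$-algebraic data with the adelic picture, and tracking the compact averaging — are essentially bookkeeping once the entropy dichotomy and the class-$\cA'$ normalization are in place.
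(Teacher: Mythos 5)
There is a genuine gap, and it sits exactly where you yourself locate ``the main obstacle'': the passage from the growth hypothesis \ref{item:big} to the dichotomy between positive entropy and the torus case. Your proposed mechanism does not work. Expansion along coarse Lyapunov directions only bounds entropy from \emph{above} (Margulis--Ruelle); there is no ``entropy--expansion inequality'' that produces $h_\mu(a)>0$ from the existence of strongly expanding elements of $A$, and triviality of all leafwise measures (i.e.\ zero entropy for every $a\in A$) does not force $\mu$ to concentrate on an orbit of the centralizing torus by any soft Ledrappier--Young-type argument --- already for a single diagonal flow on $\SL_2(\Z)\backslash\SL_2(\R)$ there are many zero-entropy invariant probability measures not supported on periodic torus orbits. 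The actual content of the theorem in the zero-entropy regime (the paper's Theorem~\ref{adelic-zeroent}) is quantitative and arithmetic: if $h_\mu(a)=0$ for all $a\in A$, then Bowen balls $xD^{a_0}_{r,n}$ have measure $>e^{-\alpha n}$, so by \ref{item:big} among the $\gg e^{\kappa\delta n}$ translates $xD^{a_0}_{r,n}\sa{k}$ with $h(k)<\delta n$ two must overlap, producing nontrivial $\gamma_n\in\SL_2(\KK)$ lying in $gD^{a_0}_{r,n}\sa{k_{(n)}}\bigl(D^{a_0}_{r,n}\bigr)^{-1}g^{-1}$ with $h(k_{(n)})$ small; one then needs the product-formula estimates showing these $\gamma_n$ are semisimple, non-central and commute (Lemmas~\ref{semisimple-gamma-adelic}--\ref{samecommutator-adelic}), hence determine a single $\KK$-torus $\HH_g$ which, after conjugation by $g$, is diagonal at every place where $a_0$ is unbounded (Lemma~\ref{lemma: on the diagonal}), and finally ergodicity plus countability of $\KK$-groups to make $\HH$ constant and obtain case (iii) together with $gA'g^{-1}\subseteq\HH(\Adel_\KK)$; note that equivalence-by-compact is already needed here, not only for the class-$\cA'$ normalization. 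None of this is supplied, or even sketched correctly, in your first step.

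Two further problems. First, you invoke \ref{item:indep-1} and \ref{item:isolate a place-1}, but Theorem~\ref{thm:adelic} assumes only \ref{item:big}; those hypotheses belong to Theorem~\ref{thm:adelic 2}, so a proof using them establishes a strictly weaker statement. Second, your positive-entropy half is close in spirit to the paper (replace $A$ by a class-$\cA'$ group $\mathring A$ differing from it by the compact group of norm-one diagonal elements, then feed the result into the $S$-arithmetic Theorem~\ref{sl2-thm-final}), but citing Theorem~\ref{positive entropy one} as a black box hides the remaining nontrivial step --- the adelic quotient is not an $S$-arithmetic quotient, so one must exhaust the places by finite sets $S_i$, apply Theorem~\ref{sl2-thm-final} to the projected $\mathring A_i$-ergodic components, verify the algebraic data ($\HH$, the unipotent group, the base point $g$) is consistent in $i$ and $j$, and recover $\mu'$ by martingale convergence --- and that theorem is itself part of the proof of the statement you were asked to prove.
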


Additional information about the case \ref{item-solvable-pos-adelic-1} is given in Theorem~\ref{positive entropy one} below. If $\Lambda$ also satisfied \ref{item:indep-1} and \ref{item:isolate a place-1} we obtain a stronger result:

\begin{theorem}[Adelic measure classification]\label{thm:adelic 2}
Let the notations be as in Theorem~\ref{thm:adelic}. Assume furthermore that $\Lambda$ satisfied \ref{item:indep-1} and \ref{item:isolate a place-1}. Let $\mu$ be an $A$-invariant and ergodic probability measure on $X_{\Adel_\K}$. Then one of the following must hold:
\begin{enumerate}[label=\textup{(\textit{\roman*})},leftmargin=*]
\item $\mu$ is the uniform Haar measure on $X_{\Adel_\K}$.
\item $\mu$ is the uniform Haar measure on the closed orbit $\Gamma\UU(\Adel_\KK) a$, where $a \in\UU(\Adel_\K)$ and $\UU$ is is one of the two unipotent $\K$-subgroups  $\twobytwo 1 * . 1$ or $\twobytwo 1 . * 1$.
\item There is a $x _ 0 \in \T (\KK) \backslash \T (\Adel_{\KK})$ so that $\mu = \delta _ {x _ 0}$. 
\end{enumerate}
\end{theorem}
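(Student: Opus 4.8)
The plan is to bootstrap from the coarse classification of Theorem~\ref{thm:adelic} --- refined in its solvable alternative by Theorem~\ref{positive entropy one} --- and to use the three hypotheses on $\Lambda$ as follows. Condition \ref{item:big} forces $\Lambda$ to be infinite of unbounded height, hence $A$ to be non-compact in $\SL_2(\Adel_\KK)$ and, together with \ref{item:indep-1}, to act ergodically on the homogeneous pieces that occur; it is also what excludes Furstenberg-type obstructions, since a thin group such as $\{2^a3^b\}$ has only polynomially many elements of bounded height and fails \ref{item:big}. Condition \ref{item:indep-1} rules out configurations in which the relevant algebraic subgroup ``lives over'' a proper set of places. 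Condition \ref{item:isolate a place-1} supplies one clean local place, of local degree one over $\Q$ and carrying an isolated coarse Lyapunov weight, at which the quantitative unipotent recurrence argument of the paper can be run so as to upgrade statements about averaged measures to statements about $\mu$ itself and to exclude restriction-of-scalars phenomena.

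\emph{Step 1 and the semisimple case.} By \ref{item:big}, Theorem~\ref{thm:adelic} provides $A''\ge A$ with $A$ cocompact in $A''$, a class-$\cA'$ subgroup $A'\le A''$ cocompact in $A''$, and an $A'$-invariant measure $\mu'$, which we may take ergodic, of one of the three types there, with $\nu:=\int_{A''/A}a.\mu\,\dee a=\int_{A''/A'}a.\mu'\,\dee a$. The measure $\nu$ is $A''$-invariant; in each case $A$ acts ergodically on $\supp\nu$ (Moore's theorem in case (i) and on the algebraic part of case (ii), directly in case (iii)), so disintegrating $\nu$ over $A''/A$ recovers $\mu$, and it suffices to identify $\nu$ and its $A$-ergodic components. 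If $\mu'$ is uniform on $\Gamma\HH(\Adel_\Q)g$ with $\HH\le\GG=\operatorname{Res}_{\KK\mid\Q}\SL_2$ semisimple, then averaging homogeneous measures of the same type over $A''/A'$ and $A''/A$ makes $\nu$, hence $\mu$, uniform on $\Gamma\widetilde\HH(\Adel_\Q)\tilde g$ for a semisimple $\Q$-group $\widetilde\HH\ge\HH$. A proper semisimple $\Q$-subgroup of $\GG$ is, over $\overline\Q$, a product of diagonally embedded copies of $\SL_2$ indexed by a $\operatorname{Gal}$-stable set $T$ of embeddings of $\KK$; if $T$ is proper then $A\subseteq\tilde g^{-1}\widetilde\HH(\Adel_\Q)\tilde g$ forces $\prod_{\sigma\in S}\absolute{k}_\sigma^{d_\sigma}=1$ for all $k\in\Lambda$ and the complementary proper set $S$ of places, contradicting \ref{item:indep-1}; and if $T$ is all of the embeddings then $\widetilde\HH$ arises by restriction of scalars from a proper subfield of $\KK$, which makes the coarse Lyapunov weights of all $\KK$-places above a common rational place proportional and so precludes the isolated weight required by \ref{item:isolate a place-1}. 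Hence $\widetilde\HH=\GG$ and $\mu$ is the Haar measure on $X_{\Adel_\KK}$.

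\emph{Solvable and torus cases.} Suppose $\mu'$ is supported on $\Gamma N^1_G(\UU)g$. One first checks that $A$ can preserve such an orbit only if $\Lambda\subseteq\Adel_\KK^{\times,1}$, so that $A$ maps into the compact abelian group $B(\KK)\backslash N^1_G(\UU)/\UU(\Adel_\KK)$. Running the recurrence argument of Theorem~\ref{positive entropy one} at the clean place of \ref{item:isolate a place-1} yields genuine $\UU(\KK_\sigma)$-invariance of $\mu$, which by \ref{item:indep-1} propagates to $\UU(\Adel_\KK)$-invariance; the push-forward of $\mu$ to the above compact group is then an ergodic translation-invariant measure, i.e. Haar on a closed sub-orbit, and \ref{item:indep-1} forces this sub-orbit to be a point (any larger invariance returns us to the semisimple case), so $\mu$ is Haar on a single orbit $\Gamma\UU(\Adel_\KK)a$. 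The opposite root group is obtained by applying the Weyl element of $\SL_2(\KK)$, which conjugates $A$ to itself. Finally, if $\supp\mu'\subseteq\Gamma\HH(\Adel_\KK)g$ with $\HH$ a $\KK$-torus and $gA'g^{-1}\subseteq\HH(\Adel_\KK)$, then $A$ acts on $\HH(\KK)\backslash\HH(\Adel_\KK)$ by zero-entropy translations and its ergodic invariant measures are Haar on closed $\overline A$-orbits; analysing the $\KK$-rationality of the traces $k+k^{-1}$ forces $\HH$ to be a conjugate of $\TT$ and $\overline A$ to act trivially, whence $\mu=\delta_{x_0}$ with $x_0\in\TT(\KK)\backslash\TT(\Adel_\KK)$.

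\emph{Main obstacle.} The heart of the matter is the solvable case: eliminating the equivalence-by-compact and reaching a single unipotent orbit with no residual toral factor. This is exactly where the new quantitative unipotent recurrence input is indispensable --- it is what makes the argument at the single clean place of \ref{item:isolate a place-1} yield honest $\UU(\KK_\sigma)$-invariance of $\mu$ rather than of an average; after that, \ref{item:indep-1} spreads the invariance over all places and, combined with \ref{item:big}, rules out any Furstenberg-type measure on the $N^1_G(\UU)/\UU(\Adel_\KK)$-factor. Checking that this extra invariance together with ergodicity collapses $\mu$ onto either a single unipotent orbit or a single torus point --- and, in parallel, that \ref{item:isolate a place-1} genuinely kills all restriction-of-scalars configurations in the semisimple case --- is the bulk of the work.
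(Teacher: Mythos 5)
Your overall plan---deduce Theorem~\ref{thm:adelic 2} by refining the trichotomy of Theorem~\ref{thm:adelic} using \ref{item:indep-1} and \ref{item:isolate a place-1}---is in the right spirit, and it is broadly how the paper proceeds (Theorem~\ref{adelic-zeroent}, parts (Z2)--(Z3), for the torus/zero-entropy alternative, and Theorems~\ref{positive entropy one}--\ref{positive entropy two} for positive entropy). But several of your key steps fail or are missing. In the semisimple case, the claim that averaging the homogeneous measure $\mu'$ over $A''/A'$ yields a measure that is again uniform on an orbit of a larger semisimple $\Q$-group $\widetilde\HH$ is false: an average of translates of a homogeneous measure is in general not homogeneous, and this is exactly the difficulty created by ``equivalence-by-compact''. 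The paper does not exclude intermediate $\HH$ by a weight/restriction-of-scalars count; instead it uses \ref{item:isolate a place-1} to intersect the invariance group $g^{-1}\HH(\Q_v)g$ with the isolated coarse Lyapunov subgroup, obtaining genuine invariance of $\mu'$ under a nontrivial unipotent subgroup of $\SL_2(\KK_\sigma)$, hence (since $\HH$ projects onto that factor by Proposition~\ref{subgroup-L-of-sl2}) under all of $\SL_2(\KK_\sigma)$, and concludes by strong approximation that $\mu'$ is Haar on $X_{\Adel_\K}$; only then does the extreme-point argument ($\mu * m_\Omega = m_{X_{\Adel_\K}}$ with $m_{X_{\Adel_\K}}$ being $A$-ergodic) recover $\mu = m_{X_{\Adel_\K}}$. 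Your proposal never supplies a correct mechanism for passing from $\mu'$ back to $\mu$ in any of the three cases; the recurrence argument is not what does this job.

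The remaining gaps are in the solvable and torus cases. In the solvable case, \ref{item:indep-1} is not what spreads $\UU(\KK_\sigma)$-invariance to $\UU(\Adel_\KK)$ --- strong approximation for the one-parameter unipotent group does; \ref{item:indep-1} is needed for a step you omit entirely, namely forcing the local Weyl twists $w_\sigma \in \{I,w\}$ of $g$ to agree at all places (otherwise the support is a twisted orbit on which points escape to infinity under $A$, contradicting Poincar\'e recurrence), and the collapse onto a single $\UU(\Adel_\KK)$-orbit uses that $\Lambda<\KK^\times$ makes $A$ act trivially on the leaf space of the foliation of $\Gamma N^1_G(\UU)a$ by $\UU(\Adel_\KK)$-orbits, not a vague ``any larger invariance returns us to the semisimple case''. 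In the torus case your argument is essentially absent: ``analysing the $\KK$-rationality of the traces'' does not show $\HH$ is $\KK$-split. The paper obtains this from information internal to the proof of Theorem~\ref{thm:adelic} and not recorded in its statement, namely that $\HH$ splits locally at every place where $\Lambda$ is unbounded (see \eqref{eq: HH determines g}), so that \ref{item:indep-1} (unboundedness at all places; already density $>1/2$ suffices via Chebotarev) forces $\HH=\TT$ up to $\SL_2(\KK)$-conjugation; one must then again exclude the mixed-$\mathbf w$ configuration via \ref{item:indep-1} and Poincar\'e recurrence, and use $\Lambda<\KK^\times$ to see that $A$ acts trivially on $\Gamma\TT(\Adel_\KK)$, whence ergodicity gives $\mu=\delta_{x_0}$. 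In short, the argument has to be run through the proofs of Theorems~\ref{adelic-zeroent} and \ref{positive entropy two}, not through the statements of Theorems~\ref{thm:adelic} and \ref{positive entropy one}.
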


We note that the above refines the unique ergodicity theorem
for the full adelic diagonal subgroup proven in \cite[Thm.~1.4]{Lindenstrauss-adelic}. A crucial point is that similar to that theorem, \emph{we can avoid an entropy assumption in the adelic context}: every 
prime provides an independent transformation on our space,
the prime number theorem makes this a very rich dynamical system, whose recurrence properties gives strong information about the measure.

\subsection{A Diophantine Application}\label{diophantus}

Let~$\alpha\in\R$ and let us denote by
\[
\langle\alpha\rangle=\min_{m\in\Z}|\alpha-m|
\]
the distance to the nearest integer. Then Dirichlet's theorem states that
\[
 Q\cdot\!\!\!\min_{q\in\N\cap[1,Q]}\langle q\alpha\rangle\leq 1
\]
for all~$Q\geq 1$.
Moreover, for a.e.\ $\alpha\in\R$ Khintchin's theorem gives that this quantity approaches zero
along a subsequence of~$Q$. On the other hand there is a large set of numbers $\alpha$, known as the set of 
badly approximable numbers, for which this quantity stays bounded away from zero.

In personal communications Bourgain asked whether this statement could be improved by allowing
a denominator that is a product of two numbers~$q_1,q_2\leq Q$. In other words, what is the behavior
of the following function
\[
 Q\in[0,\infty)\mapsto f(Q)=Q\cdot\!\!\!\min_{q_1,q_2\in\N\cap[1,Q]}\langle q_1q_2\alpha\rangle
\]
for an arbitrary $\alpha\in\R$?

If~$Q+1=p$ is a prime number and~$|\alpha-\frac{m}p|<\frac12Q^{-3}$ for some
nonzero~$m\in\Z$, then~$q_1q_2\alpha$ has distance less than~$\frac12Q^{-1}$ to~$\frac{q_1q_2m}p\in\Q\setminus\Z$ 
and so distance at least~$\gg Q^{-1}$ to the nearest integer. This implies~$f(Q)\gg 1$. 
Using the Baire Category Theorem it is now easy to see that there exists a dense $G_\delta$-set of real
number~$\alpha$ for which~$\operatorname{limsup}_{Q\to\infty}f(Q)>0$. For $\alpha = \sqrt r $ with $r$ rational or for almost every $\alpha$ Blomer, Bourgain, Radzwi\l{}\l{} and Rudnick \cite{Blomer-Bourgain-Radziwill-Rudnick} show that $f(Q) \ll_\epsilon Q^{-1+\epsilon}$ which is essentially optimal (see also \cite{Carmon-quadratic}).

Even thought one cannot guarantee that $\operatorname{limsup}_{Q\to\infty}f(Q)=0$ for \emph{all} $\alpha\in\R$, allowing products of denominators does give something: Theorem~\ref{dioph-cor} implies that 
for every real $\alpha$ we have $f(e^t)\to 0$ 
``for almost all scales''~$t\to\infty$. Indeed, we prove the following significantly stronger uniform statement:

\begin{theorem}[An improved Dirichlet-type theorem]\label{dioph-cor}
For any $\varepsilon>0$, and integer $\ell>0$ there exists an $N\in\N$ so that for every $\alpha\in\R$,
for every $Q>10$ there exist some integers $n\leq N$ and $q\leq Q$ for which
\[
\langle qn^{2\ell}\alpha\rangle\leq \min\bigl(\tfrac{\varepsilon}q,\tfrac{1+\varepsilon}Q\bigr)
\]
\end{theorem}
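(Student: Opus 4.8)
The plan is to deduce Theorem~\ref{dioph-cor} from the adelic measure classification (Theorem~\ref{thm:adelic 2}) applied to the number field $\KK=\Q$, exactly in the spirit of the standard translation between Diophantine statements and equidistribution on $X_{\Adel_\Q}=\SL_2(\Q)\backslash\SL_2(\Adel_\Q)$. First I would set up the dynamical reformulation: fix $\ell$, let $\Lambda=\{k^{2\ell}:k\in\Q^\times\}$, which was noted in the excerpt to satisfy \ref{item:big}, \ref{item:indep-1}, \ref{item:isolate a place-1}, and let $A=\{\sa k:k\in\Lambda\}$. Given $\alpha\in\R$, consider the point $x_\alpha=\Gamma u(\alpha)$ where $u(\alpha)=\twobytwo 1\alpha.1$ embedded at the infinite place (and the identity at all finite places), and look at the orbit closure $\overline{Ax_\alpha}$, or more precisely weak-$*$ limits of averages of $A$-pushes of $\delta_{x_\alpha}$ along the integers $n\le N$ acting by $\sa n$. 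The key translation is: the statement ``for all large $Q$ there exist $n\le N$, $q\le Q$ with $\langle q n^{2\ell}\alpha\rangle\le\min(\varepsilon/q,(1+\varepsilon)/Q)$'' is equivalent, via Dirichlet/Mahler-compactness-type arguments at the real place combined with the structure of $\SL_2(\Z_p)$ at the finite places, to a non-escape-of-mass / non-divergence statement for these finite averages: the orbit $\{\sa n.x_\alpha:n\le N\}$ cannot be pushed entirely into a fixed cusp neighborhood of $X_{\Adel_\Q}$ once $N$ is large enough depending on $\varepsilon,\ell$.

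Second, I would argue by contradiction and compactness. Suppose the theorem fails for some $\varepsilon,\ell$; then there is a sequence $\alpha_i\in\R$ and $Q_i\to\infty$ (or a sequence of $N_i\to\infty$) witnessing the failure, i.e.\ for every $n\le N_i$ and $q\le Q_i$ one has $\langle qn^{2\ell}\alpha_i\rangle>\min(\varepsilon/q,(1+\varepsilon)/Q_i)$. Passing to the associated normalized $A$-invariant measures obtained as limits of the averages $\frac1{N_i}\sum_{n\le N_i}\sa n.\delta_{x_{\alpha_i}}$ (here one uses that $\Lambda\supseteq\{n^{2\ell}:n\in\N\}$ is a ``non-lacunary'' enough set so that the prime number theorem, as emphasized after Theorem~\ref{thm:adelic 2}, forces the limit to be a genuine probability measure rather than losing mass — this is where the richness of the dynamical system is used, and it is the crux of ruling out escape of mass), one gets in the limit an $A$-invariant probability measure $\mu$ on $X_{\Adel_\Q}$. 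By ergodic decomposition we may take $\mu$ ergodic, and now Theorem~\ref{thm:adelic 2} applies: $\mu$ is either Haar on $X_{\Adel_\Q}$, the Haar measure on a closed unipotent orbit $\Gamma\UU(\Adel_\Q)a$, or a Dirac mass $\delta_{x_0}$ with $x_0\in\T(\Q)\backslash\T(\Adel_\Q)$.

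Third, I would show each of the three cases produces, for $N$ (equivalently $Q$) large, a point in the $A$-orbit data that violates the assumed lower bound on $\langle qn^{2\ell}\alpha\rangle$, giving the contradiction. If $\mu$ is Haar or a unipotent-orbit measure, then $\mu$ gives positive mass to any fixed open set; choosing the open set to be a neighborhood of a rational point $\Gamma\twobytwo 1{m/p}.1$ type configuration (translated appropriately), the support of the limit measure meeting it forces, for some $i$ large, some $\sa n.x_{\alpha_i}$ with $n\le N_i$ to land near a rational translate, which unwinds precisely to $\langle q n^{2\ell}\alpha_i\rangle$ being small for a suitable $q\le Q_i$ — contradicting the hypothesis. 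If $\mu=\delta_{x_0}$ is a torus-fixed Dirac mass, then $x_0$ is $A$-fixed, which (because $\Lambda$ satisfies \ref{item:indep-1}) forces $x_0$ to lie on a $\Q$-rational torus orbit, i.e.\ $x_0=\Gamma t$ with $t$ rational; but then the original points $x_{\alpha_i}$ are forced to be extremely close to $x_0$ along the $A$-orbit, again translating into an exceptionally good approximation $\langle q n^{2\ell}\alpha_i\rangle\le\varepsilon/q$ for small $q$, and in fact this is the case that also yields the $\min(\varepsilon/q,(1+\varepsilon)/Q)$ sharpening via the $q\le Q$, $n\le N$ bookkeeping at the real place (a Dirichlet-box argument producing the $(1+\varepsilon)/Q$ term uniformly).

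I expect the main obstacle to be the \emph{quantitative, uniform} passage between the measure-classification conclusion and the Diophantine inequality: Theorem~\ref{thm:adelic 2} is a qualitative statement about a single ergodic measure, whereas Theorem~\ref{dioph-cor} asks for a single $N=N(\varepsilon,\ell)$ working for \emph{all} $\alpha$ and all $Q>10$ simultaneously. Bridging this requires (a) a genuine non-escape-of-mass input — showing the averaged measures do not leak into the cusp, for which one invokes the prime-number-theorem-driven recurrence highlighted in the paper's introduction and presumably a companion quantitative-recurrence statement — and (b) a compactness/contradiction argument that converts ``fails for arbitrarily large $N$'' into ``fails for the limiting measure'', which is delicate because the relevant limit is along averages over $n\le N$ rather than along a single orbit, so one must check that the Diophantine failure hypothesis is preserved in the weak-$*$ limit. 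Secondarily, extracting the precise constant $\min(\varepsilon/q,(1+\varepsilon)/Q)$ — rather than merely $o(1)$ — from the Dirac/torus case will require a careful Minkowski/Dirichlet lattice-point argument at the real place, layered on top of the $p$-adic constraints coming from $n$ being an integer.
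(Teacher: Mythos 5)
Your overall strategy --- argue by contradiction, take weak-$*$ limits of finite orbit averages of the points attached to the $\alpha_i$, apply the adelic classification, and unwind a point in the support of the limit into a good rational approximation --- is indeed the route the paper takes, but two of your steps fail as stated. First, the averages $\frac1{N_i}\sum_{n\le N_i}\sa n.\delta_{x_{\alpha_i}}$ do not yield an $A$-invariant limit: the sets $\{1,\dots,N\}$ are \emph{not} a F{\o}lner sequence for the multiplicative group $\Q^\times$ (multiplication by $2$ already moves about half of $\{1,\dots,N\}$ outside itself), so invariance of the weak-$*$ limit is unjustified. The paper instead averages over the F{\o}lner sets $F_j$ consisting of $j$-smooth integers with multiplicities at most $j$; since $F_j\subseteq[1,N_j]$ for a suitable $N_j$, this is still compatible with the ``there exists $n\le N$'' form of the statement. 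Second, your non-escape-of-mass discussion inverts the actual mechanism and then leans on an unproved ``prime-number-theorem-driven recurrence''. No such input is needed or available: the points one averages are $a_{-t_j}m(n^\ell).[h(\alpha_j)]$ with $m(n^\ell)=\twobytwo{n^{-\ell}}..{n^{\ell}}$ and $t_j=\log Q_j$, i.e.\ one \emph{also} applies the real diagonal flow by time $-\log Q_j$, and the contradiction hypothesis (failure of the inequality for all $q\le Q_j$ and all $n\in F_j$) says exactly that the corresponding adelic lattices avoid a fixed neighborhood $C_\varepsilon$ of $0$, so by Mahler's compactness criterion all these points lie in one fixed compact set $\Omega$ and tightness is automatic. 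Your construction omits the $a_{-\log Q}$ renormalization, and without it neither the compactness nor the quantitative bounds $(1+\varepsilon)/Q$ and $q\le\varepsilon Q$ can be extracted.

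Third, your case analysis of the classification is too vague and misses the uniform mechanism that actually closes the argument. The paper only uses that in \emph{each} of the three cases of Theorem~\ref{thm:adelic} the support of the limit measure contains a point $[y]\in\TT(\Q)\backslash\TT(\Adel)$ with $y$ integral at all finite places; since $[y]$ also lies in $\Omega$, the real lattice $\Z^2y_\infty$ must contain a horizontal vector $(b,0)$ with $0<|b|\le 1$. Approximating $[y]$ by $a_{-t_j}m(n_j^\ell).[h(\alpha_j)]$ for large $j$ then directly produces some $q<\varepsilon Q_j$ with $\langle qn_j^{2\ell}\alpha_j\rangle<(1+\varepsilon)/Q_j$, contradicting the failure hypothesis --- no separate treatment of the Haar, unipotent-orbit and Dirac cases, and no additional Minkowski/Dirichlet argument, is required. (Minor bookkeeping: with your $\Lambda=\{k^{2\ell}\}$ and $A=\{\sa k:k\in\Lambda\}$ the conjugation action multiplies $\alpha$ by $k^{4\ell}$, not $k^{2\ell}$; the paper acts by $m(k^\ell)$, and it suffices to invoke Theorem~\ref{thm:adelic}, which needs only \ref{item:big}, rather than Theorem~\ref{thm:adelic 2}.)
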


We note that Theorem \ref{dioph-cor} is a strengthening of the following theorem \cite[Thm.~1.11]{EFS} by the first named author, L.~Fishman, and U.~Shapira.

\begin{theorem}[Einsieder-Fishman-Shapira]\label{EFStheorem}
    Let $\alpha\in\R$. Then
    \[
    \liminf_{n\to\infty}\liminf_{q\to\infty}q\langle qn\alpha\rangle=0.
    \]
\end{theorem}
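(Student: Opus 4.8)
We may assume $\alpha\notin\Q$: if $\alpha=a/m$ in lowest terms then $n\alpha\in\Z$ for every $n\in m\Z$, so $\liminf_{q\to\infty}q\langle qn\alpha\rangle=0$ for all such $n$, and letting $n=\ell m\to\infty$ gives the conclusion. Fix $\alpha$ irrational. The first step is an elementary reduction: substituting $Q=qn$ gives
\[
\liminf_{q\to\infty}q\langle qn\alpha\rangle=\frac1n\liminf_{\substack{Q\to\infty\\ n\mid Q}}Q\langle Q\alpha\rangle ,
\]
so it suffices to produce a sequence $n_k\to\infty$ and a constant $C$ (possibly depending on $\alpha$) such that for each $k$ the set $\{Q\in n_k\Z:\langle Q\alpha\rangle\le C/Q\}$ is infinite; then $\liminf_{q\to\infty}q\langle qn_k\alpha\rangle\le C/n_k\to 0$. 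In words: for arbitrarily large moduli $n_k$ one must find infinitely many rational approximations $p/Q$ to $\alpha$ of Dirichlet quality $|Q\alpha-p|\le C/Q$ whose denominator is divisible by $n_k$. Dropping the divisibility this is just Dirichlet's theorem; the whole point is to impose $n_k\mid Q$ while letting $n_k\to\infty$.

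To obtain these approximations I would use homogeneous dynamics in the $S$-arithmetic setting of this paper. By the Dani correspondence an approximation $p/Q$ with $\langle Q\alpha\rangle\le C/Q$ is a deep excursion of the forward $\diag(e^{-t},e^{t})$-orbit of the unimodular lattice $g_\alpha\Z^2$, $g_\alpha=\left(\begin{smallmatrix}1&0\\ \alpha&1\end{smallmatrix}\right)$, into the cusp of $\SL_2(\Z)\backslash\SL_2(\R)$; the constraint $n\mid Q$ asks for the near-minimal vector realizing the excursion to lie in the index-$n$ sublattice of vectors whose first coordinate is a multiple of $n$. To turn this sublattice condition into something dynamical I would take each $n_k$ divisible only by primes in a finite set $S_0(k)$ (for instance $n_k=\mathrm{lcm}(1,\dots,k)$) and pass to the $S$-arithmetic quotient $Y_k=\SL_2(\Z[1/m_k])\backslash\prod_{v\in\{\infty\}\cup S_0(k)}\SL_2(\Q_v)$, $m_k=\prod_{p\in S_0(k)}p$, carrying the $\R$-diagonal flow together with the $\Q_p$-diagonal flows; then ``$n_k\mid Q$'' becomes the statement that each $p$-adic component of the orbit lies far out in the Bruhat--Tits tree of $\SL_2(\Q_p)$. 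What must be shown is that the orbit of the point attached to $\alpha$ visits the \emph{product} cusp --- deep in the real cusp and far out every $p$-adic tree at once --- at arbitrarily many times. This recurrence I would deduce from a non-escape-of-mass argument together with the measure-classification/unique-ergodicity results available here precisely because \emph{no entropy hypothesis is needed in the $S$-arithmetic context} (the unique ergodicity theorem of \cite{Lindenstrauss-adelic} and its refinements, Theorems~\ref{thm:adelic}--\ref{thm:adelic 2}): a diagonal-invariant probability limit of the empirical measures along the orbit must be a combination of Haar measure and measures on closed orbits of unipotent or solvable subgroups, and in each of these cases the orbit is forced to spend time in the product cusp, which is the required recurrence. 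Reading each cusp visit back through the correspondence produces an approximation $p/Q$ with $\langle Q\alpha\rangle\ll 1/Q$ and $n_k\mid Q$, completing the argument.

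The main obstacle is this last step, in two parts. First, one must set up the Dani-type dictionary on $Y_k$ carefully enough that ``a cusp excursion of a given depth in the product space'' corresponds to ``$\langle Q\alpha\rangle\le C/Q$ with $n_k\mid Q$'' with the constant $C$ \emph{uniform in $k$}; this bookkeeping is essential, since only uniformity in $k$ converts finitely many excursions in each $Y_k$ into the sequence $n_k\to\infty$ we want. Second --- and this is the real difficulty --- one must verify the non-degeneracy needed to apply the no-entropy classification: the empirical measures must not escape entirely to a compact piece (which would amount to $\alpha$ being simultaneously badly approximable at $\infty$ and at every $p\in S_0(k)$), nor collapse onto a lower-dimensional invariant set. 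Here the irrationality of $\alpha$ enters, through the density of $\{\Gamma g_{n\alpha}\}$ on the periodic horocycle at $\infty$ and hence the non-triviality of the projection of the orbit to each factor; quantifying this non-degeneracy uniformly enough to drive the recurrence as $k\to\infty$ is the crux of the proof.
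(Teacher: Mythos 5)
Your elementary reduction (passing to $Q=qn_k$ and asking for infinitely many $Q\in n_k\Z$ with $Q\langle Q\alpha\rangle\le C$, $C$ uniform in $k$) is correct, and the translation of the divisibility constraint into $p$-adic conditions is the right general idea; note also that the paper itself does not prove Theorem~\ref{EFStheorem} but quotes it from \cite{EFS} (whose proof uses \cite[Thm.~1.4]{Lindenstrauss-adelic}) and instead proves the stronger Theorem~\ref{dioph-cor} in the final section. The genuine gap is in your dynamical core. You work in the fixed $S$-arithmetic quotient $Y_k$ and propose to classify weak$^*$ limits of empirical measures along the diagonal orbit of the single point attached to $\alpha$ by invoking the entropy-free classification (Theorems~\ref{thm:adelic}--\ref{thm:adelic 2}, \cite{Lindenstrauss-adelic}). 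Those theorems are \emph{adelic}: they need the acting group to contain diagonal directions at essentially all places, with the richness condition \ref{item:big}; on $Y_k$ the diagonal group has finite rank and the only classification available is Theorem~\ref{sl2-thm-final}, which requires positive entropy. A limit of empirical measures along one orbit can perfectly well have zero entropy (for instance be supported on a compact orbit of the diagonal group itself), and the zero-entropy case on a fixed $S$-arithmetic quotient is exactly the unresolved obstruction, so nothing forces the recurrence to the ``product cusp'' you need. Worse, such a limit is invariant only under the flow you averaged over, and for an arbitrary $\alpha$ it need not be a probability measure at all: for Liouville-type $\alpha$ the geodesic orbit of $g_\alpha\Z^2$ exhibits escape of mass, so ``non-escape of mass'' is not a verification to be supplied later but something that can genuinely fail for the single-orbit averages you propose. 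Finally, even granting a classification, the claim that in the torus/solvable cases ``the orbit is forced to spend time in the product cusp'' is not right as stated -- those limit measures are supported on compact sets.

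The way both actual proofs avoid all of this is to change what is averaged and where. One works on the adelic quotient $X_{\Adel}$ from the start and, instead of flowing one point, translates it by the rational diagonal elements $m(n)$ over a F{\o}lner set in $\Q^{\times}$ (as in the proof of Theorem~\ref{thm:diophantine}); by \eqref{eq:action of m} these act on $[h(\alpha)]$ by $\alpha\mapsto n^{2}\alpha$, which is why the conclusion naturally carries $n^{2}$ (or $n^{2\ell}$) -- enough for the liminf, since only a subsequence of $n$'s is needed. The argument is run by contradiction, so that all the translated-and-flowed points lie in one fixed compact set $\Omega$; hence the weak$^*$ limit is automatically a probability measure, and it is invariant under an adelic diagonal group satisfying \ref{item:big}, to which Theorem~\ref{thm:adelic} (or, for the EFS theorem, \cite[Thm.~1.4]{Lindenstrauss-adelic}) applies with no entropy hypothesis. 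The contradiction is then extracted not from cusp excursions but from the fact that every case of the classification puts a point of $\TT(\Q)\backslash\TT(\Adel)$ in the support: its real component is a unimodular diagonal lattice containing a vector $(b,0)$ with $0<|b|\le 1$, and orbit points close to it inherit the forbidden good approximations, with the divisibility built in through the finite places. If you want to salvage your outline, this is the missing mechanism: invariance under a large adelic group obtained by group averaging plus a compactness (contradiction) setup, rather than orbit averages of a finite-rank diagonal flow on $Y_k$.
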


We also note that both Theorem \ref{EFStheorem} and Theorem \ref{dioph-cor} rely 
on adelic measure classification results. Theorem \ref{EFStheorem} uses \cite[Thm.~1.4]{Lindenstrauss-adelic} due to the second named author
while Theorem \ref{dioph-cor} instead uses the results from Section \ref{s:adelic}.

\subsection{Outline of the proofs}\label{outline}
We briefly outline the argument of the principle result of this paper,  Theorem~\ref{sl2-thm-final}, from which all other results are derived. For simplicity
of exposition we discuss only  
a special case of a quotient $X=\Gamma\backslash G$ of $G=\SL_2(\R)^4$ 
by an arithmetic irreducible lattice $\Gamma$. 
We suppose that $a\in G$ is the direct product of nontrivial positive 
diagonal matrices in the first two factors and the identity in the last two factors. Let $U$ be the product of the upper unipotent subgroup in the first two factors and suppose that $a$ expands $U$. 
Similarly suppose $b\in G$ is the product of the identity
in the first two factors and nontrivial positive diagonal matrices in the last two factors (hence $b$ commutes with $U$). Let $V$ be the product of the upper unipotent subgroup
in the last two factors and suppose that $b$ expands~$V$.

The results of this paper cover much more generally the actions of subgroups  of the full diagonal group that have rank at least two. The concrete choice of acting group $A=\langle a,b\rangle$ given here is a simple example of a case which we treat here but is not handled by the low-entropy method of~\cite{Lindenstrauss-03,Einsiedler-Katok-Lindenstrauss,Low-entropy}. 

Similarly to the low entropy method, in order to study $\mu$
we wish to use dynamics along the direction~$U$ --- a direction in which $\mu$ is neither invariant nor quasi-invariant! However, whereas in the low entropy method we use it in a way inspired by Ratner's work in unipotent dynamics, here we do it in a rather different way: our argument is closer to the way positive entropy for quantum limits was established by Bourgain and the second named author \cite{Bourgain-Lindenstrauss}, combined with some ideas of Boshernitzan \cite{Boshernitzan} regarding rates of recurrence.

\subsubsection{Nontrivial leafwise measures}
Crucial to our approach are the leafwise measures for certain unipotent subgroups (like $U$ and $V$), see \S \ref{sec:prelim}--\ref{anewconstruct} for their basic properties and a detailed construction.

Assume that~$a\in A$ has positive entropy with respect to an~$A$-invariant
and ergodic measure~$\mu$ on~$X$. 
By standard arguments the positive entropy assumption translates to $U$ having nontrivial leafwise measures $\mu_x^U$ a.s. However this does not give directly any information about the leafwise measures $\mu_x^V$ nor on the entropy of $\mu$ under $b$.

\subsubsection{Polynomial recurrence}
Even though $\mu$ is not $U$-invariant, one can obtain
fairly general quantitative 
recurrence statement
once the leaf-wise measures~$\mu_x^{U}$ are nontrivial a.s., analoguous to the above mentioned results \cite{Boshernitzan} of Boshernitzan.

In fact we will show (cf.\ Proposition~\ref{simplerrecurrence}) that for a.e. $x\in X$, if $n$ is large enough there are ``$\mu_x^U$-typical'' elements $u\in U$
of size at most $e^n$ so that $u.x$ is in a ball of radius 
$e^{-\kappa n}$ around $x$ for some $\kappa>0$ (depending on the dimension 
of the space and the entropy of $a$).

\subsubsection{Using the arithmeticity}\label{SMB-step}
The above mentioned recurrence to shrinking balls (cf. Proposition~\ref{simplerrecurrence}) is not strong enough for our needs. Instead we wish to prove positive entropy also under $b$, and ultimately getting additional invariance for $\mu$.

Let us first try to establich that $\mu$ has to have positive entropy also under $b$. Assume in contradiction $h_\mu(b)=0$.
Then the measure of the Bowen balls of $b$
shrinks only sub-exponentially. Combining this with the polynomial recurrence techniques of Proposition~\ref{simplerrecurrence} leads to
an unreasonably high speed for the recurrence claim -- unreasonable enough to be in contradiction to the Diophantine properties of elements in the arithmetic lattice $\Gamma$. 
Thus $b$
must have positive entropy. This step is somewhat similar to 
the proof in~\cite{Bourgain-Lindenstrauss}.

\subsubsection{Unipotent invariance}\label{conditioning}
To upgrade the production of additional directions where we have positive entropy to unipotent invariance we employ a crucial, but quite simple, observation: 
Using the product structure of the leafwise measure (see \S\ref{sec;product} below or \cite{Einsiedler-Katok}, \cite[\S 6]{Lindenstrauss-03} and \cite[\S 8]{Pisa-notes} for more details) the above argument can be carried out conditionally with respect to the 
measurable factor of the $A$ action on $\Gamma\backslash G$
determined by the~$\sigma$-algebra generated by the function~$x\mapsto\mu_x^{V}$ (we view these leafwise measures $\mu_x^{V}$ as elements in a space of locally finite measures on $V$, and $A$ acts on such measure through the action of $A$ on $V$ by conjugation). 
In fact we establish that $\mu$ has positive entropy with respect to~$ b  $ even if we 
condition on this~$\sigma$-algebra. This
implies, by standard properties of entropy and 
leaf-wise measures,  the statement that there are many 
points on typical~$V$-orbits that have the same leafwise measure for $V$. 
This in turn gives that~$\mu_x^{V}$ is invariant under a nontrivial subgroup $I_x<V$. Using $A$-invariance of $\mu$, this invariance subgroup $I_x$ must satisfy an equivariance property and Poincar\'e recurrence. Using that $A$ is of class-$\mathcal{A}'$ and $A$-ergodicity of $\mu$ we obtain that $I_x=I<V$ is a.s.\ constant, which finally implies that $\mu$ must be invariant under $I$. 

Now that we have established that $\mu$ is invariant under a non-trivial unipotent group we can apply Ratner's measure classification theorem for measures invariant under unipotent flows, or more generally the $S$-arithmetic extensions of this theorem (\cite{Ratner-Annals,Ratner-padic, Margulis-Tomanov, Margulis-Tomanov-almost-linear}). Analysing the
possible outcomes of the measure classification theorem using the invariance under $A$ allows us to establish Theorem~\ref{sl2-thm-final}.

\section{Preliminaries and notation}\label{sec:prelim}

We will recall here briefly the main properties of leaf-wise measures and their connection to entropy 
for diagonalizable actions on homogeneous spaces and refer to~\cite{Pisa-notes}
for the details. We will also describe an alternative construction
of the leaf-wise measure in \S\ref{anewconstruct}. Given two measures~$\nu_1,\nu_2$
we write~$\nu_1\propto\nu_2$ if there exists some~$c>0$ with~$\nu_1=c\nu_2$. 
For a given measure $\nu$ we write $[\nu]$ for the equivalence class
with respect to $\propto$.

\subsection{The space}\label{sec: the space}
Let~$G=\GG(\Q_S)=\prod_{v\in S}\GG(\Q_v)$ be 
the group of $\Q_S$-points of a~$\Q$-group~$\GG$. 
Let~$\Gamma<G$ be an arithmetic lattice arising from the~$\GG$, 
i.e.\ 
setting~$\order=\Z[\frac1p:p\in S]$ the lattice~$\Gamma$ is commensurable with~$\GG(\order)$ diagonally embedded
into~$\GG(\Q_S)$. 
Let $X=\Gamma\backslash G$ be the corresponding $S$-arithmetic quotient. The group $G$ acts on $X$ by right translations, i.e.~$g.x=xg^{-1}$.
Now fix some  left invariant  metric~$d_G(\cdot,\cdot)$ on~$G$
and recall that it induces a metric
\[
 d(\Gamma g_1,\Gamma g_2)=\inf _{\gamma_1,\gamma_2\in\Gamma} d_G(\gamma_1 g_1,\gamma_2g_2)=
\inf _{\gamma\in\Gamma} d_G(\gamma g_1,g_2)
\]
for~$\Gamma g_1,\Gamma g_2\in X$. We will write~$B_r^G$ for the ball
of radius~$r>0$ in~$G$ centered at the identity and~$B_r(x)=B_r^X(x)=B_r^G.x$ for the ball
of radius~$r>0$ centered at~$x\in X$. 

Since~$\Gamma$ is a discrete subgroup there exists for every~$x\in X$ some~$r>0$, called an \emph{injectivity radius at~$x$}, such that~$g\in B_r^G\mapsto xg\in X$ is injective and an isometry.
If $H<G$ is a subgroup we will always use the restriction of $d(\cdot,\cdot)$ to $H\times H$ as the left-invariant metric on $H$.
Finally recall~$g.x=xg^{-1}$ defines an action of~$g\in G$ on~$x\in X$.

The discussion up to and including \S\ref{secunipotent} is valid for general algebraic $\Q$-groups $\GG$,
and up to and including \S\ref{recurrencesection} 
with only minor modifications also for general algebraic groups $\GG$
over global fields with positive characteristic.
In \S\ref{sec:not so fast} and \ref{easy-end} essential use is made of the rather strong assumptions we make on $\GG$, namely that it is 
a $\Q$-almost simple form of $\operatorname{SL}_2^k$ or of~$\PGL_2^k$.

\subsection{Leaf-wise measures}\label{characterizingprops}
Let~$\mu$ be a locally finite measure on~$X$ and let~$H<G$ be a closed subgroup. Assume that for~$\mu$-a.e.~$x\in X$ the map~$h\in H\mapsto h.x=xh^{-1}$ is injective. In that case there exists 
a $H$-invariant set~$X'\subset X$ of full measure, and for every~$x\in X'$ a 
\emph{leaf-wise measure}~$[\mu_x^H]$ that is a proportionality class of locally finite measure on~$H$, and ``describes~$\mu$ along
the~$H$-orbit of~$x$'' in the following way: Let~$Y\subset X$ be measurable with~$\mu(Y)<\infty$
and let~$\mathcal A$ be  a countably generated~$\sigma$-algebra on~$Y$ that is~\emph{$H$-subordinate}, 
i.e.\ for a.e.~$y\in Y$ the atom has the form~$[y]_{\mathcal A}=V_y. y$
for some bounded neighborhood~$V_y\subset H$ of the identity in $H$. 
We will refer to $V_y$ as the \emph{shape of the atom}. 
Then for a.e.~$y\in Y$ the proportionality class of the conditional measure on atoms of $\mathcal A$ satisfies
\[
 [\mu_y^{\mathcal A}]=[\mu_y^H|_{V_y}].y,
\]
i.e.~we restrict the leaf-wise measure to the shape of the atom, push the restriction forward
under the orbit map~$h\in H\mapsto h.y$ and obtain the proportionality class of the conditional measure.

On occasion it will be convenient to pick a specific locally finite measure from the proportionality class. 
We will exhibit later a strictly positive function $f_H$ on $H$ so that for a.e.~$y$, \  $\int f_H d\nu < \infty$  for any (or some) $\nu \in  [\mu_y^H]$, and then chose~$\mu_x^H$ as the element of the proportionality class $[\mu_y^H]$  so that $\int f_H d\mu_y^H =1$. Even before we construct this $f_H$ we will use  $\mu_y^H$ to denote an element in the proportionality class $[\mu_y^H]$, taking care to use it only when the actual proportionality constant is immaterial.

A key property of the leaf-wise measures is given by the 
\emph{compatibility formula}
\begin{equation}\label{shift-formula}
  [\mu_{h.y}^H.h]=[\mu_y^H]
\end{equation}
whenever~$h\in H$ and~$y\in X'$; this simply means that the (somewhat pathological) proportionality class of measures on $X$ obtained by taking for any $y \in X'$ the pushforward of $\mu_y^H$ under the map $h \mapsto h.y$ is constant along $H$-orbits.
Additional important properties are that~$\mu$ is~$H$-invariant if and only if~$\mu_x^H$ is a left Haar measure
on~$H$ for a.e.~$x\in X$, and that if~$\mu$ is a finite measure then~$\mu$ is~$H$-recurrent
if and only if~$\mu_x^H(H)=\infty$ for a.e.~$x$ (see \S\ref{recurrencesection}
where we will study refinements of this fact).

\subsection{Dynamics of~\texorpdfstring{$A$}{A}}\label{sec:dynamicsintro}
Let us now fix some closed abelian class-$\cA'$ subgroup~$A<G$ (see Definition~\ref{def:class-A}),
and let~$\mu$ an~$A$-invariant and ergodic probability measure on~$X$.
The dynamics of~$a\in A$ on~$X$ can locally be modelled by the conjugation map~$\theta_a(g)=aga^{-1}$ for~$g\in G$.

For any~$a\in A$ let~$G_a^+=\{g\in G\mid a^nga^{-n}\to e$ as~$n\to-\infty\}$
denote the full unstable horospherical subgroup (where~$e$ denotes the identity element of~$G$). Let~$\mathfrak g_a^+$ be
its Lie algebra.
More precisely, projecting~$a$ to~$G_v$ for some~$v\in S$ we obtain~$a_v\in G_v$ and
can define the group~$(G_v)_{a_v}^+=G_v\cap G_a^+$. This group
equals the group of~$\Q_v$-points of a unipotent algebraic group.
As such it has a Lie
algebra over the field~$\Q_v$. The group~$G_a^+$ equals the direct product~$\prod_{v\in S}G_{a_v}^+$
and the Lie algebra of~$G_a^+$ is defined as the direct sum of the Lie algebras of~$(G_v)_{a_v}^+$,
we will consider it as a Lie algebra over the ring~$\Q_S$.
We also define the full stable horospherical subgroup~$G_a^-=G^+_{a^{-1}}$ and note that the above and following discussion holds
equally well for~$G_a^-$.

We will frequently consider \emph{Zariski closed unipotent subgroups}~$U<G_a^+$, 
i.e.~subgroups that are themselves
products~$U=\prod_{v\in S}U_v$ where~$U_v$ is the group of~$\Q_v$-points
of a Zariski-closed subgroup of~$\GG_{a_v}^+$ defined over~$\Q_v$
for every~$v\in S$. 

\medskip

Let~$a_0\in A$ and~$U<G_{a_0}^+$ be an~$A$-normalized Zariski closed unipotent subgroup. Then 
invariance of~$\mu$ under~$A$ implies for the leaf-wise measures with respect to~$U$
the equivariance property
\begin{equation}\label{eq:equivariance}
 [\mu_{a.x}^U]= (\theta_{a})_*[\mu_{x}^U] 
\end{equation}
for a.e.~$x$, i.e.\ the leaf-wise measure at~$a.x$ is up to a scalar
the pushforward of the leaf-wise measure at~$x$ under the conjugation map $
\theta_{a}$ by~$a\in A$ 
restricted to~$U$.

The \emph{entropy contribution} of~$U<G_a^+$ at~$x$ for~$a\in A$
is defined by
\begin{equation}\label{volume decay}
\vol_\mu(a, U,x)=\lim_{|n|\rightarrow\infty}
\frac{\log\mu_{x}^U \bigl(\theta_a^n(B_1^ U)\bigr)}{n}.
\end{equation}
This limit exists a.s., defines an~$A$-invariant function, and so is constant a.e.~by ergodicity of~$\mu$.
The almost sure value is the entropy contribution of~$U$ and is denoted by~$h_\mu(a,U)$. 
We have~$h_\mu(a,U)=0$ if and only if~$\mu_x^{U}$ equals a.s.~the Dirac measure at the identity.
Moreover, if~$U=G_a^+$ then~$h_\mu(a,G_a^+)=h_\mu(a)$. 
This connection is well known and
has been developed in the homogeneous context in \cite[Sect.~9]{Margulis-Tomanov} 
(see also~\cite[Sect.~7]{Pisa-notes} for an introduction).

By the assumption that~$A$ is of class-$\cA'$ there exists for every prime~$p\in S$
a number~$\lambda_p\in\Q_p^\times$ (with~$\|\lambda_p\|_p>1$) and finitely many homomorphisms~$\chi:A\to\Z$
such that the eigenspaces of the adjoint action of~$A$ on~$G_p$ are given by
\[
 \mathfrak g_p^\chi=\{w\in\mathfrak g_p\mid\operatorname{Ad}_a(w)=\lambda_p^{\chi(a)}w\mbox{ for all }a\in A\}.
\]
For the case of~$v=\infty\in S$ we set~$\lambda_v=e$ and allow the homomorphisms~$\chi$ on~$A$ to take real values. 
Hence we can decompose~$\mathfrak g_\infty$ into eigenspaces of the form
\[
\mathfrak g_\infty^\chi=\{v\in\mathfrak g_\infty\mid\operatorname{Ad}_a(v)=\lambda_\infty^{\chi(a)}v\mbox{ for all }a\in A\}
\]
for finitely many homomorphisms~$\chi:A\to\R$. 

We will refer to the homomorphisms~$\chi:A\to\R$ with nontrivial eigenspaces (at some place) as the \emph{Lyapunov weights} 
and to~$\mathfrak g^\chi=\prod_{v\in S}\mathfrak g_v^\chi$ as the \emph{Lyapunov weight spaces}.
Note that a homomorphism $\chi$ of $A$ might have nontrivial eigenspaces at two or more different places if $\chi:A\to\Z$. 
The trivial Lyapunov weight is given by~$\chi(a)=0$ for all~$a\in A$, the corresponding Lyapunov
weight space is the Lie algebra of the centralizer of~$A$. 
Two nontrivial Lyapunov weights~$\chi_1,\chi_2$ are called equivalent if there exists some~$s>0$ such
that~$\chi_2=s\chi_1$. The equivalence classes are called \emph{coarse Lyapunov weights}
and denoted by~$[\chi]$. The sum of the Lyapunov weight spaces over all weights equivalent to~$\chi$
is a Lie algebra over~$\Q_S$, is called the \emph{coarse Lyapunov weight space}, and is denoted by~$\mathfrak g^{[\chi]}$.

We define~$G^{[0]}=C_G(A)$ and for a nontrivial Lyapunov 
weight~$\chi$ we define the corresponding 
\emph{coarse Lyapunov subgroup} 
by~$G^{[\chi]}=\exp\mathfrak g^{[\chi]}$. We denote the corresponding leaf-wise measures by~$\mu_x^{[\chi]}=\mu_x^{G^{[\chi]}}$ 
for a.e.~$x\in X$.
We note that~$G^{[\chi]}$ is a Zariski closed unipotent subgroup of~$G$ for any nontrivial~$\chi$. 

\subsection{Product lemma}\label{sec;product} 
The coarse Lyapunov subgroups are dynamically significant for the dynamics of~$A$ as above. 
One instance of this is the following product structure. Given some~$a\in A$ the unstable horospherical
subgroup is the product of coarse Lyapunov subgroups, i.e.
\begin{equation}\label{productofcoarse}
 G_a^+=\prod_{[\chi]:\chi(a)>0}G^{[\chi]},
\end{equation}
for which we fix some enumeration of the coarse Lyapunov weights
and the corresponding subgroups.
Then by \cite[Thm.~8.4]{EinsiedlerKatokNonsplit} (cf.~also \cite[Thm.~9.8]{Pisa-notes}) there exists a set~$X'$ of full measure such that
\begin{equation}\label{leafwiseproductofcoarse}
 \mu_x^{G_a^+}\propto\prod_{[\chi]:\chi(a)>0}\mu_x^{[\chi]}
\end{equation}
for all~$x\in X'$, where the the right hand side
is defined as the push-forward of the product measure
under the product map in \eqref{productofcoarse}.
An immediate consequence of this is that the entropy contributions add up to the full entropy, i.e.
\begin{equation}\label{eq:entropycontributionsaddup}
h_\mu(a)=\sum_{[\chi]:\chi(a)>0}h_\mu\bigl(a,G^{[\chi]}\bigr).
\end{equation}
In the particular cases we consider here the coarse Lyapunov subgroups $\{G^{[\chi]}:\chi(a)>0\}$ commute which each other, but \eqref{leafwiseproductofcoarse} is valid in general whichever order one imposes on the coarse Lyapunovs $\{[\chi]:\chi(a)>0\}$; when the $\{G^{[\chi]}:\chi(a)>0\}$ do not commute this imposes restrictions on the leaf-wise measures, restrictions that are expoited by the \emph{high entropy method} of establishing additional invariance \cite{Einsiedler-Katok,EinsiedlerKatokNonsplit} as 
presented in \cite[Sect.~8]{Pisa-notes}.

While not stated there explicitly, the above also holds conditionally on an $A$-invariant $\sigma$-algebra: i.e. if $\mathcal C$ is an $A$-invariant $\sigma$-algebra then
\[
h_\mu(a|\mathcal C)=\sum_{[\chi]:\chi(a)>0}h_\mu\bigl(a,G^{[\chi]}\bigm|\mathcal C\bigr)
\]
and for $\mu$-a.e.~$x \in X$, for $\mu_x^{\mathcal C}$-a.e. $y$,
\begin{equation}\label{eq:conditional product of coarse}
 \bigl(\mu_x^{\mathcal C}\bigr)^{G_a^+}_y\propto\prod_{[\chi]:\chi(a)>0} \bigl(\mu_x^{\mathcal C}\bigr)_y^{[\chi]}.
\end{equation}

\section{Fubini construction of leaf-wise measures}\label{anewconstruct}

In this section we present an alternative construction for the leaf-wise measures, 
which simplifies some of the
upcoming proofs.

\subsection{Extended space and countably generated \texorpdfstring{$\sigma$}{sigma}-algebra}\label{extended-space}
Since the type of subgroups~$H<G$ that we are interested in studying often act ergodically
on~$X=\Gamma\backslash G$ (e.g.\ with respect to the Haar measure on~$X$), there is no countably 
generated~$\sigma$-algebra whose atoms are~$H$-orbits. Because of this the 
standard construction of the leaf-wise
measures invokes
the existence of $H$-subordinate~$\sigma$-algebras or $\sigma$-rings in $X$
(as done e.g.\ in~\cite{Lindenstrauss-03,Pisa-notes}). 

We present here an alternate construction of the leafwise measures. We note that in this treatment we do not need to assume
that $H$ is unimodular.
Consider the product space~$X\times H$ and define the~$\sigma$-algebra
\[
 \mathcal C_H=\Psi_H^{-1}\mathcal B_X,
\]
where~$\Psi_H:X\times H\to X$ is the map defined 
by~$\Psi_H(x_0,h_0)=h_0^{-1}.x_0$ for~$(x_0,h_0)\in X\times H$
and~$\mathcal B_X$ is 
the Borel~$\sigma$-algebra of~$X$. Since~$\mathcal B_X$ is countably generated, the $\sigma$-algebra~$\mathcal C_H$ is a countably generated algebra of Borel subsets of $X\times H$. 

To describe the atom of a point in~$X\times H$ we define~$\Delta_H=\{(h,h)\mid h\in H\}$
and let~$\Delta_H$ act on~$X\times H$ by setting
\[
 (h,h).(x_0,h_0)=(h.x_0,hh_0)
\]
for all~$x_0\in X$ and~$h_0,h\in H$. For $h \in H$, let $\Delta(h) =(h,h)\in \Delta_H$.
We note that 
$$
\Psi_H\bigl(\Delta(h).(x_0,h_0)\bigr)=
\Psi_H\bigl(h.x_0,hh_0\bigr)=
(hh_0)^{-1}h.x_0=h_0^{-1}.x_0=\Psi_H(x_0,h_0)
$$
for all $x_0\in X$ and $h_0,h\in H$.
Clearly the atom~$[(x_0,h_0)]_{\mathcal C_H}$ of~$(x_0,h_0)\in X\times H$ 
consists of all~$(x_1,h_1)\in X\times H$ with~$h_0^{-1}.x_0 =h_1^{-1}.x_1$.
Setting $h=h_1h_0^{-1}$ gives $\Delta(h).(x_0,h_0)=(x_1,h_1)$.
This shows that
\[
[(x_0,h_0)]_{\mathcal C_H}=\Delta_H.(x_0,h_0).
\]
In other words the~$\sigma$-algebra~$\mathcal C_H$
on~$X\times H$ has atoms that consist of orbits of the group~$\Delta_H \cong H$.

\begin{figure}[ht]
\centering
\includegraphics[width=8.8cm]{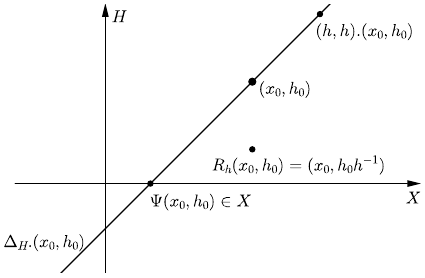}
\caption{We draw $X$ horizontally, $H$ vertically, so that the $\Delta_H$-orbits are represented as diagonal lines. Below we will also use the action $R_h$ of $h\in H$ on $X\times H$ by right translation on the second component.}
\end{figure}

Central to our argument is studying not just the leaf-wise measures but also relative version of this construction. Let~$\fancyZ$ denote any Borel space,  set~$\fancyX=X\times \fancyZ$. We extend the $H$ action to $\fancyX$ by letting~$H$ act on~$X$ as before but trivially on~$\fancyZ$. Consider
the product space~$\fancyX\times H$, and set~$\mathcal C_H=\Psi_H^{-1}\mathcal B_{\fancyX}$, where
\[
 \Psi_H\bigl((x_0,\omega),h_0\bigr)=h_0^{-1}.(x_0,\omega)=(h_0^{-1}.x_0,\omega)
\]
for all~$h_0\in H$ and~$(x_0,\omega)\in\fancyX=X\times \fancyZ$.
We also define~$\Delta(h).((x_0,\omega),h_0)=((h.x_0,\omega),hh_0)$ for all~$(x_0,\omega)\in\fancyX$ and~$h_0,h\in H$, 
so that the atoms for~$\mathcal C_H$ are  of the 
form
\[ 
 \bigl[\bigl((x_0,\omega),h_0\bigr)\bigr]_{\mathcal C_H}=\Delta_H.\bigl((x_0,\omega),h_0\bigr)\subset (X\times\{\omega\})\times H.
\]

\subsection{Extending the measure}
Let now~$\mu$ be a probability measure on~$X$ for which we wish to construct the leaf-wise measures.
We equip the product space $X\times H$ with the product
measure~$\mu\times m_H$ where~$m_H$ 
is the left Haar measure on~$H$.

If~$\phi:X\to \fancyZ$ is a Borel measurable map and we wish to study the leaf-wise measures relative to the 
map~$\phi$, we replace~$X$ by~$\fancyX=X\times \fancyZ$ and replace~$\mu$ by 
the push-forward of~$\mu$ under the map~$\tilde\phi=(\operatorname{id}\times\phi):X\to\fancyX$. 
Below we will always work with this more general setup by considering a probability
measure~$\mu$ on~$\fancyX$. Keeping the notation at its minimum
we will simply write~$x\in\fancyX$, so that~$x$ is a tuple belonging to~$X\times \fancyZ$.

We recall e.g.~from \cite[Ch.~5]{ET-book} that for a Borel probability 
space $(\fancyY,\nu)$ and a~$\sigma$-algebra $\mathcal{C}$ one can define a 
system of conditional measures $y\in \fancyY'\mapsto \nu_y^{\mathcal{C}}$ 
defined on a set $\fancyY'\in\mathcal{C}$
of full measure in a~$\mathcal{C}$-measurable way so that 
$\nu_y^{\mathcal{C}}$ is a probability measure on $\fancyY$ and $\nu=\int_{\fancyY'} \nu_y^{\mathcal{C}}\dee\nu(y)$. 
Moreover, if $\mathcal{C}$ is countably generated one furthermore 
has $\mu_y^{\mathcal{C}}([y]_{\mathcal{C}})=1$, where
$[y]_{\mathcal{C}}=\bigcap_{y\in C\in\mathcal{C}}C$ 
denotes the \emph{atom} of $y\in \fancyY$.
This is also referred to as disintegration of the measure 
$\nu$, and easily 
extends to finite Borel measures on $\fancyY$, where we keep the requirement that the conditional measures 
are probability measures.

We now extend this construction to locally finite measures.
Suppose that $\nu$ is a locally finite 
measure on a~$\sigma$-compact 
locally compact metric space~$\fancyY$ and that ~$\mathcal C$
is a countably generated~$\sigma$-algebra. 
In this case one can define a type of infinite 
conditional measure on the atoms~$[y]_{\mathcal C}$ 
for $y\in\fancyY$ in the following way ---
we will refer to these measures as \emph{fibre measures} in order to distinguish
them from regular conditional measures (which are probability measures by definition). First choose 
a strictly positive continuous integrable function $f_0$ on $\fancyY$ with $\int  f_0 \,d \mu= 1$,
and replace~$\nu$ by the probability measure~$\nu_{\textrm{prob}}$ with~$d\nu_{\textrm{prob}}=f_0d\nu$.
Then find the conditional measures 
$(\nu_{\textrm{prob}})_y^{\mathcal C}$ for~$\nu_{\textrm{prob}}$ 
with respect to $\mathcal{C}$ and define
the fibre measure
\[
 d\nu_y^{\mathcal C}=\frac1{f_0}d(\nu_{\textrm{prob}})_y^{\mathcal C}
\] 
for a.e.~$y\in \fancyY$.
From this definition we see 
that~$\nu_{y_1}^{\mathcal C}=\nu_{y_2}^{\mathcal C}$
whenever~$[y_1]_{\mathcal C}=[y_2]_{\mathcal C}$.
Moreover, using the assumptions on $f_0$ and properties of conditional measures it 
follows that~$\nu_y^{\mathcal C}$
is a locally finite measure on~$\fancyY$. A different choice (say $f_0'$) of such a strictly positive continuous function instead $f_0$ will give a new system of fiber measures (say $\nu_y'$) that a.e.\ are in the same proportionality class as the the original system of fiber measures: whereas $\nu_y^{\mathcal C}$ satisfy that $\int f_0 d\nu^{\mathcal C}_{y}=1$ a.e., $\nu_y'$ satisfy $\int f_0' d\nu_{y}'=1$. Since we will mostly care only about the proportionality class, we keep the dependence of $\nu^{\mathcal C}_{y}$ on the choice of $f_0$ implicit.

If~$K\subseteq \fancyY$ is a subset of finite~$\nu$-measure, then the conditional measure
of~$\nu|_K$ with respect to~$\mathcal C$
can be obtained by taking the normalized restriction of the fibre measures, i.e.
\[
 (\nu|_K)_y^{\mathcal C}\propto \nu_y^{\mathcal C}|_K
\]
for a.e.~$y\in K$. This property characterises the proportionality class of the fibre measures
up to sets of zero measure.

Moreover, if~$T:\fancyY\rightarrow \fancyY$ is measure-preserving, or more generally if~$T_*\nu\propto\nu$,
then
\begin{equation}\label{easytccformula}
 T_*\nu_y^{T^{-1}\mathcal C}\propto \nu_{Ty}^{\mathcal C}
\end{equation}
for a.e.~$y$.

\begin{proposition}\label{proposition about sub sigma algebras}
    If $\mathcal C'\supseteq\mathcal C$ are two countably generated $\sigma$-algebras, then for $\nu$-a.e. $y \in \fancyY$ and for $\nu_y^{\mathcal C}$-a.e.\ $z$ it holds that $
(\nu_y^{\mathcal C})_z^{\mathcal C'} \propto \nu_z^{\mathcal C'}$. In particular, if~$\mathcal C'$ satisfies
that the~$\mathcal C$-atoms are countable unions of~$\mathcal C'$-atoms, then the 
fibre measures~$\nu_y^{\mathcal C'}$ can be obtained by restricting the fibre measures~$\nu_y^{\mathcal C}$
to the atoms for~$\mathcal C'$.
\end{proposition}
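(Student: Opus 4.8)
The plan is to reduce the statement to the classical tower property of conditional \emph{probability} measures and then transport it through the $f_0$-renormalisation used to define the fibre measures. Fix once and for all the strictly positive continuous function $f_0$ with $\int f_0\,d\nu=1$ from the construction, so that $d\nu_z^{\mathcal C'}=f_0^{-1}\,d(\nu_{\textrm{prob}})_z^{\mathcal C'}$ and $d\nu_y^{\mathcal C}=f_0^{-1}\,d(\nu_{\textrm{prob}})_y^{\mathcal C}$, where $d\nu_{\textrm{prob}}=f_0\,d\nu$. The key elementary observation is that $\int f_0\,d\nu_y^{\mathcal C}=(\nu_{\textrm{prob}})_y^{\mathcal C}(\fancyY)=1$ for $\nu$-a.e.\ $y$; hence running the fibre-measure construction for the locally finite measure $\nu_y^{\mathcal C}$ with the \emph{same} $f_0$ involves no renormalisation, $(\nu_y^{\mathcal C})_{\textrm{prob}}=f_0\,\nu_y^{\mathcal C}=(\nu_{\textrm{prob}})_y^{\mathcal C}$, and therefore
\[
d\bigl(\nu_y^{\mathcal C}\bigr)_z^{\mathcal C'}=f_0^{-1}\,d\bigl((\nu_{\textrm{prob}})_y^{\mathcal C}\bigr)_z^{\mathcal C'}.
\]
Comparing with $d\nu_z^{\mathcal C'}=f_0^{-1}\,d(\nu_{\textrm{prob}})_z^{\mathcal C'}$, the proposition reduces to the statement $\bigl((\nu_{\textrm{prob}})_y^{\mathcal C}\bigr)_z^{\mathcal C'}=(\nu_{\textrm{prob}})_z^{\mathcal C'}$ for $\nu$-a.e.\ $y$ and $(\nu_{\textrm{prob}})_y^{\mathcal C}$-a.e.\ $z$, i.e.\ the assertion for the finite measure $\nu_{\textrm{prob}}$ (and in fact gives equality, hence the proportionality claimed).

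For the finite-measure case I would verify that $z\mapsto(\nu_{\textrm{prob}})_z^{\mathcal C'}$ has the defining properties of the disintegration of $(\nu_{\textrm{prob}})_y^{\mathcal C}$ over $\mathcal C'$ and invoke uniqueness of disintegrations. It is $\mathcal C'$-measurable and concentrated on $[z]_{\mathcal C'}\subseteq[z]_{\mathcal C}=[y]_{\mathcal C}$ (the inclusion because $\mathcal C\subseteq\mathcal C'$), so the only point to check is the averaging identity, which for a fixed bounded Borel $f$ follows from the tower property of conditional expectations (all conditioning with respect to $\nu_{\textrm{prob}}$):
\[
\int\Bigl(\int f\,d(\nu_{\textrm{prob}})_z^{\mathcal C'}\Bigr)d(\nu_{\textrm{prob}})_y^{\mathcal C}(z)=\int\E(f\mid\mathcal C')(z)\,d(\nu_{\textrm{prob}})_y^{\mathcal C}(z)=\E\bigl(\E(f\mid\mathcal C')\bigm|\mathcal C\bigr)(y)=\E(f\mid\mathcal C)(y)
\]
for $\nu$-a.e.\ $y$. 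To pass from ``for each $f$'' to ``for all $f$ simultaneously'' I would let $f$ range over a fixed countable family of bounded continuous functions separating locally finite measures on the $\sigma$-compact space $\fancyY$, and use the Fubini property of conditional measures (the relation $\nu_{\textrm{prob}}=\int(\nu_{\textrm{prob}})_y^{\mathcal C}\,d\nu_{\textrm{prob}}(y)$ lets one push $\nu_{\textrm{prob}}$-a.e.\ statements in $z$ down to $(\nu_{\textrm{prob}})_y^{\mathcal C}$-a.e.\ statements for a.e.\ $y$). This step is precisely the content recorded in \cite[Ch.~5]{ET-book}, and in the write-up I would simply cite it.

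For the ``in particular'' clause, assume every $\mathcal C$-atom is a countable union of $\mathcal C'$-atoms. Then $\nu_y^{\mathcal C}$ is concentrated on the countable union of $\mathcal C'$-atoms making up $[y]_{\mathcal C}$, and for a measure concentrated on a countable union of atoms of a sub-$\sigma$-algebra, disintegration over that $\sigma$-algebra is nothing but decomposition into the restrictions to those atoms; concretely, applying the restriction characterisation of fibre measures with $K$ a finite-measure subset of the relevant $\mathcal C'$-atom gives $(\nu_y^{\mathcal C})_z^{\mathcal C'}\propto\nu_y^{\mathcal C}\big|_{[z]_{\mathcal C'}}$. Combining this with the relation $(\nu_y^{\mathcal C})_z^{\mathcal C'}\propto\nu_z^{\mathcal C'}$ just established and specialising to $z=y$ yields $\nu_y^{\mathcal C'}\propto\nu_y^{\mathcal C}\big|_{[y]_{\mathcal C'}}$, as claimed.

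The only genuinely delicate point is the null-set bookkeeping in the middle step: ensuring the ``for a.e.\ $y$, for $\nu_y^{\mathcal C}$-a.e.\ $z$'' statements hold simultaneously across the whole separating family and compatibly with the outer disintegration. This is routine once one works with a fixed countable separating family together with the Fubini property, and introduces no idea beyond standard disintegration theory; the conceptual content is entirely in the reduction of the first paragraph plus the classical tower property.
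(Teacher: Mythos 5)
Your proposal is correct and follows essentially the same route as the paper: the paper's proof simply cites the analogous double-conditioning statement for conditional probability measures \cite[Prop.~5.20]{ET-book} and notes that the second claim reduces to the fact that on a $\mathcal C$-atom the restriction of $\mathcal C'$ is a countable partition, for which fibre measures are just restrictions. Your $f_0$-transport (using $\int f_0\,d\nu_y^{\mathcal C}=1$) and tower-property verification merely spell out the "follows readily" reduction that the paper leaves implicit.
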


\begin{proof}
The first part of the statement follows readily from the analoguous statement for conditional measures, e.g. \cite[Prop.~5.20]{ET-book}. The second statement follows from the first since for countable partitions, the fibre measures are simply the proportionality class of the restrictions.  
\end{proof}

\subsection{Fibre measures for the product}
Combining the above preparations we set~$\fancyY=\fancyX\times H$,
let~$\mu$ be a probability measure on~$\fancyX$, set $\nu=\mu\times m_H$
and obtain the fibre measures
\[
\nu_{(x_0,h_0)}^{\mathcal C_H}= (\mu\times m_H)_{(x_0,h_0)}^{\mathcal C_H}\mbox{ on the atoms }[(x_0,h_0)]_{\mathcal C_H}=\Delta_H.(x_0,h_0)
\] 
of a.e.~$(x_0,h_0)\in \fancyX\times H$. 
It will be helpful to give a more concrete choice of integrable function $f_0$ in the construction of fibre measures: we take $f_0$ to be a strictly positive continuous integrable function on $H$, and implicitly identify it with the corresponding function on $\fancyX\times H$ depending only on the second coordinate.

The 
right action of~$H$ defined by
\[
 R_{h}(x_0,h_0)=(x_0,h_0h^{-1})
\]
for~$(x_0,h_0)\in \fancyX\times H$ and~$h\in H$ commutes with the the left action of $\Delta_H$ and
maps~$\mu\times m_H$ to a multiple of itself.  Moreover, we have $\Psi_H\bigl(R_h(x_0,h_0)\bigr)=h.\Psi_H(x_0,h_0)$ 
for all~$(x_0,h_0)\in\fancyX\times H$, 
which implies in particular~$R_{h}^{-1}\mathcal C_H=\mathcal C_H$. 
This allows us to use~\eqref{easytccformula} to see
that
\begin{equation}\label{goodprop} 
 (R_{h})_*(\mu\times m_H)_{(x_0,h_0)}^{\mathcal C_H}\propto(\mu\times m_H)_{(x_0,h_0h^{-1})}^{\mathcal C_H}
\end{equation}
for any fixed~$h\in H$ and~$\mu\times m_H$-a.e.~$(x_0,h_0)$,
where the null set may depend on~$h$ and a priori is not a product set.
We will also use $R_{h}$ to denote the right action $R_h (h_0) = h_0 h^{-1}$ on $H$; hopefully it should be clear from the context which action is used at any particular point; note that when identifying a function $f$ on $H$ with functions on $\fancyX\times H$ depending only on the second coordinate, both possible interpretations of $f\circ R_h$ coincide.

\subsection{A cleaner version of the fibre measures}
Recall that by construction 
\begin{equation}\label{eq:intisnormalized}
    \int f_0 \, d  (\mu\times m_H)_{(x_0,h_0)}^{\mathcal C_H} =1\qquad\text{a.e.}
\end{equation}
We assume without loss of generality that our metric on $G$ is proper and define
\begin{equation}\label{eq:mindefinition}
f_H(h)=\inf_{h'\in B_1^H}f_0(hh'). 
\end{equation}
By our assumptions on $f_0$ it follows that $ f_H$ is also a strictly positive integrable and continuous function on $H$.

The next lemma provides a cleaner version of \eqref{goodprop}.
Let $p:\fancyX\times H\rightarrow H$ be the projection $p(x,h)=h$ for $(x,h)\in \fancyX\times H$, and recall our notation $\Delta(h)=(h,h)$ for~$h\in H$. In these notations
 \begin{equation}\label{eq;Deltaisinvp}
 \Delta\bigl(R_{h_0}\circ p(x_1,h_1)\bigr).(x_0,h_0)=(x_1,h_1)
 \end{equation}
for all~$(x_1,h_1)\in\Delta_H.(x_0,h_0)$ and~$(x_0,h_0)\in\fancyX\times H$. 

Define
\begin{equation}\label{eq:shorthand12}
  \mu_{(x_0,h_0)}=(R_{h_0}\circ p)_*(\mu\times m_H)_{(x_0,h_0)}^{\mathcal C_H}.
\end{equation}
Since~$p$ is a 
homeomorphism when restricted to the~$\Delta_H$-orbit, the measure $\mu_{(x_0,h_0)}$
is a locally finite measure on~$H$. Moreover, due to~\eqref{eq;Deltaisinvp}
the push-forward of~$\mu_{(x_0,h_0)}$ under the map~$h\in H\mapsto \Delta(h).(x_0,h_0)$
is again~$ (\mu\times m_H)_{(x_0,h_0)}^{\mathcal C_H}$. 

Also note that $p\circ R_h=R_h\circ p$ for $h\in H$ (here $R_h$ denotes the action of $H$ on $\fancyX \times H$ on the left hand side and the action of $H$ on itself by right translations on on the right hand side).
Thus applying $R_{h_0h^{-1}}\circ p$
to both measures in equation~\eqref{goodprop}
we see that equation~\eqref{goodprop}
can then be written more succinctly as
\begin{equation}\label{goodprop2} 
 \mu_{(x_0,h_0)}\propto \mu_{(x_0,h_0h^{-1})},
\end{equation}
which holds for any fixed~$h\in H$ and~$\mu\times m_H$-a.e.~$(x_0,h_0)\in\fancyX\times H$.

\begin{lemma}
\label{fubinilemma}
	There exists a
	measurable conull set $\fancyX'\subset \fancyX$ 
 and a choice of the fibre 
 measures~$(\mu\times m_H)_{(x_0,h_0)}^{\mathcal C_H}$ on~$\fancyX'\times H$ 
 so that:
 \begin{enumerate}
 \item equations \eqref{goodprop} and \eqref{goodprop2} hold for all~$x_0\in\fancyX'$ and all~$h_0,h\in H$;
\item $\int_H f_H d \, \mu_{(x_0,h_0)} <\infty$ for all~$x_0\in\fancyX'$ and all~$h_0\in H$.
\end{enumerate}
\end{lemma}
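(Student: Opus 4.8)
The plan is to upgrade the a.e.\ statements \eqref{goodprop} and \eqref{goodprop2}, which hold for \emph{fixed} $h$ off an $h$-dependent null set, into statements holding \emph{simultaneously} for all $h_0,h\in H$ off a single null set in the $\fancyX$-coordinate, and then to verify the integrability condition (2) on that set. The two statements \eqref{goodprop} and \eqref{goodprop2} are equivalent under $p$, so it suffices to work with $\mu_{(x_0,h_0)}$ and \eqref{goodprop2}.

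First I would exploit the $\Delta_H$-invariance built into the construction. By definition $\mu_{(x_0,h_0)}$ depends only on the atom $[(x_0,h_0)]_{\mathcal C_H}=\Delta_H.(x_0,h_0)$, pushed to $H$ via $R_{h_0}\circ p$; concretely $\mu_{(x_1,h_1)} = R_{h_1 h_0^{-1}}(\mu_{(x_0,h_0)})$ whenever $(x_1,h_1)\in\Delta_H.(x_0,h_0)$, which is an \emph{identity}, valid pointwise with no null set. Hence the only content in \eqref{goodprop2} is the comparison \emph{between distinct $\Delta_H$-orbits}, i.e.\ between $(x_0,h_0)$ and $(x_0,h_0h^{-1})$ which generically lie on different atoms. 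Using Fubini on $\fancyX\times H$ with the measure $\mu\times m_H$: for each fixed $h$, \eqref{goodprop2} holds for $(\mu\times m_H)$-a.e.\ $(x_0,h_0)$; integrating out, and using that the bad set for $h$ and the bad set for $h$ intersected with a fixed fiber $\{x_0\}\times H$ are controlled, one finds a conull $\fancyX_0\subseteq\fancyX$ so that for $x_0\in\fancyX_0$, \eqref{goodprop2} holds for $m_H\times m_H$-a.e.\ $(h_0,h)$. Now I use the cocycle/orbit identity above to promote ``a.e.\ $(h_0,h)$'' to ``all $(h_0,h)$'': given arbitrary $h_0,h$, pick $h_0'$ near $h_0$ with $(h_0',h)$ in the good set, observe $\mu_{(x_0,h_0)} = R_{h_0 (h_0')^{-1}}\mu_{(x_0,h_0')}$ and $\mu_{(x_0,h_0h^{-1})} = R_{h_0(h_0')^{-1}}\mu_{(x_0,h_0'h^{-1})}$ by the identity (same $\Delta_H$-orbit), and $\mu_{(x_0,h_0')}\propto\mu_{(x_0,h_0'h^{-1})}$ by choice; combining gives \eqref{goodprop2} for this $(h_0,h)$. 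This handles part (1), after replacing $\fancyX_0$ by a full-measure subset $\fancyX'$ on which the fibre measures are honestly defined and locally finite.

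For part (2) the point is that $\int f_0\,d(\mu\times m_H)_{(x_0,h_0)}^{\mathcal C_H}=1$ a.e.\ by \eqref{eq:intisnormalized}, and $f_H(h)=\inf_{h'\in B_1^H}f_0(hh')$ was designed precisely so that $\int f_H\,d\mu_{(x_0,h_0)}$ can be dominated by a bounded sum of translates of the normalization integral. Writing $\mu_{(x_0,h_0)}$ as $(R_{h_0}\circ p)_*$ of the fibre measure and unwinding, one has $f_0$-mass one on the atom; covering $H$ (which is $\sigma$-compact) by countably many translates $B_1^H h_n$ and using \eqref{goodprop}/\eqref{goodprop2} to relate the fibre measure over a nearby atom, the $f_H$-integral over $B_1^H h_n$ is at most the $f_0$-integral over a slightly larger set times a constant coming from continuity and positivity of $f_0$ on compacta; local finiteness of $\mu_{(x_0,h_0)}$ then gives finiteness of $\int_H f_H\,d\mu_{(x_0,h_0)}$. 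One checks this for $h_0=e$ on a conull set and then uses \eqref{goodprop2} again to get it for all $h_0\in H$, intersecting with $\fancyX'$.

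The main obstacle I expect is the measure-theoretic bookkeeping in the first step: Fubini only gives, for each fixed $h$, a conull set of $(x_0,h_0)$, and the null sets depend on $h$ and need not be product sets, so some care is needed to extract a \emph{single} conull $\fancyX'\subseteq\fancyX$ that works for all $h$ and all $h_0$ simultaneously. The resolution is exactly the orbit-identity trick sketched above --- continuity of $h_0\mapsto\mu_{(x_0,h_0)}$ along each $\Delta_H$-orbit (indeed it is a strict right translation) lets one use density of the good $(h_0,h)$ and the cocycle relation to fill in every pair --- but making this rigorous, including checking that the chosen fibre measures can be modified on a null set of $\fancyX$ to be genuinely locally finite and to satisfy the stated pointwise equalities, is the technical heart of the lemma.
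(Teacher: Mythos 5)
Your opening observation is the right one --- the fibre measure depends only on the atom, so $\mu_{(x_1,h_1)}=(R_{h_1h_0^{-1}})_*\mu_{(x_0,h_0)}$ holds pointwise whenever $(x_1,h_1)\in\Delta_H.(x_0,h_0)$, and all the actual content of \eqref{goodprop2} lies in comparing \emph{distinct} atoms --- but your promotion step then contradicts it. For a.e.\ $x_0$ the map $h\mapsto h.x_0$ is injective, so the $\Delta_H$-orbit of $(x_0,h_0')$ is $\{(u.x_0,uh_0'):u\in H\}$ and contains $(x_0,h_0)$ only if $u=h_0(h_0')^{-1}$ fixes $x_0$, i.e.\ only if $h_0=h_0'$. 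Hence the claim ``$\mu_{(x_0,h_0)}=R_{h_0(h_0')^{-1}}\mu_{(x_0,h_0')}$ by the identity (same $\Delta_H$-orbit)'' is false: points with the same $\fancyX$-coordinate and different $H$-coordinates lie on different atoms, and the relation between their fibre measures is itself an instance of the a.e.-only statement \eqref{goodprop2} that you are trying to extend. The ``fill in every pair by density plus the cocycle relation'' step is therefore circular, and there is no continuity in $h_0$ available to rescue it. The same defect propagates into your part (2): transferring finiteness of $\int f_H\,d\mu_{(x_0,h_0)}$ from one value of $h_0$ to all of $H$ requires \eqref{goodprop2} for all $(h_0,h)$, which you have not secured; moreover $\{h_0=e\}$ is a null fibre, so \eqref{eq:intisnormalized} gives no information there (one should instead work on $\fancyX\times B_1^H$, where $f_H(u)\le f_0(uh_0)$ by \eqref{eq:mindefinition} bounds the integral by $1$ directly), and ``local finiteness then gives finiteness of $\int_H f_H\,d\mu_{(x_0,h_0)}$'' is not a valid deduction.

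The mechanism the paper uses is genuinely different and is exactly the ingredient missing from your sketch: one does not extend the a.e.\ identity by translating along orbits, but shows that after normalizing by $\int f_H$ the fibre measure is a.e.\ \emph{independent of} $h_0$, and then \emph{redefines} it. Concretely, fix a countable dense subgroup $H'<H$ and a conull set $\fancyY'\subseteq\fancyX\times H$, invariant under right translation by $H'$ in the second coordinate, on which \eqref{goodprop2} holds for all $h\in H'$ and on which $\int f_H\,d\mu_{(x_0,h_0)}<\infty$ holds over $\fancyX\times B_1^H$; ergodicity of the right $H'$-action on $(H,m_H)$ forces the slices $\fancyY'_{x_0}$ to be conull and the normalized measures $\tilde\mu_{(x_0,h_0)}$ to be a.e.\ constant in $h_0$, equal to an average $\mu_{x_0}$ over $B_1^H$ (the dense subgroup also lets one move an arbitrary $h_0$ into $B_1^H$ to get the $f_H$-integrability off $\fancyX\times B_1^H$). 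One then declares the fibre measure at $(x_0,h_0)$ to be $\mu_{x_0}$ for \emph{all} $h_0\in H$ and all $x_0$ in a conull $\fancyX'$; since the new measures do not depend on $h_0$ at all and $R_h$ intertwines the parametrizations of the atoms, \eqref{goodprop} and \eqref{goodprop2} then hold identically and (2) holds everywhere. Your Fubini reduction to ``a.e.\ $(h_0,h)$ for a.e.\ $x_0$'' is a reasonable first step, but without this constancy-and-redefinition argument (or an equivalent) the lemma as stated is not reached.
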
 

\begin{proof}
Combining \eqref{eq:intisnormalized} and 
our definition \eqref{eq:shorthand12} we have
\[
 \int_H f_0(hh_0) d \mu_{(x_0,h_0)}(h)=1,
\]
for a.e.~$(x_0,h_0)$. For $f_H$ as in \eqref{eq:mindefinition} we therefore obtain
that
\begin{equation}\label{eq:f1intable}
    \int_Hf_Hd \mu_{(x_0,h_0)}<\infty
\end{equation}
for a.e.~$(x_0,h_0)\in \fancyX\times B_1^H$.

Let $H'<H$ be a countable dense subgroup. 
	Then there exists a measurable conull 
	set~$\fancyY'\subseteq\fancyX\times H$ that is invariant
	under the right action of~$H'$ on the second factor
  and such that~\eqref{goodprop2} holds for all~$(x_0,h_0)\in \fancyY'$ and $h\in H'$
  and that $\eqref{eq:f1intable}$
  holds for all $(x_0,h_0)\in \fancyY'\cap(\fancyX\times B_1^H)$. 
	Recall that the right action of~$H'$ on~$H$ is ergodic with respect to~$m_H$. Hence the~$H'$-invariant
	sets~$\fancyY_{x_0}'=\{h_0\in H\mid (x_0,h_0)\in \fancyY'\}$ must be null or 
	conull for every~$x_0\in \fancyX$. 
	Using Fubini's theorem we may remove a null set from~$\fancyY'$ and assume that~$\fancyY_{x_0}'$
	is conull for any~$(x_0,h_0)\in \fancyY'$.

For $(x_0,h_0)\in \fancyY'$ there exists now some $h\in H'$ so that $h_0h^{-1}\in B_1^H$. Hence we may use \eqref{goodprop2} and \eqref{eq:f1intable} at $(x_0,h_0h^{-1})$ to see that \eqref{eq:f1intable}
also holds at any $(x_0,h_0)\in \fancyY'$. 
Therefore it makes sense to define the measure
\[
 \tilde{\mu}_{(x_0,h_0)}=\left(\int f_Hd\mu_{(x_0,h_0)}\right)^{-1}\mu_{(x_0,h_0)}
\]
for all $(x_0,h_0)\in \fancyY'$.
We note that
$(x_0,h_0)\in \fancyY'\mapsto \tilde{\mu}_{(x_0,h_0)}$
takes values in a compact metric space
(consisting of all measures $\nu$ with $\int_Hf_H d\nu\leq 1$ and equipped with the
weak* topology). Using the ergodicity of the right
action of $H'$ on $H$ again we see that $h_0\in H\mapsto\tilde{\mu}_{(x_0,h_0)}$
is almost surely constant for any fixed $x_0$. Equivalently we have that $\tilde{\mu}_{(x_0,h_0)}$ equals 
\[
 \mu_{x_0}=m_H(B_1^H)^{-1}\int_{B_1^H}\tilde{\mu}_{(x_0,h)}dm_H(h) 
\]
for a.e.~$(x_0,h_0)\in \fancyY'$. 

Using Fubini's theorem we can fix some $\tilde{h}\in H$ so that $\fancyX'=\{x\mid (x,\tilde{h})\in \fancyY'\}$ is conull in $\fancyX$.
Summarizing the above we have that $\mu_{(x_0,h_0)}\propto\tilde{\mu}_{(x_0,h_0)}$ for all $(x_0,h_0)\in \fancyY'$,
and that $\tilde{\mu}_{(x_0,h_0)}=\mu_{x_0}$
for a.e.~$(x_0,h_0)\in \fancyY'$. 
This allows us to replace the original $\mu_{(x_0,h_0)}$ defined for $(x_0,h_0)\in \fancyY'$
by $\mu_{x_0}$ defined for $(x_0,h_0)\in\fancyX'\times H$.
\end{proof}

\subsection{Completing the construction}
Lemma~\ref{fubinilemma} is saying that, up to proportionality,
the fibre measure depends on the second coordinate~$h\in H$ only in terms of a shift
of the position of the measure. We can now use this property to obtain an
alternative construction of the leaf-wise measure.

\begin{proposition}[Fubini-construction of leaf-wise measures]\label{straighten}
 Let~$H<G$,~$\fancyX$,~$\mathcal C_H$, and~$\mu$ be as above. 
Assume in addition that for~$\mu$-a.e.~$x\in H$ the map~$h\in H\mapsto h.x$
is injective.
Let~$\fancyX'\subset \fancyX$ and $\mu_{(x,h)}$ for $(x,h)\in\fancyX'\times H$ be as in Lemma~\ref{fubinilemma}.
Then the measures defined by
\begin{equation}\label{eq;fubiniformula}
\mu_x^H=\mu_{(x,e)}
\end{equation}
for all~$x\in \fancyX'$ satisfy the characterising properties of leaf-wise measures
discussed in \S\ref{characterizingprops}.
\end{proposition}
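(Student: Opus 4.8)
The plan is to verify that the family $\mu_x^H = \mu_{(x,e)}$, $x\in\fancyX'$, satisfies the two characterizing properties of leaf-wise measures recalled in \S\ref{characterizingprops}: the compatibility/shift formula \eqref{shift-formula}, and the statement that for any $Y\subset X$ of finite measure and any countably generated $H$-subordinate $\sigma$-algebra $\mathcal A$ on $Y$ with atom shapes $V_y$, one has $[\mu_y^{\mathcal A}] = [\mu_y^H|_{V_y}].y$ for a.e.\ $y$. Since the leaf-wise measure is uniquely determined (up to proportionality and null sets) by the latter property, establishing both for our family $\mu_{(x,e)}$ finishes the proof.

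First I would dispose of the shift formula. By Lemma~\ref{fubinilemma}, equation \eqref{goodprop2} holds for all $x_0\in\fancyX'$ and all $h_0,h\in H$, which says $\mu_{(x_0,h_0)}\propto\mu_{(x_0,h_0h^{-1})}$; equivalently the proportionality class $[\mu_{(x_0,h_0)}]$ is independent of $h_0$. Unwinding the definition \eqref{eq:shorthand12} together with \eqref{eq;Deltaisinvp}: the pushforward of $\mu_{(x_0,h_0)}$ under $h\mapsto\Delta(h).(x_0,h_0)$ is $(\mu\times m_H)^{\mathcal C_H}_{(x_0,h_0)}$, and the atom $\Delta_H.(x_0,h_0)$ is exactly $\Delta_H.(h.x_0,\text{something})$, so computing $\mu_{(h.x_0,e)}$ versus $(R_h)_*\mu_{(x_0,e)}$ reduces precisely to the $h_0$-independence already recorded. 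More carefully, one chases through: $\Psi_H$ is constant on $\Delta_H$-orbits, $R_h$ satisfies $\Psi_H\circ R_h = h.\Psi_H$, and $p\circ R_h = R_h\circ p$, so that for a point $(x_0,e)$ and its image $(h.x_0,h)$ under $\Delta(h)$ — which lies in the same $\mathcal C_H$-atom — the two constructions of fibre measures agree up to the left-Haar modular factor absorbed in the proportionality class. This yields $[\mu^H_{h.x_0}.h] = [\mu^H_{x_0}]$, i.e.\ \eqref{shift-formula}.

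For the second property, which is the substantive step, I would argue as follows. Fix $Y$ and an $H$-subordinate countably generated $\sigma$-algebra $\mathcal A$ on $Y$, with atoms $[y]_{\mathcal A} = V_y.y$. The key is to relate $\mathcal A$ (living on $X$) to the $\sigma$-algebra $\mathcal C_H$ (living on $\fancyX\times H$) via the map $\Psi_H$ and the section $x\mapsto(x,e)$. Concretely, pulling $\mathcal A$ back through $\Psi_H$ and intersecting with a slice of the form $\fancyX\times B^H_r$ produces a $\sigma$-algebra on $\fancyX\times H$ whose atoms, restricted to the image of the section, correspond to the sets $V_y.y$; the conditional measures of $\mu\times m_H$ with respect to this combined $\sigma$-algebra then disintegrate, by the standard relation between conditional measures of a product and Fubini (and by Proposition~\ref{proposition about sub sigma algebras} for passing between the coarser $\sigma$-algebra $\mathcal C_H$ and its refinement by $\mathcal A$), into the restriction $\mu_{(y,e)}|_{V_y}$ pushed forward by the orbit map $h\mapsto h.y$. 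Since $(\nu|_K)^{\mathcal C}_y\propto\nu^{\mathcal C}_y|_K$ for sets $K$ of finite measure, restricting to $\fancyX\times B^H_r$ only changes things by the harmless normalizing restriction, and one reads off $[\mu^{\mathcal A}_y] = [\mu^H_y|_{V_y}].y$.

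The main obstacle I anticipate is precisely this last identification: matching the abstract disintegration of $\mu\times m_H$ along $\mathcal C_H$ (refined by the pullback of $\mathcal A$) against the classically-constructed conditional measures $\mu^{\mathcal A}_y$ on $X$ itself. One has to be careful that the map $\Psi_H$ restricted to a slice $\fancyX\times B^H_r$ is, on each $\Delta_H$-atom, a bijection onto a piece of an $H$-orbit in $X$, so that pushing forward behaves well; that the atom shapes $V_y$ are handled uniformly in $y$ (using the density of a countable subgroup $H'<H$ as in the proof of Lemma~\ref{fubinilemma} to control null sets); and that the proportionality constants and the auxiliary function $f_H$ do not interfere. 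All of this is routine measure theory of the type carried out in \cite[Ch.~6]{Lindenstrauss-03} or \cite[\S 6]{Pisa-notes}, but the bookkeeping between the two spaces $X$ and $\fancyX\times H$ is where the real work lies; the rest is formal manipulation of the relations \eqref{goodprop}, \eqref{goodprop2}, \eqref{easytccformula}, and \eqref{eq;Deltaisinvp} already established above.
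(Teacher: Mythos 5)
Your handling of the compatibility formula \eqref{shift-formula} is fine and coincides with the paper's (which records it as an immediate corollary of \eqref{goodprop2} before the proof proper). The genuine gap is in the main step, and it is exactly the step you defer as ``routine bookkeeping''. The $\sigma$-algebra you propose does not do the job: since $\mathcal A\subseteq\mathcal B_X$, the pullback $\Psi_H^{-1}\mathcal A\subseteq\Psi_H^{-1}\mathcal B_{\fancyX}=\mathcal C_H$ is \emph{coarser} than $\mathcal C_H$ -- its atoms are unions of whole $\Delta_H$-orbits -- so ``refining $\mathcal C_H$ by the pullback of $\mathcal A$ through $\Psi_H$'' changes nothing, and these atoms do not cut a $\Delta_H$-orbit into pieces of shape $V_y$. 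Intersecting with a slice $\fancyX\times B_r^H$ cuts each orbit into pieces of shape $B_r^H$, not $V_y$, and the section $\{(x,e)\}$ is a $\mu\times m_H$-null set, so ``atoms restricted to the image of the section'' carry no disintegration information. Consequently the identification of the fibre measures of $\mu\times m_H$ with the conditional measures $\mu_y^{\mathcal A}$ on $\fancyX$ -- which \emph{is} the content of the proposition -- is not obtained from your construction by Fubini together with Proposition~\ref{proposition about sub sigma algebras}; the idea that makes it work is missing.

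What is actually needed (and what the paper does) is to pull $\mathcal A$ back through the projection to the \emph{first} coordinate, i.e.\ $\mathcal A_{\times H}=\{A\times H:A\in\mathcal A\}$, and to form the join $\mathcal A_H=\mathcal C_H\vee\mathcal A_{\times H}$, whose atoms are precisely $\Delta(V_y).(y,h)$. Its fibre measures are then computed in two ways: once relative to $\mathcal C_H$ via the second part of Proposition~\ref{proposition about sub sigma algebras} (each $\mathcal C_H$-atom contains only countably many $\mathcal A_H$-atoms), giving the restriction $(\mu\times m_H)^{\mathcal C_H}_{(y,h)}|_{\Delta(V_y).(y,h)}$, i.e.\ essentially $\mu_y^H|_{V_y}$; and once relative to $\mathcal A_{\times H}$, whose fibre measures are $\mu_{y_0}^{\mathcal A}\times m_H$, using the straightening map $\Xi\colon\Delta(h').(y_0,h)\mapsto(h'.y_0,h)$, which preserves that fibre measure and carries $\mathcal A_H$-atoms to atoms of $\mathcal A_{\mathrm{flat}}=\mathcal A\times\mathcal B_H$, whose fibre measures are $\mu_y^{\mathcal A}\times\delta_h$. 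Equating the two computations yields $[\mu_y^{\mathcal A}]=[\mu_y^H|_{V_y}].y$. This double conditioning with the four $\sigma$-algebras and the map $\Xi$ is the substantive argument your proposal lacks; without it (or an equivalent device) the proof is incomplete.
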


We note that this definition of the leaf-wise measure has the 
compatibility formula~\eqref{shift-formula} as an immediate corollary:
If~$x,h.x\in\fancyX'$, then~$(x,e)$ and~$(h,h).(x,e)=(h.x,h)$ are in the same~$\Delta_H$-orbit
and as such have proportional fibre measures. 
Applying the push forward 
under~$p$ to this fibre
measure we then obtain using \eqref{eq;fubiniformula},
\eqref{goodprop2}, and \eqref{eq:shorthand12} that
\begin{align*}
 \mu_{h.x}^H&=\mu_{(h.x,e)}\propto\mu_{(h.x,h)}\propto(R_h\circ p)_*(\mu\times m_H)_{(h.x,h)}^{\mathcal C_H}\\
 &\propto (R_h\circ p)_*(\mu\times m_H)_{(x,e)}^{\mathcal C_H}
 \propto\mu_{(x,e)}h^{-1}=\mu_x^Hh^{-1}.
\end{align*}

\begin{proof} [Proof of Proposition~\ref{straighten}]
	Suppose~$Y\subset \fancyX$ is measurable  and 
	that~$\mathcal A$ is an~$H$-subordinate~$\sigma$-algebra on~$Y$
	as in~\S\ref{characterizingprops}. We may assume without loss of generality that
	the injectivity requirement for~$\mu$ in the proposition holds for all~$y\in Y$. 
	Recall that~$V_y$ denotes the ``shape of the atom'' for~$y\in Y$
	in the sense that~$V_y.y=[y]_{\mathcal A}$ and that~$V_y\subset H$ is assumed to be 
	a bounded neighborhood of the identity in $H$ for all $y \in Y$.

    We now consider four $\sigma$-algebras of subsets of $Y \times H$. The first $\sigma$-algebra is the restriction of $\mathcal C_H$ to ${Y \times H}$, for simplicity again denoted by $\mathcal{C}_H$, whose atoms are given by $\Delta_H$-orbits intersected with $Y\times H$. 
    The second is $\mathcal A_{\times H} = \{A \times H : A \in \mathcal A\}$, whose atoms are 
    \[
    [y]_\mathcal{A}\times H=(V_{y}.y)\times H
    \]
    for $y\in Y$. The third is $\mathcal A_{\mathrm{flat}} = \mathcal A \times \mathcal B_H$, whose atoms are
    \[
     [y]_{\mathcal{A}}\times\{h\}=(V_y.y)\times\{h\}
    \]
    for $(y,h)\in Y\times H$. And finally $\mathcal A_H=\mathcal C_H \vee \mathcal A_{\times H}$, whose atoms are  \begin{equation}\label{eq:fourthatom}
    [y,h]_{\mathcal{A}_H}=\Delta(V_{y}).(y,h)
    \end{equation}
    for $(y,h)\in Y\times H$.

    Our aim is to calculate the fibre measures
    $(\mu\times m_H)^{\mathcal A_H}_{(y,h)}$ on the atoms in \eqref{eq:fourthatom} for a.e.~$(y,h)\in Y\times H$ using Proposition \ref{proposition about sub sigma algebras} in two different ways.
    The first application is quite straight-forward:
    Recall that the group $H$ is second countable and 
    that for any $y \in Y$ the set $V _ y$ contains an open neighborhood. Hence each atom of $\mathcal{C}_H$ (here denoting a $\sigma$-algebra of subsets of $Y \times H$) contains only countably many atoms of $\mathcal{A}_H$. By the second part of Proposition~\ref{proposition about sub sigma algebras} and \eqref{eq:fourthatom} it follows that
\begin{equation}\label{eq:firstdoublecond}
(\mu \times m _ H)^{\mathcal{A} _ {H}}_{(y,h)} \propto (\mu \times m _ H)^{\mathcal{C}_H}_{(y,h)} |_ {\Delta (V _ y).(y,h)}
\end{equation}
for a.e.\ $(y,h)\in Y\times H$. 

For the second application of Proposition \ref{proposition about sub sigma algebras} we wish to use the $\sigma$-algebras $\mathcal A_{\times H}$ and $\mathcal A_{\mathrm{flat}}$. 
For this notice that the definition of fibre measures implies 
for $\mu\times m_H$ a.e.~$(y_0,h_0)\in Y\times H$ that
\[
(\mu \times m _ H) ^ {\mathcal{A} _ {\times H}}_{(y_0,h_0)} \propto \mu ^ {\mathcal{A}} _ {y_0} \times m _ H. 
\]
  Fix such a $(y_0,h_0) \in Y \times H$. We now argue
  conditionally on the atom $[y _ 0] _ {\mathcal{A}} \times H=(V_{y_0}.y_0)\times H$ and with respect to this fibre measure. Then the map 
 \[
  \Xi: (h'.y_0,h'h)=\Delta(h').(y_0,h) \mapsto (h'y_0, h)
 \]
  is a map from the atom $[y _ 0] _ {\mathcal{A}} \times H$ to itself preserving $(\mu \times m _ H) ^ {\mathcal{A} _ {\times H}}_{(y_0,h_0)}$. Moreover, on $[y _ 0] _ {\mathcal{A}} \times H $ our map $\Xi$ maps atoms of the $\sigma$-algebra  $\mathcal{A} _ H$
  precisely to atoms of $\mathcal{A}_{\mathrm{flat}}$ -- equivalently the preimage of $\mathcal{A}_{\mathrm{flat}}$ equals $\mathcal{A}_H$. 
Finally also notice that
\[
(\mu \times m _ H) ^ {\mathcal{A}_{\mathrm{flat}}} _ {(y,h)} \propto \mu ^ \mathcal{A} _ y \times \delta _ {h}
\]
for a.e.~$(y,h)\in Y\times H$, where $\delta_{h}$ denotes the delta measure at $h$.  By \cite[Cor.~5.24]{ET-book} and Proposition~\ref{proposition about sub sigma algebras} it follows a.s.\ that for $(\mu \times m _ H) ^ {\mathcal{A} _ {\times H}}_{(y_0,h_0)}$-a.e.\ $(y, h)=(h'.y_0,h)$ with $h'\in V_{y_0}$ we have
\begin{equation}\label{eq:seconddoublecond}
\Xi_* ((\mu \times m _ H) _ {(y,h)} ^ {\mathcal{A}_H}) = 
\mu ^ \mathcal{A} _ y \times \delta _ {(h')^{-1}h}
.\end{equation}

By construction of $\mu_y^H=\mu_{(y,\cdot)}$ for $y\in \fancyX'\cap Y$ in and before Lemma \ref{fubinilemma}, the fibre measure on the right of \eqref{eq:firstdoublecond} can be obtained by restricting $\mu_y^H$
to $V_y$ and pushing it forward under the map $h_1\in V_y\mapsto \Delta(h_1).(y,h)$. By \eqref{eq:firstdoublecond} what we obtain is proportional to 
$(\mu \times m _ H) _ {(y,h)} ^ {\mathcal{A}_H}$. Applying the push forward for $\Xi$ to the latter gives a measure proportional to $\mu ^ \mathcal{A} _ y \times \delta _ {(h')^{-1}h}$ by \eqref{eq:seconddoublecond}. Finally applying
the projection to $\fancyX$ we obtain a measure proportional to $\mu ^ \mathcal{A} _ y$.
Composing these maps we obtain that the push-forward 
of $\mu_y^H|_{V_y}$ under the map $h\in V_y\mapsto h.y\in [y]_{\mathcal {A}}$ equals $\mu_y^{\mathcal A}$ up to proportionality. 
\end{proof}

\section{Recurrence along~\texorpdfstring{$U$}{U}}\label{recurrencesection}

Poincar\'e recurrence is one of the basic results of measure preserving dynamics on probability spaces. 
In 1993 Boshernitzan \cite{Boshernitzan}
refined this basic principle by showing recurrence at a polynomial rate 
when the underlying space has finite Hausdorff dimension.

The study of recurrence along directions that are not known to preserve the measure class
played a crucial role in \cite{Lindenstrauss-03}, but there no finer information about the recurrence
was needed. 
Also in \cite{Einsiedler-Lindenstrauss-joinings-2} this recurrence property was used in an essential way along the unipotent
coarse Lyapunov subgroup but again the speed of the recurrence did not play any role. 

\subsection{\texorpdfstring{$U$}{U}-returns to sets of small diameter}\label{sec:ureturns}
We combine here
these two lines of thoughts and study the speed of recurrence along coarse Lyapunov subgroups
and relate this to the entropy of an element that commutes with the given coarse Lyapunov subgroup. 
For this it will be convenient to define the notion of returns to subsets of locally homogeneous spaces
of ``small diameter''.

Let~$X=\Gamma\backslash G$
which we endow with the metric induced from a left-invariant metric on~$G$.
We also use this metric on subgroups of $G$ and assume that $B_r^G$ has compact closure for any $r>0$.
Recall that this implies that for every~$x_0\in X$
there exists an \emph{injectivity radius}~$r>0$ for which the map~$g\in B_r^G\mapsto x_0g\in X$ is an isometry. 
We say that a 
$Y\subset X$ has  \emph{small diameter} 
if there exists $x_0\in X$ and $r\in(0,\frac14)$
with $Y\subseteq B_r^X(x_0)$ so that~$10r$ is an injectivity radius at~$x_0$. Note
that~$9r$ is then an injectivity radius for all points~$x\in B_{r}^X(x_0)$ and moreover $Y\subseteq B_{2r}^X(x)$.

Given a subset~$Y$ of small diameter,
a subgroup~$U<G$, and subset~$Q\subseteq U$ with compact closure, 
a~\emph{$U$-return within~$Q$ of~$x\in Y$ to~$Y$}
is an element~$u\in Q\setminus B_{2r}^U$ such that~$u.x\in Y$. 

\subsection{A simpler version}

Let us start as a warm-up with a simpler version of the polynomial recurrence phenomenon along
a unipotent subgroup with positive entropy contribution.
For this we assume that the metric on $G$ has the property that $B_1^G$
(and hence all compact subsets of $G$ and $X$) have finite upper box dimension\footnote{Box dimension is defined only on bounded sets, hence the restriction to $B_1^G$. Any other reasonable bounded set would have been equally good.}.
 
\begin{proposition}[Recurrence at polynomial rate]\label{simplerrecurrence}
	Let~$X=\Gamma\backslash G$ be as in Section~\ref{sec: the space} 
	and~$A<G$ be of class $\mathcal{A}'$. 
	Let~$\mu$ be an~$A$-invariant and ergodic probability
	measure on~$X$ and~$X'\subseteq X$ be measurable with positive measure. 
	Let~$a\in A$ and~$U<G_a^+$ be an~$A$-normalized unipotent subgroup with positive entropy contribution for~$a$.
	Then there exists some~$\kappa>0$ such that for~$\mu$-a.e.~$x\in X'$ and every sufficiently large~$n$
	there exists a~$U$-return within~$a^{n}B_1^Ua^{-n}$ of~$x$ to~$X'\cap xB_{e^{-\kappa n}}$.
\end{proposition}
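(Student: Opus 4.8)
The plan is to combine three ingredients: (1) the positive entropy contribution of $U$ for $a$ translates, via the volume-decay formula~\eqref{volume decay}, into the statement that the leaf-wise measures $\mu_x^U$ grow at a definite exponential rate under conjugation by $a^n$; (2) a Boshernitzan-style pigeonhole argument using the finite upper box dimension of $X$; and (3) the equivariance~\eqref{eq:equivariance} together with the description of conditional measures via $U$-subordinate $\sigma$-algebras from \S\ref{characterizingprops}. First I would reduce to a set of positive measure on which the relevant convergences are uniform: by the definition of $h_\mu(a,U)>0$ and Egorov's theorem, there is a compact $K\subseteq X'$ of positive measure, a constant $c>0$, and $n_0$ so that for all $x\in K$ and $n\geq n_0$ one has $\mu_x^U(\theta_a^n(B_1^U))\geq e^{cn}\mu_x^U(B_1^U)$, and also $\mu_x^U(B_{2r}^U)$ is comparable to a fixed positive constant times $\mu_x^U(B_1^U)$ (shrinking $K$ and fixing the small-diameter scale $r$ if necessary). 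We may also assume $K$ has small diameter, contained in $B_r^X(x_0)$ with $10r$ an injectivity radius.

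Next I would set up the counting. Fix $x\in K$ and a large $n$. Using a $U$-subordinate $\sigma$-algebra $\mathcal A$ whose atoms have shape containing $a^n B_1^U a^{-n}$ (which is legitimate since this set has compact closure and the conjugates of $B_1^U$ exhaust $U$), the conditional measure $\mu_x^{\mathcal A}$ is, up to normalization, the pushforward of $\mu_x^U|_{a^n B_1^U a^{-n}}$ under the orbit map $u\mapsto u.x$. Now cover $B_r^X(x_0)$ by $N(\delta)$ balls of radius $\delta=e^{-\kappa n}$; since $X$ has finite upper box dimension $D$, $N(\delta)\ll \delta^{-D}=e^{\kappa D n}$. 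By the pigeonhole principle, if the $U$-orbit segment $\{u.x: u\in a^n B_1^U a^{-n}\}$ is given $\mu_x^U$-mass at least $M$ (which by step (1) is $\geq e^{cn}$ times a fixed constant, while the total mass is normalized to $1$ on a larger atom — here one must be careful with the normalization, comparing $\mu_x^U(a^nB_1^Ua^{-n})$ to $\mu_x^U(B_1^U)$), then as long as $\kappa D < c$ there must be two distinct "typical" points $u_1.x, u_2.x$ in the same $\delta$-ball. Then $u=u_2 u_1^{-1}$ satisfies $u.(u_1.x)\in B_{2\delta}^X(u_1.x)$, i.e.\ (after relabelling $u_1.x$ as the base point, which is again in $X'$ up to the typicality set) $u$ is a $U$-return of a nearby point; a standard argument using the injectivity radius and the $A$-normalization (so that $u\in a^n B_1^U a^{-n}$ as well, the difference of two elements of that set lying in a bounded multiple of it) converts this into a genuine $U$-return within $a^n B_1^U a^{-n}$ of a $\mu$-typical $x$ to $X'\cap xB_{e^{-\kappa n}}$. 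To handle the fact that the two points need not include $x$ itself, I would instead argue along the $U$-leaf through $x$: the conditional-measure description guarantees that $\mu_x^{\mathcal A}$-a.e.\ point is in $X'$, so one applies the pigeonhole to the conditional measure and then invokes Poincar\'e-type recurrence / Fubini over the $\mathcal A$-atoms to conclude the statement holds for $\mu$-a.e.\ $x\in X'$ (not merely a.e.\ point on a.e.\ leaf), for all $n$ large depending on $x$.

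The main obstacle I anticipate is the bookkeeping around normalization of the leaf-wise measure and the passage from "two typical points in a small ball" to "a $U$-return of the original point $x$". The leaf-wise measures are only defined up to proportionality, so the statement "$\mu_x^U(a^n B_1^U a^{-n})\geq e^{cn}$" is meaningless in isolation; one must phrase the exponential growth as a ratio $\mu_x^U(a^n B_1^U a^{-n})/\mu_x^U(B_1^U)\geq e^{cn}$ and carry this ratio through the conditional-measure normalization, which forces the covering argument to be run relative to a reference atom of fixed shape. Secondly, the pigeonhole naturally produces a return $u$ of a \emph{typical} point $u_1.x$ rather than of $x$ itself; converting this to a statement about $\mu$-a.e.\ $x$ requires using that the typicality set has conditional measure $1$ on a.e.\ atom together with a measurability/Fubini argument, and one must check the size constraint $u\in a^n B_1^U a^{-n}$ survives (this uses unipotence: $\theta_{a^{-n}}(u_1),\theta_{a^{-n}}(u_2)\in B_1^U$ implies $\theta_{a^{-n}}(u_2u_1^{-1})$ lies in a fixed bounded set $B_{C}^U$, and one absorbs the constant $C$ by slightly enlarging $B_1^U$ or shrinking $\kappa$ and increasing $n_0$). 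Finally, choosing $\kappa$: any $0<\kappa<c/D$ works, so $\kappa$ depends on $\dim X$ (through $D$) and on $h_\mu(a,U)$ (through $c$), matching the statement in the proposition.
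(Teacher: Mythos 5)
Your overall skeleton is the right one and matches the paper's: phrase the entropy contribution as a ratio $\mu_x^U(a^nB_1^Ua^{-n})/\mu_x^U(B_1^U)\geq e^{cn}$, make it uniform on a large compact set, cover by $\ll e^{\kappa D n}$ balls of radius $e^{-\kappa n}$ using finite box dimension, take $\kappa$ small compared to $c/D$, and finish with a Borel--Cantelli-type argument. However, the central counting step --- pigeonholing the $\mu_x^U$-mass of the single orbit segment $\{u.x: u\in a^nB_1^Ua^{-n}\}$ over the cover --- has genuine gaps. First, you have no lower bound on how much of that mass lands in the covered region at all: at a fixed $x$ nothing prevents essentially all of the orbit-segment mass from falling outside $B_r^X(x_0)$ (or outside any fixed compact set of measure $1-\epsilon$), and establishing such a bound is itself a nontrivial statement about leafwise measures that you neither prove nor cite. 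Second, even if some covering ball receives mass $\geq e^{(c-\kappa D)n}\mu_x^U(B_1^U)$, this does not yield two returns separated by more than $2r$: all of that mass could sit inside a single translate $B_{2r}^U u_1$, and there is no a priori upper bound on $\mu_x^U(B_{2r}^Uu_1)$ relative to $\mu_x^U(B_1^U)$ --- the compatibility formula \eqref{shift-formula} only relates it to $\mu_{u_1.x}^U(B_{2r}^U)$ up to an uncontrolled proportionality constant, and leafwise measures have no doubling or anti-concentration property you can invoke. Third, your transfer from ``a return of a typical point $u_1.x$'' to ``for $\mu$-a.e.\ $x\in X'$ and \emph{all} sufficiently large $n$'' needs a bound, summable in $n$, on the measure of the set of points admitting no return; ``Fubini over the atoms'' as stated does not produce one.

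The paper avoids all three problems by reorganizing the count: instead of locating two close points on one orbit, Lemma~\ref{rec-to-a-set} bounds directly the measure of the set $Y^{\mathrm{nr}}$ of points of a small-diameter set $Y$ having no $U$-return within $QQ^{-1}$, where $Q=a^nB_{1/2}^Ua^{-n}$. Working with the conditional measures of $\mu\times m_U$ on $X\times Q^2$ with respect to $\mathcal C_U$, one observes that for a no-return point $y_0$ the whole return set inside the atom lies in $B_{2r}^U$, so the relevant estimate is $\mu_{y_0}^U(B_{2r}^U)\leq\mu_{y_0}^U(B_1^U)\leq T^{-1}\mu_{y_0}^U(Q)$ --- a ratio at the \emph{same} base point, so the normalization ambiguity is irrelevant and no control of mass at translated windows, nor of how much mass returns to the covered region, is needed. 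This gives $\mu(Y^{\mathrm{nr}})\ll T^{-1}$ with $T=e^{\alpha n}$; summing over the $\leq e^{\frac12\alpha n}$ balls of the cover of $X_\epsilon$ and applying Borel--Cantelli yields the proposition, with the return automatically landing in $X'$ because one intersects each ball with $X_\epsilon\cap X'$ before applying the lemma. To make your proposal work you would either have to supply the missing mass-distribution and anti-concentration inputs, or (more efficiently) recast the pigeonhole as a bound on the measure of the no-return set, which is exactly the paper's route.
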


As the set~$a^nB_1^Ua^{-n}$ expands at an exponential rate in~$n$ and we also 
consider the balls of exponentially shrinking radius, the proposition 
	amounts to a polynomial rate of recurrence, where~$\kappa$ controls 
	the degree of the polynomial recurrence. As we will see~$\kappa$
	depends on the entropy contribution of~$U$
	and the dimension of~$X$.

For the proof we will apply the following infinitely often (see also~\cite[\S4]{Lindenstrauss-03}).

\begin{lemma}[Recurrence to a set]\label{rec-to-a-set}
Let~$X=\Gamma\backslash G$ be as in Section~\ref{sec: the space}, 
let~$\mu$ be a probability measure on~$X$, 
and let~$U<G$ be a closed connected subgroup.  
Fix some~$T\geq 1$ and some $F_\sigma$-subset~$Q\subset U$ 
with compact closure. 
Let~$Y$ be an~$F_\sigma$-subset
of small diameter (with corresponding $r>0$ as in the definition of small diameter in~\S\ref{sec:ureturns}) such that
\begin{equation}\label{Q measure inequality}
    \mu_x^U\left(Q\right)\geq T \mu_x^U(B_1^U)
\end{equation} 
for~$x\in Y$. 
Then the set of elements
	\[
Y^{\mathrm{nr}}=
\bigl\{z\in Y\mid (QQ^{-1}\setminus B_{2r}^U).z\cap Y=
\emptyset\bigr\}
	\]
without~$U$-returns within~$QQ^{-1}$ to~$Y$	has measure
\[
\mu(Y^{\mathrm{nr}})\leq T^{-1}\frac{m_U(Q^2)}{m_U(Q)}.
\]
\end{lemma}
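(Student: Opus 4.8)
The plan is to estimate the measure of $Y^{\mathrm{nr}}$ by slicing $X\times U$ along $\Delta_U$-orbits using the Fubini construction of \S\ref{anewconstruct}, which expresses $\mu$ as an integral of the leaf-wise measures $\mu_x^U$ (or more precisely their finite restrictions). Fix an $F_\sigma$ set $Y$ of small diameter with associated $x_0\in X$, $r\in(0,\tfrac14)$, and note that on the ball $B_r^X(x_0)$ the orbit map $u\mapsto u.x$ from a fixed neighborhood of the identity in $U$ is injective and bi-Lipschitz, so we may pull everything back to a subset of $U$. The key geometric observation, exactly as in the classical Boshernitzan-type counting, is: if $x\in Y$ has \emph{no} $U$-return within $QQ^{-1}$ to $Y$, then the ``good'' translates $\{q\in Q: q.x\in Y\}$ are pairwise $B_{2r}^U$-separated inside $U$. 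Indeed if $q_1,q_2$ are two such elements with $q_1q_2^{-1}\notin B_{2r}^U$, then $u:=q_1q_2^{-1}\in QQ^{-1}\setminus B_{2r}^U$ and $u.(q_2.x)=q_1.x\in Y$ with $q_2.x\in Y$, contradicting $x\in Y^{\mathrm{nr}}$ (here one uses the small-diameter hypothesis to ensure the relevant inequalities in $X$ lift to genuine identities in $U$, since all points stay within an injectivity radius).

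First I would set up the counting functional. For $x$ in (the lift of) $Y^{\mathrm{nr}}$, consider $N(x)=\mu_x^U\bigl(\{q\in Q: q.x\in Y\}\bigr)$. On one hand, by the defining hypothesis \eqref{Q measure inequality}, one has $\mu_x^U(Q)\geq T\,\mu_x^U(B_1^U)$, and I want to compare $N(x)$ with $\mu_x^U(Q)$. By the separation observation, the set $S_x=\{q\in Q: q.x\in Y\}$ has the property that $S_xS_x^{-1}\cap B_{2r}^U=\{e\}$ — but this does \emph{not} immediately bound $\mu_x^U(S_x)$ since $\mu_x^U$ need not be Haar. The right move is instead the standard ``$Q^2$ covers the translates'' trick: for the \emph{characteristic function} computation we integrate over $Y^{\mathrm{nr}}$ the quantity $\int_Q \mathbbm 1_Y(q.x)\,d\mu_x^U(q)$, use the compatibility formula \eqref{shift-formula} (i.e.\ $\mu_{q.x}^U \propto \mu_x^U q^{-1}$) together with Lemma~\ref{fubinilemma}, and a Fubini argument on $X\times U$ with measure $\mu\times m_U$ restricted appropriately, to relate $\mu(Y^{\mathrm{nr}})$ to $m_U(Q)$ against $m_U(Q^2)$ via the normalization $\int_U f_U\,d\mu_x^U=1$ built into the construction in \S\ref{anewconstruct}.

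More concretely, the second step is: cover $Y^{\mathrm{nr}}$ by pieces on which $\mu_x^U$ is comparable along $\Delta_U$-orbits, use that for $x\in Y^{\mathrm{nr}}$ the translates $\{q.x: q\in Q,\ q.x\in Y\}$ are ``$2r$-separated'' so that the sets $qB_r^U.x$ for such $q$ are disjoint, and then run the computation
\[
\mu(Y^{\mathrm{nr}})
\;\le\;
\frac{1}{T}\int_{Y}\frac{\mu_x^U(Q^2)}{\mu_x^U(Q)}\,d\mu(x)
\;\le\;
\frac{1}{T}\,\frac{m_U(Q^2)}{m_U(Q)},
\]
where the first inequality comes from \eqref{Q measure inequality} after a change of variables pushing the $U$-return failure into a statement about restrictions of $\mu_x^U$ to $Q$ versus $Q^2$, and the second uses that the \emph{ratio} $\mu_x^U(Q^2)/\mu_x^U(Q)$ is bounded by the corresponding Haar ratio $m_U(Q^2)/m_U(Q)$ — which in fact follows because after pushing forward by the orbit map and using \eqref{shift-formula}, the measure $\mu_x^U$ restricted to $Q^2$ decomposes into translates of $\mu_{q.x}^U|_Q$-type pieces that each, when $q.x\notin Y$, contribute nothing to the ``return'' count, while the $m_U$ ratio controls how many $Q$-translates fit in $Q^2$.

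The main obstacle, and the step requiring the most care, is the bookkeeping that converts the pointwise separation statement (no $U$-return $\Rightarrow$ good translates are $B_{2r}^U$-separated in $U$) into the global measure bound without assuming $\mu_x^U$ is translation-invariant. The trick is that one does \emph{not} integrate $\mu_x^U$ against itself; rather, one uses the \emph{Haar} measure $m_U$ on the $U$-coordinate of the extended space $X\times U$ (as in \S\ref{anewconstruct}), and the leaf-wise measures enter only through the normalization and the disintegration. Thus the quantity $m_U(Q^2)/m_U(Q)$ appears naturally as a ``covering number'' of $Q^2$ by $Q$-translates, completely independent of $\mu_x^U$, and the factor $T^{-1}$ comes from \eqref{Q measure inequality}. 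I would carry out this Fubini argument carefully on $X \times U$ equipped with $\mu \times m_U$, restricting to the preimage of $Y$ under $\Psi_U$ and intersecting with $Q$-translates, and then read off the bound by comparing the $\mathcal C_U$-fibrewise mass contributed by $Y^{\mathrm{nr}}$ (which forces disjointness of $Q$-translates, hence mass $\le m_U(Q^2)$ per orbit) against the mass forced by \eqref{Q measure inequality} (which is $\ge T\,m_U(Q)$ per orbit in the appropriate normalization).
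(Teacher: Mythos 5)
Your overall scaffolding matches the paper's proof: work on the extended space $X\times U$ with $\mu\times m_U$, restrict to the rectangle $X\times Q^2$, and compare the mass of $Y^{\mathrm{nr}}\times Q$ fibrewise along the $\Delta_U$-orbits, so that $m_U(Q)$ and $m_U(Q^2)$ enter only through the (trivial) second coordinate. However, the two steps you use to control the fibres have genuine gaps. First, your separation claim is not justified: from $q_1,q_2\in Q$ with $q_1.x,q_2.x\in Y$ and $u=q_1q_2^{-1}\notin B_{2r}^U$ you produce a $U$-return of the point $q_2.x$, which would contradict $q_2.x\in Y^{\mathrm{nr}}$ --- but you only know $q_2.x\in Y$, while your hypothesis is $x\in Y^{\mathrm{nr}}$; the no-return property of $x$ says nothing about translates of $q_2.x$, and for general $Q$ (not containing $e$) the set $\{q\in Q: q.x\in Y\}$ need not be separated at all. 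Second, the inequality $\mu_x^U(Q^2)/\mu_x^U(Q)\leq m_U(Q^2)/m_U(Q)$ that your displayed chain relies on is false in general: leafwise measures satisfy no doubling or Federer-type property, and $\mu_x^U$ may give $Q^2\setminus Q$ enormous mass compared with $Q$. Relatedly, in your last paragraph you attribute masses ``$\leq m_U(Q^2)$ per orbit'' and ``$\geq T\,m_U(Q)$ per orbit'' to the $\mathcal C_U$-fibres, but the conditional measures on $\Delta_U$-orbits are (normalized restrictions of) the leafwise measures $\mu_{y}^U$, not Haar measure, so this accounting does not go through.

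The missing idea, which is exactly how the paper closes the argument, is that no separation statement and no comparison of $\mu_x^U$ with $m_U$ is needed: for $(y_0,u_0)\in Y^{\mathrm{nr}}\times Q$ and $u\in U$ with $\Delta(u).(y_0,u_0)\in Y^{\mathrm{nr}}\times Q$, the \emph{second} coordinate gives $uu_0\in Q$, hence $u\in Qu_0^{-1}\subseteq QQ^{-1}$, and then the no-return property of the base point $y_0$ itself forces $u\in B_{2r}^U$. Thus the conditional measure of $Y^{\mathrm{nr}}\times Q$ at $(y_0,u_0)$, taken relative to the rectangle $X\times Q^2$, is at most $\mu_{y_0}^U(B_{2r}^U)/\mu_{y_0}^U(Q)\leq \mu_{y_0}^U(B_1^U)/\mu_{y_0}^U(Q)\leq T^{-1}$, where the denominator uses $\Delta(Q).(y_0,u_0)\subseteq X\times Q^2$ and the hypothesis \eqref{Q measure inequality}; integrating over the rectangle gives $\mu(Y^{\mathrm{nr}})\,m_U(Q)=(\mu\times m_U)(Y^{\mathrm{nr}}\times Q)\leq T^{-1}m_U(Q^2)$, which is the lemma. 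Your draft never isolates this use of the second coordinate to confine the return element to $QQ^{-1}$, and without it the fibrewise bound you are after cannot be obtained.
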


\begin{proof}
 Let~$Y=\bigcup_nY_n$ be as in the lemma with~$Y_n\subset X$ being compact.
 Let~$K\subset U$ be compact and let
 \[
  Y_{n,K}^{\mathrm{nr}}=\bigl\{z\in Y\mid K.z\cap Y_n=\emptyset\bigr\}.
 \] 
 Notice that~$z\in Y_{n,K}^{\mathrm{nr}}$ implies that~$K.z$ has positive
 distance to~$Y_n$, which implies that~$Y_{n,K}^{\mathrm{nr}}$ is 
 a relatively open subset of~$Y$. In particular,~$Y_{n,K}^{\mathrm{nr}}$ and
 \[
  Y^{\mathrm{nr}}_K=\bigcap_nY_{n,K}^{\mathrm{nr}}=\bigl\{z\in Y\mid  K.z\cap Y=\emptyset\bigr\}
 \]
 are measurable sets. Writing~$QQ^{-1}\setminus B_{2r}^U$ as a 
countable union of compact sets~$K_\ell$ we see that 
also~$Y^{\mathrm{nr}}=\bigcap_{\ell}Y^{\mathrm{nr}}_{K_\ell}$ 
is measurable.

We define the rectangle $R=X\times(Q^2)$. To prove the lemma we will consider
the measure $\mu\times m_U$ restricted to $R$ and estimate the conditional measures of $Y^{\mathrm{nr}}\times Q$ with respect to $\mathcal{C}_U|_R$, with $\mathcal C_U$ as in \S\ref{extended-space}.
For this we recall from Section \ref{anewconstruct} that the conditional measures $(\mu\times m_U|_R)_{(x_0,u_0)}^{\mathcal{C}_U}$ can be identified as the push forward of the measure $\mu_{x_0}^U$ to the
orbit $\Delta_U.(x_0,u_0)$, after further restricting
it to the atom $[(x_0,u_0)]_{\mathcal{C}_U}\cap R=(\Delta_U.(x_0,u_0))\cap R$,
and normalizing the resulting measure to be a probability measure.

Now fix some $(y_0,u_0)\in Y^{\mathrm{nr}}\times Q$.
Suppose also
\[
\Delta(u).(y_0,u_0)\in (Y^{\mathrm{nr}}\times Q)\cap [(y_0,u_0)]_{\mathcal{C}_U}
\]
for some $u\in U$. 
Looking at the second coordinate we have $uu_0\in Q$, 
which implies $u\in Qu_0^{-1}\subseteq QQ^{-1}$. However, as $y_0$ is
not having any $U$-returns within $QQ^{-1}$ and $u.y_0\in Y$
we conclude that $u\in B_{2r}^U\subset B_1^U$, where $r\in(0,\frac14)$
is as in the definition of small diameter.
Also note that $\Delta(Q).(y_0,u_0)\subset R$. 
We now combine these facts
with the assumption \eqref{Q measure inequality} to
obtain that
\begin{align*}
(\mu\times m_U|_{R})_{(y_0,u_0)}^{\mathcal{C}_U}\bigl(Y^{\mathrm{nr}}\times Q\bigr)
&= \frac{\mu_x^U(\{u\in U: \Delta(u).(y_0,u_0)\in Y^{\mathrm{nr}}\times Q\})}{\mu_x^U(\{u\in U: \Delta(u).(y_0,u_0)\in R\})}\\
&\leq 
\frac{\mu_x^U(B_{2r}^U)}{\mu_x^U(Q)}\\
&\leq T^{-1}
\end{align*}
for all $(y_0,u_0)\in Y^{\mathrm{nr}}\times Q$

Applying now the properties of the conditional measures and using that $\mu$ is a probability measure this implies
\[
 \mu\times m_U (Y^{\mathrm{nr}}\times Q)\leq T^{-1} m_U(Q^2),
\]
which gives the lemma.
\end{proof}

\begin{proof}[Proof of Propositions~\ref{simplerrecurrence}]
	Fix some~$\epsilon>0$. By assumption~$U$ has positive entropy contribution for~$a$. 
	Hence by the definition of
	the entropy contribution in~\eqref{volume decay} there exists some~$\alpha>0$ such that for a.e.~$x$
	there exists some~$N(x)$ with
	\begin{equation}\label{alpha-equation}
    \mu_x^U(a^n B_{1/2}^U a^{-n})	\geq e^{\alpha n}  \mu_x^U (B_1^U)
	\end{equation}
  for all~$n\geq N(x)$.
  We choose~$N_0\geq 1$ and some compact set~$X_\epsilon$
 of measure~$\mu(X_\epsilon)>1-\epsilon$
  such that $\mu_x^U$ is defined and~\eqref{alpha-equation} 
  holds for all~$x\in X_\epsilon$ and all~$n\geq N_0$.
   Let $X'$ be as in the proposition.
   
  Set $Q=a^nB_{1/2}^Ua^{-n}$. Note that by definition of $B_{r}^U$ the set $Q$ satisfies $Q=Q^{-1}$ and $Q^2 \subseteq a^nB_1^Ua^{-n}$.

	Let~$\kappa\in(0,\alpha/(2\dim G))$, with $\dim G$ 
 the box dimension of~$B_1^G$ with respect to our chosen metric $d$.
	Then the 
	compact set~$X_\epsilon$ can be covered for every sufficiently large~$n$
	by~$K_n\leq e^{\frac12\alpha n}$ many balls~$B_{n,k}=
	B_{e^{-\kappa n}/2}^X(x_{n,k})\subset X$ with~$x_{n,k}\in X_\epsilon$ for~$k=1,\ldots,K_n$; we may further assume that all these balls will be of small diameter in the sense of \S\ref{sec:ureturns}.
	
	Finally we apply Lemma~\ref{rec-to-a-set} to each
	\[
	 Y_{n,k}=B_{n,k}\cap X_\epsilon \cap X'.
	\]
	 Hence, we obtain using~\eqref{alpha-equation}
	for every~$n$ and~$k$ that the measure of the set~$Y_{n,k}^{\text{nr}}$
	of all~$x\in Y_{n,k}$ 
	with no~$a^nB_1^Ua^{-n}$-return to~$Y_{n,k}$ 
	is~$\ll e^{-\alpha n}$.
	Taking the union over all~$k=1,\ldots,K_n$ we get
	\[
	\mu\biggl(\bigcup_{k=1}^{K_n}Y_{n,k}^{\text{nr}}  \biggr)\ll e^{-\frac12\alpha n}.
	\]
	Using the easy half of Borel-Cantelli we see that a.e.~$x\in X_\epsilon \cap X'$ 
	only belongs to finitely
	many of the sets~$Y_{n,k}^{\text{nr}}$. For any of these~$x\in X_\epsilon$ 
	and all large enough~$n$ it follows that~$x$
	has~$U$-returns within~$a^nB_1^Ua^{-n}$ to~$B_{e^{-\kappa n}}^X(x)\cap X'$: if $k$ is such that $B_{n,k} \ni x$ then $B_{e^{-\kappa n}}^X(x)$
	 contains the~$B_{n,k}=B_{e^{-\kappa n}\!/2}^X(x_{n,k})$. 
	As $\mu(X_\epsilon)>1-\epsilon$ and~$\epsilon>0$ was arbitrary the proposition follows.	
\end{proof}

\subsection{Multiple recurrence}
Given a subset~$Y$ of small diameter
 and a subset~$Q\subseteq U$ with compact closure
we say that $x\in Y$ has \emph{~$N$ independent $U$-returns within~$Q$ to~$Y$}
if there are~$N$ points~$u_1,\ldots,u_N\in Q\setminus B_{2r}^U$
with~$ d(u_j^{-1},u_k^{-1})\geq 2r$ for~$1\leq j\neq k\leq N$
such that~$u_1.x,\ldots,u_N.x\in Y$ and
\[
 \{u\in Q\mid u.x\in Y\}\subseteq B_{2r}^U\cup\bigcup_{j=1}^N B_{2r}^U u_j.
\]
In this case we
also say that the elements~$u_1,\ldots,u_N$ \emph{represent 
the independent~$U$-returns} of~$x$ within~$Q$ to~$Y$.

We note that for a given~$U$-return~$u.x\in Y$ for some~$x\in Y$ and~$u\in Q$ we may have 
some nearby elements~$u'\in B_{2r}^Uu$ that also satisfy~$u'.x\in Y$, but due to our 
assumptions on the injectivity radius there will not be any~$u'\in (B_{9r}^U\setminus B_{2r}^U)u$
with~$u'.x\in Y$. This spacing property of occurrence of~$U$-returns 
implies that the number~$N$ of independent~$U$-returns 
is independent of the choice
of the elements~$u_1,\ldots,u_N\in Q$ that represent the independent~$U$-returns of~$x$ within~$Q$ to~$Y$.

\subsection{Multiple fast polynomial recurrence}
We now state the version of recurrence at polynomial rate that will be needed for the proof of 
Theorem~\ref{sl2-thm-final}. Since this does not entail any significant complications, we prove it for the setting presented in \S\ref{sec: the space}, where $\GG$ was a general linear algebraic groups defined over $\Q$ (rather than only forms of $\SL_2^k$ or $\PGL_2^k$).  We plan to make use of this more general statement in follow up work that will consider more general cases.  
In the outline \S\ref{outline} we had available a second element~$ b  \in A$ that was in the kernel of the Lyapunov weight corresponding to~$U$.
Unfortunately, in general this element may not exist, e.g.\ if $A\cong\Z^2$ and the coarse Lyapunov
weight defining $U$ has no rational representative. Hence we need to allow more general elements~$b\in A$, which we
will later choose, so that $U$ is in the weak unstable subgroup for $b$, but so that~$b$ does not expand~$U$ 
``too much''. 
We define for any radius~$r>0$ and time parameter~$t>0$ the Bowen $t$-ball (also called the Bowen-Dinaburg~$t$-ball) for~$ b  $
as
\begin{equation}\label{Bowen-Dinaburg}
 D_{r,t}^{ b  }=\bigcap_{k=-\lfloor t\rfloor}^{\lfloor t\rfloor}b^kB_r^Gb^{-k}.
\end{equation}

\begin{theorem}[Fast recurrence or positive entropy]\label{zero-or-fast-pol-rec}
	Let~$X=\Gamma\backslash G$ be as in \S\ref{sec: the space}, and~$A<G$ be a higher rank class $\mathcal{A}'$ subgroup. 
	Let~$\mu$ be an~$A$-invariant and ergodic probability
	measure on~$X$. 
	Let~$U$ be a coarse Lyapunov subgroup 
	with positive entropy contribution with respect to some (hence all) elements in $A$ expanding $U$. Fix such an element $a \in A$ and let $\alpha=h_\mu(a,U)$.
  Let~$ b  \in A$ and let $V\leq G^+_b$ be a product of coarse Lyapunov subgroups.
 Assume $U\cap V=\{e\}$, that $U$ normalizes $V$, and that
  \[
  G^+_b \leq U \ltimes V\leq G^+_bC_G(b).
  \]
  Let~$\mathcal A_V$ be 
 the~$A$-invariant~$\sigma$-algebra generated by the 
function~$x\mapsto\mu_x^{V}$. Then for every~$\kappa\geq 1$ we have either
\begin{itemize}[leftmargin=*,topsep=0pt,before=\vspace{0.5\baselineskip}]
	\item \textup{\bf non-negligible conditional entropy:}
	 $h_\mu( b  |\mathcal A_V)\geq \frac\alpha{4\kappa} $
\end{itemize}
\noindent
or
\begin{itemize}[leftmargin=*,topsep=0pt]
	\item
\textup{\bf fast multiple recurrence:} 
for $\mu$-a.e.~$x\in X$, 
 any sufficiently small~$r>0$, and
for all $n$ large enough we have at least~$e^{\frac\alpha{4} n}$ 
 independent~$U$-returns within~$a^n B_1^U a^{-n}$
of~$x$ to~$xD_{r,\kappa n}^{ b  }$.
\end{itemize}
\end{theorem}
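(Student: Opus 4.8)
The plan is to run a dichotomy argument that parallels the proof of Proposition~\ref{simplerrecurrence}, but conditioned on the $\sigma$-algebra $\mathcal A_V$ and tracking the $b$-Bowen balls rather than metric balls. First I would fix $\kappa\ge 1$ and suppose the non-negligible conditional entropy alternative fails, i.e.\ $h_\mu(b|\mathcal A_V)<\frac{\alpha}{4\kappa}$; the goal is then to force fast multiple recurrence. The key point is that on a set of almost full measure, the conditional measure $\mu_x^{\mathcal A_V}$ of the $b$-Bowen ball $D_{r,\kappa n}^b.x$ decays sub-exponentially at rate at most $e^{o(n)}$ (more precisely, slower than $e^{\frac{\alpha}{4}n}$ once $\kappa$ is absorbed), by the Shannon--McMillan--Breiman theorem applied to the $b$-action relative to $\mathcal A_V$: the measure of the $\kappa n$-Bowen ball is comparable to $e^{-\kappa n\, h_\mu(b|\mathcal A_V)+o(n)}$, which by our entropy assumption is $\ge e^{-\frac{\alpha}{4}n+o(n)}$. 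Meanwhile, using \eqref{eq:conditional product of coarse} and the equivariance of leaf-wise measures, the conditional leaf-wise measure $(\mu_x^{\mathcal A_V})_y^U$ is still $\mu_x^U$ up to proportionality (since $U$ and $V$ generate $G_b^+$ with trivial intersection, conditioning on $\mathcal A_V$ does not destroy the $U$-leafwise measure), so the volume growth estimate \eqref{alpha-equation} persists conditionally: $\mu_y^U(a^nB_{1/2}^U a^{-n})\ge e^{\alpha n}\mu_y^U(B_1^U)$ for $n\ge N(y)$.

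Next I would run a relative (fibered over $\mathcal A_V$) covering-and-recurrence argument. The Bowen ball $D_{r,\kappa n}^b.x$ plays the role of the small-diameter set $Y$, but now we cannot simply cover $X$ by $e^{\frac12\alpha n}$ such sets since Bowen balls are not metric balls; instead, I would cover a compact set $X_\epsilon$ of almost full measure by $e^{o(n)}$ Bowen $(\kappa n)$-balls (this is where the box dimension of $B_1^G$ and the fact that $b$ contracts at a bounded exponential rate enter — the number of Bowen balls needed is at most $e^{C\kappa n \cdot (\text{top.\ entropy of }b)}$, but we only need it to be $\ll e^{\frac{\alpha}{4}n}$ after choosing parameters, OR, more robustly, one covers the \emph{conditional} space relative to $\mathcal A_V$ so that the relevant count is governed by $h_\mu(b|\mathcal A_V)$ not the topological entropy). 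Applying Lemma~\ref{rec-to-a-set} with $Q=a^nB_{1/2}^U a^{-n}$ fiberwise over $\mathcal A_V$, and using \eqref{alpha-equation} conditionally with $T=e^{\alpha n}$, gives that the conditional measure of points in each Bowen-ball piece with no $U$-return within $a^nB_1^Ua^{-n}$ is $\ll e^{-\alpha n}$; summing over the $e^{o(n)}$ pieces and applying Borel--Cantelli yields a single $U$-return for a.e.\ $x$, for all large $n$. To upgrade from one return to $e^{\frac{\alpha}{4}n}$ \emph{independent} returns, I would iterate: after locating a return $u_1.x\in D_{r,\kappa n}^b.x$, one removes the ball $B_{2r}^U u_1$ and reapplies the same estimate to the remaining set of $U$-elements; since each application only "uses up" a $\mu_x^U$-mass of order $\mu_x^U(B_1^U)\cdot e^{-\alpha n}$ relative to the total $\mu_x^U(Q)\ge e^{\alpha n}\mu_x^U(B_1^U)$, one can extract on the order of $e^{\alpha n}$ disjoint returns before exhausting the leaf-wise mass; in particular at least $e^{\frac{\alpha}{4}n}$ of them survive after accounting for the $o(n)$ losses from the Bowen-ball counting and the conditioning.

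The main obstacle I anticipate is the interplay between the \emph{conditional} SMB estimate for the sub-exponential decay of the $b$-Bowen ball mass and the \emph{covering number} of $X_\epsilon$ by such Bowen balls: the naive covering uses the topological entropy of $b$, which can be much larger than $h_\mu(b|\mathcal A_V)$, so the union bound over $e^{C n}$ pieces would swamp the $e^{-\alpha n}$ gain from Lemma~\ref{rec-to-a-set}. Resolving this requires doing the entire covering/Borel--Cantelli step \emph{fiberwise} over the $\mathcal A_V$-atoms — i.e.\ covering $\mu_x^{\mathcal A_V}$-almost all of the fiber by Bowen balls, where the relevant count is controlled by the \emph{conditional} entropy $h_\mu(b|\mathcal A_V)<\frac{\alpha}{4\kappa}$ via a conditional version of the Katok entropy formula / SMB — and then integrating. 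One also has to be careful that $U$-returns land in the correct Bowen ball (centered at $x$ itself, not merely a nearby center), which is handled exactly as in the last lines of the proof of Proposition~\ref{simplerrecurrence} by the injectivity-radius and spacing properties; and that the $b$-invariance (hence $D_{r,\kappa n}^b$-compatibility) is consistent with the $A$-action on $\mu_x^{\mathcal A_V}$, which follows since $\mathcal A_V$ is $A$-invariant and in particular $b$-invariant. The hypothesis $G_b^+\le U\ltimes V\le G_b^+ C_G(b)$ is used precisely to ensure that $D_{r,\kappa n}^b$ does not contract the $U$-direction too violently, so that $a^nB_1^Ua^{-n}$ genuinely fits inside a controlled $b$-Bowen ball neighborhood and the recurrence lands where we want it.
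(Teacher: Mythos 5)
Your proposal follows essentially the same route as the paper's proof: assume $h_\mu(b|\mathcal A_V)<\frac{\alpha}{4\kappa}$, use the product structure of leaf-wise measures to see that conditioning on $\mathcal A_V$ leaves the $U$-leafwise measures unchanged (the paper's Lemma~\ref{lem: implication of product lemma} and \eqref{eq:leafofconditional}), then run a conditional Shannon--McMillan--Breiman counting of Bowen-ball-sized pieces fiberwise over the $\mathcal A_V$-atoms (so that the number of relevant pieces is $\leq e^{\frac{\alpha}{2}n}$ rather than anything governed by topological entropy), apply a recurrence-to-a-set estimate on each piece with $T\approx e^{(\alpha-\epsilon)n}$, and finish with a union bound and Borel--Cantelli; your identification of the fiberwise counting as the crux is exactly how the paper resolves it (via a partition $\eta$ whose $\eta_{-\kappa n}^{\kappa n}$-atoms sit inside $xD^b_{r,\kappa n}$).

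Two caveats. First, the upgrade from one return to $e^{\frac{\alpha}{4}n}$ independent returns should not be phrased as literally iterating the single-return lemma after removing a ball (the lemma bounds the measure of a bad set, not a pointwise procedure one can re-run); the correct formalization is the direct leafwise-mass count of the paper's Proposition~\ref{multiple-rec}: if $x$ has fewer than $N$ independent returns, the return set inside $Q^2$ is covered by $N+1$ translates of $B_{2r}^U$, each of relative $\mu_x^U$-mass $\leq T^{-1}$, so the set of such $x$ has measure $\ll NT^{-1}$ --- which is exactly the mass heuristic you state, so this is a matter of packaging rather than a missing idea. Second, your closing remark misstates the role of the hypothesis $G^+_b\leq U\ltimes V\leq G^+_bC_G(b)$: it is not about Bowen balls of $b$ interacting gently with the $U$-direction (no such control is needed, since the returns only need to land in the set $Y\subseteq xD^b_{r,\kappa n}$), but precisely about guaranteeing the product structure $\mu_x^{U\ltimes V}\propto\iota_*(\mu_x^V\times\mu_x^U)$, hence that $\mu_x^V$ is constant along $U$-orbits and $(\mu_{x_0}^{\mathcal A_V})_x^U=\mu_x^U$ --- the fact you already used, with the right justification, earlier in your argument. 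A minor bookkeeping point: the two-sided atoms give conditional measure $\gtrsim e^{-\frac{\alpha}{2}n}$ (not $e^{-\frac{\alpha}{4}n}$) because of the factor $2$ in $2\kappa h_\mu(b|\mathcal A_V)$, but the union-bound arithmetic still closes, as in the paper, with room to spare.
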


In the case that the  conditional entropy $h_\mu( b  |\mathcal A_V)$ vanishes, the non-negligible entropy case cannot occur and then we are guaranteed to have fast multiple recurrence, which is much stronger than what is given in Proposition~\ref{simplerrecurrence}, 
as we can choose the `degree of the polynomial recurrence' arbitrarily by increasing $\kappa$. We also note that we assume
that~$r$ is sufficiently small only so that~$xD_{r,0}^b=B_r^X(x)$
is a ball of small diameter in the sense of \S \ref{sec:ureturns} and hence the notion 
of independent~$U$-returns is defined for~$x$ 
and subsets of~$xD_{r,0}^b$.

\begin{lemma}\label{lem: implication of product lemma}
    Under the conditions of Theorem~\ref{zero-or-fast-pol-rec}, there is a set of full measure~$X'$ so that $\mu_x^V=\mu_{u.x}^V$ whenever $x,u.x \in X'$.
\end{lemma}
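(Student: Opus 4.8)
The plan is to deduce the $U$-invariance of the leaf-wise measure $\mu_x^V$ from the product structure of the leaf-wise measure for $G_b^+$ together with the equivariance property \eqref{eq:equivariance}. First I would observe that by hypothesis $G_b^+ \leq U\ltimes V \leq G_b^+ C_G(b)$, so that $G_b^+$ is (up to the centralizer direction, which contributes trivially to leaf-wise measures) the semidirect product of $U$ and $V$. Since $U$ and $V$ are each products of coarse Lyapunov subgroups for $A$, the product lemma \eqref{leafwiseproductofcoarse} applies: there is a conull set on which $\mu_x^{G_b^+} \propto \mu_x^U \cdot \mu_x^V$ (pushforward of the product measure under the product map $U\times V\to U\ltimes V$, in whichever order we fix — here I would take $U$ first, then $V$). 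The key structural point is that because $U$ normalizes $V$, conjugation by an element $u\in U$ maps the ``$V$-coordinate'' of this product to another copy of $V$, so the dependence of $\mu_x^{G_b^+}$ on the $V$-direction is governed entirely by $\mu_x^V$.

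Next I would use the compatibility formula \eqref{shift-formula} for the subgroup $G_b^+$ (or for $U$ inside it): for $x, u.x$ both in an appropriate conull set, $[\mu_{u.x}^{G_b^+}] = [\mu_x^{G_b^+}.u^{-1}]$, i.e. the pushforward of $\mu_x^{G_b^+}$ under right translation by $u^{-1}$. Writing both sides through the product decomposition, the left side is $[\mu_{u.x}^U \cdot \mu_{u.x}^V]$ and the right side, after pushing the product measure $\mu_x^U\times\mu_x^V$ forward under $g\mapsto g u^{-1}$ and using that $U$ normalizes $V$ (so $u^{-1}$ can be moved past the $V$-factor at the cost of conjugating $\mu_x^V$ by $u$), becomes $[(\mu_x^U.u^{-1}) \cdot (u^{-1}\mu_x^V u)]$ — more precisely the pushforward of $\mu_x^U.u^{-1}$ on the $U$-side and of the conjugate of $\mu_x^V$ on the $V$-side. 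By uniqueness of the factors in such a product decomposition of a leaf-wise measure on $U\ltimes V$ (the $U$- and $V$-leaf-wise measures of the product are recovered from $\mu_x^{G_b^+}$ by the product lemma applied to $U\ltimes V$ as an ambient group), I can match the $V$-components and conclude $[\mu_{u.x}^V] = [u^{-1}\mu_x^V u]$ on a conull set.

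To finish I need to upgrade this to the honest equality $\mu_x^V = \mu_{u.x}^V$ claimed in the lemma, i.e. to eliminate the conjugation by $u$ and the proportionality constant. For the conjugation: an element $u\in U$ that genuinely moves $\mu_x^V$ by conjugation would, iterated under the expanding action of $a$ on $U$, conflict with $A$-invariance of $\mu$ and ergodicity — but in fact the cleanest route is to note that the $\sigma$-algebra $\mathcal A_V$ is $A$-invariant and that $\mu_x^V$ as an element of the space of locally finite measures on $V$ is, by definition, $\mathcal A_V$-measurable; combined with the equivariance \eqref{eq:equivariance} under $A$ one reduces to showing $\mu_x^V$ is genuinely $U$-invariant as a function of $x$ along $U$-orbits, which is exactly what the compatibility/conjugation identity gives once one checks $u^{-1}\mu_x^V u$ and $\mu_x^V$ must agree (not merely be conjugate) because they are both determined as the $V$-leaf-wise measure at the \emph{same} point $u.x$, read off from $\mu_{u.x}^{G_b^+}$ via the product lemma with $U\ltimes V$ in the fixed enumeration. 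For the proportionality constant: the normalization convention fixing $\int f_V\, d\mu_x^V = 1$ a.e.\ forces the constant to be $1$ once one knows the two measures are equal up to scalar and both satisfy the normalization. I expect the main obstacle to be precisely this last bookkeeping step — making the product-lemma decomposition rigid enough that ``$\mu_{u.x}^V$ equals $\mu_x^V$'' rather than merely ``equals a conjugate of a scalar multiple'' — which amounts to carefully tracking that the $V$-factor in $\mu^{G_b^+}_{u.x}$, computed two ways, must coincide, and invoking \eqref{eq:conditional product of coarse} / the uniqueness in \cite[Thm.~8.4]{EinsiedlerKatokNonsplit} rather than just its existence assertion.
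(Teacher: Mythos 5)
Your overall strategy is the right one and matches the paper's in spirit (product structure of the leaf-wise measure on $U\ltimes V$, compatibility along $U$, then extraction of the $V$-factor by conditioning on $V$-cosets), but as written there are two genuine gaps. First, you apply the product lemma to $G_b^+$ and wave away the discrepancy between $G_b^+$ and $U\ltimes V$ by saying the centralizer direction ``contributes trivially to leaf-wise measures''. This is not correct, and in the case $U<C_G(b)$ (which does occur in the application: there one may have $\chi_1(b)=0$, so $G_b^+=V$ and $U\cap G_b^+=\{e\}$) your argument does not even start: the decomposition \eqref{leafwiseproductofcoarse} of $\mu_x^{G_b^+}$ then contains no $U$-factor at all, and the compatibility formula \eqref{shift-formula} for $H=G_b^+$ cannot be invoked for $u\in U$ since $u\notin G_b^+$. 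What is needed is a product structure for the leaf-wise measures on the group $U\ltimes V$ itself, and for that one must realize $U\ltimes V$ inside the unstable horospherical subgroup of \emph{some} element of $A$; the paper does this by replacing $b$ with $b'=b^na$ for $n$ large, so that $U\ltimes V< G_{b'}^+$, and then applying \cite[Thm.~8.4]{EinsiedlerKatokNonsplit}. Some such step is indispensable in your write-up.

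Second, your choice of order (elements written as $u'v$ with the $U$-coordinate on the left) makes the right translation by $u^{-1}$ coming from \eqref{shift-formula} act on the $V$-factor through the conjugation $\theta_u$, and your attempt to remove this conjugation is circular: you argue that $\theta_{u}^{-1}(\mu_x^V)$ and $\mu_x^V$ ``must agree because both are the $V$-leaf-wise measure at the same point $u.x$'', but that the $V$-leaf-wise measure at $u.x$ equals $\mu_x^V$ is exactly the statement being proved; from your computation you only know it is proportional to a conjugate. The detour through $\mathcal A_V$ and \eqref{eq:equivariance} does not repair this. The fix is simply to use the other order, as the paper does: with the product map $(v,u')\mapsto vu'$ (the $V$-factor on the left, using that $V$ is normalized by $U$), right multiplication by $u^{-1}$ only shifts the $U$-coordinate and leaves the $V$-factor literally unchanged; conditioning on $V$-cosets (the double-conditioning step, valid at every pair of points of a conull set where both the product structure and the compatibility hold) then gives $\mu_{u.x}^V\propto\mu_x^V$ directly, and the normalization convention upgrades proportionality to equality. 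Alternatively one could note that in the case actually needed later $U$ and $V$ commute, so $\theta_u$ is trivial, but the lemma is stated under the general hypotheses of Theorem~\ref{zero-or-fast-pol-rec}, where only normalization is assumed, so you should not rely on commutativity without saying so.
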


\begin{proof}
 As indicated in \S\ref{sec;product} the leafwise measure for $UV\simeq U\ltimes V$ is a.s.\ the direct product
 of the leafwise measures for $U$ and for $V$. Morever, we may also use the groups in the reversed order, i.e.\ for the product map $\iota:(v,u)\in V\times U\mapsto vu$ we have $\mu_x^{UV}\propto\iota_*(\mu_x^V\times\mu_x^U)$ a.s.
 Indeed if $U\ltimes V=G_b^+$, this is precisely \cite[Thm.~8.4]{EinsiedlerKatokNonsplit} (see also \cite[\S~8]{Pisa-notes}). 
 If instead $U<C_G(b)$, we define $b'=b^na$, choose $n\in\N$ sufficiently large so that $U\ltimes V<G_{b'}^+$, and may again apply \cite[Thm.~8.4]{EinsiedlerKatokNonsplit}. 
  Since $U$ is a coarse Lyapunov subgroup, either $U<G^+_b$ or $U<C_G(b)$.
  
 This product structure implies now the lemma using double conditioning: The leafwise measure for $V$ can be obtained from the leafwise measure for $U\ltimes V$
 by conditioning on $V$-cosets. As the leafwise measure for $U\ltimes V$ is a product, moving from $x$ to $u.x$ within a set of full measure does not change the leafwise measure for $V$.
\end{proof}

 The following can be viewed as a sharper variant of Lemma~\ref{rec-to-a-set}. 

\begin{proposition}[Multiple recurrence to a set]\label{multiple-rec}	
	Let~$X=\Gamma\backslash G$ and~$A<G$ be as in Theorem \ref{zero-or-fast-pol-rec}. Let~$a\in A$ and~$U<G_a^+$ be an~$A$-normalized unipotent subgroup. Let~$\nu$ be
	a probability measure on~$X$.	
 Fix $n\in\mathbb{N}$, $T>1$, and suppose that~$Y\subset X$ is an~$F_\sigma$-subset of small diameter so that
	\begin{equation}\label{alpha-asymp}
   \nu_x^U(a^n B_{1/2}^U a^{-n}) \geq T \nu_x^U(B_1^U)
	\end{equation}
	for all~$x\in Y$. 
	Then for every~$N\in\N$ the set
	\[
	  Y^{\mathrm{fr}}=\bigl\{x\in Y\mid \mbox{$x$ has $<N$ independent~$U$-returns
	within~$a^nB_1^Ua^{-n}$ to~$Y$}\bigr\}
	\]
	of points with ``few returns back to~$Y$'' has measure~$\nu(Y^{\mathrm{fr}})\ll_U NT^{-1}$.
\end{proposition}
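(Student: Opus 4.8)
The plan is to follow the template of the proof of Lemma~\ref{rec-to-a-set}, working on the extended space $X\times U$ with the measure $\nu\times m_U$ and its conditional measures with respect to the $\sigma$-algebra $\mathcal C_U$ of \S\ref{anewconstruct}, with two changes. First, instead of the rectangle $X\times Q^2$ used there we take $R=X\times W$ with $W=a^{n}B_{2}^{U}a^{-n}$, and we estimate the conditional measures of $Y^{\mathrm{fr}}\times Q_-$, where $Q_-=a^{n}B_{1/2}^{U}a^{-n}$ is the set appearing in \eqref{alpha-asymp}; that $Y^{\mathrm{fr}}$ is measurable follows just as for $Y^{\mathrm{nr}}$ in Lemma~\ref{rec-to-a-set}. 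Second, and this is the genuinely new ingredient, besides invoking \eqref{alpha-asymp} at the base point of an atom we also invoke it at each of the finitely many return points, transported back by the compatibility formula \eqref{shift-formula}. It then suffices to prove the atom-wise bound $(\nu\times m_U|_R)_\xi^{\mathcal C_U}(Y^{\mathrm{fr}}\times Q_-)\le NT^{-1}$ for $\nu\times m_U|_R$-a.e.\ $\xi$: integrating it over the disintegration of $\nu\times m_U|_R$ gives $\nu(Y^{\mathrm{fr}})\,m_U(Q_-)\le NT^{-1}m_U(W)$, and since conjugation by $a^{n}$ scales the Haar measure $m_U$ by a factor that cancels in the ratio $m_U(W)/m_U(Q_-)=m_U(B_2^U)/m_U(B_{1/2}^U)$, this yields $\nu(Y^{\mathrm{fr}})\ll_U NT^{-1}$.

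For the atom-wise bound, recall as in the proof of Lemma~\ref{rec-to-a-set} that for a.e.\ $(y_0,v_0)\in R$ the measure $(\nu\times m_U|_R)_{(y_0,v_0)}^{\mathcal C_U}$ is, after normalisation, the push-forward of $\nu_{y_0}^U|_{Wv_0^{-1}}$ along the $\Delta_U$-orbit, and it assigns measure zero to $Y^{\mathrm{fr}}\times Q_-$ unless the atom of $(y_0,v_0)$ meets that set, in which case we may choose the representative with $y_0\in Y^{\mathrm{fr}}$ and $v_0\in Q_-$. For such a representative, writing $S'=\{u\in U:\ u.y_0\in Y^{\mathrm{fr}},\ uv_0\in Q_-\}$, one has
\[
(\nu\times m_U|_R)_{(y_0,v_0)}^{\mathcal C_U}\bigl(Y^{\mathrm{fr}}\times Q_-\bigr)=\frac{\nu_{y_0}^U(S')}{\nu_{y_0}^U(Wv_0^{-1})}.
\]
Since $v_0\in Q_-=Q_-^{-1}$, the condition $uv_0\in Q_-$ forces $u\in Q_-^2\subseteq a^{n}B_1^{U}a^{-n}$, so every $u\in S'$ belongs to $\{u'\in a^{n}B_1^{U}a^{-n}:u'.y_0\in Y\}$. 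As $y_0\in Y^{\mathrm{fr}}$ has fewer than $N$ independent $U$-returns within $a^{n}B_1^{U}a^{-n}$ to $Y$, there are $u_1,\dots,u_{N'}\in a^{n}B_1^{U}a^{-n}$ with $N'<N$ representing them, so that $\{u'\in a^{n}B_1^{U}a^{-n}:u'.y_0\in Y\}\subseteq B_{2r}^{U}\cup\bigcup_{j=1}^{N'}B_{2r}^{U}u_j$, and hence by subadditivity
\[
\nu_{y_0}^U(S')\le\nu_{y_0}^U(B_{2r}^{U})+\sum_{j=1}^{N'}\nu_{y_0}^U(B_{2r}^{U}u_j).
\]

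I would now bound each of these at most $N$ summands by $T^{-1}\nu_{y_0}^U(Wv_0^{-1})$. For the first: since $r<\tfrac14$ and $y_0\in Y$, \eqref{alpha-asymp} gives $\nu_{y_0}^U(B_{2r}^{U})\le\nu_{y_0}^U(B_1^{U})\le T^{-1}\nu_{y_0}^U(Q_-)$, while $Q_-\subseteq Wv_0^{-1}$ because $Q_-v_0\subseteq a^{n}B_1^{U}a^{-n}\subseteq W$. For the $j$-th summand, the key point is that $u_j.y_0\in Y$, so \eqref{alpha-asymp} holds at $u_j.y_0$ as well; combining this with the compatibility formula $\nu_{y_0}^U\propto\nu_{u_j.y_0}^U.u_j$ (whose proportionality constant cancels in the ratio below) gives
\[
\frac{\nu_{y_0}^U(B_{2r}^{U}u_j)}{\nu_{y_0}^U(Q_-u_j)}=\frac{\nu_{u_j.y_0}^U(B_{2r}^{U})}{\nu_{u_j.y_0}^U(Q_-)}\le T^{-1},
\]
the last inequality by $B_{2r}^{U}\subseteq B_1^{U}$ and \eqref{alpha-asymp} at $u_j.y_0$. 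Since moreover $u_j\in a^{n}B_1^{U}a^{-n}$ and $v_0\in a^{n}B_{1/2}^{U}a^{-n}$, a one-line computation inside the conjugation by $a^{n}$ gives $Q_-u_jv_0\subseteq a^{n}B_2^{U}a^{-n}=W$, i.e.\ $Q_-u_j\subseteq Wv_0^{-1}$, whence $\nu_{y_0}^U(B_{2r}^{U}u_j)\le T^{-1}\nu_{y_0}^U(Q_-u_j)\le T^{-1}\nu_{y_0}^U(Wv_0^{-1})$. Adding up, $\nu_{y_0}^U(S')\le(1+N')T^{-1}\nu_{y_0}^U(Wv_0^{-1})\le NT^{-1}\nu_{y_0}^U(Wv_0^{-1})$, which is exactly the atom-wise bound.

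The step I expect to be the main obstacle is the treatment of the ``boundary'' contributions $\nu_{y_0}^U(B_{2r}^{U}u_j)$: because the leaf-wise measure $\nu_{y_0}^U$ is not translation invariant, these translated small balls --- which do not occur in Lemma~\ref{rec-to-a-set}, where only the trivial return near the identity is present --- cannot be compared with $\nu_{y_0}^U(Q_-)$ directly. Recentring at the return point $u_j.y_0\in Y$ via \eqref{shift-formula}, where \eqref{alpha-asymp} is again available, is what makes the estimate work; the price is the slight enlargement of the auxiliary rectangle (from $X\times Q_-^2$ to $X\times a^{n}B_2^{U}a^{-n}$) needed to keep every translate $Q_-u_j$ inside the conditioning fibre $Wv_0^{-1}$.
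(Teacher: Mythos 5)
Your proof is correct and takes essentially the same approach as the paper: both arguments estimate the $\mathcal C_U$-conditional measures of $\nu\times m_U$ restricted to a rectangle, cover the set of returns by $B_{2r}^U\cup\bigcup_{j}B_{2r}^Uu_j$ using the definition of $Y^{\mathrm{fr}}$, and control each translated ball by invoking \eqref{alpha-asymp} at the return point. The only (cosmetic) difference is bookkeeping: you transport \eqref{alpha-asymp} back to $y_0$ via the compatibility formula \eqref{shift-formula} at the representatives $u_j$, enlarging the conditioning window to $a^nB_2^Ua^{-n}$, whereas the paper keeps the window $Q^2$ and instead re-bases the conditional measure at an intersection point of the atom with $Y\times Q$, using the ball $B_{4r}^U$ there --- the same mechanism in a different guise.
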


\begin{proof} 
Measurability of $Y^{\mathrm{fr}}$ follows
similarly to the argument in the first paragraph of the proof of
Lemma \ref{rec-to-a-set}.
We define the expanded ball
$Q=a^nB_{1/2}^Ua^{-n}=Q^{-1}$, and the rectangle
$R=X\times (Q^2)$. We will again estimate $\nu(Y^{\mathrm{fr}})$
by estimating instead $\nu\times m_U(Y^{\mathrm{fr}}\times Q)$
using the conditional measures of $\nu\times m_U$
restricted to $R$ and with respect to $\mathcal{C}_U$.

We again identify the conditional measure $(\nu\times m_U|_{R})_{(x,u)}^{\mathcal{C}_U}$
with the push-forward of the measure $\nu_x^U$ to the $\Delta(U)$-orbit of $(x,u)$,
restricting to the rectangle $R$, and normalizing the resulting measure
to a probability measure. Note that for $(x,u)\in Y\times Q$ we have $\Delta(Q).(x,u)\in R$. 
Our assumption \eqref{alpha-asymp} therefore gives that
\begin{equation}\label{eq:estimateforR2}
 (\nu\times m_U|_{R})_{(x,u)}^{\mathcal{C}_U}(\Delta(B_{4r}^U).(x,u))\leq T^{-1}
\end{equation}
for all $(x,u)\in Y\times Q$ and $r\in(0,1/4)$. 

Now let $(y_0,u_0)\in Y^{\mathrm{fr}}\times Q$ and $r\in(0,\frac{1}{4})$
as in the definition of small diameter. Let $h_1,\ldots,h_M$ represent
the independent $U$-returns of $y$ within $Q^2$ to $Y$, where $M< N$
by definition of $Y^{\mathrm{fr}}$.
Let $h_0=e$.

Now suppose $(u,u).(y_0,u_0)\in Y\times Q$ for some $u\in U$.
Looking at the second coordinate we see that $uu_0\in Q$, or equivalently
that $u\in Qu_0^{-1}\subseteq Q^2$. Hence we have $u\in B_{2r}^Uh_j$
for some $j\in\{0,1,\ldots,M\}$ by definition of $h_0,\ldots,h_M$ and so
\begin{multline*}
 (\nu\times m_U|_{R})_{(y_0,u_0)}^{\mathcal{C}_U}\bigl((\Delta(U).(y_0,u_0))\cap (Y\times Q)\bigr)\\
 \leq 
 \sum_{j}(\nu\times m_U|_{R})_{(y_0,u_0)}^{\mathcal{C}_U}(\Delta(B_{2r}^Uh_j).(y_0,u_0)),
\end{multline*}
where we only need to sum over those $j$
for which $\Delta(B_{2r}h_j).(y_0,u_0)\cap (Y\cap Q)$ contains some point $(x,u)$.
For those $j$ we can use \eqref{eq:estimateforR2} for this intersection point, which leads to 
\begin{equation}\label{eq:betterestimateR2}
     (\nu\times m_U|_{R})_{(y_0,u_0)}^{\mathcal{C}_U}\bigl((\Delta(U).(y_0,u_0))\cap (Y\times Q)\bigr)\leq
  (N+1)T^{-1}
\end{equation}
for any $(y_0,u_0)\in  Y^{\mathrm{fr}}\times Q$.

By the properties of conditional measures the estimate in \eqref{eq:betterestimateR2}
for all $(y_0,u_0)\in  Y^{\mathrm{fr}}\times Q$ implies
that
\[
(\nu\times m_U|_{R})\bigl(Y^{\mathrm{fr}}\times Q\bigr)\ll NT^{-1} m_U(Q^2).
\]
Dividing by $m_U(Q)$ this is leads to
\[
 \nu(Y^{\mathrm{fr}})\ll
 NT^{-1} \frac{m_U(Q^2)}{m_U(Q)}=NT^{-1} \frac{m_U\bigl((B^U_{1/2})^2\bigr)}{m_U(B^U_{1/2})}
\ll_U NT^{-1}.
\]
\end{proof}

\begin{proof}[Proof of Theorem~\ref{zero-or-fast-pol-rec}]
	Let~$X'\subseteq X$ be such that 
	the leaf-wise measure $\mu_x^{V}$
	is defined for all $x\in X'$
	and satisfies the almost sure properties 
  (which we will specify again in the course of the proof below).
	Due to~\eqref{eq:equivariance}
	the action of~$A$ descends to the factor~$\fancyZ$ so that $\mathcal A_V$ is an $A$-invariant $\sigma$-algebra. 

    By Lemma~\ref{lem: implication of product lemma} we may assume that $\mu_x^{V}=\mu_{u.x}^{V}$ 
    for $x,u.x\in X'$ and $u\in U$. 
    We claim that this implies
\begin{equation}\label{eq:leafofconditional}
 \bigl(\mu_{x_0}^{\mathcal A_V}\bigr)^U_x = \mu_x^U
\end{equation}
for $\mu$-a.e.\ $x_0$ and $\mu_{x_0}^{\mathcal A_V}$-a.e.\ $x$. Shrinking $X'$ if necessary and changing the definition
 of $(\mu_{x_0}^{\mathcal A_V})^U_x$ on a $\mu_{x_0}^{\mathcal{A}_V}$-conullset, we will assume
 that \eqref{eq:leafofconditional} holds for every $x_0,x\in X'$.

To see the claim suppose $\mathcal{C}$ is a~$U$-subordinate~$\sigma$-algebra on
a measurable subset~$Y\subseteq X'$. Then the atoms 
for~$\mathcal{C}$ belong to the atoms of 
the~$\sigma$-algebra~$\mathcal A_V$
corresponding to the map $x \to \mu_x^{V}$. 
Hence $\mathcal{C}\supseteq\mathcal{A}_V|_Y$ modulo~$\mu$.
By \cite[Prop.~5.20]{ET-book} this also shows
that $(\mu_{x_0}^{\mathcal A_V})_x^{\mathcal C}=\mu_x^{\mathcal{C}}$ for $\mu$-a.e.\ $x_0$ and $\mu_{x_0}^{\mathcal A_V}$-a.e.\ $x$.
	As the leaf-wise measures for $U$ are characterized by the conditional measures of these~$\sigma$-algebras we obtain the claim \eqref{eq:leafofconditional}.

We let~$\epsilon=\frac\alpha{5}$. 
By the definition of entropy contribution in \eqref{volume decay}
we have
  \[
   \mu_x^U(a^n B_{1/2}^U a^{-n}) \geq e^{(\alpha-\epsilon)n}\mu_x^U(B_1^U)
  \]
for a.e.~$x$, say for $x\in X'$, and for large enough $n$ (depending on $x$). By the above we may also phrase this as 
  \begin{equation}\label{alpha-asymp-again}
   \bigl(\mu_{x_0}^{\mathcal A_V}\bigr)_x^U(a^n B_{1/2}^U a^{-n}) \geq e^{(\alpha-\epsilon)n}\bigl(\mu_{x_0}^{\mathcal A_V}\bigr)_x^U(B_1^U)
  \end{equation}
for $x_0,x\in X'$ and for large enough $n$. 
 
 	Let~$\delta>0$ and let~$K\subseteq X'$ be a compact set with~$\mu(K)>1-\delta$.  
	Suppose~$r\in(0,\frac14)$ is such that~$10 r$
	is an injectivity radius for all~$x\in K$.
Shrinking~$K$ slightly if necessary we may assume that~$\mu(K)>1-\delta$
 and that there exists some fixed~$N_0$ such that~\eqref{alpha-asymp-again} holds 
 for all~$x\in K$ and all~$n\geq N_0$. This will be used
 as the input \eqref{alpha-asymp} to Proposition \ref{multiple-rec}
 after finding an ``efficient cover''.
 
\medskip
	Let~$\kappa\geq 1$ and let us assume that~$h_\mu( b  |\mathcal A_V)<\frac\alpha{4\kappa}$.
 	Let~$\mathcal E$
	be the~$\sigma$-algebra of~$ b  $-invariant so that  $\mu=\int_X\mu_{x}^{\mathcal E}\dee\mu(x)$
    is the ergodic decomposition of~$\mu$ with respect to the action of $ b $. 
	Then
	\[h_\mu( b  |\mathcal A_V)=\int_Xh_{\mu_x^{\mathcal E}}( b  |\mathcal A_V)d \mu(x);
	\]
	by ergodicity of $\mu$ under $A$ and e.g.~\eqref{eq:equivariance} applied to $G_a^+$
	the function~$x\mapsto h_{\mu_x^{\mathcal E}}( b  |\mathcal A_V)$ is invariant
	under~$A$. Hence it is constant and we obtain
	\[
	h_{\mu_x^{\mathcal E}}( b  |\mathcal A_V)=h_\mu( b  |\mathcal A_V)<\frac\alpha{4\kappa}
	\]
	for a.e.~$x$.

	 We will use a countable finite entropy partition~$\eta$ with the following properties\footnote{The construction of $\eta$ is straight-forward when $X$ is compact (where $\eta$ can be chosen to be finite), so the only somewhat delicate point is how to handle the part of the trajectory $\{ b ^j.x: |j|\leq n\}$ high up in the cusp.}
	 (see~\cite[\S7]{Pisa-notes} for the construction of such a partition): For any~$n\geq 0$ and~$x\in K$
	the partition element
	\[ 
	 [x]_{\eta_{-n}^n}\in\eta^n_{-n}=\bigvee_{j=-n}^n  b  ^j.\eta
	\]
	is contained in the Bowen~$n$-ball $xD_{r,n}^{ b  }$ (cf.~\ref{Bowen-Dinaburg}).  For convenience of 
	notation we define~$\eta_{-s}^s=\eta_{-\lfloor s\rfloor}^{\lfloor s\rfloor}$ for~$s\geq 0$.

 By the Shannon-McMillan-Breiman theorem conditioned on~$\mathcal A_V$ (see~\cite[Ch.~3]{ELW}) we have for~$\mu$-a.e.~$x\in X$ that
 \[
\frac1n  I_\mu(\eta^n_{-n}|\mathcal{A}_V)(x)=-\frac1n\log\mu_x^{\mathcal A_V}\left([x]_{\eta_{-n}^n}\right)\to 2h_{\mu_x^{\mathcal E}}( b ,\eta |\mathcal A_V)
<\frac\alpha{2\kappa}.
 \]
as~$n\to\infty$. In particular, this shows that for a.e.~$x$ and all sufficiently large~$n$ we have
\begin{equation}\label{big-atoms-equation}
\mu_x^{\mathcal A_V}\left([x]_{\eta_{-\kappa n}^{\kappa n}}\right)>e^{-\frac12\alpha n}.
\end{equation}
If necessary we increase~$N_0$ such that~\eqref{big-atoms-equation} holds for all~$n\geq N_0$ and all points~$x\in X_0$
for some measurable~$X_0\subset K$ of measure~$\mu(X_0)>1-\delta$. 

We fix some~$x_0\in X_0$ and
let~$\nu=\mu_{x_0}^{\mathcal{A}_V}$. Reinterpreting~\eqref{big-atoms-equation} we have
\begin{equation}\label{big-atoms-equation2}
 \nu\left([x]_{\eta_{-\kappa n}^{\kappa n}}\right)>e^{-\frac12\alpha n}
\end{equation}
for~$\nu$-a.e.~$x\in X_0$ and~$n\geq N_0$: indeed for~$x\in[x_0]_{\mathcal A_V}\cap X_0$ we have $\mu_x^{\mathcal{A}_V}=\mu_{x_0}^{\mathcal A_V}$
and so \eqref{big-atoms-equation} gives \eqref{big-atoms-equation2}. 
It follows
that for every~$n\geq N_0$ there are at most~$K_n\leq e^{\frac12\alpha n}$ many partition
elements~$P_{n,1},\ldots,P_{n,K_n}\in \eta_{-\kappa n}^{\kappa n}$ intersecting $X_0$.

Recall that \eqref{alpha-asymp-again} holds for $x_0,x\in X_0\subset X'$
and $n\geq N_0$,
which is now a statement for the leafwise measures of $\nu=\mu_{x_0}^{\mathcal A_V}$.  
By choice of~$r$ and construction of~$\eta$ the sets $P_{n,\cdot}$
have small diameter.
We now apply Proposition~\ref{multiple-rec}	
to each of these sets with the parameters $T=e^{(\alpha-\epsilon)n}$
and $N=e^{\tau n}$ for $\tau=\frac14\alpha$. Hence we obtain that the set
\begin{multline*}
	  Q_{n,k}=\Bigl\{x\in P_{n,k}\cap X_0\,\big|\, \mbox{$x$ has less than~$e^{\tau n}$ independent}\\
	  \mbox{$U$-returns within~$a^nB_1^Ua^{-n}$ to~$P_{n,k}$}\Bigr\}
\end{multline*}
satisfies~$\nu(Q_{n,k})\ll e^{(\tau+\epsilon-\alpha) n}$. Taking the union over all~$k=1,\ldots,K_n$ we obtain
\begin{equation}\label{nu-ineqwith4}
 \nu\left(\bigcup_{k=1}^{K_n} Q_{n,k}\right)\ll 
 e^{\frac12\alpha n} e^{(\tau+\epsilon-\alpha) n}= e^{(\epsilon-\frac14\alpha) n}=e^{-\frac1{20}\alpha}
\end{equation}
as $\epsilon=\frac15\alpha$.

Let us now return to the measure~$\mu$, define
\begin{multline*}
 Q_n=\Bigl\{x\in X_0\,\big|\,\mbox{$x$ has less than~$e^{\tau n}$ independent}\\\mbox{
 $U$-returns within~$a^nB_1^Ua^{-n}$ to~$X_0\cap (xD_{r,\kappa n}^{ b  })$}\Bigr\}.
	\end{multline*}
We claim that
\begin{equation}\label{equ;mainclaimformu}
\mu(Q_n)\ll e^{-\frac1{20}\alpha n}.
\end{equation}

To prove the claim it suffices to 
show
\begin{equation}\label{mu-ineqwith4}
	\mu_{x_0}^{\mathcal{A}_V}(Q_n)\ll e^{-\frac1{20}\alpha n}
\end{equation}
for a.e.~$x_0$, or 
since~$Q_n\subset X_0$ it suffices to consider a.e.~$x_0\in X_0$. 
Fix one such~$x_0$ and let again~$\nu=\mu_{x_0}^{\mathcal{A}_V}$. 
Define~$P_{n,k}\supseteq Q_{n,k}$ as above.
By our choice of the partition~$\eta$ we have~$P_{n,k}\subseteq xD_{r,\kappa n}^ b $ for all~$x\in P_{n,k}\cap X_0$. Hence every point of~$P_{n,k}$
that has at least~$e^{\tau n}$ independent~$U$-returns to~$P_{n,k}$
has at least that many to~$xD_{r,\kappa n}^ b $. Also~$X_0\setminus\bigcup_{k=1}^{K_n}P_{n,k}$ has~$\nu$-measure zero, which shows that~\eqref{nu-ineqwith4} implies~\eqref{mu-ineqwith4} and so the claim in~\eqref{equ;mainclaimformu}.
	
By the easy half of the Borel Cantelli lemma we see that a.e.~$x\in X_0$ can only belong to finitely many of the sets~$Q_n$ for~$n\geq N_0$.
It follows for a.e.~$x\in X_0$ and for large enough~$n$ that there 
are at least~$e^{\tau n}$ many~$U$-returns within~$a^nB_1^Ua^{-n}$ 
to~$xD_{r,\kappa n}^ b $. Since~$\mu(X_0)>1-\delta$ and~$\delta>0$ was arbitary, the theorem follows.
\end{proof}

\section{Unipotent invariance}\label{secunipotent}

We will consider in this section the first of the two possible outcomes of 
Theorem~\ref{zero-or-fast-pol-rec}, 
which in suitable circumstances leads to unipotent
invariance of the measure $\mu$. 

\subsection{Conditional positive entropy contribution implies invariance}

We continue with the setup of Theorem~\ref{zero-or-fast-pol-rec}, and in particular let~$X=\Gamma\backslash G$ be a locally homogeneous space defined by the lattice~$\Gamma$ arising
from a~$\Q$-structure~$\GG$ of the~$S$-arithmetic 
group~$G=\GG(\Q_S)$, and~$A<G$ a 
class~$\mathcal{A}'$-subgroup of higher rank.  
We suppose that~$U<G$ is a coarse Lyapunov subgroup, \ $ b \in A$, and $V \leq G_ b ^+$ a product of coarse Lyapunov groups satisfying the conditions of Theorem~\ref{zero-or-fast-pol-rec}, i.e. $U$ normalizes $V$, \ $U \cap V = \{e\}$ and 
 \[
  G^+_b \leq U \ltimes V\leq G^+_bC_G(b).
  \]
  Let~$\mathcal A_V$ be the~$A$-invariant~$\sigma$-algebra generated by the function~$x\mapsto\mu_x^{V}$.
  Furthermore, for every $x \in X$, let $I _ x ^ V$ denote the (closed) subgroup of $V$ defined by
\begin{equation}\label{eq: definition of I_x^V}
I _ x ^ V = \left\{ v \in V: [\mu _ x ^ V] = [\mu _ x ^ V.v] \right\};
\end{equation}
clearly $x \mapsto I^V_x$ is $\mathcal{A} _ V$-measurable.

A key observation that is the basis of the paper \cite{Einsiedler-Lindenstrauss-symmetry}  is that positivity of the relative entropy $h _ \mu (b, V| \mathcal A _ V)$ implies non triviallity of $I_x^V$. At its simplest, this is \cite[Lemma 3.1]{Einsiedler-Lindenstrauss-symmetry} which we now quote;  because of its simplicity and importance we also recall the proof.

\begin{lemma}[{\cite[Lemma 3.1]{Einsiedler-Lindenstrauss-symmetry}}]\label{pos-ent-implies-invariance-1}
Let $U$, $V$, $b$, $\mathcal A_V$ be as above. Then for $\mu$ a.e.~$x$ and for $\mu_x^{\mathcal A_V}$ a.e.~$y$, \ $\supp (\mu_x^{\mathcal A_V})_y^V < I_x^V$.  In particular, if $h_\mu( b ,V |\mathcal A_V)>0$ then $I_x^V$ is non-trivial a.s.
\end{lemma}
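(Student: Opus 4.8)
The plan is to prove the support statement $\supp(\mu_x^{\mathcal A_V})_y^V\subseteq I_x^V$ directly from the characterising properties of leaf-wise measures together with the compatibility formula \eqref{shift-formula}, and then deduce the ``in particular'' from the fact that a positive relative entropy contribution forces a non-atomic leaf-wise measure. I will work throughout with the conditional measure $\mu_x^{\mathcal A_V}$, for which leaf-wise measures along $V$ are defined since $\mu=\int\mu_x^{\mathcal A_V}\dee\mu(x)$ and the relevant injectivity holds $\mu$-a.e.

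First I would fix a $V$-subordinate $\sigma$-algebra $\mathcal B$ on some measurable set $Y$ of positive measure, with shapes $V_y\subseteq V$ as in \S\ref{characterizingprops}. The key point is that $\mu_x^{\mathcal A_V}$ is supported on its atom $[x]_{\mathcal A_V}$ (here using that $\mathcal A_V$ is countably generated), so that by the tower property for conditional measures, for $\mu_x^{\mathcal A_V}$-a.e.\ $y\in Y$ the further conditional measure $(\mu_x^{\mathcal A_V})_y^{\mathcal B}$ gives full mass to $[y]_{\mathcal B}\cap[x]_{\mathcal A_V}=(V_y.y)\cap[x]_{\mathcal A_V}$. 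On the other hand, applying the characterising property of leaf-wise measures to the probability measure $\mu_x^{\mathcal A_V}$, this same conditional measure is proportional to the push-forward of $(\mu_x^{\mathcal A_V})_y^V|_{V_y}$ under $v\mapsto v.y$. Comparing the two descriptions, and letting $V_y$ exhaust $V$ by varying $\mathcal B$, I would conclude (using that $I_x^V$ is closed, see below) that $\supp(\mu_x^{\mathcal A_V})_y^V$ is contained in the ``return set'' $\{v\in V:v.y\in[x]_{\mathcal A_V}\}$, for $\mu$-a.e.\ $x$ and $\mu_x^{\mathcal A_V}$-a.e.\ $y$.

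Next I would identify this return set with $I_x^V$. Since $x\mapsto I_x^V$ depends only on the proportionality class $[\mu_x^V]$, which is constant along $\mathcal A_V$-atoms, one has $I_y^V=I_x^V$ for $\mu_x^{\mathcal A_V}$-a.e.\ $y$. If $v.y\in[x]_{\mathcal A_V}$, then $[\mu_{v.y}^V]=[\mu_x^V]=[\mu_y^V]$; feeding this into the compatibility formula \eqref{shift-formula}, which gives $[\mu_{v.y}^V.v]=[\mu_y^V]$, yields $[\mu_y^V.v]=[\mu_y^V]$, i.e.\ $v\in I_y^V=I_x^V$. Together with the previous paragraph this proves $\supp(\mu_x^{\mathcal A_V})_y^V\subseteq I_x^V$. (Running the same computation backwards shows the return set is in fact all of $I_x^V$, so that $(\mu_x^{\mathcal A_V})_y^V\propto\mu_y^V|_{I_x^V}$, but I would not need this refinement.) For the ``in particular'' I would invoke the conditional analogue of the criterion recalled in \S\ref{sec:dynamicsintro} (cf.\ also the conditional product formula in \S\ref{sec;product}): $h_\mu(b,V\mid\mathcal A_V)=0$ precisely when $(\mu_x^{\mathcal A_V})_y^V$ is $\mu$-a.s.\ the Dirac mass at $e$ (here using $V\leq G_b^+$). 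Hence $h_\mu(b,V\mid\mathcal A_V)>0$ forces $(\mu_x^{\mathcal A_V})_y^V$ to be non-atomic on a set of positive measure, so its support contains two distinct points $v_1\neq v_2\in I_x^V$, whence $e\neq v_1^{-1}v_2\in I_x^V$. Finally, the equivariance \eqref{eq:equivariance} gives $I_{a.x}^V=\theta_a(I_x^V)$, so $\{x:I_x^V\neq\{e\}\}$ is $A$-invariant and therefore conull by ergodicity of $\mu$.

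The only place requiring genuine care is the first step: one must keep straight that the conditional measures of $\mu_x^{\mathcal A_V}$ are concentrated both on their own $\mathcal B$-atoms and on $[x]_{\mathcal A_V}=\supp\mu_x^{\mathcal A_V}$, and must match a $V$-subordinate $\sigma$-algebra with the leaf-wise measure of the \emph{conditional} measure $\mu_x^{\mathcal A_V}$ rather than of $\mu$ itself; closedness of $I_x^V$ is what lets one pass from ``full mass on the return set'' to ``support inside $I_x^V$''. Once this is set up, the passage to $I_x^V$ and the entropy input are purely formal.
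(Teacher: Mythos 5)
Your argument is correct and is essentially the paper's own proof: both rest on the observation that $(\mu_x^{\mathcal A_V})_y^V$-a.e.\ $v$ satisfies $v.y\in[x]_{\mathcal A_V}$, so that $[\mu_{v.y}^V]=[\mu_x^V]=[\mu_y^V]$, which combined with the compatibility formula \eqref{shift-formula} gives $[\mu_x^V]=[\mu_x^V].v$, and closedness of $I_x^V$ upgrades this to the whole support; your more explicit detour through $V$-subordinate $\sigma$-algebras is just an unwinding of the same step the paper states directly. Only a cosmetic correction: in the ``in particular'' part, positive conditional entropy yields that $(\mu_x^{\mathcal A_V})_y^V$ is not a.s.\ the Dirac mass at the identity (not that it is non-atomic), which together with $e\in\supp(\mu_x^{\mathcal A_V})_y^V$ already produces a nontrivial element of $I_x^V$, and your equivariance-plus-ergodicity argument then gives the a.s.\ statement.
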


\begin{proof}
By the definition of conditional measures, for $\mu$ a.e.~$x$, for $\mu_x^{\mathcal A_V}$ a.e. $y$, we have that $[y]_{\mathcal A_V}=[x]_{\mathcal A_v}$, i.e. by definition of $\mathcal A_V$,\ $[\mu_x^V]=[\mu_y^V]$. Also for $\mu_x^{\mathcal A_V}$ a.e. $y$, for $(\mu_x^{\mathcal A_V})_y^V$ a.e. $v\in V$ we have that $[y]_{\mathcal A_V}=[v.y]_{\mathcal A_v}$ (since $[y]_{\mathcal A_V}$ is essentially constant with respect to $\mu_x^{\mathcal A_V}$ and that $v.y$ is in the conull set where \eqref{shift-formula} holds (since this $\mu$-conull set is $\mu_x^{\mathcal A_V}$ conull a.s.). Thus for $\mu_x^{\mathcal A_V}$ a.e.~$y$ there is a dense sequence of $v_i \in \supp (\mu_x^{\mathcal A_V})_y^V$ for which both
\[
\begin{aligned}
{[\mu_x^V]}&=[\mu_{v_i.y}^V]\qquad\text{and}\\
[\mu_{v_i.y}^V].v_i &= [\mu_y^V]
\end{aligned}
\]
hold. Since we also have $[\mu_x^V]=[\mu_y^V]$, it follows that $[\mu_{x}^V]=[\mu_{x}^V].v_i$ for all $i$. Since the group $\{v \in V: [\mu_x^V]=[\mu_x^V].v\}$ is closed, we may conclude that $\mu_x^V$ is indeed right invariant under the group generated by $\supp (\mu_x^{\mathcal A_V})_y^V$.
\end{proof}

In fact, the following stronger fact holds:
\begin{lemma}[{\cite [Cor.~3.4]{Einsiedler-Lindenstrauss-symmetry}}] For $\mu$-a.e.\ $x$, and $\mu_x^{\mathcal{A}}$-a.e.\ $y$, the leafwise measure $(\mu_x^{\mathcal{A}})_y^V$ is proportional to Haar measure on $I_x^V$.
\end{lemma}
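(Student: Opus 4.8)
The plan is to upgrade Lemma~\ref{pos-ent-implies-invariance-1} in two moves: first reduce the assertion about the $V$-leafwise measure of $\mu_x^{\mathcal{A}_V}$ to the corresponding assertion for its $I_x^V$-leafwise measure, and then identify the latter with Haar measure on $I_x^V$.

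For the reduction, fix a $\mu$-generic $x$ and write $I=I_x^V$; this is a closed (hence unimodular, being unipotent) subgroup of $V$, and since $x\mapsto I_x^V$ is $\mathcal{A}_V$-measurable we have $I_y^V=I$ for $\mu_x^{\mathcal{A}_V}$-a.e.\ $y$. By Lemma~\ref{pos-ent-implies-invariance-1}, $(\mu_x^{\mathcal{A}_V})_y^V$ is, for a.e.\ such $y$, a locally finite measure concentrated on $I$. Given a $V$-subordinate $\sigma$-algebra $\mathcal B$ on some $Y$ with shapes $V_y$, one may (after shrinking the shapes) refine it to an $I$-subordinate $\sigma$-algebra with atoms $(V_y\cap I).y$; since by the characterising property of \S\ref{characterizingprops} the $\mathcal B$-conditional measures of $\mu_x^{\mathcal{A}_V}$ are concentrated on these $I$-orbit pieces, they agree up to proportionality with the conditional measures for the refinement. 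Hence $(\mu_x^{\mathcal{A}_V})_y^{I}\propto(\mu_x^{\mathcal{A}_V})_y^V$, and it suffices to show that $\theta:=(\mu_x^{\mathcal{A}_V})_y^{I}$ is proportional to Haar measure $m_I$.

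Being a locally finite measure supported on all of $I$, $\theta$ is proportional to $m_I$ precisely when it is genuinely right $I$-invariant. First one records that $\theta$ is at least \emph{quasi}-invariant under $I$ with constant Radon--Nikodym cocycle: using the compatibility formula \eqref{shift-formula} for $\mu_x^{\mathcal{A}_V}$ together with the fact that moving from $y$ to $v.y$ along $v\in\supp\theta\subseteq I$ stays inside the $\mathcal{A}_V$-atom (again by \eqref{shift-formula} and $v\in I_y^V$), one obtains as in the proof of Lemma~\ref{pos-ent-implies-invariance-1} that $[\theta.v]=[\theta]$, i.e.\ $\theta.v=c(v)\theta$ for a homomorphism $c\colon I\to\R_{>0}$. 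It then remains to prove $c\equiv 1$. This is where the $A$-dynamics enters: the equivariance \eqref{eq:equivariance}, which descends to the conditional leafwise measures since $\mathcal{A}_V$ is $A$-invariant, shows that $c$ transforms along the $A$-orbit of $x$ by the (unipotent, hence Haar-preserving) conjugations $\theta_a|_I$; taking $a=b$, which expands $V$ and hence contracts $I$ under $\theta_b^{-1}$, and applying Luzin's theorem together with Poincar\'e recurrence ($b^{n_k}.x\to x$ for a.e.\ $x$), the character $c$ at $x$ is forced to equal its value at the identity, namely $1$. Thus $\theta$ is right $I$-invariant, hence a Haar measure on $I$, which by the first move is the assertion.

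The main obstacle is exactly this last step --- the vanishing of the modular character $c$, equivalently the passage from quasi-invariance to honest invariance of the conditional leafwise measure on $I_x^V$. Everything before it is soft bookkeeping with conditional and leafwise measures, but this step genuinely requires the global structure (the $A$-equivariance, Poincar\'e recurrence, and the higher-rank/class-$\mathcal{A}'$ hypothesis guaranteeing an element of $A$ contracting $I_x^V$), and is precisely the content imported from \cite{Einsiedler-Lindenstrauss-symmetry}.
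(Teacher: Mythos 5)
There is a genuine gap at the central step of your argument, namely the claimed quasi-invariance $[\theta.v]=[\theta]$ for $v\in\supp\theta$. In the proof of Lemma~\ref{pos-ent-implies-invariance-1} the two inputs are the compatibility formula \emph{and} the fact that the leafwise measure of $\mu$ itself, $y\mapsto[\mu_y^V]$, is constant on $\mathcal{A}_V$-atoms --- which holds by the very definition of $\mathcal{A}_V$. When you replace $\mu_y^V$ by $\theta_y=(\mu_x^{\mathcal{A}_V})_y^V$, the compatibility formula for the conditional measure only gives $[\theta_{v.y}.v]=[\theta_y]$; to conclude $[\theta_y.v]=[\theta_y]$ you would also need $[\theta_{v.y}]=[\theta_y]$, i.e.\ that the leafwise measures \emph{of the conditional measure} are constant along the atom. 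Nothing in the definition of $\mathcal{A}_V$ gives this (it is generated by $y\mapsto\mu_y^V$, not by $y\mapsto\theta_y$), and in fact this constancy --- that $\theta_y$ is a.e.\ one and the same measure, Haar on $I_x^V$ --- is essentially the statement being proved. So "as in the proof of Lemma~\ref{pos-ent-implies-invariance-1}" does not deliver even quasi-invariance of $\theta$, and the rest of your argument (killing the modular character by $A$-equivariance, Luzin and Poincar\'e recurrence) has nothing to act on. Note that the paper itself does not prove this lemma: it is quoted from \cite{Einsiedler-Lindenstrauss-symmetry}, with the explicit remark that the proof is "slightly more delicate" than that of Lemma~\ref{pos-ent-implies-invariance-1} --- the delicacy being exactly the point elided here.

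The correct route runs differently. First, by \eqref{shift-formula}, $v.y$ lies in the same $\mathcal{A}_V$-atom as $y$ precisely when $v\in I_y^V$, so the atoms intersected with $V$-leaves are (mod null sets) single $I_x^V$-cosets. Second, one upgrades the proportionality in the definition \eqref{eq: definition of I_x^V} to genuine equality, i.e.\ shows $\mu_x^V.v=\mu_x^V$ for $v\in I_x^V$; this is where an equivariance-plus-recurrence argument of the kind you sketch legitimately enters, but applied to $\mu_x^V$ (whose quasi-invariance under $I_x^V$ is known by definition), not to $\theta$. Third --- and this is the delicate part --- a double-conditioning (fibre-measure) argument identifies $(\mu_x^{\mathcal{A}_V})_y^V$ with the conditional of $\mu_y^V$ along the partition of $V$ into $I_x^V$-cosets, which by the genuine $I_x^V$-invariance of $\mu_y^V$ is Haar on $I_x^V$. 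Your step 1 (passing from the $V$-leafwise to the $I$-leafwise measure of the conditional) is harmless but also does not substitute for this: knowing $\supp\theta\subseteq I_x^V$ (Lemma~\ref{pos-ent-implies-invariance-1}) says nothing about invariance, and your assertion that $\theta$ has full support in $I_x^V$ is likewise unjustified at that stage. Finally, even granting quasi-invariance, the character-killing step as written is loose: the putative character depends measurably on the pair $(x,y)$, the equivariance \eqref{eq:equivariance} only holds a.e., and recurrence must be arranged compatibly with the conditional measures --- all fixable with standard care, but secondary to the main gap above.
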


While conceptually this follows from similar arguments as Lemma~\ref{pos-ent-implies-invariance-1}, showing this is slightly more delicate and we refer the reader to \cite{Einsiedler-Lindenstrauss-symmetry}. For our purposes, the simpler Lemma~\ref{pos-ent-implies-invariance-1} is sufficient.

The following simple proposition is standard (especially in the case that is relevant to this paper, i.e. when $V$ is a product of vector spaces over $\Q _ v$). 

\begin{proposition}\label{prop: local structure of closed groups}
    Let $V = \prod_ {s \in S} V _ s$ be a product of unipotent algebraic groups $V _ s$ over $\Q _ s$ respectively. Let $I$ be a closed subgroup of $V$. Then there is an open neighborhoods $B ^ V _ r$ of the identity in $V$ so that
\[
I \cap B ^ V _ r = \left (\prod_ {v \in S} I' _ v \right) \cap B ^ V _ r
\]
with each $I ' _ s$ the group of $\Q_s$-points of a (possibly trivial) algebraic subgroup of $V _ s$.
\end{proposition}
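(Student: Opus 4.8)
The plan is to pass to exponential coordinates, reducing the statement to one about Lie subalgebras, and then to extract the product-over-places structure from the interplay between the connected archimedean factor and the totally disconnected non-archimedean ones. First, since $\operatorname{char}\Q_v=0$ and each $V_v$ is unipotent, the exponential $\exp_v\colon\mathfrak g_v\to V_v$ (where $\mathfrak g_v$ is the $\Q_v$-Lie algebra of $V_v$) is an isomorphism of $\Q_v$-varieties with polynomial inverse $\log_v$, the Baker--Campbell--Hausdorff series being a finite Lie polynomial in the nilpotent setting. Under $\exp_v$, algebraic $\Q_v$-subgroups $\mathbb{I}'_v\le V_v$ correspond bijectively to $\Q_v$-Lie subalgebras $\mathfrak h_v\le\mathfrak g_v$ via $\mathbb{I}'_v(\Q_v)=\exp_v(\mathfrak h_v)$, and $\exp_v(\mathfrak h_v)$ is Zariski closed, being the image of the linear subspace $\mathfrak h_v$ under the biregular map $\exp_v$. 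Taking each $B^{V_v}_r$ to be an $\exp_v$-image of a ball around the origin (permissible after comparing metrics), the claim reduces to showing that $I\le V=\prod_v V_v$ agrees, near the identity, with $\prod_v\exp_v(\mathfrak h_v)$ for suitable $\Q_v$-Lie subalgebras $\mathfrak h_v$. (When the $V_v$ are vector groups, as in the application here, $\exp_v$ is the identity and one instead uses directly that a closed subgroup of $\Q_v^n$ is a $\Q_v$-subspace together with a lattice in a complement.)

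Next, I would split $V=V_\infty\times V_f$ with $V_f=\prod_{p\in S\setminus\{\infty\}}V_p$ totally disconnected and locally compact. The identity component $I^\circ$ projects trivially to $V_f$, hence is a connected closed subgroup of the real unipotent group $V_\infty$; by Cartan's closed-subgroup theorem $I^\circ=\exp_\infty(\mathfrak h_\infty)=\mathbb{I}'_\infty(\R)$ for an $\R$-subalgebra $\mathfrak h_\infty$, which is algebraic over $\R$ by the first step. Since $I/I^\circ$ is totally disconnected and locally compact it has a compact open subgroup; let $D\le I$ be its preimage, an open subgroup with $D^\circ=I^\circ$ and $D/D^\circ$ compact. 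As unipotent groups in characteristic $0$ are torsion-free, so is $D/D^\circ$ (a finite subgroup would lift, via the splitting of extensions of finite groups by $\R$-vector spaces, to a finite subgroup of the torsion-free group $D$). I would then show $D=I^\circ\times K$ as an internal direct product with $K\le V_f$ compact: the extension $1\to I^\circ\to D\to D/D^\circ\to 1$ splits because the continuous second cohomology of a profinite group with coefficients in an $\R$-vector space vanishes, the continuous linear conjugation action of the profinite group $D/D^\circ$ on $I^\circ\cong\R^d$ preserves an inner product and hence has finite --- torsion-free, so trivial --- image in $O(d)$, and a compact subgroup of $V_\infty$ is trivial (a simply connected nilpotent Lie group has no nontrivial compact subgroup), forcing $K\le\{e\}\times V_f$.

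Finally, a compact subgroup $K\le\prod_p V_p$ lies inside a product of pro-$p$ groups, hence is pro-nilpotent and equals the product of its pro-$p$ Sylow subgroups; since the pro-$p$ part of $\prod_q V_q$ is carried by $V_p$ alone, this yields $K=\prod_p(K\cap V_p)$. Each $K\cap V_p$ is a compact $\Q_p$-analytic subgroup of $V_p$ (by the non-archimedean analogue of Cartan's closed-subgroup theorem), hence open in the $\Q_p$-points $\mathbb{I}'_p(\Q_p)$ of its Zariski closure, and so agrees with $\mathbb{I}'_p(\Q_p)$ on a neighbourhood of the identity. Putting the steps together, on a small enough neighbourhood of the identity $I$ coincides with $D=\mathbb{I}'_\infty(\R)\times\prod_p\mathbb{I}'_p(\Q_p)=\prod_{v\in S}\mathbb{I}'_v(\Q_v)$, as asserted.

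The step I expect to carry the real (if entirely standard) content is the middle one: the decoupling of archimedean and non-archimedean behaviour near the identity, i.e.\ that the germ of $I$ at the identity is a genuine product over the places, with no graph-like entanglement mixing the factors. Everything else is the classical structure theory of closed subgroups of $\Q_v$-analytic groups at a single place. The decoupling is forced by the combination of facts used above --- $V_\infty$ is connected with neither small nor nontrivial compact subgroups, the $V_p$ are totally disconnected, and all groups in sight are torsion-free --- or, equivalently, by the absence of nontrivial continuous homomorphisms between the $V_v$ for distinct places $v$.
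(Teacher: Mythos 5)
The paper itself gives no argument for this proposition (it is explicitly left to the reader, with a pointer to \cite[\S6]{EinsiedlerKatokNonsplit}), so I can only compare your proposal with the standard argument — and your architecture is exactly the expected one and is essentially correct: exp/log reduces algebraic subgroups to Lie subalgebras at each place; $I^\circ$ lands in $V_\infty$ (total disconnectedness of $V_f$), is $\exp_\infty(\mathfrak h_\infty)$ and hence $\R$-algebraic; van Dantzig gives an open subgroup $D\leq I$ with $D/I^\circ$ compact, which suffices since only the germ of $I$ at $e$ matters; the compact complement is pushed into $V_f$ because simply connected nilpotent Lie groups have no nontrivial compact subgroups; a compact subgroup of $\prod_p V_p$ splits as a product over the (distinct) primes by the pro-$p$ Sylow/procyclic argument; and each compact $K_p$ is open in the $\Q_p$-points of its Zariski closure (here the "hence" hides the fact that unipotent groups in characteristic $0$ are Zariski connected, which is what rules out a finite-index discrepancy — worth saying).

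Two places in the write-up do not hold up as stated, though both are repairable and neither threatens the conclusion in the case the paper actually uses (abelian $V$). First, the inference "finite --- torsion-free, so trivial --- image in $O(d)$" is fallacious: torsion-freeness of the profinite group $D/D^\circ$ does not pass to its finite quotients (e.g.\ $\Z_p\twoheadrightarrow\Z/p\Z$), so a finite image need not be trivial. Fortunately you do not need this step at all: once $K\subseteq\{e\}\times V_f$, it commutes elementwise with $I^\circ\subseteq V_\infty\times\{e\}$ simply because they sit in complementary direct factors; and if you do want triviality of the conjugation action, it follows because each element of $D$ acts on $\mathfrak h_\infty$ by a unipotent matrix, and a unipotent element of a compact subgroup of $\GL_d(\R)$ is trivial. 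Second, the splitting step (and the torsion-freeness argument, and the "linear action of $D/D^\circ$ on $I^\circ\cong\R^d$") implicitly assumes $I^\circ$ abelian: for general unipotent $V_\infty$ the group $I^\circ$ is only a simply connected nilpotent group, the $D/D^\circ$-action on it is not even well defined, and $H^2_{\mathrm{cont}}$ with vector-space coefficients does not apply verbatim; since the proposition is stated for general unipotent $V_s$, you would need e.g.\ a maximal-compact-subgroup argument for the almost connected group $D$, or induction on the nilpotency class (your parenthetical remark acknowledges the abelian case but does not close this). Even in the abelian case you can bypass the measurable-section/Moore-cohomology subtleties behind "vanishing continuous $H^2$ implies the topological extension splits" by a direct argument: for $d\in D$ pick $n_k\to\infty$ with $d^{n_k}\to e$ in the compact group $D/D^\circ$; then $\operatorname{dist}(n_k d_\infty,\mathfrak h_\infty)=n_k\operatorname{dist}(d_\infty,\mathfrak h_\infty)\to0$ forces $d_\infty\in\mathfrak h_\infty$, so $\pi_\infty(D)\subseteq I^\circ$ and hence $D=I^\circ\times\pi_f(D)$ with $\pi_f(D)\cong D/D^\circ$ compact, which is the decoupling you want. (Also note the final identification should read that $I$ agrees near $e$ with $I^\circ\times K$, which in turn agrees near $e$ with $\prod_v I'_v(\Q_v)$; $K$ is only open in $\prod_p I'_p(\Q_p)$, not equal to it.)
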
 

We leave the proof to the reader (cf.\ also \cite[\S6]{EinsiedlerKatokNonsplit} for a relevant discussion).

\begin{lemma}
    Let $U$, $V$, $b$ be as in Theorem~\ref{zero-or-fast-pol-rec}, and let $\mathcal{A} _ V$ be the $\sigma$-algebra corresponding to the map $x \mapsto \mu _ x ^ V$ as above. Then under the assumptions of Theorem~\ref{zero-or-fast-pol-rec}, the group $I _ x ^ V$ is constant a.e.\ and is equal to a product $\prod_ {v \in S} I _ v$ with each $I_v$ (the $\Q_v$-points of) an $A$-normalized $\Q _ v$-algebraic subgroup of the $\Q _ v$-component of $V$.
\end{lemma}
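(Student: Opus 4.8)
The plan is to upgrade the $A$-invariance of $\mu$ to an exact equivariance relation for the assignment $x\mapsto I_x^V$, and then to combine this with Poincar\'e recurrence and $A$-ergodicity, first to pin down the \emph{discrete} invariants attached to $I_x^V$ and then $I_x^V$ itself. \textbf{Step 1 (equivariance).} Since $V$ is a product of coarse Lyapunov subgroups it is $A$-normalized, and $V\le G_b^+$, so \eqref{eq:equivariance} applies and gives $[\mu_{a.x}^V]=(\theta_a)_*[\mu_x^V]$ for every $a\in A$ and $\mu$-a.e.\ $x$. Using the elementary identity $\bigl((\theta_a)_*\nu\bigr).v=(\theta_a)_*\bigl(\nu.\theta_{a^{-1}}(v)\bigr)$ for a locally finite measure $\nu$ on $V$ and $v\in V$, together with the definition \eqref{eq: definition of I_x^V}, one deduces
\[
 I_{a.x}^V=\theta_a\bigl(I_x^V\bigr)\qquad(a\in A,\ \mu\text{-a.e.\ }x).
\]
Restricting $a$ to a countable dense subgroup of $A$ and using that $I_x^V$ is closed, one arranges this to hold simultaneously for all $a\in A$ off a single $\mu$-null set.

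\textbf{Step 2 (discrete invariants and algebraicity).} By Proposition~\ref{prop: local structure of closed groups}, for each $x$ there is an $r>0$ with $I_x^V\cap B_r^V=\bigl(\prod_{v\in S}I'_{v,x}\bigr)\cap B_r^V$, with each $I'_{v,x}$ an algebraic subgroup of $V_v$; in characteristic $0$ these are connected, hence determined by their Lie algebras $\mathfrak i_{v,x}\subseteq\mathfrak v_v:=\Lie V_v$, and I set $\mathfrak i_x=\bigoplus_v\mathfrak i_{v,x}$. Since $\theta_a$ preserves the place decomposition of $\mathfrak v:=\bigoplus_v\mathfrak v_v$ and differentiates to $\Ad(a)$, Step 1 gives $\mathfrak i_{a.x}=\Ad(a)\mathfrak i_x$, so the dimension vector $(\dim_{\Q_v}\mathfrak i_{v,x})_v$ is $A$-invariant and therefore a.e.\ equal to a fixed vector $(d_v)_v$ by ergodicity of $\mu$. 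Applying the same equivariance to the ``compact/lattice part'' of $I_x^V$ — recorded place by place, e.g.\ via the index of the open subgroup $\langle I_x^V\cap B_r^V\rangle$ in $I_x^V$ — produces a measurable cocycle over the $A$-action into a discrete abelian group on which some class-$\cA'$ element acts by a \emph{nonzero} translation (this uses that $A$ acts with nontrivial Lyapunov weights on all of $\mathfrak v$, and the scaling $e^{\chi(a)}$ resp.\ $\lambda_v^{\chi(a)}$ recorded in \S\ref{sec:dynamicsintro}); since no probability measure on a discrete group is invariant under a nonzero translation, this part must be trivial a.e. Hence $I_x^V=\exp\mathfrak i_x$ a.e.

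\textbf{Step 3 (constancy of $\mathfrak i_x$).} It remains to show that the measurable, $A$-equivariant map $x\mapsto\mathfrak i_x$, taking values in the closed subvariety of $\prod_v\operatorname{Gr}_{d_v}(\mathfrak v_v)$ consisting of subalgebras, is a.e.\ constant. Pushing $\mu$ forward by this map yields an $A$-invariant ergodic measure $\nu$ on that variety. For each $a\in A$, $\Ad(a)$ acts on each $\mathfrak v_v$ semisimply with all eigenvalues positive reals (at $v=\infty$) resp.\ with eigenvalues of pairwise distinct absolute value on distinct eigenspaces (at a finite place $v$, since they are integer powers of $\lambda_v$ with $|\lambda_v|_v\neq1$); in either case the induced action of $\Ad(a)$ on $\operatorname{Gr}_{d_v}(\mathfrak v_v)$ is gradient-like, i.e.\ every orbit converges to an $\Ad(a)$-invariant subspace. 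Hence, by Poincar\'e recurrence, $\nu$ is supported on $\Ad(a)$-fixed points for each $a$, so on the closed set of $\Ad(A)$-invariant subalgebras; that is, $\Ad(a)\mathfrak i_x=\mathfrak i_x$ for all $a\in A$ and a.e.\ $x$. Feeding this back into Step 1 gives $\mathfrak i_{a.x}=\mathfrak i_x$, so $x\mapsto\mathfrak i_x$ is $A$-invariant and therefore a.e.\ equal to a fixed subalgebra $\mathfrak i=\bigoplus_v\mathfrak i_v$ by ergodicity. Then $I_v:=\exp\mathfrak i_v$ is an $A$-normalized $\Q_v$-algebraic subgroup of $V_v$, and $I_x^V=\prod_vI_v$ a.e., as claimed.

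\textbf{Expected main difficulty.} The genuinely delicate point is Step 2, specifically the passage from the \emph{local} product-of-algebraic-groups description of $I_x^V$ provided by Proposition~\ref{prop: local structure of closed groups} to the \emph{global} assertion that $I_x^V$ \emph{equals} such a product — i.e.\ ruling out, say, a compact-open $\ZZ_p$-type factor at a finite place or a lattice translate at the archimedean place. Steps 1 and 3 form a fairly standard equivariance/recurrence package, whereas this reduction must exploit the scaling behaviour of class-$\cA'$ elements in an essential way.
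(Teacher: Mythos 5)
Your Steps 1 and 3 are essentially the paper's own argument: the equivariance $I^V_{a.x}=\theta_a(I^V_x)$ deduced from \eqref{eq:equivariance} and \eqref{eq: definition of I_x^V}, and then, once algebraicity is in hand, viewing the Lie algebra of $I^V_{x}$ place by place as a point in a Grassmannian, invoking the fact that orbits of class-$\mathcal{A}'$ elements in projective varieties converge to fixed points, and applying Poincar\'e recurrence separately for each $a\in A$ to get $\operatorname{Ad}(a)\mathfrak i_x=\mathfrak i_x$, hence $I^V_{a.x}=I^V_x$ and constancy by ergodicity. This matches the paper.

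The divergence is your Step 2, and as written it has a gap. The paper globalizes Proposition~\ref{prop: local structure of closed groups} directly: on a positive-measure set where the local radius is bounded below by some fixed $r>0$, Poincar\'e recurrence along $b$ gives times $n_k\to\infty$ with $b^{-n_k}.x$ in that set; conjugating the identity $I^V_{b^{-n_k}.x}\cap B^V_r=\bigl(\prod_v I'_v\bigr)\cap B^V_r$ by $b^{n_k}$ (via Step 1) describes $I^V_x$ on the sets $\theta_b^{n_k}(B^V_r)$, and since $b$ expands $V$ these exhaust $V$; as a unipotent algebraic subgroup is determined by its intersection with any identity neighborhood, $I^V_x$ is therefore globally a product of algebraic subgroups. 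Your substitute invariant, the index of $\langle I^V_x\cap B^V_r\rangle$ in $I^V_x$, does not do this job: if $I^V_x$ has a lattice factor at the archimedean place that index is infinite, and at a finite place the non-algebraic part is a compact $\Z_p$-module factor (e.g.\ $p^k\Z_p$ inside $\Q_p$), so it is not a finite-index phenomenon at all; consequently the ``discrete abelian group'' receiving your alleged cocycle and the claimed fixed nonzero translation are not actually defined. The underlying principle you appeal to (any non-algebraic part carries a scale -- covolume of a lattice part, level of a $\Z_p$-module part -- that $\theta_b$ shifts by a fixed nonzero amount, contradicting invariance of $\mu$) is sound and could be carried through, but it requires first a global structure statement for closed subgroups of $V$ and constancy of the per-weight dimension data before the shift is constant in $x$; this is precisely the work the paper's expansion-plus-recurrence argument avoids. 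I would either adopt the paper's globalization or spell out the scale invariant carefully in place of the index.
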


\begin{proof}
By \eqref{eq:equivariance} and the definition of $I _ x ^ V$ it follows that for every $a \in A$ and $\mu$-a.x.~$x$
\begin{equation} \label{equivariance of I}
I _ {a . x} ^ V = a (I _ x ^ V) a^{-1}.
\end{equation}

That $I _ x ^ V$ is a.e.\ a product $\prod_ {v \in S} I _ {x,v}$ with each $I_{x,v}$  a $\Q _ v$-algebraic subgroup of the $\Q _ v$-component of $V$ follows from Proposition~\ref{prop: local structure of closed groups} which implies that the staement holds when restricting $I_x^V$ to a sufficiently small neighborhood of the identity, \eqref{equivariance of I} and Poincare recurrence along orbits of $b$ since $p$ expands $V$.

Elements $a_s \in G_s$ ($s \in S$) of class $\mathcal{A} '$ have the property that for every algebraic action of $G _ s$ on a projective variety $P$ over $\Q _ s$, for any point $p \in P$, we have that $a _ s ^ n p$ converge as $n \to \infty$ to an $a _ s$-invariant point of $P$. For every $s \in S$, the Lie algebra of $I _ {x,s}$ can be viewed as a point in an appropriate Grassmanian for $\mathfrak g _ s$ which is a point in a projective variety over $\Q _ s$. Employing Poincare recurrence once again, this time (individually) for every $a \in A$ we can deduce that the Lie algebra of $I _ {x,s}$ is fixed by $a$, hence $I _ {x, s}$ is normalized by $a$.

Thus for every $a \in A$ and $\mu$-a.e.~$x$ \eqref{eq:equivariance} implies that $I _ {a.x}^V= I _ x ^ V$, hence by ergodicity $I _ x ^ V$ is constant a.e.
\end{proof}

\begin{lemma}\label{pos-ent-implies-invariance}
Let $U$, $V$, $b$, $\mathcal A_V$ be as above and assume that~$h_\mu( b ,V |\mathcal A_V)>0$. Then~$\mu$ is invariant
under a nontrivial~$A$-invariant unipotent subgroup~$I<V$.
\end{lemma}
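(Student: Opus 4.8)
The plan is to read the statement off the structural results just established, with only one genuinely new ingredient. Set $I:=I_x^V$. By the preceding lemma this subgroup is, for $\mu$-a.e.\ $x$, equal to a \emph{fixed} closed subgroup $\prod_{v\in S}I_v<V$ in which each $I_v$ is the group of $\Q_v$-points of an $A$-normalized $\Q_v$-algebraic subgroup of the $\Q_v$-component of $V$; in particular $I$ is an $A$-normalized unipotent subgroup of $V$. Since $h_\mu(b,V\mid\mathcal A_V)>0$ by hypothesis, Lemma~\ref{pos-ent-implies-invariance-1} guarantees that $I$ is nontrivial. Thus everything reduces to showing that $\mu$ is $I$-invariant, for then $I$ is exactly the subgroup promised in the statement.

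To prove the $I$-invariance I would first record that the atoms of $\mathcal A_V$ are $I$-invariant: if $y$ lies in the atom $[x]_{\mathcal A_V}$ (so $[\mu_y^V]=[\mu_x^V]$) and $v\in I=I_y^V$, then using the compatibility formula \eqref{shift-formula} and $v^{-1}\in I$ we get $[\mu_{v.y}^V]=[\mu_y^V\cdot v^{-1}]=[\mu_y^V]=[\mu_x^V]$, so $v.y\in[x]_{\mathcal A_V}$ (on the relevant conull set). Consequently each conditional measure $\mu_x^{\mathcal A_V}$ lives on an $I$-invariant set and it is meaningful to ask whether it is $I$-invariant — and I claim it is. Indeed, by \cite[Cor.~3.4]{Einsiedler-Lindenstrauss-symmetry}, quoted above, for $\mu$-a.e.\ $x$ and $\mu_x^{\mathcal A_V}$-a.e.\ $y$ the leaf-wise measure $(\mu_x^{\mathcal A_V})_y^V$ is proportional to Haar measure on $I_x^V=I$; since this measure is supported on the closed subgroup $I<V$, it coincides, up to proportionality, with the $I$-leaf-wise measure $(\mu_x^{\mathcal A_V})_y^I$. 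Hence $(\mu_x^{\mathcal A_V})_y^I$ is Haar measure on $I$ for a.e.\ $y$, which by the characterization of invariance through leaf-wise measures recalled in \S\ref{characterizingprops} (recall that the unipotent group $I$ is unimodular) says precisely that $\mu_x^{\mathcal A_V}$ is $I$-invariant for $\mu$-a.e.\ $x$. Integrating the disintegration $\mu=\int_X\mu_x^{\mathcal A_V}\,d\mu(x)$ over $\mathcal A_V$ then gives, for every $v\in I$,
\[
v.\mu=\int_X v.\mu_x^{\mathcal A_V}\,d\mu(x)=\int_X\mu_x^{\mathcal A_V}\,d\mu(x)=\mu,
\]
so $\mu$ is $I$-invariant, as desired.

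The one point above that is not a verbatim application of an earlier result is the identification of the $V$-leaf-wise measure of $\mu_x^{\mathcal A_V}$ with its $I$-leaf-wise measure once we know it is supported on $I$; this is the expected locality property of leaf-wise measures (a $V$-subordinate $\sigma$-algebra may be refined to an $I$-subordinate one, and on neighbourhoods of the identity a measure supported on $I$ cannot distinguish the two), and verifying it cleanly is the main — though modest — piece of work. I would also remark, in line with the comment after Lemma~\ref{pos-ent-implies-invariance-1}, that \cite[Cor.~3.4]{Einsiedler-Lindenstrauss-symmetry} can be avoided: Lemma~\ref{pos-ent-implies-invariance-1} already yields that $\mu_x^V$ is right-invariant under the closed subgroup $J\leq I$ generated by the supports $\supp(\mu_x^{\mathcal A_V})_y^V$, and $J$ is nontrivial by positivity of the conditional entropy and — arguing as in the preceding lemma (local structure via Proposition~\ref{prop: local structure of closed groups}, equivariance \eqref{eq:equivariance}, Poincar\'e recurrence and $A$-ergodicity) — is itself a fixed $A$-normalized algebraic subgroup; it then suffices to prove $\mu$ is $J$-invariant, and the only remaining subtlety is to upgrade right-invariance of the proportionality class $[\mu_x^V]$ to genuine right-invariance of a representative, for which at the real place one uses that the associated multiplicative cocycle on $J$ is forced to be trivial by $A$-equivariance of the leaf-wise measures together with recurrence along elements of $A$ expanding $J$.
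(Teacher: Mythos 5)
Your argument is correct, but the second half takes a genuinely different route from the paper. You and the paper agree on the first step: $I_x^V$ is a.e.\ equal to a fixed nontrivial $A$-normalized unipotent subgroup $I<V$ (constancy from the preceding lemma, nontriviality from Lemma~\ref{pos-ent-implies-invariance-1} and $h_\mu(b,V\mid\mathcal A_V)>0$). For the invariance of $\mu$, however, the paper deliberately avoids \cite[Cor.~3.4]{Einsiedler-Lindenstrauss-symmetry} (it quotes it but remarks that the simpler Lemma~\ref{pos-ent-implies-invariance-1} suffices): it instead uses the compatibility formula \eqref{shift-formula} to get $I_{v.x}^V=vI_x^Vv^{-1}$ on a conull set, deduces from the a.e.\ constancy of $I_x^V$ that $I$ is normalized by the group generated by $\supp\mu_x^V$, upgrades the right-invariance in the definition of $I_x^V$ to invariance of $\mu_x^V$ under $I$ from \emph{both} sides, and then applies \cite[Prop.~4.3]{Lindenstrauss-03} (left-invariance of the $V$-leafwise measures under a subgroup implies invariance of $\mu$). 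You instead import the stronger Cor.~3.4, identify the $V$-leafwise measures of the conditionals $\mu_x^{\mathcal A_V}$ with their $I$-leafwise measures via a restriction/locality property (which you correctly flag as the one thing to verify; it is standard but not stated in this paper), conclude fiberwise $I$-invariance of $\mu_x^{\mathcal A_V}$ from the criterion in \S\ref{characterizingprops}, and integrate. Your route is shorter once Cor.~3.4 is granted, but it leans on the ``more delicate'' external result plus the unstated restriction lemma; the paper's route is more self-contained, needing only the elementary Lemma~\ref{pos-ent-implies-invariance-1}, the compatibility formula, and a standard criterion.

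One caution about your closing alternative sketch: genuine \emph{right}-invariance of $\mu_x^V$ under $J$ is not by itself what \cite[Prop.~4.3]{Lindenstrauss-03} (or the criterion of \S\ref{characterizingprops}) consumes --- with the conventions of this paper it is \emph{left}-invariance of the leafwise measures that corresponds to invariance of $\mu$. The bridge from right- to left-invariance is exactly the paper's observation that $I$ is normalized by the group generated by $\supp\mu_x^V$ (so left translation by $I$ preserves each right coset $vI$ in the support and acts there by left translation by $v^{-1}wv\in I$, under which Haar measure of the unimodular group $I$ is invariant); your sketch omits this step, though the cocycle-triviality point you do raise is a genuine (and standard) issue as well. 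Your main argument is unaffected, since Haar measure on $I$ is two-sided invariant there.
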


\begin{proof}
We have already established that under these conditions there is a nontrivial $A$-invariant unipotent subgroup $I < V$ so that for $\mu$-a.e.~$x$ the group $I _ x ^ V$ defined in \eqref{eq: definition of I_x^V} equals $I$.

The other hand, by the compatibility formula \eqref{shift-formula} there is a co-null set $X '$ so that if $x, v.x \in X'$ for $v \in V$ then 
$I _ {v.x}^V = v I _ {x} ^ V v ^{-1}$. However, $I _ {x} ^ V$ is constant a.e.\ and the only way to reconcile these two facts is that $I_x^V$ is a.e.\ normalized by the group generated by $\supp \mu _ x ^ V$. Thus $\mu _ x ^ V$ is invariant under $I$ \emph{both} from the left and from the right.

By \cite[Prop.~4.3]{Lindenstrauss-03} the a.s.\ left invariance of $\mu _ x ^ V$ by $I$ implies that $\mu$ is $I$-invariant.
\end{proof}

\subsection{The case of \texorpdfstring{$\mu$}{mu} invariant under an \texorpdfstring{$A$}{A}-invariant unipotent group}
Suppose now that $\mu$ is indeed invariant
under a nontrivial $A$-invariant unipotent subgroup~$I<G_b^+$. In view of Ratner's landmark Measure Classification Theorem this imposes rather strict restraints on the measure $\mu$.

Let~$H_u$ denote the subgroup generated by all one-parameter unipotent subgroups of~$G$ that preserve~$\mu$,
and note that~$A$ normalizes~$H_u$.

Using Ratner's measure classification theorem \cite{Ratner-Annals,Ratner-Acta} extended to  $S$-arithmetic homogeneous spaces
by Ratner \cite{Ratner-padic} and Margulis-Tomanov 
\cite{Margulis-Tomanov} 
we obtain that the ergodic components~$\mu_x^{\mathcal E_{H_u}}$
of~$\mu$ with respect to the action of~$H_u$ must be a homogeneous measure, i.e.
\begin{equation}\label{description-of-erg-comp-via-Rat}
	 \mu_x^{\mathcal E_{H_u}}=m_{L_x'.x}
\end{equation}
is a.s.\ the normalized Haar measure on a closed orbit $L_x'.x$ for some closed subgroup~$L_x'<G$. 

The following result, obtained by combining the results of~\cite{Margulis-Tomanov-almost-linear} and \cite{Tomanov-orbits} (which are based on the measure classification theorems referred to above) will be convenient for our purposes: 

\begin{theorem}\label{MTclassA}
 Let~$X=\Gamma\backslash G$ be an $S$-arithmetic quotient with $G\leq \GG(\Q_S)$ a finite index subgroup, 
 $\GG$ a connected algebraic group defined over~$\Q$, and $\Gamma$ an arithmetic lattice
 arising from $\GG$. Let~$A<G$ be a diagonalizable subgroup of class-$\mathcal A'$, and $H_u<G$ 
an~$A$-normalized subgroup generated by Zariski-connected unipotent subgroups. Let~$H=A H_u$ be the group generated by~$A$ and~$H_u$. 
Let~$\mu$ be an~$H$-invariant and ergodic probability measure on~$X$. 
Then there exists a connected algebraic~$\Q$-subgroup~$\LL\leq\GG$ and an open finite index 
subgroup~$L\leq\LL(\Q_S)$ such that a.e.~$x\in X$ has a representative~$g\in G$ 
such that the ergodic component~$\mu_x^{\mathcal{E}}$
of~$\mu$ for the point~$x=\Gamma g$ and for the action of~$H_u$ is algebraic and equals the normalized Haar measure~$m_{\Gamma Lg}$
on the closed orbit~$\Gamma L g$. Moreover, $H_u<g^{-1}Lg$, $\mu$ is supported on the closed orbit~$\Gamma N_{G}^1(\LL) g$, 
$A<N_G^1(g^{-1}\LL g)$, and
$A$ normalizes $g^{-1}Lg$. 
\end{theorem}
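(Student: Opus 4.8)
The plan is to derive this statement from the $S$-arithmetic measure classification theorems of Ratner~\cite{Ratner-padic} and Margulis--Tomanov~\cite{Margulis-Tomanov}, in the ``almost algebraic'' form of~\cite{Margulis-Tomanov-almost-linear} and the $\Q$-rationality and orbit-closure form of~\cite{Tomanov-orbits}, and then to feed in the class-$\mathcal{A}'$ hypothesis on $A$ by an argument of exactly the type already used above to show that $I_x^V$ is a.e.\ constant. First I would decompose $\mu=\int\mu_x^{\mathcal{E}}\dee\mu(x)$ over the $\sigma$-algebra $\mathcal{E}$ of $H_u$-invariant sets. Since $H_u$ is generated by Zariski-connected unipotent subgroups, the $S$-arithmetic Ratner theorem shows that $\mu$-a.e.\ $\mu_x^{\mathcal{E}}$ is homogeneous, say $\mu_x^{\mathcal{E}}=m_{L_x'.x}$ with $L_x'\supseteq H_u$ a closed subgroup and $L_x'.x$ a closed orbit; by~\cite{Margulis-Tomanov-almost-linear,Tomanov-orbits} together with arithmeticity of $\Gamma$ one may arrange that $x$ has a representative $g$ and that there is a connected $\Q$-subgroup $\LL_x\leq\GG$ with $g^{-1}\LL_x(\Q_S)g$ equal to the Zariski closure $\widehat{\LL}_x$ of $L_x'$ in $\GG$ and $L_x'=g^{-1}Lg$ for an open finite-index subgroup $L\leq\LL_x(\Q_S)$, whence $H_u<g^{-1}Lg$. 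Here $L_x'$ and $\widehat{\LL}_x$ are intrinsic to $x$, while $\LL_x$ is determined up to $\Gamma$-conjugacy; this already gives the first three assertions, modulo making $\LL$ and $L$ independent of $x$.

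For that uniformity, and for the $A$-normalization, I would use that $A$ normalizes $H_u$ and so permutes the $H_u$-ergodic components: $L_{a.x}'=aL_x'a^{-1}$, hence $\widehat{\LL}_{a.x}=a\widehat{\LL}_xa^{-1}$, and $ga^{-1}$ is a representative of $a.x$ carrying the \emph{same} $\Q$-group $\LL_x$. For each place $v\in S$, record the Lie algebra of $\widehat{\LL}_x$ as a point in the projective $\Q_v$-variety of Lie subalgebras of $\mathfrak{g}_v$; then the push-forward of $\mu$ under this map is an $\Ad(A_v)$-invariant probability measure. Since $A$ is of class $\mathcal{A}'$, every $a_v$ acts on such a variety with all forward orbits converging to $\Ad(a_v)$-fixed points, so any $\Ad(a_v)$-invariant probability measure is supported on $\Ad(a_v)$-fixed points. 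Letting $v$ and $a\in A$ range (and invoking Poincar\'e recurrence along individual elements of $A$, exactly as in the earlier argument for $I_x^V$), $\Lie(\widehat{\LL}_x)$ is a.e.\ $\Ad(A)$-invariant, so $A$ normalizes $\widehat{\LL}_x$; thus $\widehat{\LL}_{a.x}=\widehat{\LL}_x$, and as $x\mapsto\widehat{\LL}_x$ is also $H_u$-invariant, $H$-ergodicity forces it to be a.e.\ equal to a single $A$-normalized $\Q_S$-group $\widehat{\LL}$. A parallel argument over the finite set of open finite-index subgroups of $\widehat{\LL}$ pins $L_x'$ to a single $A$-normalized $L'$, and the passage to the norm-one normalizer $A<N_G^1(L')\cap N_G^1(\widehat{\LL})$ follows either from finiteness of $\mu$ via Poincar\'e recurrence or directly from~\cite{Margulis-Tomanov-almost-linear}. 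After absorbing a $\Gamma$-conjugacy we may take $\LL_x$ to be a single $\Q$-group $\LL$ and $L$ a single finite-index subgroup of $\LL(\Q_S)$, and conjugating the previous line by a good representative $g$ yields $A<N_G^1(g^{-1}\LL g)$ and $A$ normalizing $g^{-1}Lg$.

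It remains to identify $\supp\mu$. Since a.e.\ $\mu_x^{\mathcal{E}}$ is now the normalized Haar measure on the closed orbit $L'.x$ for the \emph{fixed} group $L'$, the measure $\mu$ is itself $L'$-invariant, hence $L'A$-invariant, and as $L'A\supseteq H_uA=H$ and $\mu$ is $H$-ergodic it is $L'A$-ergodic. The orbit-closure results of~\cite{Tomanov-orbits} (using that $\widehat{\LL}$ is arithmetically defined) then identify $\supp\mu$ as a single closed orbit of $N_G^1(\widehat{\LL})$, i.e.\ as $\Gamma N_G^1(\LL)g$ for a good representative $g$. I expect the main obstacle to be exactly this last step, together with the bookkeeping in the previous paragraph: extracting from~\cite{Margulis-Tomanov-almost-linear,Tomanov-orbits} that all the algebraic data descends to $\Q$ and that $\supp\mu$ is a \emph{single} $N_G^1(\LL)$-orbit rather than a finite union of closed $L'$-orbits with the same hull --- the homogeneity and algebraicity of the individual $H_u$-ergodic components being essentially a black-box input from the Ratner--Margulis--Tomanov theory.
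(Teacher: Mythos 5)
Your overall skeleton is the same as the paper's: treat the $S$-arithmetic Ratner/Margulis--Tomanov classification as a black box and use \cite{Margulis-Tomanov-almost-linear} and \cite{Tomanov-orbits} (plus \cite{Borel-Prasad} in the real case) for almost-linearity and $\Q$-rationality of the $H_u$-ergodic components. Your class-$\mathcal A'$ Grassmannian-plus-Poincar\'e-recurrence argument for making $L_x'$ constant and $A$-normalized is a reasonable substitute for the paper's shortcut, which simply quotes \cite[Thm.~(a)]{Margulis-Tomanov-almost-linear} to get a single $A$-normalized group $L'$ describing a.e.\ ergodic component at once.

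The genuine gap is in your final step, and it is not mere bookkeeping. First, the fact that $\mu$ is concentrated on a \emph{single} orbit of the norm-one normalizer, with $A$ inside it, is exactly \cite[Thm.~(b)]{Margulis-Tomanov-almost-linear}; it does not follow from $L'A$-invariance and ergodicity of $\mu$ combined with the orbit results of \cite{Tomanov-orbits}, which concern individual homogeneous measures and orbits of the unipotently generated group, not the transverse structure of the ergodic decomposition. The paper uses this single-orbit statement essentially: writing a.e.\ $x$ as $\Gamma g_1 m$ with $m\in N_G^1(L')$ for one base point $x_1=\Gamma g_1$ is what allows the identity $\Gamma gL'=\Gamma g_1L'm=\Gamma L g$ and hence a single $\Q$-group $\LL=\LL_{g_1}$ and a single open finite-index $L\leq\LL(\Q_S)$ serving all components simultaneously, rather than only a $\Gamma$-conjugacy class, which is all your ``absorbing a $\Gamma$-conjugacy'' yields. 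Second, the statement that $\Gamma N_G^1(\LL)g$ is a \emph{closed} orbit is not addressed in your proposal and cannot be quoted from closedness of orbits of $\Q$-groups, since in the $S$-arithmetic setting $N_G^1(\LL)$ is not the group of $\Q_S$-points of an algebraic group. The paper proves closedness by a separate arithmetic argument: letting $\rho=\bigwedge^{d}\Ad$ act on the rational multivector $w=e_1\wedge\cdots\wedge e_d$ spanning $\bigwedge^{d}\Lie(\LL)$ (with $d=\dim\LL$), observing that elements of $N_G^1(\LL)$ move $w$ by scalars whose absolute values multiply to $1$ over the places in $S$, and using that $KW(\mathcal O_S)$ is discrete in $K\backslash W(\Q_S)$ for $K$ the compact group of $S$-norm-one scalars. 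Without these two inputs your last paragraph does not close.
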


Here~$N^1_{G}(\LL)$ denotes the group
\[
N^1_{G}(\LL)=\{g\in G: \text{$g$ normalizes~$\LL$ and preserves the Haar measure on~$\LL(\Q_S)$}\}.
\]
Note that in the $S$-arithmetic setting $N^1_{G}(\LL)$
is in general \emph{not} the group of~$\Q_S$-points of an algebraic group. 

Let~$\Gamma g_0\in\supp\mu$. Applying conjugation
by~$g_0$ to~$A$ and~$H_u$, and 
pushing~$\mu$ by the action of~$g_0$,
we may also simplify the notation and suppose~$g_0=e$ is the identity,~$L<\LL(\Q_S)$ is a finite index subgroup
which can be used to describe the ergodic components
as in \eqref{description-of-erg-comp-via-Rat}
and~$\mu$ is supported on the orbit~$\Gamma N_G^1(\LL)\subset \Gamma N_G(\LL)$. 

\begin{proof}
Applying \cite[Thm.\ (a)]{Margulis-Tomanov-almost-linear} we find a closed subgroup $L'<G$ containing $H_u$, 
normalized by $A$
so that a.s.\
the ergodic component $\mu_x^{\mathcal{E}_{H_u}}$
is the normalized $L'$-invariant Haar measure on the closed orbit $L'.x$, i.e.\ the closed subgroup $L'_x$ appearing in \eqref{description-of-erg-comp-via-Rat} can be chosen uniformly a.s. 

We note that in \cite{Margulis-Tomanov-almost-linear} quotients of $G$ by more general closed subgroups $\Gamma$ were considered. Hence stabilizer of points could contain
the groups of $\Q_S$-points of algebraic subgroups defined over $\Q_S$. In our case $\Gamma$ is discrete, which removes this possibility and simplifies the statements in \cite{Margulis-Tomanov-almost-linear}. 
Applying \cite[Thm.\ (b)]{Margulis-Tomanov-almost-linear} with this in mind, there exists a point $x_0$ so that $\mu$
is concentrated on the orbit $N_G^1(L').x_0$
for
\[
N^1_{G}(L')=\{g\in G: \text{$g$ normalizes~$L'$ and preserves the Haar measure on~$L'$}\}.
\]
Moreover, also by \cite[Thm.\ (b)]{Margulis-Tomanov-almost-linear} we have $A<N^1_G(L')$. 

As the probability measure $\mu_x^{\mathcal{E}_{H_u}}$
is a.s.\ $H_u$-invariant and $H_u$-ergodic,
it follows from \cite[Thm.\ 2]{Tomanov-orbits}
(see also \cite[Prop.~1.1]{Borel-Prasad} for the real case) that it is algebraic. In other words the support of the measures satisfy for a.e.\ $x=\Gamma g$
that
\[
 \supp\mu_x^{\mathcal{E}_{H_u}}=\Gamma g L'=\Gamma L_g g
\]
for some finite index 
subgroup $L_g$ of the $\Q_S$-points of some Zariski connected~$\Q$-group~$\LL_g<\GG$ (that in general depends on the representative~$g\in G$ of~$x$).  Moreover, also by \cite[Thm.\ 2]{Tomanov-orbits} we have $H_u<g^{-1}L_gg$. Note that finite index implies that $L_g$ is open in $\LL_g(\Q_S)$. 

Now fix a point $x_1\in N_G^1(L').x_0$ with these properties,
let $g_1$ be a representative of $x_1=\Gamma g_1$, and
define $\LL=\LL_{g_1}$. Then by the above a.e.\ $x$ can be written as $x=\Gamma g_1 m$ for some $m\in N^1_G(L')$. Choosing the representative $g=g_1m$ of $x$ we now obtain by combining the above that a.s.\ 
\[
 \supp\mu_x^{\mathcal{E}_{H_u}}=\Gamma g L'=\Gamma g_1m L'=
 \Gamma g_1 L'm=\Gamma L_{g_1}g_1m=\Gamma L_{g_1} g. 
\]
Note that this implies that $gL'g^{-1}\cap L_{g_1}$ is open both in $gL'g^{-1}$ and in $L_{g_1}$, which in particular implies that they have the same Lie algebra as $\LL$. 
We now define the open subgroup $L=g_1L'g_1^{-1}\cap\LL(\Q_S)$ of $\LL(\Q_S)$,
and obtain that $\Gamma Lg_1\subseteq\Gamma L_{g_1}g_1$
is a closed subset with positive measure for the uniform measure $\mu_{x_1}^{\mathcal{E}_{H_u}}$ on the closed orbit $\Gamma L_{g_1}g_1$. Recall also that $H_u<g^{-1}_1Lg_1$ by the properties of $L'$ and of $L_{g_1}$, 
which implies that $\Gamma Lg_1$ is invariant under $H_u$. Using ergodicity for $H_u$ and $\mu_{x_1}^{\mathcal{E}_{H_u}}$ this forces $\Gamma Lg_1=\Gamma L_{g_1}g_1$. 
By construction
we have $A<N_G^1(L')< N_G^1(g_1^{-1}\LL g_1)$, and so that $A$ normalizes $g_1^{-1}Lg_1$.

It remains to prove that $\Gamma N_G^1(\LL)$
is closed. So suppose $\Gamma g_n\to\Gamma g$ as $n\to\infty$, where $g_n\in N_G^1(\LL)$
and $g\in G$. By definition this means that there exists a sequence $\gamma_n\in\Gamma$
so that $\gamma_ng_n\to g$ as $n\to\infty$.
We wish to show that $\Gamma g\in \Gamma N_G^1(\LL)$. 

Let $\mathfrak g$ denote the Lie algebra of $\GG$, let $e_1,\ldots,e_d\in\mathfrak g(\ZZ)$ be a rational basis of the Lie algebra of $\LL$
and consider
\[
 w=e_1\wedge\cdots\wedge e_d\in W=\bigwedge^d\mathfrak g. 
\]
Furthermore, let us write $\rho=\bigwedge^d\operatorname{Ad}$ for the
natural representation of $\GG$ on $W$. We identify $w$ as a point in $W(\Q_S)=\prod_{v\in S}W(\Q_v)$ in the usual way.

Write $g_n=(g_{n,v})_{v \in S}$. As the components $g_{n,v}$ of $g_n$ normalize
$\LL$, we see that $\rho(g_n)$ maps $w$
to a multiple of $w$. That is for every $v\in S$ there exists $c_{n,v}\in\Q_v^\times$ so that 
\[
\rho((g_n)_v)w=c_{n,v} w.
\]
We note that $|c_{n,v}|_v$ equals the modular character for the automorphism of $\LL(\Q_v)$ defined by conjugation by $(g_n)_v$. This implies that $\prod_{v\in S}|c_{n,v}|_v=1$ by the assumption that conjugation by $g_n$ preserves the volume in $L<\LL(\Q_S)$, which determines $c_{n,\infty}$ up to the sign as the inverse of $\prod_{p\in S}|c_{n,p}|_p$. 

Let $K=\{\pm 1\}\times\prod_{p\in S}\ZZ_p^\times$ be the compact subgroup of $\Q_S^\times$ consisting of all elements with norm $1$ in all places. Note that $K$ acts naturally on $W(\Q_S)$. Taking the quotient by $K$ we reinterpret the above fact concerning $c_{n,\infty}$ in the form
\[
K\rho(g_n)w
\in KW\bigl(\mathcal{O}_S\bigr).
\]
Now notice that $KW(\mathcal{O}_S)$ is a discrete subset of $K\backslash W(\Q_S)$.
As $\gamma_ng_n\to g$
as $n\to\infty$ we see that
\[
 K\rho({\gamma_ng_n})w\to K\rho(g) w
\]
as $n\to\infty$. It follows that
\[
K\rho(\gamma_ng_n)w= K\rho(g) w
\]
for all sufficiently large $n$. For any such $n$ we therefore have $K\rho(g_n)w=K\rho(\gamma_n^{-1}g)w$.
However, this implies that $\gamma_n^{-1}g$
normalizes $\LL$ and that the modular character of its associated automorphism agrees for every place $v\in S$ with the 
modular character of the automorphism of $g_n$. It follows that $\gamma_n^{-1}g\in N_G^1(\LL)$ and $\Gamma g\in \Gamma N_G^1(\LL)$ as desired.
\end{proof}

\section{Fast multiple recurrence cannot hold}\label{sec:not so fast}

Unless said otherwise we 
suppose in this section that~$\GG$ is a~$\Q$-almost simple~$\Q$-group that is a
form of~$\operatorname{SL}_2^k$ or~$\PGL_2^k$ for some~$k>1$, that~$\Gamma$ is an arithmetic lattice in~$G=\GG(\Q_S)$
arising from~$\GG$, that~$X=\Gamma\backslash G$ is the resulting
locally homogeneous space,  
that~$A<G$ is a class-$\mathcal{A}'$ subgroup of higher rank,
and that~$\mu$ is an~$A$-invariant and ergodic probability measure on~$X$ satisfying~$h_\mu(a)>0$
for some~$a\in A$. 

For simplicity, we treat the case of $\GG$ being a $\QQ$-form of $\SL_2^k$, the case of $\GG$ a $\QQ$-form of $\PGL_2^k$ is handled similarly. We recall that this implies that there is a number field $\KK$ with $k=[\KK:\Q]$, and a quaternion algebra (possibly $\KK$-split) $\mathbb D$ be over $\KK$, so that $\mathbb G = \operatorname{Res}_{\KK\mid\Q}(\mathbb G^\KK)$ where $\mathbb G^\KK$ is the $\KK$-group of elements of $\mathbb D$ of reduced norm 1. 
With these notations, we can identify $\GG(\Q_S) $ with
\begin{equation}
\label{gg in another way}
\prod_{v \in S}\prod_{\sigma|v} \mathbb \mathbb G^\KK(\KK_\sigma),
\end{equation}
where in the second product $\sigma$ runs over all places of $\KK$ lying over $v$. For a place $\sigma$ of $\KK$ lying over $v$ we let $|\cdot|_\sigma$ denote the corresponding absolute value extending the absolute value $|\cdot|_v$ on $\Q$; if $d_\sigma=[\KK_\sigma:\Q_v]$ then for any $k \in \KK$
\[
\sum_{\sigma} d_\sigma \log |k|_\sigma = 0,
\]
with $\sigma$ running over all places of $\KK$.
If $\mathbb{D}$ is split over $\KK$ we can identify $\mathbb G^\KK$ with $\SL_2$. For $\sigma$ a place of $\KK$ and $g \in \mathbb G^\KK(\KK)$ (or $\mathbb G^\KK(\KK_\sigma)$) denote by $\|g\|_\sigma$ the operator norm of $g$ as a $2\times 2$-matrix. If $\mathbb{D}$ is not split we can embedd $\mathbb G^\KK$ as a $\KK$-group in $\SL_4$, and let $\|g\|_\sigma$ denote the operator norm under this embedding. 

Let $S'$ be the set of all places of $\KK$ lying over $S$, i.e. $S'=\{\sigma: \sigma | v \text { for some $v \in S$}\}$.
Under the identification \eqref{gg in another way}, we may identify $\Gamma$ with a subgroup commensurable to $\mathbb G^\KK (\mathcal O_{\KK,S'})$ with $\mathcal O_{\KK,S'}$ denoting the ring of $S'$-integral elements of $\KK$.

\subsection{Arithmetic lattices}\label{easy-start}
Note that the assumption on~$\Gamma$ simply means that we may assume  
that~$\Gamma$ and $\mathbb G^\KK (\mathcal O_{\KK,S'})$ are commensurable. Replacing~$\Gamma$ with~$\Gamma'=\Gamma\cap \mathbb G^\KK (\mathcal O_{\KK,S'})$
we obtain a finite-to-one cover~$X'=\Gamma'\backslash G\rightarrow X$. It is now easy to define an~$A$-invariant
lift~$\mu'$ on~$X'$ by defining for any~$f'\in C_c(X')$
\[
 \int_{X'}f'\dee\mu'=\frac1{[\Gamma:\Gamma']}\int_X\sum_{\Gamma'g}f'(\Gamma'g)\dee\mu,
\]
where the sum is over all cosets~$\Gamma'g\in X'$ with~$\Gamma g=x\in X$ and so defines a continuous function~$f(x)$
of compact support on~$X$.
If~$\mu'$ is not~$A$-ergodic we may replace it by one of its~$A$-ergodic component. This shows that 
the study of~$A$-invariant and ergodic probability measures on~$X'$ includes the study of such measures on~$X$.
Therefore, we may and will always assume that $\Gamma<\mathbb G^\KK (\mathcal O_{\KK,S'})$.

\subsection{Choosing \texorpdfstring{$b$}{b} for
the application of Theorem~\ref{zero-or-fast-pol-rec}}\label{secchoosingb}
We can use \eqref{eq:entropycontributionsaddup} and our assumption $h_\mu(a)>0$
to find a coarse Lyapunov weight~$\chi_1$ 
and corresponding coarse Lyapunov subgroup~$U=G^{[\chi_1]}<G_a^+$
such that its entropy contribution 
\[
\alpha=h_\mu(a,U)>0
\]
is positive.

We note that for any $v \in S$ and $\sigma | v$, the direct factor $\mathbb G^\KK (\KK_\sigma)$ is either compact or isomorphic to $\SL_2(\KK_\sigma)$, and that $\KK_\sigma$ is a finite extension of $\Q_v$. Moreover, unless $A$ commutes with that factor, such a factor defines two nonzero Lyapunov weights corresponding to two opposite maximal unipotent subgroups of $\SL_2(\KK_\sigma)$. It follows that the coarse Lyapunov group~$U$ is the direct product of maximal unipotent subgroups of  $\mathbb G^\KK(\KK_\sigma)$ for all $\sigma$ in some subset $S'_1$ of $S'$.
Let~$G_1=\prod_{\sigma \in S'_1} \mathbb G^\KK (\KK_\sigma)$; then $U < G_1$.

Since~$A$ is of higher rank, there exists some nonzero Lyapunov
weight~$\chi_2$ that is inequivalent to~$\pm\chi_1$. 
Hence there exists a sequence~$ b _k\in A$ with
\begin{equation}\label{chi1ofbisclosetozero}
0\leq\chi_1( b _k)\to 0
\end{equation}
but
\begin{equation}\label{chi2ofbgetsbig}
\chi_2( b _k)\rightarrow\infty
\end{equation}
as $k\to\infty$. We will choose $ b $ from this sequence after some more preparations. 

Corresponding to the coarse Lyapunov weight $[\chi_2]$ there again corresponds a set $S'_2$ of places of $\KK$ lying over $S$, as well as 
a closed normal subgroup~$G_2=\prod_{\sigma \in S'_2} G^\KK(\K_\sigma)$ so that $G^{[\chi_2]}<G_2$. 
Multiplying~$\chi_2$ with a positive scalar if necessary,
we may assume~$\chi_2$ is a smallest weight
from its coarse Lyapunov equivalence class. In particular
all Lyapunov weights corresponding to~$G_2$ are of
the form~$t\chi_2$ for~$t\in\R$ with~$|t|\geq 1$. 

Since~$\chi_2$ and~$\pm\chi_1$ are inequivalent, 
we have~$G_1\cap G_2=\{e\}$. Let 
\[
S'_3 = \{\sigma|v:v \in S\}\setminus (S'_1\cup S'_2)
\]
and $G_3=\prod_{\sigma\in S_3'} \G^\KK (\KK_\sigma)$
so that~$G=G_1\times G_2\times G_3$. We note that 
while the groups~$G_1,G_2$ are
nontrivial and have no compact normal factors, 
the group~$G_3$ may be compact or trivial. 
Moreover, we note that the projection to $G_3$
of the element $b$ (to be defined below) may
be nontrivial.

Let~$e^{\Delta}$ be the largest (real or~$p$-adic) absolute value of an eigenvalue of the adjoint representation of~$a$ on the ambient matrix algebra.
Then
\begin{equation}\label{prep-delta}
 u\in a^n B_1^U a^{-n}\text{ implies }\|u\|\ll e^{\Delta n}
\end{equation}
for all~$n\geq 0$.

As $U=G^{[\chi_1]}$ is a coarse Lyapunov subgroup,
the amount of expansion of the conjugation map $\theta_b$ 
restricted to $U$ depends only on $\chi_1(b)$ for any $b\in A$. 
With this in mind \eqref{volume decay}
implies that the entropy contribution~$h_\mu( b ,U)$
of~$U$ for~$ b \in A$ with~$\chi_1(b)\geq 0$ is a certain
positive multiple of~$\chi_1(b)$. 
Hence we may use  \eqref{chi1ofbisclosetozero}-- \eqref{chi2ofbgetsbig} to choose $ b = b _k$ 
for large enough $k$ so that 
\begin{equation}\label{choiceofkappa1}
	h_\mu( b , U)<\frac1{4}\alpha
\end{equation}
and
\begin{equation}\label{choiceofkappa}
 \chi_2( b )\geq\lambda=5 |S|k\Delta
\end{equation}
holds. With our choice of~$\chi_2$ as the smallest weight
from its equivalence class we obtain for all sufficiently small $r>0$ that
the projection $h'$ to $G_2$ of any 
$h\in D_{r,n}^b$ satisfies
\begin{equation}\label{prep-lambda}
 d(h',a')\ll r e^{-\lambda n}
\end{equation}
for some~$a'\in C_{G_2}(b)$; note that by our choice of $G_2$, the group $C_{G_2}(b)$
is a diagonalizable subgroup of~$G_2$.

Finally we consider the horospherical subgroup~$G_ b ^+$. 
By~\eqref{chi1ofbisclosetozero} and since~$A$ is 
of class-$\mathcal{A}'$ we either have~$U<C_G( b )$
or~$U<G_ b ^+$. In the former case we set~$V=G_ b ^+$.
In the latter we note that any two coarse Lyapunov weight
groups -- except those of opposite weight --  commute in the case at hand.  Therefore we can find a direct 
product~$V$ of coarse Lyapunov weight groups
such that~$G_ b ^+$ is the direct product of~$U$ and~$V$. 
Furthermore we let~$\mathcal{A}_V$ be the~$\sigma$-algebra 
generated by~$x\mapsto\mu_x^{V}$. 
Applying Theorem~\ref{zero-or-fast-pol-rec} for 
our chosen~$ b \in A$ and~$\kappa=1$ we either
have~$h_\mu( b |\mathcal{A}_V)\geq \frac14\alpha$ or a strong form of recurrence.
However, the product formula of the leaf-wise
measures in the form of \eqref{eq:entropycontributionsaddup} implies 
(in either of the two cases) that
\[
 h_\mu( b |\mathcal{A}_V)=h_\mu( b ,G_ b ^+|\mathcal{A}_V)=
 h_\mu( b ,U|\mathcal{A}_V)+h_\mu( b ,V|\mathcal{A}_V)
\]
Using $h_\mu( b ,U|\mathcal{A}_V)\leq h_\mu( b ,U)$ 
and~\eqref{choiceofkappa1} 
we obtain from Theorem~\ref{zero-or-fast-pol-rec}
\begin{itemize}
	\item that $h_\mu( b ,V|\mathcal{A}_V)>0$ or
	\item that almost surely the 
fast multiple recurrence claim for the dynamics of~$U$ and the
Bowen balls at $x$ holds. 
\end{itemize}
We will analyse the
(desired) first possibility in \S\ref{easy-end}
after showing that the fast recurrence claim cannot hold.

\subsection{Fast multiple recurrence cannot hold}\label{fastreccannothold}

We will now show that the fast multiple recurrence claim of Theorem~\ref{zero-or-fast-pol-rec}
cannot hold. In fact we have chosen~$b$ in a way so that certain Diophantine inequalities will be
impossible to satisfy in a nontrivial fashion, which will allow us to upgrade
certain inequalities to equations. This is where the arithmetic structure of the lattice, i.e.\ 
the assumption~$\Gamma<\GG(\mathcal{O}_S)$, will be used.

Recall that~$D_{r,n}^{b}$
is the Bowen ball as defined in~\eqref{Bowen-Dinaburg}. We assume (for the purpose of a contradiction)
that for~$\mu$-a.e.~$x\in X'$, 
 any sufficiently small~$r>0$ and
 all sufficiently large~$n$ there are at least $e^{\frac\alpha4n}$ independent $U$-returns within~$a^n B_1^U a^{-n}$
of~$x$ to~$(xD_{r,n}^{ b  })\cap X'$. 
We fix some $g\in G$ and $n_0\geq 1$
so that $x=\Gamma g$ satisfies this claim for all $n\geq n_0$. 

Suppose now that $n\geq n_0$
and some~$u\in a^n B_1^U a^{-n}\setminus B_{2r}^U$ is such that $xu^{-1}=xh$ for some~$h\in D_{r, n}^{b }$. Then there exists some~$\gamma\in\Gamma$ with  $\gamma g u^{-1}=g h$, which we may also write as 
\begin{equation}
\gamma=ghug^{-1}\in g D_{r,n}^b (a^nB_1^U a^{-n}) g^{-1}
\label{definitionofgamma}.
\end{equation}
As $u\notin B_{2r}^U$ and $h\in B_r^G$,
it follows that $\gamma$ must be nontrivial.
Moreover, we claim that we obtain
at least $e^{\frac\alpha4n}$ different elements of $\Gamma$
in this way all satisfying \eqref{definitionofgamma}.

Indeed we have at least  $e^{\frac\alpha4n}$ independent $U$-returns of $x$ within $a^nB_1^Ua^{-n}$ to $xD_{r,n}^b$
and we show that no two such returns give rise to the same lattice element. So suppose that $xu_1^{-1}$ and $xu_2^{-1}$ are independent $U$-returns within $a^nB_1^Ua^{-n}$ to $x D_{r,n}^b$
that give rise to the same lattice element $\gamma$. 
Then $d(u_1^{-1},u_2^{-1})\geq 2r$ by definition of independent $U$-returns, but also $\gamma=gh_1u_1g^{-1}=gh_2u_2g^{-1}$
for some $h_1,h_2\in D_{r,n}^b$. 
However, the latter implies 
$u_1u_2^{-1}=h_1^{-1}h_2\in B_{2r}^G$, which contradicts
$d(u_1^{-1},u_2^{-1})\geq 2r$.

We now concern ourselves with the properties of the various elements~$\gamma$ appearing above.
For the places $\sigma \in S'_1$ of $\KK$ corresponding to the factor~$G_1$ we use the estimate of~$u$ in~\eqref{prep-delta}
and find that
\begin{equation}\label{boundoncomplexity}
	\|\gamma\|_\sigma \ll_g e^{\Delta n}
\end{equation}
while for all other places of $\KK$ we have trivially
\begin{equation}\label{trivialboundcomplexity}
\|\gamma\|_\sigma \ll_g 1.
\end{equation}
We will combine these estimates with the information coming from the factor~$G_2$. 

\begin{lemma}[Semi-simple elements]\label{semisimple-gamma}
	If~$x=\Gamma g$ satisfies fast multiple recurrence for~$b$ as in~\eqref{choiceofkappa},~$r$ is sufficiently small, and~$n$ is sufficiently large,
	then any element~$\gamma$ 
 as in \eqref{definitionofgamma} is semi-simple and non-central. 
\end{lemma}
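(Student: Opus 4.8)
The plan is to reduce the assertion to the arithmetic statement that the reduced trace satisfies $\operatorname{trd}(\gamma)\ne\pm2$, and then to exclude $\operatorname{trd}(\gamma)=\pm2$ by the product formula applied to $\gamma\mp1$, exploiting that $\gamma$ has three very different size regimes at the places of $\KK$ lying in $S_1'$, $S_2'$, $S_3'$. For the reduction: over $\overline\Q$ the group $\GG$ is $\SL_2^k$, and $\gamma$ maps to $(\gamma^{\sigma_1},\dots,\gamma^{\sigma_k})$ under the embeddings $\sigma_i\colon\KK\to\overline\Q$. If $\operatorname{trd}(\gamma)\ne\pm2$ then $\sigma_i(\operatorname{trd}\gamma)\ne\pm2$ for every $i$ (as $\pm2\in\Q$ and $\sigma_i$ is injective), so each $\gamma^{\sigma_i}$ is regular semisimple and $\gamma\ne\pm1$, i.e.\ $\gamma$ is semisimple and non-central; conversely $\operatorname{trd}(\gamma)=\pm2$ makes every $\gamma^{\sigma_i}$ have $\pm1$ as a double eigenvalue, so $\gamma$ is central or has a nontrivial unipotent part, hence is not semisimple-and-non-central. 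Thus it suffices to show $\operatorname{trd}(\gamma)\ne\pm2$; note $\operatorname{trd}(\gamma)\in\mathcal O_{\KK,S'}$ since $\gamma\in\mathbb G^\KK(\mathcal O_{\KK,S'})$, and $\gamma\ne1$ was observed below \eqref{definitionofgamma}.

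\emph{Excluding $\operatorname{trd}(\gamma)=-2$ and setting up.} For $\sigma\in S_2'$ the $U$-factor of $u$ is trivial, so $\gamma_\sigma=g_\sigma h_\sigma g_\sigma^{-1}$ with $h_\sigma$ the $\sigma$-component of the $G_2$-projection of $h\in D_{r,n}^b\subseteq B_r^G$; by conjugation invariance of the trace, $\sigma(\operatorname{trd}\gamma)=\operatorname{tr}\gamma_\sigma=\operatorname{tr}h_\sigma$, and since $h_\sigma$ lies within $O(r)$ of the identity, $|\operatorname{tr}h_\sigma-2|_\sigma\ll r$. For $r$ small (depending only on $S_2'$) this is incompatible with $\operatorname{tr}h_\sigma=-2$, so $\operatorname{trd}(\gamma)\ne-2$. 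Now assume, for contradiction, $\operatorname{trd}(\gamma)=2$. Then $N:=\gamma-1$ is a \emph{nonzero} element of the $\mathcal O_{\KK,S'}$-order of $\mathbb D$ --- nonzero because $\gamma=1$ would force $u=h^{-1}\in B_r^G$, contradicting $u\notin B_{2r}^U$. Viewing $N$ as a matrix with $S'$-integral entries, the product formula (the places outside $S'$ contributing at most $1$) gives
\[
1\ \le\ \prod_{\sigma\in S'}\|N\|_\sigma^{\,d_\sigma}.
\]

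\emph{The three estimates and the contradiction.} At $\sigma\in S_1'$ use \eqref{boundoncomplexity} to get $\|N\|_\sigma\ll_g e^{\Delta n}$, and at $\sigma\in S_3'$ the trivial bound \eqref{trivialboundcomplexity} gives $\|N\|_\sigma\ll_g1$. The decisive estimate is at $\sigma\in S_2'$: by \eqref{prep-lambda}, $\gamma_\sigma$ lies within $O_g(re^{-\lambda n})$ of $s_\sigma=g_\sigma a'_\sigma g_\sigma^{-1}$ with $a'_\sigma$ in the fixed torus $C_{\mathbb G^\KK(\KK_\sigma)}(b_\sigma)$. The hypothesis $\operatorname{tr}\gamma_\sigma=2$ forces $\operatorname{tr}a'_\sigma=2+O(re^{-\lambda n})$; writing $\mu$ for an eigenvalue of $a'_\sigma$ (so $|\mu|_\sigma\asymp1$) and using the identity $(\mu-1)^2=\mu(\operatorname{tr}a'_\sigma-2)$, the eigenvalues of $a'_\sigma$ lie within $O\bigl((re^{-\lambda n})^{1/2}\bigr)$ of $1$; since $a'_\sigma$ ranges in a fixed torus this pins $a'_\sigma$, hence $s_\sigma$, hence $\gamma_\sigma$, within $O_g\bigl((re^{-\lambda n})^{1/2}\bigr)$ of the identity, so $\|N\|_\sigma=\|\gamma_\sigma-1\|_\sigma\ll_g(re^{-\lambda n})^{1/2}$. (When $\mathbb D\otimes\KK_\sigma$ is a division algebra, $\operatorname{trd}(\gamma_\sigma)=2$ already forces $\gamma_\sigma=1$, contradicting $\gamma\ne1$.) Inserting the three bounds and using $\sum_{\sigma\in S_1'}d_\sigma\le\sum_{\sigma\in S'}d_\sigma=k|S|$ and $\sum_{\sigma\in S_2'}d_\sigma\ge1$,
\[
1\ \ll_g\ r^{1/2}\,e^{\,\left(k|S|\Delta-\tfrac12\lambda\right)n}.
\]
Since $\lambda=5k|S|\Delta$ by \eqref{choiceofkappa}, the exponent equals $-\tfrac32 k|S|\Delta<0$, so the right-hand side tends to $0$ as $n\to\infty$ --- a contradiction. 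Hence $\operatorname{trd}(\gamma)\ne\pm2$, and by the reduction $\gamma$ is semisimple and non-central.

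The hard part will be the $S_2'$-estimate: one must upgrade the mild closeness of $\gamma_\sigma$ to a semisimple element, coming from the recurrence-driven contraction \eqref{prep-lambda}, to genuine $e^{-\lambda n/2}$-smallness of the unipotent part $\gamma_\sigma-1$, which works only because the standing hypothesis $\operatorname{trd}(\gamma)=2$ supplies the trace identity collapsing the eigenvalues of $a'_\sigma$ onto $1$. Everything else is careful bookkeeping of which implied constants may depend on the fixed representative $g$ and which are absolute, plus the harmless square-root loss from the double eigenvalue; no individual step is deep.
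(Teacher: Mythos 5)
Your proposal is correct and follows essentially the same route as the paper's proof: reduce to ruling out the trace-$2$ (nontrivial unipotent) case, then combine the bound $\ll_g e^{\Delta n}$ at the places in $S_1'$, the trivial bound at $S_3'$, and the square-root-improved bound $\|\gamma-I\|_\sigma\ll_g r^{1/2}e^{-\lambda n/2}$ at $S_2'$ obtained from \eqref{prep-lambda} together with the trace hypothesis, and contradict the product formula for nonzero $S'$-integral elements using the choice $\lambda=5k|S|\Delta$ in \eqref{choiceofkappa}. The only cosmetic difference is that you phrase the reduction as $\operatorname{trd}(\gamma)\neq\pm2$ instead of via the Jordan decomposition, and the exclusion of trace $-2$ by closeness to the identity at $S_2'$ matches the paper's exclusion of the central part $-I$.
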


\begin{proof}
Recall that we may identify $\Gamma$ with a subgroup of $\mathbb G^\KK(\KK)$.

If $\mathbb D$ is a division algebra over $K$, then $\gamma$ is automatically semisimple. If $\mathbb D$ is split, i.e.\ is isomorphic to the ring of $2\times 2$-matrices over $\KK$, the Jordan decomposition gives that~$\gamma=s_1u_1$ is the product
of a semisimple element~$s_1$ and a unipotent element~$u_1$ that commute with each other. If~$s_1$ does not belong to the center,
then we must have~$u_1=I$. Now recall that~$u \in G_1$, \eqref{definitionofgamma}, and that the projection of~$g^{-1}\gamma g$ to~$G_2$ belongs
to~$D_{r, n}^{ b  }\subset B_r^G$. In particular we see that the eigenvalues
of $\gamma$ are close to~$1$ according to $|\cdot|_\sigma$ for any $\sigma \in S_2'$. 

This implies that if $s_1$ is central, $s_1 = I$.
Since $\gamma$ is by definition not the identity, if the statement of the lemma is false, $\mathbb D$ is split, $\GG = \operatorname{Res}_{\KK\mid\Q}\SL_2$ (so $\GG(\Q_S)$ can be identified with $\prod_{\sigma \in S'} \SL_2(\KK_\sigma)$) and
$\gamma$ is unipotent.

For $\sigma\in S_2'$ let $b_\sigma \in \SL_2(\KK_\sigma)$ denote the projection of $b$ to $\SL_2(\KK_\sigma)$.
By \eqref{prep-lambda}, for any $\sigma\in S_2'$ there is some $a' \in \C_{\SL_2(\KK_\sigma)}(b)$ so that 
\[
\|\gamma - ga'g^{-1}\|_\sigma \ll re^{-\lambda n}.
\]
If $\gamma$ is unipotent, it follows that $|\operatorname{trace} (a') -2|_\sigma \ll re^{-\lambda n}$. Thus
$\|a'-e\|_\sigma \ll r^{1/2}e^{-\lambda n/2}$ and hence
\begin{equation}\label{anotherineqforh}
\|\gamma-e\|_\sigma \ll_g r^{1/2}e^{-\lambda n/2}.
\end{equation}

For every place $\sigma \in S'_1$ of $\KK$ we use
use~\eqref{prep-delta} to get
\[
 \|\gamma\|_\sigma\ll_{g} e^{\Delta n}, 
\]
and for every $\sigma \in S'_3$
\[
 \|\gamma\|_\sigma\ll_{g,r} 1.
\]

Since $\sum_{\sigma |v} d_\sigma = [\KK:\Q]=k$ and since there is at least one $\sigma \in S'_2$, it follows that 
\[
\prod_{\sigma \in S'} \|\gamma -I \|_\sigma ^{d_\sigma} \ll_{g,r} e^{\Delta k |S| n - \lambda n /2}  \leq  e^{-\Delta |S'| n/2}.
\]
However for any nonzero $x \in \mathbb{D}(\mathcal O_{\K,S'})$,
\[
\prod_{\sigma \in S'} \|x \|_\sigma^{d_\sigma} \geq 1
\]
hence we get $\gamma=I$ and so a contradiction.
\end{proof}

\begin{lemma}[Commuting]\label{samecommutator}
  If~$x=\Gamma g$ satisfies fast multiple recurrence for~$b$ as in~\eqref{choiceofkappa},~$r$ is sufficiently small, and $n$ is sufficiently large, then 
  any two elements
  of $\Gamma$ satisfying \eqref{definitionofgamma} commute.
\end{lemma}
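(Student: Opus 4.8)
The plan is to run the Diophantine argument of Lemma~\ref{semisimple-gamma} once more, now with the commutator in place of a single return element. Let $\gamma_1,\gamma_2\in\Gamma$ be two elements as in \eqref{definitionofgamma}, say $\gamma_i=gh_iu_ig^{-1}$ with $h_i\in D_{r,n}^b$ and $u_i\in a^nB_1^Ua^{-n}$, and put $\delta=[\gamma_1,\gamma_2]=\gamma_1\gamma_2\gamma_1^{-1}\gamma_2^{-1}\in\Gamma$; it suffices to show $\delta=I$. I would bound $\|\delta-I\|_\sigma$ at every place $\sigma\in S'$ of $\KK$ over $S$, show that $\prod_{\sigma\in S'}\|\delta-I\|_\sigma^{d_\sigma}\to0$ as $n\to\infty$ (for fixed $g$, $r$), and then invoke, exactly as at the end of the proof of Lemma~\ref{semisimple-gamma}, that $\prod_{\sigma\in S'}\|x\|_\sigma^{d_\sigma}\geq1$ for every nonzero $x\in\mathbb D(\mathcal O_{\KK,S'})$. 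Since $\Gamma<\GG^\KK(\mathcal O_{\KK,S'})$, the element $\delta-I$ is such an $x$, so this forces $\delta=I$ once $n$ is large.

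First I would dispose of the places outside $S'_2$. At a place $\sigma\in S'_1$ carrying $U=G^{[\chi_1]}$, the set $a^nB_1^Ua^{-n}$ is symmetric, so \eqref{prep-delta} gives $\|u_i^{\pm1}\|_\sigma\ll e^{\Delta n}$, while $\|h_i^{\pm1}\|_\sigma\ll 1$ because $h_i\in B_r^G$; hence $\|\gamma_i^{\pm1}\|_\sigma\ll_g e^{\Delta n}$ and, by submultiplicativity of the operator norm, $\|\delta-I\|_\sigma\ll_g e^{4\Delta n}$. At a place $\sigma\in S'_3$, the element $u_i$ lies in $G_1$ and so has trivial $\sigma$-component, while $h_i\in B_r^G$, so $\|\gamma_i^{\pm1}\|_\sigma\ll_g 1$ and $\|\delta-I\|_\sigma\ll_g 1$.

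The decisive estimate is at the places $\sigma\in S'_2$ carrying $G^{[\chi_2]}$, where I need $\delta$ to be exponentially close to $I$. Here $u_i\in U<G_1$ again has trivial $\sigma$-component, so in the $\sigma$-factor $\gamma_i$ equals $gh_i'g^{-1}$, with $h_i'$ the $G_2$-component of $h_i$. Since $h_i\in D_{r,n}^b$ and, by the choice of $b$, $\chi_2(b)\geq\lambda=5|S|k\Delta$, estimate \eqref{prep-lambda} supplies $a_i'\in C_{G_2}(b)$ with $d(h_i',a_i')\ll re^{-\lambda n}$, whence $\|\gamma_i-ga_i'g^{-1}\|_\sigma\ll_g re^{-\lambda n}$ at each $\sigma\in S'_2$ (all elements in sight being bounded in terms of $g$ and $r$). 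Now $C_{G_2}(b)$ is diagonalizable, hence abelian, so the model elements $ga_1'g^{-1}$ and $ga_2'g^{-1}$ commute in the $\sigma$-factor; the elementary fact that the commutator of two matrices each $\varepsilon$-close to bounded commuting matrices is within $O(\varepsilon)$ of $I$ then gives $\|\delta-I\|_\sigma\ll_g re^{-\lambda n}$ for all $\sigma\in S'_2$.

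Finally I would multiply these bounds over $S'=S'_1\sqcup S'_2\sqcup S'_3$. Using $\sum_{\sigma\in S'}d_\sigma=|S|k$, $\sum_{\sigma\in S'_1}d_\sigma\le|S|k$ and $\sum_{\sigma\in S'_2}d_\sigma\ge1$ (as $S'_2\ne\emptyset$), for all sufficiently large $n$ one obtains
\[
\prod_{\sigma\in S'}\|\delta-I\|_\sigma^{d_\sigma}\ll_{g,r}e^{4\Delta|S|k\,n}\cdot re^{-\lambda n}=r\,e^{(4|S|k\Delta-\lambda)n}=r\,e^{-|S|k\Delta\,n}\longrightarrow 0
\]
as $n\to\infty$, since $\Delta>0$. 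Together with the product-formula lower bound this forces $\delta=I$, i.e.\ $\gamma_1\gamma_2=\gamma_2\gamma_1$. The genuinely delicate point is the $S'_2$-estimate: it uses that $u_i$ drops out of the $G_2$-projection, the Bowen-ball contraction \eqref{prep-lambda} (which is precisely why $b$ was chosen with $\chi_2(b)$ large), and the abelianity of $C_{G_2}(b)$, which together squeeze the commutator down to size $re^{-\lambda n}$; everything else is the same arithmetic bookkeeping as in Lemma~\ref{semisimple-gamma}.
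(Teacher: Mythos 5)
Your proof is correct and follows essentially the same route as the paper: bound the commutator by $e^{4\Delta n}$ at the places in $S_1'$, by $O(1)$ at $S_3'$, use \eqref{prep-lambda} together with the commutativity of $C_{G_2}(b)$ to make it $\ll e^{-\lambda n}$ at $S_2'$, and then invoke $\prod_\sigma\|x\|_\sigma^{d_\sigma}\ge 1$ for nonzero $x\in\mathbb D(\mathcal O_{\KK,S'})$ to force $[\gamma_1,\gamma_2]=I$. The only difference is that you spell out the nearly-commuting perturbation argument at the $S_2'$ places, which the paper leaves implicit.
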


\begin{proof}
Let $\gamma,\eta\in\Gamma$ be two 
elements satisfying \eqref{definitionofgamma}.
We will again make use of~\eqref{prep-delta} and \eqref{prep-lambda} for the 
study of the commutator
\[
 [\gamma,\eta]=\gamma^{-1}\eta^{-1}\gamma\eta.
\]
In fact~\eqref{prep-delta} gives
\[
 \|[\gamma,\eta]\|_\sigma \ll_g e^{4\Delta n}
\]
for $\sigma \in S'_1$. We again have trivially
\[
\|[\gamma,\eta]\|_\sigma \ll_g 1
\]
for all $\sigma \in S'_3$. Due to~\eqref{prep-lambda} we have
\[
\|[\gamma,\eta]-I\|_\sigma\ll_g e^{-\lambda n}
\]
for all $\sigma\in S'_2$.

It follows that
\[
x=[\gamma,\eta]-I
\]
is an element of the ring $\mathbb{D} ( \mathcal O_{\KK,S'})$ with
\[
\prod_{\sigma \in S'} \|x\|_\sigma^{d_\sigma} \ll e^{4\Delta k |S|n-\lambda n} \leq e^{-\Delta k |S|n}.
\]
For $n$ large this can only happen if $x=0$, i.e. $\gamma$ and $\eta$ commute as claimed. 
\end{proof}

Now note that any two elements of~$\SL_2$, that do not belong to the centre of~$\SL_2$ (or any two nontrivial elements
of~$\PGL_2$) commute
if and only if they have the same centralizer. Hence by Lemmas \ref{semisimple-gamma}--\ref{samecommutator}
we have found for every large enough $n$ a torus $\TT<\GG$ defined over $\Q$ with $e^{\frac\alpha4n}$ many elements in $\TT(\mathcal{O}_S)$ satisfying \eqref{boundoncomplexity}--\eqref{trivialboundcomplexity}. 
It is a standard fact that this is impossible. 

\begin{lemma}\label{fewlementsintorus}
Let $\GG$ be a $\Q$-group. Let $S$ be a finite set of places of $\Q$ containing $\infty$. 
 There exist constants $C$ and $d$ (depending on $\GG$ and $S$ only) so that for any $\Q$-torus $\TT<\GG$ and $t\geq1$ there are at most $Ct^d$ many elements $\gamma\in\TT(\mathcal{O}_S)$
 satisfying
 \begin{equation}\label{eq:height bound}
 \max_{\sigma\in S}\|\gamma\|_\sigma<e^t. 
 \end{equation}
\end{lemma}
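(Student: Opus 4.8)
The plan is to use the fact that $\TT(\mathcal{O}_S)$ is, up to finite index, a finitely generated abelian group of rank bounded in terms of $\GG$ and $S$, and that on it the function $\gamma\mapsto\max_{\sigma\in S}\log\|\gamma\|_\sigma$ behaves like a norm; the bound $\le Ct^d$ is then nothing but the count of lattice points in a Euclidean ball of radius $\ll t$, and it is this reformulation that makes the estimate polynomial rather than exponential in $t$. Concretely, fix once and for all a closed embedding $\GG\hookrightarrow\GL_N$ defined over $\Z$ and read $\TT(\mathcal{O}_S)$ as $\TT(\Q)\cap\GL_N(\mathcal{O}_S)$; then for every $\gamma\in\TT(\mathcal{O}_S)$ both $\gamma$ and $\gamma^{-1}$ lie in $\GL_N(\mathcal{O}_S)$, so $\|\gamma\|_v=1$ at every place $v\notin S$. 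Let $m=\dim\TT\le\dim\GG$ and let $L$ be the splitting field of $\TT$. Galois acts faithfully on $X^*(\TT)\cong\Z^m$ through a finite subgroup of $\GL_m(\Z)$, whose order is at most a constant $D_0=D_0(\GG)$ by Minkowski's theorem on finite subgroups of $\GL_m(\Z)$; hence $[L:\Q]\le D_0$. Over $L$ the torus $\TT$ is conjugate to a subtorus of the diagonal subgroup of $\GL_N$, so all eigenvalues of all $\gamma\in\TT(\Q)$ lie in $L$ and, after fixing such a conjugation, $\gamma$ is recovered from the \emph{ordered} tuple $(\lambda_1(\gamma),\dots,\lambda_N(\gamma))$ of its eigenvalues.

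Now fix $\gamma\in\TT(\mathcal{O}_S)$ with $\max_{\sigma\in S}\|\gamma\|_\sigma<e^t$, and let $S_L$ be the set of places of $L$ lying above $S$, so $|S_L|\le D_0|S|$. Since $\gamma,\gamma^{-1}$ are integral at every $v\notin S$, each eigenvalue $\lambda_i=\lambda_i(\gamma)$ is a unit at every place of $L$ not in $S_L$, i.e.\ $\lambda_i\in\mathcal{O}_{L,S_L}^\times$. For $w\in S_L$ lying above $\sigma\in S$, being an eigenvalue of $\gamma$ gives $|\lambda_i|_w\ll_N\|\gamma\|_\sigma<e^{O(t)}$, and then the product formula $\prod_{w\in S_L}|\lambda_i|_w^{d_w}=1$ forces also $|\lambda_i|_w\gg e^{-O_{\GG,|S|}(t)}$; for $w\notin S_L$ we have $|\lambda_i|_w=1$. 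Hence $\bigl|\log|\lambda_i|_w\bigr|\ll_{\GG,|S|}t$ for every place $w$ of $L$.

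It remains to count $S_L$-units $\lambda\in\mathcal{O}_{L,S_L}^\times$ with $\bigl|\log|\lambda|_w\bigr|\le C't$ for all $w\in S_L$. The logarithmic embedding $\ell\colon\lambda\mapsto(d_w\log|\lambda|_w)_{w\in S_L}$ has kernel the finite group of roots of unity of $L$ (of order bounded since $[L:\Q]\le D_0$) and, by Dirichlet's $S$-unit theorem, image a lattice $\Lambda_L$ of rank $|S_L|-1$. The crucial point is a \emph{uniform} lower bound $\rho_L\ge\rho_0(\GG,|S|)>0$ for the length of the shortest nonzero vector of $\Lambda_L$: indeed $\|\ell(\lambda)\|\gg_{\GG,|S|}h(\lambda)$ by Cauchy--Schwarz and the product formula, where $h$ is the logarithmic Weil height, while by Northcott's finiteness theorem together with Kronecker's theorem the infimum of $h(\lambda)$ over all algebraic numbers $\lambda$ of degree $\le D_0$ that are not roots of unity is a positive constant depending only on $D_0$. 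A standard packing argument (the translates $v+B_{\rho_0/2}$ for $v\in\Lambda_L\cap B(0,C't)$ are disjoint and contained in $B(0,C't+\rho_0/2)$) then gives $|\Lambda_L\cap B(0,C't)|\le(1+2C't/\rho_0)^{|S_L|-1}\ll_{\GG,|S|}t^{D_0|S|}$ for $t\ge1$. Therefore the number of admissible $S_L$-units is $\ll_{\GG,|S|}t^{D_0|S|}$, and since each relevant $\gamma$ is determined by its $N$-tuple of eigenvalues, each entry admissible, the number of such $\gamma$ is $\ll_{\GG,|S|}t^{ND_0|S|}$, which is the asserted bound with $d=ND_0|S|$ and $C$ depending only on $\GG$ and $S$. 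The only non-formal inputs are Minkowski's bound on finite subgroups of $\GL_m(\Z)$ (to control $[L:\Q]$) and the Northcott--Kronecker lower bound on heights of bounded-degree algebraic numbers (to control $\rho_L$ from below); the conceptual point, and the reason the bound is polynomial in $t$ rather than exponential, is the reduction of the count to a lattice-point count in a Euclidean ball, and I expect cleanly packaging the uniform height lower bound over the family of splitting fields $L$ to be the step needing the most care.
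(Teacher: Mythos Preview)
Your proof is correct and follows essentially the same approach as the paper: both reduce to counting $S$-units of bounded height in a number field of bounded degree, invoking Dirichlet's $S$-unit theorem and a uniform lower bound on heights of non-root-of-unity algebraic numbers of bounded degree. The paper parametrizes via an explicit isomorphism $\phi:\G_m^k\to\TT$ over the splitting field and cites Zannier for the fact that $\hat h$ extends to a norm on $\R^M$, whereas you parametrize by the eigenvalue tuple in the ambient $\GL_N$ and run a direct packing argument with the shortest-vector bound; you are also more explicit about why the splitting-field degree is uniformly bounded (Minkowski on finite subgroups of $\GL_m(\Z)$), which the paper asserts without naming the mechanism.
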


\begin{proof}
 Let $\GG<\SL_d$ and $\TT<\GG$ be a $\Q$-torus. This means that over $\overline \Q$ the group $\TT$ is isomorphic to a product of $k<d$ copies of the multiplicative group $\G_m$, and moreover this isomorphism can be defined over a number field $\K$ with $D=[\K:\Q]$ bounded in terms of $d$. Let $\overline S$ denote the set of places of $\K$ over $S$, and let $\phi : \G_m^k \to \TT$ be the aforementioned isomorphism.

For $\alpha=(\alpha_1,\dots,\alpha_k)\in\G_m^k(\overline \Q)$, let $\hat h(\alpha)=h(\alpha_1)+\dots+h(\alpha_k)$ with $h(\cdot)$ denoting the usual logarithmic height, cf.~\cite[Ch. 3]{Zannier}.
Note that if $\gamma\in\TT(\mathcal{O}_S)$ then $\phi^{-1} (\gamma) \in \bigl({\mathcal O}^{\times}_{\K,\overline S}\bigr)^k$, and if $\gamma$ further satisfies 
 \[
 \max_{\sigma\in S}\|\gamma\|_\sigma<e^t
 \]
 then $\hat h (\phi^{-1} (\gamma)) \ll t$ with the implied constant depending only on $d$.

The $S$-arithmetic version of the Dirichlet unit theorem (see \cite[\S 3.2.2]{Zannier}) gives that the group of $\overline S$-units $\mathcal O^{\times}_{\K,\overline S} \simeq \Z^{\#\overline{S}-1}\times \mathrm{Tors}$ with $\mathrm{Tors}$ a group of roots of unity of bounded order. Let $M=k(\#\overline{S}-1)$ and $\alpha_1,\dots,\alpha_{M}$ be generators of $\bigl({\mathcal O}^{\times}_{\K,\overline S}\bigr)^k$ modulo torsion. Then by \cite[\S3.4]{Zannier}, the map
\[
(n_1,\dots,n_m)\goesto \hat h(\alpha_1^{n_1}\dots\alpha_M^{n_M})
\]
extends to a norm $\norm{\cdot}_{*}$ on $\R^M$.\footnote{Note that this is not just a simple consequence of the functorial properties of the height, but requires an argument.}
Since there is a uniform lower bound on the height of an algebraic number of degree $D$ that is not a root of unity, the number of $M$-tuples $\mathbf n \in \Z^M$ with $\norm{\mathbf n}_{*} < Ct$ is $\ll C^Mt^M$. Since for any $\gamma\in\TT(\mathcal{O}_S)$ we have that $\phi^{-1} (\gamma) \in \left({\mathcal O}^{\times}_{\K,\overline S}\right )^k$ we may conclude that
\[
\# \{\gamma\in\TT(\mathcal{O}_S) \quad \text{satisfying \eqref{eq:height bound}}\}\ll t^M
\]
 \end{proof}

Combining Lemmas \ref{semisimple-gamma}--\ref{fewlementsintorus} it follows that
the case of fast multiple recurrence
in Theorem \ref{zero-or-fast-pol-rec} cannot hold. 

\section{Conclusion of the proof of Theorem~\ref{sl2-thm-final}}
\label{easy-end}

Theorem \ref{zero-or-fast-pol-rec} and the discussions in 
\S\ref{sec:not so fast} 
prove that
\[
 h_\mu(b,V|\mathcal{A}_V)>0. 
\]
However, this is the
assumption of \S\ref{secunipotent} and Lemma \ref{pos-ent-implies-invariance}. Applying Theorem~\ref{MTclassA} we find an algebraic group~$\LL$ which 
describes the ergodic components for the unipotently
generated subgroup preserving~$\mu$. 
We will describe now which possibilities we have for
the~$\Q$-subgroup~$\LL<\GG$ by using that~$\GG$ is~$\Q$-almost simple
and a form of~$\SL_2^k$ or~$\PGL_2^k$.

\begin{proposition}\label{subgroup-L-of-sl2}
	Let~$\GG$ be a~$\Q$-almost simple~$\Q$-group
	that is a form of~$\SL_2^k$ (or of~$\PGL_2^k$). 
	Let~$\LL<\GG$ be a nontrivial Zariski-connected~$\Q$-subgroup.
	Then 
 the projection of $\LL$ to any of the almost 
 simple factors over $\overline{\Q}$ is nontrivial
 and one of the following statements holds.
	\begin{itemize}
		\item[(S)] $\LL$ is semi-simple, and the normalizer of~$\LL$ contains~$\LL$ as a finite index subgroup.
		 Moreover, $\LL$ projects onto 
  each of the almost simple factors over $\overline{\Q}$. 		\item[(U)] $\LL$ is unipotent and abelian,
 the connected component of the centralizer of $\LL$ is the unipotent
 radical of a Borel subgroup of $\GG$, 
 and the normalizer~$N_{\GG}(\LL)^\circ$ is solvable. In particular,~$\GG$ has positive~$\Q$-rank.
		
		\item[(T)] $\LL=\LL_t\LL_u$ is a semidirect product
		of a nontrivial~$\Q$-torus~$\LL_t<\LL$ and its (possibly trivial) unipotent radical~$\LL_u\lhd\LL$.
	\end{itemize}
  The subgroup~$\LL$ from \S\ref{secunipotent} is never as in~(T).
\end{proposition}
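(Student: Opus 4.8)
I would prove the structural trichotomy by passing to $\overline\Q$ and using the $\mathrm{Gal}$-action, and then exclude case (T) for the $\LL$ produced in \S\ref{secunipotent} by an ergodicity obstruction for $H_u$.

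\textbf{The trichotomy.} Over $\overline\Q$ we have $\GG\cong\SL_2^k$ (resp.\ $\PGL_2^k$), with $\mathrm{Gal}(\overline\Q/\Q)$ permuting the $k$ factors transitively because $\GG$ is $\Q$-almost simple. Recall that a Zariski-connected subgroup of $\SL_2$ (resp.\ $\PGL_2$) over $\overline\Q$ is, up to conjugacy, $\{e\}$, a one-parameter unipotent group, a one-dimensional torus, a Borel subgroup, or the whole group. For $\LL$ a nontrivial connected $\Q$-subgroup let $\pi_i$ be the projection onto the $i$-th $\overline\Q$-factor. If some $\pi_i(\LL)$ were trivial then, $\LL$ being $\mathrm{Gal}$-stable and the $\pi_i$ being $\mathrm{Gal}$-permuted, every $\pi_i(\LL)$ would be trivial and so would $\LL$; this gives the first assertion, and also that the conjugacy type of $\pi_i(\LL)$, being $\mathrm{Gal}$-invariant, is independent of $i$. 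If the common type is the whole group then $R_u(\LL)$ (whose projections are normal unipotent subgroups of $\SL_2$) and $Z(\LL)^\circ$ (whose projections are central) are trivial, so $\LL$ is semisimple and surjects onto every factor; then $C_\GG(\LL)$ is finite and, since $\mathrm{Aut}(\LL)^\circ=\mathrm{Inn}(\LL)$, the map $N_\GG(\LL)\to\mathrm{Aut}(\LL)$ forces $N_\GG(\LL)^\circ=\LL$ --- case (S). If it is unipotent then $\LL\hookrightarrow\prod_i\pi_i(\LL)$ is unipotent and abelian with each $\pi_i(\LL)$ a full root group, so $C_\GG(\LL)^\circ=\prod_i\pi_i(\LL)$ is the unipotent radical of a Borel, $N_\GG(\LL)\subseteq N_\GG(C_\GG(\LL)^\circ)$ is contained in that Borel, and $\GG$ has positive $\Q$-rank since it contains the nontrivial unipotent $\Q$-subgroup $\LL$ --- case (U). If it is a torus then $\LL$ is a $\Q$-subtorus of $\prod_i\pi_i(\LL)$; if it is a Borel then $\LL\hookrightarrow\prod_i\pi_i(\LL)$ is connected solvable and its Levi decomposition $\LL=\LL_t\ltimes\LL_u$, which is $\Q$-rational in characteristic zero, has nontrivial torus part $\LL_t$ (as $\pi_i(\LL)$ is not unipotent) --- both are case (T).

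\textbf{$\LL$ from \S\ref{secunipotent} is never of type (T).} Recall that, $h_\mu(b,V|\mathcal A_V)>0$ having been shown, Lemma~\ref{pos-ent-implies-invariance} makes $\mu$ invariant under a nontrivial unipotent subgroup, so the unipotently generated invariance group $H_u$ is nontrivial; Theorem~\ref{MTclassA} then supplies $\LL$, an open finite-index $L\leq\LL(\Q_S)$, and a conull set of representatives $g$ for which the $H_u$-ergodic component of $\mu$ at $\Gamma g$ is the normalized Haar probability measure $m_{\Gamma Lg}$ on the finite-volume closed orbit $\Gamma Lg$, with $H_u<g^{-1}Lg$. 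Suppose $\LL$ is of type (T). If $\LL_u=R_u(\LL)$ is trivial then $\LL=\LL_t$ is a torus, $\LL(\Q_S)$ has no nontrivial one-parameter unipotent subgroup, and hence $H_u=\{e\}$ --- a contradiction. So $\LL_u\neq\{e\}$ and $\LL_t$ is a nontrivial $\Q$-torus. Since $H_u$ is generated by unipotent one-parameter subgroups of $g^{-1}Lg\subseteq g^{-1}\LL(\Q_S)g$, its conjugate $gH_ug^{-1}$ lies in $L\cap\LL_u(\Q_S)=:R_u(L)$, so in the coordinates $\Gamma Lg\cong\Lambda\backslash L$ (with $\Lambda=g^{-1}\Gamma g\cap L$ a lattice, as the orbit has finite volume) the group $H_u$ acts by right translations inside the normal subgroup $R_u(L)$. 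By the $S$-arithmetic form of Mostow's structure theorem for lattices in solvable groups, the image $q(\Lambda)$ of $\Lambda$ under $q\colon L\to L/R_u(L)$ is discrete, so the quotient map $\Lambda\backslash L\to q(\Lambda)\backslash q(L)$ lands in a homogeneous space with non-trivial target: $q(L)$ is commensurable with $\LL_t(\Q_S)$, a non-discrete locally compact group (since $\LL_t(\R)$ has positive dimension), while $q(\Lambda)$ is discrete, hence proper. Pushing $m_{\Gamma Lg}$ forward along this map therefore yields a non-Dirac measure, and pullbacks of non-constant functions from it are $R_u(L)$-invariant, hence $H_u$-invariant, contradicting $H_u$-ergodicity of $m_{\Gamma Lg}$. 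Thus $\LL$ is not of type (T).

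\textbf{Main obstacle.} The substantive input is the $S$-arithmetic Mostow structure theorem, needed to know that the image of the lattice $\Lambda$ in the toral quotient $L/R_u(L)$ is discrete --- this is exactly what turns the observation ``$H_u$ lies in the unipotent radical'' into a genuine obstruction to $H_u$-ergodicity once $\LL_t$ is nontrivial. The trichotomy itself is routine structure theory of subgroups of $\SL_2$ combined with $\mathrm{Gal}$-transitivity; the only point there requiring care is the interaction of the Levi decomposition with the field of definition, which is handled by its $\Q$-rationality in characteristic zero.
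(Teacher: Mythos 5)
Your proposal is correct, and its skeleton (Galois-transitivity on the $\overline{\Q}$-factors, the classification of connected subgroups of $\SL_2$, a type-by-type analysis, and an ergodicity obstruction to rule out (T) for the $\LL$ of \S\ref{secunipotent}) is the same as the paper's. The genuine differences are in the details of the two extreme cases. In case (S) the paper does more work: it proves via a Lie-algebra argument (minimal subsets $A\subseteq\{1,\dots,k\}$ with $\mathfrak l\cap\mathfrak h_A\neq 0$, shown to partition the index set) that $\mathfrak l$ is a \emph{diagonally embedded} copy of $\sl_2^\ell$, and then reads off the normalizer statement; you instead argue softly that $R_u(\LL)$ and $Z(\LL)^\circ$ project trivially, hence $\LL$ is semisimple with surjective projections, and get finiteness of $N_\GG(\LL)/\LL$ from finiteness of $C_\GG(\LL)$ and of $\operatorname{Out}(\LL)$. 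Your route suffices for the proposition as stated and is shorter, but it does not recover the diagonal-embedding structure, which the paper uses immediately afterwards (the remark that $\LL$ is simply connected when $\GG$ is a form of $\SL_2^k$, feeding into the strong-approximation step $H=\HH(\Q_S)$ in \S\ref{easy-2}); so be aware that with your argument that remark would need a separate justification. In the exclusion of (T) both arguments are the same in substance ($H_u$ is generated by unipotents, all unipotent elements of a group of type (T) lie in $\LL_u$, and the orbit fibers over a toral quotient on which $H_u$ acts trivially); you are actually more careful than the paper, which glosses over why that quotient is a nontrivial factor, whereas you make explicit the needed discreteness of the image of the lattice in $L/R_u(L)$ — though rather than an ``$S$-arithmetic Mostow theorem'' it is cleaner here to invoke that the stabilizer lattice is $S$-arithmetic in $\LL$ (this is part of what Theorem~\ref{MTclassA}, via Tomanov, provides) together with the standard fact that the image of an $S$-arithmetic subgroup under the quotient by the unipotent radical is again $S$-arithmetic, hence discrete. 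Also note your separate treatment of the subcase $\LL_u=\{e\}$ quietly uses that $H_u$ is nontrivial, which is legitimate since Lemma~\ref{pos-ent-implies-invariance} has been applied before Theorem~\ref{MTclassA} is invoked.
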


\begin{proof}
We recall that the absolute Galois group of~$\Q$ acts on the various algebraic~$\overline\Q$-subgroups of~$\GG$
and that~$\GG$ and~$\LL$ as~$\Q$-groups are fixed by this action. 
Since~$\GG$ is isomorphic to~$\SL_2^k$ or~$\PGL_2^k$ over~$\overline\Q$, we see that the almost simple normal $\overline{\Q}$-subgroups
are permuted under the Galois action. Moreover, since~$\GG$ is assumed to be almost simple over~$\Q$
the action of the Galois group is transitive on the almost simple~$\overline \Q$ factors of~$\GG$ (for otherwise
the product over all factors in an orbit would be a normal~$\Q$-subgroup of~$\GG$). We enumerate
the $\overline{\Q}$-almost-simple normal factors of~$\GG=\HH_1\times\cdots\times\HH_k$, and write~$\pi_j:\GG\to\HH_j$
for the projection maps for~$j=1,\ldots,k$. Let $\mathfrak{h}_j$ be the Lie algebra of~$\H_j$ for~$j=1,\ldots,k$. 
Since~$\HH_1$ is isomorphic to~$\SL_2$ or to~$\PGL_2$ 
the subgroup~$\pi_1(\LL)<\HH_1$ is either trivial, all of~$\HH_1$, a unipotent subgroup, a torus subgroup, or
the Borel subgroup of~$\HH_1$. By transitivity of the Galois action the subgroups~$\pi_j(\LL)<\HH_j$
are for every~$j=2,\ldots,k$ of the same type as the subgroup~$\pi_1(\LL)$.

If~$\pi_j(\LL)$ is trivial for all~$j=1,\ldots,k$ then~$\LL<\pi_1(\LL)\times\cdots\times\pi_k(\LL)$ is trivial too, which 
contradicts our assumption.

If~$\pi_j(\LL)$ is a torus subgroup for all~$j$,
then~$\LL<\pi_1(\LL)\times\cdots\times\pi_k(\LL)$ is also a torus subgroup. 
Similarly if~$\pi_j(\LL)$ is the Borel subgroup for all $j$, then $\LL$ is solvable. The remaining claim in (T) follows
from the Levy decomposition of $\Q$-groups.

If~$\pi_j(\LL)$ is unipotent for all~$j=1,\ldots,k$, then~$\LL<\pi_1(\LL)\times\cdots\times\pi_k(\LL)$
is also unipotent as in~(U). We also have
in this case that $\pi_1(\LL)\times\cdots\times\pi_k(\LL)$
is the connected component of the centralizer of $\LL$ and the unipotent radical of a Borel subgroup of $\GG$. 
Moreover,~$\LL(\mathcal{O}_S)=\GG(\mathcal{O}_S)\cap \LL(\Q_S)$
is a lattice in~$\LL(\Q_S)$ and so nontrivial,
which implies that~$\GG(\mathcal{O}_S)$ has unipotent elements, the locally homogeneous space~$X=\GG(\mathcal{O}_S)\backslash \GG(\Q_S)$
is noncompact, or equivalently that~$\GG$ has positive~$\Q$-rank. 
 The claim in~(U) regarding~$N_{\GG}(\LL)^\circ$ follows
by applying the case (T) above also to~$N_{\GG}(\LL)^\circ$.

So let us assume now that~$\pi_j(\LL)=\HH_j$ for~$j=1,\ldots,k$ and let~$\mathfrak{l}<\sl_2^k$ 
denote the Lie algebra of~$\LL$. We claim that~$\mathfrak{l}$ is a diagonally embedded copy of~$\sl_2^\ell$
for some $\ell$ dividing $k$. More precisely after permuting the indices and applying
some inner automorphism over $\overline{\Q}$ (if necessary) $\mathfrak{l}$ takes the form
\[
 \bigl\{(v_1,\ldots,v_\ell,v_1,\ldots,v_\ell,\ldots,v_1,\ldots,v_\ell)\mid
v_1,\ldots,v_\ell\in\sl_2\bigr\}
\]
Note that the connected component of the normalizer of a semisimple Lie group (or Lie algebra)
equals the Lie group times the centralizer of the Lie group. Therefore, the claim implies the statement in~(S). 

To prove the claim we let~$A$ be a minimal 
subset of~$\{1,\ldots,k\}$ with the property
that the intersection
\[
 \mathfrak{l}\cap \mathfrak{h}_A\neq 0 \qquad \mathfrak{h}_A  = \sum_{j\in A}\mathfrak{h}_j.
\]
Let~$B$ be another minimal subset with this intersection property 
and suppose that~$A$ and~$B$ intersect nontrivially.
We wish to show that this implies that~$A=B$. So suppose~$j_0\in A\cap B$ and let~$v\in\mathfrak{l}\cap\mathfrak h_A$
be a nonzero vector. Taking the commutators with any~$w\in\mathfrak{l}$
and using that~$\pi_{j_0}(\mathfrak{l})=\mathfrak h_{j_0}$ is semisimple
we see that the component~$v_{j_0}=\pi_{j_0}(v)\in\mathfrak h_{j_0}$ can be any vector in~$\mathfrak h_{j_0}$. For that reason there 
exists some~$w\in\mathfrak{l}\cap\mathfrak h_B$ so that~$w_{j_0}=\pi_{j_0}(w)$ does not commute with~$v_{j_0}$. 
In particular, we see that~$[v,w]\in\mathfrak{l}\cap\mathfrak h_{A\cap B}$ has a nonzero~${j_0}$-th component. 
As~$A$ and~$B$ were minimal subsets with the intersection property this implies~$A=B$. 

Using the same argument it follows that $\pi_j(\mathfrak{l}\cap\mathfrak h_A)=\sl_2$ for any $j\in A$. 
Also by definition~$\mathfrak{l}\cap\mathfrak h_A$ has trivial
intersection with any subspace~$\mathfrak h_B$ for a proper 
subset $B\subsetneq A$. 
Using only linear algebra this implies 
that~$\mathfrak{l}\cap\mathfrak h_A$ must be a diagonally embedded 
three-dimensional subspace of~$\mathfrak h_A$. As it is also 
a Lie algebra, it equals the simulteneous graph of~$|A|-1$
many Lie algebra automorphisms of~$\sl_2$ with common 
domain equal~$\mathfrak h_{j_0}$ for some fixed~$j_0\in A$.
As all automorphisms of $\sl_2$ are inner over $\Q$, we may also apply
some inner automorphism to $\mathfrak{l}$ and assume 
that $\mathfrak{l}\cap\mathfrak h_A$ equals 
a diagonally embedded copy of~$\sl_2$.

For any~$j_0\in\{1,\ldots,k\}$ there exists a unique minimal subset~$A_{j_0}\subset\{1,\ldots,k\}$
with the intersection property. As the Galois action is transitive on the indices
we see that the subsets are all of equal size and constitute a partition of~$\{1,\ldots,k\}$.
This implies the claim.

Let us now prove the final claim of the proposition: 
In fact, we have found the subgroup~$\LL$ in \S\ref{secunipotent}
as the algebraic group describing the~$H$-ergodic components of~$\mu$ where~$H$
was generated by unipotents. However, in the case~(T) the unipotent elements
of~$\LL$ are all contained in~$\LL_u\lhd\LL$ 
and these do not act ergodically on the 
orbit~$\Gamma\LL(\Q_S)'$ (as it factors onto
a locally homogeneous space defined by~$\LL_t(\R)'$ 
on which~$H<\LL_u(\Q_S)$ acts trivially). 
\end{proof}

Note that it also follows from the proof that if $\GG$ is a form of $\SL_2^k$ and $\LL$ is as in~(S) then
$\LL$ is simply connected, as $\LL(\overline\Q)$ is isomorphic to a product of $\SL_2$'s.

\subsection{Proof of Theorem~\ref{sl2-thm-final} for \texorpdfstring{$\LL$}{L} semi-simple}\label{easy-2}

We suppose first that the subgroup
$\LL$ in Theorem \ref{MTclassA} 
is semi-simple as in 
Proposition \ref{subgroup-L-of-sl2} (S). 
In particular, $\LL$ equals the Zariski connected component of its normalizer.
Also recall that~$\mu$ is~$L$-invariant for a finite 
index subgroup $L<\LL(\Q_S)$ and both of these groups 
are normalized by $A$. Since $A<N_G(\LL)$ it follows that 
$A\cap L$ has finite index in $A$. Since $A$ is 
class-$\mathcal{A}'$ we obtain $A<\LL(\Q_S)$ (e.g.\ by considering the action of $A$ in the Chevalley representation for $\LL$). 
Therefore, $\mu$ is invariant under $L'=LA<\LL(\Q_S)$ 
and is supported on the closed orbit~$\Gamma L'$,
which proves the first possibility in 
Theorem~\ref{sl2-thm-final} in the case at hand.

Note that if $\GG$ is a form of $\SL_2^k$, then by the remarks following the proof of Proposition~\ref{subgroup-L-of-sl2} the group $\LL$ is simply connected, hence as $\LL(\Q_S)$ is non compact we have $\LL(\mathcal O_S) L'=\LL(\Q_S)$
by strong approximation \cite[\S7]{Platonov-Rapinchuk}. It follows that in this case we may take $L=L'=\LL(\Q_S)$.

We also note that if $X$ is compact,
then $\Gamma$ cannot contain any 
unipotent elements. This in turn
implies that there are no unipotent $\Q$-subgroups of $\GG$. 
By Proposition~\ref{subgroup-L-of-sl2} we conclude that~$\LL$
is semi-simple and the above discussion applies.

\subsection{Proof of Theorem~\ref{sl2-thm-final} for \texorpdfstring{$\LL$}{L} unipotent}\label{sec:embedding}

By Theorem \ref{MTclassA} 
and Proposition~\ref{subgroup-L-of-sl2}
the only other case to consider is
when $\mu$ is invariant under a finite index subgroup $L<\LL(\Q_S)$
for a unipotent $\Q$-group $\LL$. 
As $\LL(\Q_S)$ has no finite index subgroups, 
we have in fact $L=\LL(\Q_S)$. 
Moreover, by Proposition~\ref{subgroup-L-of-sl2}
$N_G(\LL)$ is a solvable subgroup of $G$.
Finally, $\mu$ is supported on a single closed orbit of the Haar measure preserving normalizer~$N_G^1(\LL)$.

To prove Theorem \ref{sl2-thm-final} it remains to upgrade this to the statement that $\mu$ is supported on a compact orbit. 
For this we first note that for $\GG$ as in Theorem~\ref{sl2-thm-final} positive $\Q$-rank implies that~$\GG=\operatorname{Res}_{\K\mid\Q}\operatorname{SL}_2$ 
or~$\GG=\operatorname{Res}_{\K\mid\Q}\operatorname{PGL}_2$
for some number field~$\K\mid\Q$, see \cite[Ch.~4]{Platonov-Rapinchuk}. 
We define $\mathbb{U}$
as the connected component of the centralizer of $\LL$. By Proposition \ref{subgroup-L-of-sl2} $\UU$ is the unipotent radical of a Borel subgroup $N_\GG(\UU)$. 

Let $\mathbb{B}$ denote the upper triangular
Borel subgroup of $\SL_2$ resp.\ $\PGL_2$ so that $\operatorname{Res}_{\K|\Q}\mathbb{B}<\GG$ defines a $\Q$-subgroup which is also a Borel subgroup of $\GG$. 
Note that any two Borel subgroups (as minimal parabolic subgroups)
defined over $\Q$ are conjugated by an element of $\GG(\Q)$, which allows us to pass statements (up to finite index issues) from one Borel subgroup to the other.
In particular, Dirichlet's $S$-unit theorem now implies that the orbit $\Gamma N_G^1(\UU)$ is in fact compact.
By construction we have $\N_\GG(\LL)<N_\GG(\UU)$ and the modular characters
of conjugation on $\LL$ and conjugation on $\UU$ both define nontrivial $\Q$-characters. As $\GG$ has $\Q$-rank one, these need to be rationally dependent, which implies that $N_G^1(\LL)<N_G^1(\UU)$. Put together
we now obtain that $\Gamma N_G^1(\LL)$
is closed by Theorem \ref{MTclassA}
and contained in the compact orbit $\Gamma N_G^1(\UU)$. This concludes the proof of Theorem \ref{sl2-thm-final}.

\section{The Adelic Measure Classification}

As in \S\ref{s:adelic},
let $\KK$ be a number field, and let
\[
X_{\Adel_\K}=\SL_2(\K)\backslash \SL_2(\Adel_\K).
\]
We set $\GG=\operatorname{Res}_{\KK|\QQ}\SL_2$ and $G=\GG(\Adel_\Q)$ which we identify with $\SL_2(\Adel_K)$.

Let $\TT<\SL_2$ be the group of diagonal matrices, and let $\Lambda <\Adel_\KK ^\times$ be a discrete subgroup satisfying
\begin{enumerate}[label=\textup{(A\arabic*)},leftmargin=*]
\item \label{item:big-2} There are $C,\kappa>0$ so that for any $t>0$,
\[
|\{ k \in \Lambda: h(k)\leq t \}| \geq C e^{\kappa t}
\]
\end{enumerate}
Set, for $k=(k_\sigma)\in \Adel_\KK$,
\[
\sa k= (\sa {k_\sigma})_\sigma \text{ \ with \ }\sa {k_\sigma}=\begin{pmatrix}
k_\sigma&\\&k_\sigma^{-1}
\end{pmatrix},
\]
and $A = \left\{ \sa k: k \in \Lambda \right\}<\TT(\Adel_\KK)$.
Let~$\mu$ be $A$-invariant and ergodic probability measure $\mu$ on $X_{\Adel_\K}$.
Let $S_\KK$ denotes the set of places of $\KK$, \ $S_{\KK,\infty}$ the set of $\sigma \in S_\KK$ with $\sigma \mid \infty$, and $S_{\KK,\mathrm{f}} = S_\KK \setminus S_{\KK,\infty}$. Also, for any place $\sigma \in S_\KK$ we let $\mathcal O_{(\sigma)}$ denote the maximal order in the local field $\KK_\sigma$ (e.g. if $\KK=\QQ$, \ $\mathcal O_{(p)}=\Z_p$).
We say that $\Lambda$ is bounded at $\sigma \in S_\KK$ if $\norm {k} _ \sigma = 1$ for every $k \in \Lambda$. We say that it is unbounded at $\sigma$ otherwise.

\medskip

Suppose first that this $A$-invariant and ergodic probability measure $\mu$ 
has zero entropy. We want to show (at least if $\Lambda$ is rich enough) that there is a $\Q$-torus $\HH$ and a $g \in \GG(\Adel_\Q)$ so that
    \[
    \mu(\Gamma \HH(\Adel_\Q)g)=1
    \]
    with $A < g^{-1}\HH(\Adel_\Q)g$.

\subsection{Decay rates of Bowen balls}\label{sec:defofa0}

For now fix some $k_0\in\Lambda$ and
let $a_0=\sa{k_0}\in A$. 
For $r\in(0,1]$ we define the Bowen ball for $a_0$ and $n\in\N$ by
\begin{equation}\label{adelicBowen}
     D^{a_0}_{r,n}=\bigcap_{k=-n}^n a_0^k B_r^{G} a^{-k}_0,
\end{equation}
where we will assume that the metric satisfies
\[
 B_1^{G}\subset\prod_{\sigma \in S_{\KK,\infty}} \SL_2(\KK_\infty)\times\prod_{\sigma\in S_{\KK,\mathrm{f}}}\SL_2(\mathcal O_{(\sigma)}).
\]
We chose $r$ small enough so that the intersection of $B_{10r}(G)$ with any conjugate of~$A$ in $\SL_2(\Adel_\KK)$ is the identity.

As in \S\ref{recurrencesection}, the measure of such Bowen balls relate to the entropy of $a_0$ with respect to an $A$-invariant and ergodic probability measure $\mu$. In fact for $\mu$-a.e.\ $x$ and any sufficiently small $r>0$ we have that
\[
-\tfrac1n\log\mu(x D^{a_0}_{r,n})\rightarrow 
h_\mu(a_0)\qquad\text{as $n\to\infty$}.
\]
Assume now that the entropy of $a_0$ with respect to $\mu$ vanishes. Then in particular for $\alpha>0$ arbitrary 
we have that for $\mu$-a.e.\ $x$
\begin{equation}\label{eq:adelic-0-decay}
\mu\bigl(x D^{a_0}_{r,n}\bigr)>e^{-\alpha n} \qquad\text{for all sufficiently large $n$.}
\end{equation}

\subsection{Using the growth condition on \texorpdfstring{$A$}{A}}

Starting with a point $x=\Gamma g$ and $n$ satisfying \eqref{eq:adelic-0-decay} for some $\alpha>0$, consider the union
\[
\bigcup_{k\in \Lambda :h(k)<\delta n} x D_{r,n}^{a_0}\sa k,
\]
where $\delta = 2\alpha/ \kappa$,  with $\kappa$ as in \ref{item:big-2}. All the sets appearing in the union have the same measure which, if $n$ is large enough,  is at least $e^{-\alpha n}=e^{-\delta\kappa n/2}$ by  \eqref{eq:adelic-0-decay}. By \ref{item:big-2} we have $\gg e^{\kappa\delta n}$ such
sets included in the union. Hence  for large enough $n$ at least two of these sets must overlap. 
In other words for each such $n$ there exist two distinct $k_1, k_2\in \Lambda$ with $h(k_1),h(k_2)<\delta n$  (depending on $n$) and $\gamma_n\in\Gamma$ so that for $a_i=\sa {k_i}$ it holds that
\begin{equation*}
     \gamma_n g h_1 a_1= g h_2 a_2
\end{equation*}
for some $h_1,h_2\in D_{r,n}^{a_0}$, or equivalently
\begin{equation}\label{new-nice-gamma}
    \gamma_n=g (h_2 a_{2}a_{1}^{-1} h_1^{-1}) g^{-1}\in\SL(\KK).
\end{equation}
Note that our choice of $r$ guarantees that $\gamma_n\neq e$ as $a_1\neq a_2$. Set $k_{(n)}=k_2k_1^{-1}$, $a_{(n)}=a_{2}a_{1}^{-1}=\sa {k_{(n)}}$, and note that $h(k_{(n)})\leq 2\delta n$.

\subsection{Analysing the lattice elements}\label{sec:lattice-torus}
Similar to the discussion in Section \ref{sec:not so fast} we want to show that \eqref{new-nice-gamma} imposes rather sever restrictions on the $\gamma_n$.
Specifically, we show that analogues of Lemmas~\ref{semisimple-gamma}--\ref{samecommutator} holds. We note however a key difference between the $S$-arithemtic setting in \S\ref{sec:not so fast} and the Adelic torus we study here: in contrast to Lemma~\ref{fewlementsintorus}, which gives polylog bounds on the number of $S$-integral points in a $\Q$-torus, an adelic torus contains polynomially in $T$ many point of ``size'' $<T$. 

\begin{lemma}[Semi-simple elements]\label{semisimple-gamma-adelic}
	If~$x=\Gamma g$ satisfies~\eqref{eq:adelic-0-decay} for~$\alpha>0$ small enough, and if~$r>0$ is sufficiently small, and~$n$ is sufficiently large,
	then $\gamma_n$ is semi-simple and non-central. 
\end{lemma}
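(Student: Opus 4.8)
The plan is to mimic the proof of Lemma~\ref{semisimple-gamma} in the $S$-arithmetic setting, using the adelic product formula $\prod_{\sigma}\|x\|_\sigma^{d_\sigma}\geq 1$ for nonzero $x\in\SL_2(\KK)$ (or more precisely for nonzero elements of $\mathfrak{sl}_2(\KK)$ or the matrix ring $M_2(\KK)$) to upgrade an inequality to an equality. First I would recall from \eqref{new-nice-gamma} that $\gamma_n = g(h_2 a_{(n)} h_1^{-1})g^{-1}\in\SL_2(\KK)$ with $h_1,h_2\in D_{r,n}^{a_0}$ and $h(k_{(n)})\leq 2\delta n$. The key point is to bound $\|\gamma_n\|_\sigma$ place-by-place. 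Since $a_0 = \sa{k_0}$ acts by conjugation on $\SL_2(\KK_\sigma)$ with expansion/contraction governed by $|k_0|_\sigma$, and the Bowen ball $D_{r,n}^{a_0}$ is the set of $h$ that stay within $B_r^G$ under $n$ iterations of conjugation by $a_0$, the element $h_i$ can be large only in the unipotent directions and only at places where $|k_0|_\sigma\neq 1$. Concretely, at places $\sigma$ where $|k_0|_\sigma > 1$ the element $h_i\in D_{r,n}^{a_0}$ is within $\ll r|k_0|_\sigma^{-n}$ of the upper-triangular unipotent subgroup, and similarly for $|k_0|_\sigma<1$ with the lower unipotent; at places where $|k_0|_\sigma=1$ we just have $h_i\in B_r^G$ so $\|h_i\|_\sigma\ll 1$.

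Next I would suppose for contradiction that $\gamma_n$ is not semisimple and non-central, i.e.\ (since $\SL_2$ is involved) that $\gamma_n$ is unipotent (the central case $\gamma_n=\pm e$ is excluded because $a_1\neq a_2$ forces $\gamma_n\neq e$ and a unipotent-times-central argument as in Lemma~\ref{semisimple-gamma} rules out the nontrivial central part). Writing $\gamma_n - e$, I would estimate $\|\gamma_n - e\|_\sigma$ at each place. At places $\sigma$ where $\Lambda$ (equivalently $k_0$ and the relevant $k_{(n)}$) is bounded, $a_{(n)}$ has bounded norm and using $\|h_i - (\text{unipotent})\|_\sigma$ small one gets — much as in the derivation of \eqref{anotherineqforh} — a bound like $\|\gamma_n - e\|_\sigma \ll_{g,r} (\text{something that decays})$, or at worst $\ll_{g,r} 1$ where that place does not matter. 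The crucial gain comes at a place $\sigma_0$ at which $\Lambda$ is \emph{unbounded} (which must exist, otherwise $\Lambda$ sits in a compact group and condition \ref{item:big-2} fails): there, because $\gamma_n$ conjugated back by $g$ lands in $D_{r,n}^{a_0}$-type neighborhoods, its eigenvalues are forced $e^{-cn}$-close to $1$; if $\gamma_n$ were unipotent this, combined with the smallness in the off-diagonal at the contracting direction, gives $\|\gamma_n - e\|_{\sigma_0}\ll r^{1/2}e^{-cn}$ for an explicit $c>0$ proportional to $\log|k_0|_{\sigma_0}$. Meanwhile the growth of $\|\gamma_n\|_\sigma$ at the expanding places is only $\ll e^{C\delta n}$ with $C$ a fixed constant (depending on $k_0$), and $\delta = 2\alpha/\kappa$ can be made as small as we like by taking $\alpha$ small. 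Summing $d_\sigma\log\|\gamma_n-e\|_\sigma$ over all $\sigma$ and choosing $\alpha$ small enough that the total is negative for large $n$, the product formula forces $\gamma_n - e = 0$, contradicting $\gamma_n\neq e$.

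The main obstacle, and the point that genuinely differs from the $S$-arithmetic case, is that here $\Lambda$ is an adelic group which can be unbounded at infinitely many places and whose elements $k_{(n)}$ of bounded height may nonetheless spread their ``mass'' across many places. So I must be careful that the contributions $\log|k_{(n)}|_\sigma$ at the places where $\Lambda$ is unbounded are all controlled by the single quantity $h(k_{(n)})\leq 2\delta n$, and that the expansion of $h_i\in D_{r,n}^{a_0}$ is governed by the \emph{fixed} element $k_0$ (so by a fixed constant $\Delta = \Delta(k_0)$) rather than by the varying $k_{(n)}$. The clean way to organize this is: the $h_i$ contribute at most $\Delta n$ per place at the (finitely many, since $k_0$ is fixed) places where $|k_0|_\sigma\neq 1$, hence at most $\Delta n \cdot [\KK:\QQ]$ to $\sum_\sigma d_\sigma\log\|\gamma_n\|_\sigma$; the factor $a_{(n)}$ contributes at most $h(k_{(n)})\leq 2\delta n$ (after accounting for both diagonal entries, a further bounded multiple); and the gain at the contracting place(s) of $k_0$ is $\geq c n - O(\log r^{-1})$. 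Choosing $\alpha$ (hence $\delta$) small relative to the fixed constants $c, \Delta, [\KK:\QQ]$ closes the argument. I would also need the elementary fact that $\Lambda$ \emph{is} unbounded at some place, which follows immediately from \ref{item:big-2} since a subgroup of a compact group cannot contain exponentially many elements of bounded height.
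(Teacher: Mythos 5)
Your overall strategy is the paper's: reduce to ruling out a nontrivial unipotent $\gamma_n$, bound $\log\|\gamma_n-I\|_\sigma$ place by place, gain an exponentially small bound at the places where $a_0$ is hyperbolic (using unipotence and closeness to the torus), control the remaining places by $h(k_{(n)})\leq 2\delta n$, and conclude by the product formula \eqref{eq:norm product} with $\alpha$ (hence $\delta$) small; this is exactly how \eqref{eq:S_1 places} and \eqref{calculation giving contradiction} work. However, your quantitative bookkeeping rests on a misreading of the Bowen ball \eqref{adelicBowen} that, as written, breaks the final inequality. You treat $h_1,h_2\in D_{r,n}^{a_0}$ as if they were elements of an \emph{expanded} set like $a_0^nB_r^Ga_0^{-n}$, "large only in the unipotent directions" with norm up to $e^{\Delta n}$, and close to the upper or lower unipotent subgroup. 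In fact $D_{r,n}^{a_0}=\bigcap_{k=-n}^{n}a_0^kB_r^Ga_0^{-k}\subseteq B_r^G$, so $\|h_i\|_\sigma$ is bounded at \emph{every} place (and $h_i$ lies in $\SL_2(\mathcal O_{(\sigma)})$ at finite places by the normalization of the metric); moreover at the places with $|k_0|_\sigma\neq 1$ the two-sided intersection forces both off-diagonal entries to be $\ll re^{-cn}$, i.e.\ $h_i$ is exponentially close to the \emph{diagonal} (the centralizer of $a_0$), not to a unipotent subgroup. Consequently your loss term "$\Delta n\cdot[\KK:\Q]$ from the $h_i$" does not exist — but since you include it, your closing step fails: taking $\alpha$ small makes $\delta$ small but does nothing against a loss of order $\Delta n[\KK:\Q]$, and unlike \S\ref{secchoosingb} (where $b$ was \emph{chosen} so that the gain $\lambda=5|S|k\Delta$ beats the $\Delta$-losses, cf.\ \eqref{prep-delta}) there is no freedom here: $a_0$ is a fixed element of $A$ and there is no reason to have $c>\Delta[\KK:\Q]$. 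So the contradiction you claim does not follow from the estimates you assert.

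The correct accounting, as in the paper, is that the only factor of $\gamma_n=g(h_2a_{(n)}h_1^{-1})g^{-1}$ in \eqref{new-nice-gamma} that can grow is $a_{(n)}=\sa{k_{(n)}}$, whose total contribution to $\sum_\sigma d_\sigma\log\|\gamma_n-I\|_\sigma$ is at most $2h(k_{(n)})\leq 4\delta n$ plus an $O(1)$ depending on $g$ and $\KK$, while at \emph{every} place of $S_1=\{\sigma:|k_0|_\sigma\neq1\}$ (not just one place "where $\Lambda$ is unbounded" — note also that what matters is hyperbolicity of the fixed $a_0$, which requires choosing $k_0$ with $h(k_0)>0$) one gets $\|\gamma_n-I\|_\sigma\ll_g e^{-cn/2}$: there $g_\sigma^{-1}\gamma_ng_\sigma$ is $e^{-cn}$-close to a torus element, and unipotence forces its trace to equal $2$, hence the torus element is $e^{-cn/2}$-close to $I$ (your phrase "its eigenvalues are forced $e^{-cn}$-close to $1$" should be replaced by this trace argument; the eigenvalues of a unipotent element are exactly $1$, and it is the nearby diagonal element that is being pinned down, with the square-root loss). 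With these corrected local estimates your plan does close exactly as in the paper, by choosing $\delta=2\alpha/\kappa<c/8$, and the reduction from "semisimple and non-central" to the unipotent case is handled as you indicate, in the spirit of Lemma~\ref{semisimple-gamma}.
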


Before we set to prove Lemma~\ref{semisimple-gamma-adelic}, we need a few preliminaries. For $g _ \sigma \in \SL _ 2 (\KK _ \sigma)$ we let $\norm g _ \sigma$ denote the operator norm of $g$ with respect to the usual norm on $\KK _ \sigma ^2$ (i.e.\ the Euclidean norm if $\sigma$ is infinite, and the maximum norm if $\sigma$ is finite). Since for any $2\times 2$ matrix $\begin{pmatrix} m_{ij} \end{pmatrix}$
\[
\norm {\begin{pmatrix} m_{ij} \end{pmatrix}} _ \sigma \leq \max \absolute {m _ {ij}} _ \sigma
\]
it holds that for any nonzero $z \in M_2(\KK)$,
\begin{equation}\label{eq:norm product}
\prod_ {\sigma \in S_\KK} \norm z _ \sigma ^ {d _ \sigma} \geq 1
.\end{equation}

\begin{proof} [Proof of Lemma~\ref{semisimple-gamma-adelic}]
Let $S_1$ be the set of places $\sigma \in  S_\KK$ for which $\absolute {k _ 0} _ \sigma \neq 1$, $S _ 3$ be the finite set of those places in $S_\KK \setminus S _ 1$ which either are infinite places or places in which the $\sigma$-component of $g \in \SL _ 2 (\Adel_{\KK})$ is not in $\SL _ 2 (\mathcal{O}_{(\sigma)})$, and let $S _ 2$ be all other places.

As in the proof of Lemma~\ref{semisimple-gamma}, the statement reduces easily to proving that $\gamma _ n$ cannot be a unipotent element of $\SL _ 2 (\KK)$.
Suppose in contradiction that $\gamma _ n$ is a nontrivial unipotent element. 

It follows from \eqref{new-nice-gamma} that for every $\sigma \in S _ 2$ for which $\absolute {k_{(n)}}_\sigma=1$, $\gamma _ n \in \SL _ 2 (\mathcal{O}_{(\sigma)})$, hence
\begin{equation*}
\norm {\gamma _ n - I} _ \sigma \leq 1
,\end{equation*}
and more generally
\begin{equation*}
\log \norm {\gamma _ n - I} _ \sigma \leq  \absolute{\log | {k_{(n)}}|_\sigma}.
\end{equation*}
While we cannot be as precise for $\sigma \in S _ 3$, we can similarly say that in this case
\begin{equation*}
\log \norm {\gamma _ n - I} _ \sigma \leq  \absolute{\log | {k_{(n)}}|_\sigma} + O(1)
,\end{equation*}
with the $O(1)$ depending only on $\KK,g$.

Similar estimates hold for $\sigma \in S _ 1$, but for these we can do better using the assumption that $\gamma _ n$ is unipotent. Taking the trace of both sides of \eqref{new-nice-gamma} we see that for any $\sigma \in S _ 1$ the  component of $h _ 2 a _ {(n)} h _ 1$ 
corresponding to $\sigma$ (in the notations of \eqref{new-nice-gamma}) has trace 2 and is $\ll e^{-cn}$ close to $\T (\KK _ \sigma)$, where $c > 0$ is some constant that depends on $a_0$. This implies that for $\sigma \in S _ 1$
\begin{equation} \label{eq:S_1 places}
\norm {\gamma_n -I} _ \sigma \ll _ {g} e ^ {- c n / 2}
.\end{equation}

Setting $z=\gamma_n-I$, taking the logarithm of \eqref{eq:S_1 places}, and summing over all places, we get using \eqref{eq:norm product} that
\begin{equation}\label{calculation giving contradiction}
    \begin{aligned}
0 & \leq \sum_ {\sigma \in S_\KK} d _ \sigma \log \norm z _ \sigma \\
& \leq 
- \frac{cn}{2} \sum_ {\sigma \in S _ 1} d _ \sigma+\sum_ {\sigma \in S _ 2 \cup S _ 3} d _ \sigma\absolute{\log | {k_{(n)}}|_\sigma} + \sum_ {\sigma \in S _ 1\cup S_3} O(1)  \\
& \leq - \frac{cn}{2}+2 h (k_{(n)}) + O(1) \leq \bigl(4 \delta-\frac{c}2\bigr) n + O(1)
,\end{aligned}
\end{equation}
with the $O (1)$ independent of n. Choosing $\alpha$ small enough so that $\delta = 2\alpha/\kappa < c / 8$ gives a contradiction once $n$ is sufficiently large.
\end{proof}

A similar calculation, this time evaluating $\norm {z}_\sigma$ for $z= \gamma _ n \gamma _ {n +1} - \gamma _ {n +1} \gamma _ n$, and again using \eqref{new-nice-gamma} gives the following analogue of Lemma~\ref{samecommutator}. We leave the details to the reader.

\begin{lemma}[Commuting]\label{samecommutator-adelic}
	If~$x=\Gamma g$ satisfies~\eqref{eq:adelic-0-decay} for~$\alpha>0$ sufficiently small, and if~$r>0$ is sufficiently small, and~$n$ sufficiently large,
  then the elements
  $\gamma_n$ and $\gamma_{n+1}$ defined 
  using \eqref{new-nice-gamma}
  commute.
\end{lemma}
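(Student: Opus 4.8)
The plan is to mimic, in the adelic setting, the argument that in the $S$-arithmetic case ruled out fast multiple recurrence (Lemmas~\ref{semisimple-gamma}--\ref{samecommutator}), but with the crucial difference that an adelic torus has \emph{polynomially} many small points, so the height estimates alone do not directly give a contradiction. The claim to establish (``Commuting'') should follow the same template as Lemma~\ref{semisimple-gamma-adelic}: start from the two identities
\[
\gamma_n = g\,(h_2 a_{(n)} h_1^{-1})\,g^{-1},\qquad
\gamma_{n+1} = g\,(h_2' a_{(n+1)} h_1'^{-1})\,g^{-1}
\]
coming from \eqref{new-nice-gamma}, set $z = \gamma_n\gamma_{n+1} - \gamma_{n+1}\gamma_n \in M_2(\KK)$, and estimate $\norm{z}_\sigma$ place by place. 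Since $z$ is a polynomial (of bounded degree) in the entries of $\gamma_n$ and $\gamma_{n+1}$, each of the three types of estimates used in the proof of Lemma~\ref{semisimple-gamma-adelic} propagates to $z$ with only a bounded multiplicative loss: for $\sigma \in S_2$ with $\absolute{k_{(n)}}_\sigma = \absolute{k_{(n+1)}}_\sigma = 1$ one gets $\norm{z}_\sigma \leq 1$ (both $\gamma_n,\gamma_{n+1}\in\SL_2(\mathcal O_{(\sigma)})$), for general $\sigma\in S_2\cup S_3$ one gets $\log\norm{z}_\sigma \leq \absolute{\log\absolute{k_{(n)}}_\sigma} + \absolute{\log\absolute{k_{(n+1)}}_\sigma} + O(1)$, and the key gain is at $\sigma\in S_1$.

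The key step, and the one I expect to be the main obstacle, is the estimate at the places $\sigma\in S_1$. There, as in the proof of Lemma~\ref{semisimple-gamma-adelic}, the $\sigma$-components of $a_0^k$ contract/expand along $\T(\KK_\sigma)$, so both $h_2 a_{(n)} h_1^{-1}$ and $h_2' a_{(n+1)} h_1'^{-1}$ are $\ll e^{-cn}$-close to the (abelian) torus $\T(\KK_\sigma)$; hence the $\sigma$-components of $\gamma_n$ and $\gamma_{n+1}$ are each within $\ll_g e^{-cn/2}$ of a common conjugate $g\T(\KK_\sigma)g^{-1}$ of $\T$, which is abelian, so their commutator-difference satisfies $\norm{z}_\sigma \ll_g e^{-cn/2}$. (One should be a little careful that ``close to $\T(\KK_\sigma)$'' is uniform in $k$, which it is because the relevant estimate \eqref{prep-lambda}-type bound only used that $a_0$ has eigenvalue of absolute value $\neq 1$ at $\sigma$; also one needs $\gamma_n,\gamma_{n+1}$ close to the \emph{same} conjugate of $\T$, which holds since $g$ is fixed.) Here the semisimplicity and non-centrality of $\gamma_n$ from Lemma~\ref{semisimple-gamma-adelic} may be invoked, but it is not strictly needed for the commutator estimate itself — what one really needs is only proximity to an abelian subgroup.

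Finally, one assembles the global estimate via the product formula \eqref{eq:norm product}: summing $d_\sigma\log\norm{z}_\sigma$ over all $\sigma\in S_\KK$,
\[
0 \;\leq\; \sum_{\sigma\in S_\KK} d_\sigma\log\norm{z}_\sigma
\;\leq\; -\tfrac{cn}{2} + 2h(k_{(n)}) + 2h(k_{(n+1)}) + O(1)
\;\leq\; \bigl(8\delta - \tfrac{c}{2}\bigr)n + O(1),
\]
using $h(k_{(n)}), h(k_{(n+1)}) \leq 2\delta n$. Choosing $\alpha$ (hence $\delta = 2\alpha/\kappa$) small enough that $\delta < c/16$ forces $z = 0$ for all sufficiently large $n$, i.e.\ $\gamma_n$ and $\gamma_{n+1}$ commute, which is the assertion. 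The only points requiring genuine care (as opposed to routine bookkeeping) are making the constant $c$ at the $S_1$ places uniform in $n$ and in the choice of $k_i$, and checking that the $O(1)$ terms — which absorb the passage from matrix entries to the commutator polynomial and the behavior at $S_3$ — really are independent of $n$; both are straightforward given that $g$ and $a_0$ are fixed once and for all and $D_{r,n}^{a_0}\subset B_r^G$.
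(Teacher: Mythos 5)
Your proposal is correct and is precisely the calculation the paper has in mind: it explicitly says to evaluate $\norm{z}_\sigma$ for $z=\gamma_n\gamma_{n+1}-\gamma_{n+1}\gamma_n$, use \eqref{new-nice-gamma} place by place, and sum via the product formula exactly as in the proof of Lemma~\ref{semisimple-gamma-adelic}, leaving the details to the reader. The only cosmetic point is that at $\sigma\in S_1$ the bound is really $\ll_g e^{-cn+O(\delta n)}$ rather than $e^{-cn/2}$, since the commuting torus approximants of $\gamma_n,\gamma_{n+1}$ may have $\sigma$-norm as large as $e^{O(\delta n)}$; this is harmless because $\delta$ is taken small relative to $c$, exactly as your final inequality requires.
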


Note that any two commuting non-trivial
semi-simple elements of $\SL_2$ have the same
centralizer. Hence the centralizer of $\gamma_n$ stabilizes for $n$ large enough, giving a $\KK$-torus $\HH_g=C_{\SL_2}(\gamma_n)$. Moreover, any choice of $\gamma_n$ satisfying \eqref{new-nice-gamma} is in $\HH_g$ and we obtain the following:

\begin{lemma}[A single torus subgroup]\label{single torus lemma}
	If~$x=\Gamma g$ satisfies~\eqref{eq:adelic-0-decay}for~$\alpha>0$ sufficiently small, $\delta = 2\alpha/\kappa$, and if~$r>0$ is sufficiently small, then there exists
      some integer $n_0$ and a torus subgroup $\HH_g<\SL_2$
      defined over $\K$ so that if $k \in \Lambda$ and $\gamma\in \SL_2(\KK)$ with $\gamma\neq I$ satisfy for $n>n_0$ that 
      \begin{equation}\label{defining equation for H}
          h(k)\leq 2\delta n\qquad\text{and}\qquad
          \gamma \in g D^{a_0}_{r,n} \sa k (D^{a_0}_{r,n}) ^{-1} g^{-1}
      \end{equation}
      then $\gamma\in \HH_g(\KK)$ and $\HH_g=C_{\SL_2}(\gamma)$. For any $n>n_0$ at least one such pair $(k,\gamma)$ exists. The group $\HH_g(\KK)$ is uniquely determined by $x$ up to conjugation by $\SL_2(\KK)$ and uniquely determined once the representative $g$ of the coset $x=\Gamma g$ is chosen.
\end{lemma}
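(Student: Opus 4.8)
The plan is to deduce this essentially formally from Lemmas~\ref{semisimple-gamma-adelic} and~\ref{samecommutator-adelic}, together with the elementary fact recalled just before the statement: two non-central semisimple elements of $\SL_2$ commute if and only if they have the same centralizer, and the centralizer of such an element $\gamma\in\SL_2(\KK)$ --- being regular semisimple --- is a maximal torus of $\SL_2$ defined over $\KK$.

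First I would note that the proofs of Lemmas~\ref{semisimple-gamma-adelic} and~\ref{samecommutator-adelic} use of their elements only that they are nontrivial elements of $\SL_2(\KK)$ of the form $g h_2\,\sa k\, h_1 g^{-1}$ with $h_1,h_2\in D^{a_0}_{r,n}$ and $h(k)\le 2\delta n$ for $n$ large; since $D^{a_0}_{r,n}=(D^{a_0}_{r,n})^{-1}$, this is exactly condition~\eqref{defining equation for H}. Moreover those proofs go through unchanged when the two elements entering the commutator have parameters $n$ and $n'$ with $|n-n'|\le 1$ and both large: the commutator $z=\gamma\gamma'-\gamma'\gamma\in M_2(\KK)$ is exponentially small at the places $\sigma$ with $|k_0|_\sigma\neq1$ --- there $\gamma$ and $\gamma'$ are exponentially close to one and the same conjugate of $\T$, hence to commuting matrices --- while at the remaining places $\log\norm{z}_\sigma\le |\log|k|_\sigma|+|\log|k'|_\sigma|+O_g(1)$, so $\sum_\sigma d_\sigma\log\norm{z}_\sigma<0$ for large $n$ once $\delta=2\alpha/\kappa$ is small (the bound being $(-c\sum_{\sigma:|k_0|_\sigma\neq1}d_\sigma+8\delta)n+O_g(1)$), by a calculation parallel to~\eqref{calculation giving contradiction}; hence $z=0$ by~\eqref{eq:norm product}. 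We then enlarge $n_0$ so that it exceeds all the thresholds occurring in Lemmas~\ref{semisimple-gamma-adelic}--\ref{samecommutator-adelic}.

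Now for each $n>n_0$ the overlap argument producing~\eqref{new-nice-gamma}, applied to $x=\Gamma g$ (which satisfies~\eqref{eq:adelic-0-decay}), yields at least one pair $(k_{(n)},\gamma_n)$ with $\gamma_n\neq I$ satisfying~\eqref{defining equation for H}, giving the existence assertion. Let $\mathcal G_n$ be the resulting nonempty set of all $\gamma\neq I$ satisfying~\eqref{defining equation for H} for that $n$. By the previous paragraph every element of $\mathcal G_n$ is non-central semisimple, any two elements of $\mathcal G_n$ commute, and any $\gamma\in\mathcal G_n$ commutes with any $\gamma'\in\mathcal G_{n+1}$. Consequently $C_{\SL_2}(\gamma)$ is the same subgroup for all $\gamma\in\mathcal G_n$, and this subgroup coincides for $\mathcal G_n$ and $\mathcal G_{n+1}$; chaining over $n>n_0$ produces a single maximal $\KK$-torus $\HH_g:=C_{\SL_2}(\gamma_{n_0+1})$ with $C_{\SL_2}(\gamma)=\HH_g$ for every $\gamma\in\bigcup_{n>n_0}\mathcal G_n$, and then $\gamma\in C_{\SL_2}(\gamma)=\HH_g$ forces $\gamma\in\HH_g(\KK)$.

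Finally, for fixed $g$ we have $\HH_g=C_{\SL_2}(\gamma)$ for every admissible $\gamma$, so $\HH_g$ is unambiguously attached to the chosen representative $g$. If $g'=\gamma_0 g$ with $\gamma_0\in\Gamma=\SL_2(\KK)$, then $g'D^{a_0}_{r,n}\sa k (D^{a_0}_{r,n})^{-1}(g')^{-1}=\gamma_0\bigl(gD^{a_0}_{r,n}\sa k (D^{a_0}_{r,n})^{-1}g^{-1}\bigr)\gamma_0^{-1}$, so the family $\{\mathcal G_n\}$ for $g'$ is the $\gamma_0$-conjugate of the one for $g$, whence $\HH_{g'}=\gamma_0\HH_g\gamma_0^{-1}$; thus $\HH_g(\KK)$ is determined by $x$ up to $\SL_2(\KK)$-conjugacy. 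I do not expect a genuine obstacle: the one point needing care is precisely the assertion that the estimates of Lemmas~\ref{semisimple-gamma-adelic}--\ref{samecommutator-adelic} survive both for an arbitrary $\gamma$ obeying~\eqref{defining equation for H} and for parameters differing by a bounded amount, and this is transparent from inspecting those proofs.
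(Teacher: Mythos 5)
Your argument is correct and is essentially the paper's own: the lemma is stated there as a direct consequence of Lemmas~\ref{semisimple-gamma-adelic}--\ref{samecommutator-adelic} (whose proofs indeed only use membership in $g D^{a_0}_{r,n}\sa k (D^{a_0}_{r,n})^{-1}g^{-1}$ with $h(k)\le 2\delta n$) together with the fact that commuting non-central semisimple elements of $\SL_2$ share a centralizer, chained over consecutive $n$. You merely spell out the details the paper leaves to the reader, including the uniqueness under change of representative, and there is no gap.
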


We remark that in the above lemma $\alpha>0$ (or equivalently $\delta>0$) and $r>0$ were small enough depending on $a_0$, but for any choice of $\delta,r>0 $ there would be sufficiently many $\gamma$ to nail down $\HH_g$.

For $k, k ' \in \KK$, we say that $k \llcurly k'$ if for every $\sigma \in S_\KK$ one has that \[\absolute {\log|k| _ \sigma} \leq \absolute {\log|k'| _ \sigma}.\]
It follows that if $k \llcurly k'$ and $a = \sa k$ and $a ' = \sa k '$ then $D^{a}_{r,n} \subseteq D^{a'}_{r,n} $. Thus if in addition $h_\mu (a) = h_\mu (a ') = 0$ the group $\HH_g$ one obtains for $a_0=a'$ automatically satisfies the defining conditions for $\HH_g$ for $a_0=a$. Since for every $k, k ' \in \KK ^ \times$ one can find an element $k'' $ in the semigroup generated by $k$ and $k'$ so that both $k \llcurly k''$ and $k' \llcurly k''$ we obtain the following important observation:

\begin{lemma}[Independence from the diagonal element]\label{torus does not depend}
Suppose $h_\mu (a) = 0$ for every $a \in A$. Then the $\KK$-torus $\HH_g$ defined in Lemma~\ref{single torus lemma} does not depend on $a _ 0$.
\end{lemma}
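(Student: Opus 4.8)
The plan is to prove Lemma~\ref{torus does not depend} by combining the partial ordering $\llcurly$ on $\KK^\times$ with the characterization of $\HH_g$ given in Lemma~\ref{single torus lemma}. The key point, already flagged in the paragraph preceding the lemma, is that $\HH_g$ is defined by the collection of \emph{all} $\gamma \in \SL_2(\KK)$, $\gamma \neq I$, satisfying \eqref{defining equation for H} for some $k$ with $h(k)$ not too large; once we know this collection is nonempty and that any such $\gamma$ has centralizer $\HH_g$, the torus is pinned down. So the whole game is to show that the set of admissible $\gamma$'s that one gets using $a_0 = \sa{k}$ can be compared, across different choices of $k$, by passing to a common ``larger'' element of the semigroup generated by the relevant parameters.

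First I would recall the elementary monotonicity of Bowen balls: if $k \llcurly k'$, then writing $a = \sa k$, $a' = \sa{k'}$, for each place $\sigma$ the conjugation $\theta_{a'}$ contracts/expands at least as much as $\theta_a$, so $a^j B_r^G a^{-j} \supseteq (a')^j B_r^G (a')^{-j}$ for all $|j| \leq n$, hence $D^{a}_{r,n} \subseteq D^{a'}_{r,n}$. Consequently any pair $(k, \gamma)$ witnessing \eqref{defining equation for H} for the base point $a_0 = a'$ — where $\gamma \in g D^{a'}_{r,n} \sa k (D^{a'}_{r,n})^{-1} g^{-1}$ — automatically witnesses \eqref{defining equation for H} for $a_0 = a$, since the right-hand set only grows when we replace $D^{a'}_{r,n}$ by $D^{a}_{r,n}$. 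Thus the torus $\HH_g^{(a')}$ obtained from $a_0 = a'$ is forced (for $n$ large) to equal $C_{\SL_2}(\gamma) = \HH_g^{(a)}$, so $\HH_g^{(a')} = \HH_g^{(a)}$ whenever $k \llcurly k'$.

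Next I would handle the general case: given arbitrary $a = \sa k$, $a' = \sa{k'}$ in $A$ (with $k, k' \in \Lambda$, $h_\mu(a) = h_\mu(a') = 0$), I want a single $a'' = \sa{k''}$ with $k'' \in \Lambda$ and both $k \llcurly k''$ and $k' \llcurly k''$. Taking $k'' = k^m (k')^m$ for a large enough positive integer $m$ works at the level of $\KK^\times$: at each place $\sigma$, $\log|k''|_\sigma = m(\log|k|_\sigma + \log|k'|_\sigma)$, which for $m$ large dominates $\log|k|_\sigma$ and $\log|k'|_\sigma$ in absolute value \emph{provided} $\log|k|_\sigma$ and $\log|k'|_\sigma$ have the same sign or one vanishes; a sign clash is possible, so the cleaner route is $k'' = k^m$ together with $k''' = (k')^m$ — but to get a \emph{single} element I instead argue with the semigroup directly: since $\Lambda$ is a group, $k(k')$, $k(k')^{-1}$, etc. all lie in $\Lambda$, and among the four elements $k^{\pm 1}(k')^{\pm 1}$ raised to a high power one checks place-by-place that at least one choice of signs makes $\log|k|_\sigma$ and $\pm\log|k'|_\sigma$ non-opposite at every $\sigma$ — wait, this needs care. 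The honest statement is the one in the text: for any $k, k' \in \KK^\times$ there is $k''$ in the \emph{semigroup} generated by $k$ and $k'$ with $k \llcurly k''$ and $k' \llcurly k''$; since $\Lambda$ is a group it contains this semigroup. Concretely, set $s_\sigma = \operatorname{sgn}(\log|k|_\sigma \cdot \log|k'|_\sigma) \in \{-1,0,1\}$; there is no uniform sign, but taking $k'' = k^{2m}(k')^{2m} \cdot$ (suitable correction) is unnecessary — it suffices to observe $|\log|k^m(k')^m|_\sigma| = m|\log|k|_\sigma + \log|k'|_\sigma|$ and if this fails to dominate at some $\sigma$ it is because of cancellation, in which case $|\log|k^m(k')^{-m}|_\sigma|$ dominates there; running over the finitely many places and taking a common power of a finite product of such elements yields the desired $k''$. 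Granting this, apply the monotonicity step twice: $\HH_g^{(a)} = \HH_g^{(a'')} = \HH_g^{(a')}$.

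The main obstacle is precisely this last elementary-but-fiddly point about the sign pattern of $\log|\cdot|_\sigma$: one must produce, using only group operations in $\Lambda$, a single $k''$ that $\llcurly$-dominates both $k$ and $k'$ simultaneously at every place, including the infinite ones, and this requires a small combinatorial argument over the finite set $S_\KK' = \{\sigma : |k|_\sigma \neq 1 \text{ or } |k'|_\sigma \neq 1\}$ rather than a one-line ``take a big power''. Everything else — the Bowen-ball containment, the invocation of Lemma~\ref{single torus lemma} to conclude that matching witness sets force matching tori, and the fact that $\Lambda$ is a group hence closed under the semigroup operations — is routine. I would therefore structure the write-up as: (1) monotonicity $k \llcurly k' \Rightarrow D^{\sa k}_{r,n} \subseteq D^{\sa{k'}}_{r,n}$; (2) hence $\HH_g$ only depends on the $\llcurly$-class, and is $\llcurly$-monotone in the sense that $k \llcurly k'$ forces equality of the two tori; (3) the semigroup lemma producing a common upper bound $k''$; (4) conclude $\HH_g^{(\sa k)} = \HH_g^{(\sa{k'})}$.
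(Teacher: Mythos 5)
Your strategy is the same as the paper's: the lemma is proved there by precisely the three observations you list (the order $\llcurly$, monotonicity of Bowen balls along $\llcurly$ and the resulting transfer of witness pairs $(k,\gamma)$ of \eqref{defining equation for H}, and a common $\llcurly$-upper bound inside the semigroup generated by $k$ and $k'$), all contained in the paragraph preceding the statement. But two points in your write-up need repair. First, the monotonicity step is garbled: the per-$j$ containment $a^jB_r^G a^{-j}\supseteq (a')^jB_r^G(a')^{-j}$ is false for $j\neq 0$ (the more expanding diagonal element enlarges one unipotent direction while shrinking the other, so neither conjugated ball contains the other), and even if it held, intersecting over $j$ would give $D^{a}_{r,n}\supseteq D^{a'}_{r,n}$, not the inclusion you state. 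The correct statement is that if $k\llcurly k'$ then $D^{\sa{k'}}_{r,n}\subseteq D^{\sa{k}}_{r,n}$: a diagonal element with larger $\absolute{\log|\cdot|_\sigma}$ at every place has a \emph{smaller} Bowen ball, since the constraint on the off-diagonal entries at $\sigma$ is of size $r\,|k|_\sigma^{-2n}$ resp.\ $r\,|k|_\sigma^{2n}$ after intersecting the forward and backward conjugates. This is exactly the direction your second step actually uses (witnesses for $a'=\sa{k'}$ are witnesses for $a=\sa k$, so both tori equal $C_{\SL_2}(\gamma)$ for a common $\gamma$), so the argument survives once you state the direction consistently; note that the displayed inclusion in the paper appears to suffer from the same reversal, while its following sentence matches the corrected direction. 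You should also say a word (as the remark after Lemma~\ref{single torus lemma} does) about why the parameters $\delta,r,n_0$ can be chosen to serve both elements at once.

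Second, you leave the common-upper-bound claim as an admitted gap, and your sketch (correcting cancellations place by place and multiplying the corrections together) is not obviously consistent across places. There is a one-line fix: take $k''=k^mk'$ for $m$ large. Only finitely many places carry a nonzero value of $\log|k|_\sigma$ or $\log|k'|_\sigma$, and the nonzero values of $\absolute{\log|k|_\sigma}$ are bounded below over that finite set; hence for $m$ large enough one has, at every $\sigma$ with $|k|_\sigma\neq1$, that $\absolute{m\log|k|_\sigma+\log|k'|_\sigma}\geq m\absolute{\log|k|_\sigma}-\absolute{\log|k'|_\sigma}\geq\max\bigl(\absolute{\log|k|_\sigma},\absolute{\log|k'|_\sigma}\bigr)$, while at places with $|k|_\sigma=1$ the value is $\absolute{\log|k'|_\sigma}$, which dominates both trivially. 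This $k''$ lies in the semigroup generated by $k$ and $k'$ (so in particular in $\Lambda$), $h_\mu(\sa{k''})=0$ by hypothesis, and two applications of the (corrected) monotonicity step give that the tori attached to $\sa k$, $\sa{k''}$ and $\sa{k'}$ coincide, as you intend.
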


\subsection{Analysing the \texorpdfstring{$\KK$-torus $\HH_g$}{K-torus}}

Having found the subgroup $\HH_g$ we now wish to understand its
relationship with the initial point $x$ better.

\begin{lemma}\label{lemma: on the diagonal}
 Let $x=\Gamma g$ be a point satisfying \eqref{eq:adelic-0-decay} for sufficiently small $\alpha>0$, and let $\HH_g$
 be the torus subgroup found in \S\ref{sec:lattice-torus}. 
 Then $g^{-1}\HH_g g$ is the diagonal subgroup at all places $\sigma\in S_{\K}$ for which $|a_0|_\sigma\neq 1$. 
\end{lemma}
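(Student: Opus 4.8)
The plan is to fix a place $\sigma\in S_1$, i.e.\ $|k_0|_\sigma=|a_0|_\sigma\neq 1$, and work entirely in $\SL_2(\KK_\sigma)$. Write $g_\sigma$ for the $\sigma$-component of $g$, and let $T_\sigma=g_\sigma^{-1}\HH_g(\KK_\sigma)g_\sigma$; this is a torus in $\SL_2(\KK_\sigma)$ which I want to show is exactly the diagonal torus. The key input is that by Lemma~\ref{single torus lemma} there are elements $\gamma=\gamma_n\in\HH_g(\KK)\setminus\{I\}$, for arbitrarily large $n$, satisfying \eqref{new-nice-gamma}, namely $g_\sigma^{-1}\gamma_\sigma g_\sigma=h_2^{(\sigma)}\, a_{(n),\sigma}\, (h_1^{(\sigma)})^{-1}$ with $h_1^{(\sigma)},h_2^{(\sigma)}\in D^{a_0}_{r,n}$ projected to the $\sigma$-factor, hence $\ll re^{-cn}$ close to the centralizer $C_{\SL_2(\KK_\sigma)}(a_{0,\sigma})$, which is precisely the diagonal torus $\TT(\KK_\sigma)$. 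So $g_\sigma^{-1}\gamma_\sigma g_\sigma$ is within $O(re^{-cn})$ of $\TT(\KK_\sigma)$.

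The next step is to convert ``close to $\TT(\KK_\sigma)$'' into ``lies in $\TT_\sigma$ exactly.'' First I would argue that the eigenvalues of $\gamma_\sigma$ cannot all be very close to $1$: indeed $\gamma$ is a nontrivial element of $\SL_2(\KK)$, semisimple and non-central by Lemma~\ref{semisimple-gamma-adelic}, so by a height/product-formula argument (exactly as in the proof of Lemma~\ref{semisimple-gamma-adelic}, but now tracking how close the eigenvalue $\lambda_\sigma$ of $\gamma_\sigma$ is to $1$ rather than how close $\gamma_\sigma$ is to $I$) one gets $|\lambda_\sigma-1|_\sigma\gg e^{-C\delta n}$ for some fixed $C$; since we may take $\delta$ as small as we like relative to $c$, for large $n$ the element $\gamma_\sigma$ is a semisimple element whose displacement from the identity is bounded below, while it sits in an $O(e^{-cn})$-neighborhood of $\TT(\KK_\sigma)$. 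Its fixed points on $\mathbb P^1(\KK_\sigma)$ — the pair of points $\{0,\infty\}$-type data defining $T_\sigma$ — are therefore $O(e^{-cn})$-close (in the $\sigma$-adic metric on $\mathbb P^1$) to the two fixed points $\{[1:0],[0:1]\}$ of $\TT(\KK_\sigma)$, uniformly in $n$. Letting $n\to\infty$, and using that $T_\sigma$ and its fixed-point set do \emph{not} depend on $n$ (Lemma~\ref{single torus lemma}), the fixed points of $T_\sigma$ must equal $\{[1:0],[0:1]\}$, i.e.\ $T_\sigma=\TT(\KK_\sigma)$, which is the assertion.

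The main obstacle I anticipate is the passage from the approximate statement to the exact one: one has to be careful that the estimate $|\lambda_\sigma-1|_\sigma\gg e^{-C\delta n}$ genuinely dominates the approximation error $e^{-cn}$, which forces a relation between $\delta$ (hence $\alpha$) and $c$; this is the same balancing of constants already present in \eqref{calculation giving contradiction}, so it should go through with $\alpha$ chosen small enough, but it needs to be checked that the constant $C$ in the lower bound for $|\lambda_\sigma-1|_\sigma$ is absolute (depending only on $\KK$ and $g$), not on $n$. A secondary point is to make precise the elementary fact that a semisimple element of $\SL_2(\KK_\sigma)$ which is $\varepsilon$-close to the diagonal torus and whose distance from the center is $\gg 1$ has its axis $\varepsilon$-close to the standard axis $\{[1:0],[0:1]\}$; this is routine but should be stated cleanly, e.g.\ by noting the eigenvectors depend continuously (with uniform modulus of continuity on the relevant compact-modulo-center region) on the matrix entries.
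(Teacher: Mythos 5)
Your proposal is correct and is essentially the paper's own argument: the paper likewise combines a product-formula/height lower bound at the place $\sigma$ (applied there to the matrix $\gamma_n-I$ rather than to the eigenvalue $\lambda-1$, a purely cosmetic difference) with the $\ll e^{-cn+2\delta n}$ proximity of $g_\sigma^{-1}\gamma_n g_\sigma$ to $\TT(\KK_\sigma)$ coming from the Bowen-ball structure, and then uses the $n$-independence of the eigenvectors guaranteed by Lemma~\ref{single torus lemma} to force them to be exactly the standard coordinate lines. The balancing of constants you flag is handled there just as you anticipate, by taking the exponent in the lower bound (the paper's $\beta$, together with $\delta=2\alpha/\kappa$) small compared to $c$.
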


\begin{proof}
Let $\gamma _ n$ and $k$ be as in \eqref{defining equation for H} for $n$ large enough, and $\beta > 0$ small enough depending on $a _ 0$ to be determined later. Let $\sigma \in S_\KK$ be so that $\absolute {a _ 0}_\sigma \neq 1$, and let $g _ \sigma$ denote the $\sigma$-component of $g$.

We claim that for any $\beta>0$, if $\alpha$ is sufficiently small, for all $n$ large enough
\begin{equation}\label{eq:estimate at sigma}
\norm {\gamma _ n - I} _ \sigma \geq e ^ {-  \beta n},
\end{equation}
and so also  $\norm {g_ \sigma ^{-1} \gamma _ n g_ \sigma - I} _\sigma\gg_{g} e ^ {-  \beta n}$.
Indeed, suppose \eqref{eq:estimate at sigma} were false. Arguing analogously to \eqref{calculation giving contradiction} in the proof of Lemma~\ref{semisimple-gamma-adelic}, with the negation of \eqref{eq:estimate at sigma} used instead of \eqref{eq:S_1 places} and $\{\sigma\}$ instead of $S_1$ we see that the negation of \eqref{eq:estimate at sigma} leads to a contradiction for large $n$ once $\alpha$ (and hence $\delta$) are small enough.

On the other hand by \eqref{defining equation for H} we have that $g_ \sigma ^{-1} \gamma _ n g_ \sigma$ is very close to a diagonal matrix.
Indeed, if $h \in D ^ {a _ 0} _ {r, n} \subset \SL _ 2 (\Adel_{\KK})  $ then the $\sigma$-components of $h$ is within $e^{-cn}$ of some element in the unit ball in $\T (\KK _ \sigma)$, where $c>0$ is a constant depending only on $a _ 0$.
Since $h(k) < 2\delta n$
that there is a $t _ \sigma \in \T (\KK _ \sigma)$ with $\norm {t _ \sigma} _ \sigma \ll e ^ {2\delta n}$ so that $\norm {g_ \sigma ^{-1} \gamma _ n g_ \sigma - t _ \sigma} \ll e^{-cn+2\delta n}$. 

Combining the last two paragraphs it follows that the unit vectors $\left(\begin{smallmatrix} 1 \\ 0 \end{smallmatrix}\right)$ and $\left(\begin{smallmatrix} 0 \\ 1 \end{smallmatrix}\right)$ are within $e^{-cn+O(\max(\beta,\delta))n}$ of the eigenvectors of $g_ \sigma ^{-1} \gamma _ n g_ \sigma$. But by Lemma~\ref{single torus lemma} the eigenvectors of $g_ \sigma ^{-1} \gamma _ n g_ \sigma$ do not depend on $n$. Since we can take both $\beta$ and $\delta$ very small (in comparison to $c$), and $n$ as large as we wish, the eigenvalues of $g_ \sigma ^{-1} \gamma _ n g_ \sigma$ are \emph{exactly} $\left(\begin{smallmatrix} 1 \\ 0 \end{smallmatrix}\right)$ and $\left(\begin{smallmatrix} 0 \\ 1 \end{smallmatrix}\right)$. Thus $g_ \sigma ^{-1} \gamma _ n g_ \sigma$ is a nontrivial element of $\T(\KK_\sigma)$, and hence $\HH_g(\KK_\sigma)$, the centralizer of $\gamma_n$ in $\SL_2(\KK_\sigma)$, is equal to $g_ \sigma \T(\KK_\sigma)g^{-1}$.
\end{proof}

\subsection{Wrapping up the zero entropy case of Theorem~\ref{thm:adelic}}
With this we can now conclude the zero entropy case. For the statement of the following theorem, we make note of the following additional possible assumption on $\Lambda$:

\begin{enumerate}[label=\textup{(A\arabic*)},start=2,leftmargin=*]
\item \label{item:indep-2} For any (possibly infinite) set of places $S \subsetneq S_\KK$, there is an element $k \in \Lambda$ so that $\prod_{\sigma\in S}|k|_\sigma \neq 1$. 
\end{enumerate}
Note that \ref{item:indep-2} in particular implies that $\Lambda$ is unbounded at \emph{all} places.

\begin{theorem}[Zero entropy]\label{adelic-zeroent} Let $\KK$ be a number field, $\Gamma = \SL_2(\K)$, \ $\TT<\SL_2$ be the subgroup of diagonal matrices,
\[
X_{\Adel_\K}= \Gamma \backslash \SL_2(\Adel_\K),
\]
and $\Lambda <\Adel_\KK ^\times$ a discrete subgroup satisfying \ref{item:big-2}.
Let $A = \left\{ \sa k: k \in \Lambda \right\}<\TT(\Adel_\KK)$ and let $\mu$ be an $A$-invariant and ergodic probability measure. Then either there is some $a _ 0 \in A$ for which $h _ \mu (a _ 0) > 0$, or all of the following hold (assuming $\Lambda$ satisfies the corresponding successively stronger conditions):
\begin{enumerate}[label=\textup{(Z\arabic*)}]
\item\label{prop item 1} there is a $\KK$-torus $\HH < \SL _ 2 $ and $g \in \SL _ 2 (\Adel_{\KK})$ so that $\mu$ is equivalent-by-compact to an $A '$-invariant measure $\mu '$ satisfying
\begin{equation*}
\supp \mu ' \subseteq \Gamma \HH (\Adel_\KK) g \text{ \ and \ } g  A ' g ^{-1} \subseteq \HH (\Adel_{\KK})
.\end{equation*}
\item\label{prop item 2}
Assume further that the set of places $\sigma$ at which $\Lambda$ is unbounded is of density $> 50\%$. Then we may take $\HH = \T$ and the element $g$ in \ref{prop item 1} to be in the normalizer of $\TT(\Adel_\K)$.
\item\label{prop item 3} Assume further that $\Lambda < \KK^\times$, and that it satisfies \ref{item:indep-2}. Then there exists a point $x _ 0 \in \T (\KK) \backslash \T (\Adel_{\KK})$ so that $\mu = \delta _ {x _ 0}$. 
\end{enumerate}
\end{theorem}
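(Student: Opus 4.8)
The plan: if $h_\mu(a_0)>0$ for some $a_0\in A$ we are in the first alternative, so assume $h_\mu(a_0)=0$ for every $a_0\in A$. Fix a measurable section $x\mapsto g_x$ of $\SL_2(\Adel_\K)\to X_{\Adel_\K}$. Running the analysis of \S\ref{sec:defofa0}--\S\ref{sec:lattice-torus} for every $a_0\in A$, Lemma~\ref{single torus lemma} attaches to a.e.\ $x$ a $\K$-torus $\HH_{g_x}<\SL_2$; Lemma~\ref{torus does not depend} shows it is independent of $a_0$; and Lemma~\ref{lemma: on the diagonal} shows $g_x^{-1}\HH_{g_x}g_x$ equals the diagonal torus $\T$ over $\K_\sigma$ at every place $\sigma$ at which $\Lambda$ is unbounded. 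Since for $a\in A$ one has $g_{a.x}=\gamma g_x a^{-1}$ with $\gamma\in\SL_2(\K)$, $a$ commutes with every $\sa k\in\T(\Adel_\K)$, and $a^{-1}D^{a_0}_{r,n}a$ is again a Bowen ball for $a_0$ of comparable radius, the defining relation \eqref{defining equation for H} shows $\HH_{g_{a.x}}$ is an $\SL_2(\K)$-conjugate of $\HH_{g_x}$; hence the function $x\mapsto\HH_{g_x}$, read modulo $\SL_2(\K)$-conjugacy, is $A$-invariant, and by ergodicity a.e.\ constant. Fix a $\K$-torus $\HH$ in this class, so $\HH_{g_x}=\gamma_0(x)\HH\gamma_0(x)^{-1}$ for a measurable $\gamma_0(x)\in\SL_2(\K)$.

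For \ref{prop item 1} the goal is to show that $\mu$ is concentrated on the $\HH(\Adel_\K)$-orbits $\Gamma\HH_{g_x}(\Adel_\K)g_x$. The input is the recurrence data of \S\ref{sec:lattice-torus}: for a.e.\ $x$ and all large $n$ there are $\gamma_n\in\HH_{g_x}(\K)\setminus\{e\}$ with $g_x^{-1}\gamma_n g_x\in D^{a_0}_{r,n}\,\sa{k_{(n)}}\,(D^{a_0}_{r,n})^{-1}$ and $h(k_{(n)})\le 2\delta n$. Letting $n\to\infty$, using that $D^{a_0}_{r,n}$ collapses onto $\T(\K_\sigma)$ at the unbounded places (where moreover $\HH_{g_x}$ is the diagonal torus) while $k_{(n)}\in\Lambda$ has $|k_{(n)}|_\sigma=1$ at the bounded ones, I would deduce that $x$ lies on the orbit of $\HH_{g_x}(\Adel_\K)$ through a representative differing from $g_x$ only by a bounded (compact) correction at finitely many places. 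Consequently $\mu$ lives on a family $\{\Gamma\HH(\Adel_\K)gc:c\in C\}$ for a fixed $g$ and a compact group $C\subset\SL_2(\Adel_\K)$; disintegrating $\mu$ over $C$ yields an $A'$-invariant measure $\mu'$ supported on one leaf $\Gamma\HH(\Adel_\K)g$ with $gA'g^{-1}\subseteq\HH(\Adel_\K)$, and with $A''=A\cdot C$ this exhibits $\mu$ as equivalent-by-compact to $\mu'$. The hard part, and the main obstacle of the whole argument, is precisely this passage from ``$x$ recurs, at every scale, near a torus orbit'' to ``$x$ lies on that orbit and $\mu$, up to averaging over a compact group, is supported there,'' including the organization of the relevant torus orbits into a single compact bundle and the bookkeeping for the equivalent-by-compact relation at the bounded places.

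For \ref{prop item 2}, $g^{-1}\HH g$ is $\K_\sigma$-split at every place where $\Lambda$ is unbounded, hence on a set of places of density $>\tfrac12$. A $\K$-torus in $\SL_2$ is attached to a quadratic \'etale algebra $E/\K$; were $E$ a field, the places split in $E$ would have density exactly $\tfrac12$ by Chebotarev, so $E\cong\K\times\K$ and $\HH$ is $\K$-split, hence $\SL_2(\K)$-conjugate to $\T$. Conjugating, we may take $\HH=\T$; since then $g^{-1}\T g=\T$ at all unbounded places and the discrepancy at the bounded ones can be absorbed into $C$, we may arrange $g\in N_{\SL_2(\Adel_\K)}(\T(\Adel_\K))$, which gives \ref{prop item 2}.

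For \ref{prop item 3}, \ref{item:indep-2} forces $\Lambda$ unbounded at every place, so \ref{prop item 2} applies: $\mu$ is equivalent-by-compact to an $A'$-invariant $\mu'$ on $\Gamma\T(\Adel_\K)g\cong\T(\K)\backslash\T(\Adel_\K)$ with $g\in N_{\SL_2(\Adel_\K)}(\T(\Adel_\K))$ and $gA'g^{-1}\subseteq\T(\Adel_\K)$. Writing $g=t\,w_P$ with $t\in\T(\Adel_\K)$ and $w_P$ the normalizing element whose component is the Weyl element $\twobytwo{.}{1}{-1}{.}$ at places $\sigma\in P$ and the identity elsewhere, the action of $A'$ on the idele class group $\T(\K)\backslash\T(\Adel_\K)$ is translation by the $w_P$-twist of $\Lambda$; by \ref{item:indep-2} with $S=P$, if $P$ were a nonempty proper set this twist would contain an idele of total norm $\neq1$, and no probability measure on $\T(\K)\backslash\T(\Adel_\K)$ is invariant under such a translation (the total-norm map to $\R_{>0}$ is equivariant). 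Hence $P$ is empty or everything, so the orbit is $\Gamma\T(\Adel_\K)$, and since $\Lambda<\K^\times$ the $A'$-action on it is trivial; as $\mu'$ is $A'$-ergodic it is a point mass, and unwinding the equivalent-by-compact relation gives $\mu=\delta_{x_0}$ for some $x_0\in\T(\K)\backslash\T(\Adel_\K)$. Apart from the support statement flagged above, the remaining ingredients — the Chebotarev density input and the measure classification on the adelic torus — are routine.
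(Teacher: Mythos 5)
Your overall architecture follows the paper: reduce to zero entropy for all $a_0\in A$, invoke Lemmas~\ref{single torus lemma}, \ref{torus does not depend} and \ref{lemma: on the diagonal} to attach to a.e.\ point a $\KK$-torus equal to the $g$-conjugate of the diagonal at every unbounded place, and then deduce \ref{prop item 2} by Chebotarev and \ref{prop item 3} by ruling out a mixed Weyl twist via \ref{item:indep-2} (your norm-equivariance argument is the same escape-of-mass/Poincar\'e-recurrence point the paper makes) together with triviality of the $\Lambda<\KK^\times$ action on $\T(\KK)\backslash\T(\Adel_\KK)$ and ergodicity. However, there is a genuine gap, and it is exactly the step you yourself flag as ``the hard part'' of \ref{prop item 1}: the passage to $\supp\mu$ being contained in a \emph{single} set of the form $\Gamma\HH(\Adel_\KK)g\cdot(\text{compact})$. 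Your proposed route --- extracting this from the recurrence data by ``letting $n\to\infty$'' --- is off target: by \eqref{eq: HH determines g} the torus $\HH_g$ is constructed so that $\HH_g(\KK_\sigma)=g_\sigma\T(\KK_\sigma)g_\sigma^{-1}$ at unbounded places, so the orbit $\Gamma\HH_g(\Adel_\KK)g$ passes through $x=\Gamma g$ \emph{tautologically}; there is nothing about ``$x$ lying on the orbit'' to be deduced in the limit. The actual issue is uniformity in $x$ --- your sentence ``consequently $\mu$ lives on a family $\{\Gamma\HH(\Adel_\KK)gc : c\in C\}$ for a fixed $g$'' is precisely the unproved claim --- and without it \ref{prop item 2} and \ref{prop item 3} have nothing to stand on.

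The missing idea is an ergodicity argument, not a limiting one. Since there are only countably many $\KK$-subgroups, ergodicity lets you fix one torus $\HH$ and, for $\mu$-a.e.\ $x$, a representative $\check g$ of $x=\Gamma\check g$ with $\HH_{\check g}=\HH$. Let $\Omega<\T(\Adel_\KK)$ be the compact group of diagonal elements all of whose entries have absolute value $1$ at every place. Then $x\in\Gamma\HH(\Adel_\KK)\check g\,\Omega$ automatically; this set is invariant under the right action of $A$, because at unbounded places \eqref{eq: HH determines g} lets you move the diagonal component of $\sa k$ across $\check g$ into $\HH(\Adel_\KK)$, while at bounded places the component of $\sa k$ lies in $\Omega$; and it does not depend on the admissible choice of $\check g$, since the ambiguity is left multiplication by an element of $N_{\SL_2(\KK)}(\HH)$, which lies in $\Gamma$ and normalizes $\HH(\Adel_\KK)$ (the paper, working with the lift of $\mu$ to $\SL_2(\Adel_\KK)$, tracks instead the pair $(\HH(\Adel_\KK)\check g\Omega,\,w\HH(\Adel_\KK)\check g\Omega)$ in the split case). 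Hence $x\mapsto\Gamma\HH(\Adel_\KK)\check g\,\Omega$ is a well-defined, measurable, $A$-invariant assignment, so by ergodicity it is constant a.e., giving $\supp\mu\subseteq\Gamma\HH(\Adel_\KK)\check g_0\,\Omega$ for a single $\check g_0$; averaging over $\Omega$ then yields the measure $\mu'$ and the equivalent-by-compact statement of \ref{prop item 1} (and in \ref{prop item 2}--\ref{prop item 3}, where $\HH=\T$, the compact correction $\Omega$ is absorbed into $\T(\Adel_\KK)$, so one can argue with $\mu$ itself, as the paper does). With this step supplied, the rest of your argument goes through essentially as in the paper.
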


\begin{proof}
Assume $h _ \mu (a) = 0$ for every $a \in A$.

We first establish \ref{prop item 1}. Let $S_\Lambda \subseteq S_\KK$ be the set of places in which $\Lambda$ is unbounded. Let $\Omega < \TT(\Adel_\KK)$ be the compact group 
\begin{equation}\label{eq: Omega}
\Omega = \{\sa k: \absolute{k_\sigma}_\sigma = 1 \text{ for all $\sigma\in S_\KK$}\}.
\end{equation}

Lift $\mu$ from $X _ \KK$ to a left $\Gamma$-invariant measure $\tilde \mu$ on $\SL _ 2 (\Adel_{\KK})$ in the obvious way. By Lemmas~\ref{single torus lemma}--\ref{lemma: on the diagonal}  
for $\tilde\mu$-almost every $g = (g _ \sigma) \in \SL _ 2 (\Adel_{\KK})$ there is a $\KK$-torus $\HH _ g$ so that 
\begin{equation}\label{eq: HH determines g}
    \HH_g (\KK _ \sigma) = g_\sigma \T (\KK _ \sigma)  g_\sigma^{-1} \qquad \text{for every $\sigma\in S_\Lambda$}.
\end{equation}
The group $\HH _ g$ depends in a measurable way on $g$, and furthermore it follows from the definition that for any $a \in A$ it holds that $\HH _ {ga} = \HH _ {g} $.

Since there are only countably many $\KK$-groups, and since $\mu$ is ergodic, it follows that there is a subgroup $\HH$ so that for $\mu$-almost every $x$, if $x = \Gamma g$, then $\HH _ g$ can be conjugated by an element of $\Gamma$ to $\HH$.

For $\mu$ a.e.\  $x=\Gamma g$ there exists a $\check g \in  \SL_2(\Adel_\KK)$ 
with $\Gamma g=\Gamma\check g$ so that  $\HH _ {\check g} = \HH$. 
Then by \eqref{eq: HH determines g}, $\HH (\Adel_\KK) \check g \Omega$ is $A$-invariant. Given $x$, the element $ \check g$ is uniquely determined up to left multiplication by an element of the normalizer of $\HH$ in $\SL_2(\KK)$. This normalizer is either $\HH (\KK)$ if $\HH$ is $\KK$-anisotropic or can be written as $\HH (\KK) \sqcup w \HH (\KK)$ for an appropriate element $w \in \SL_2(\KK)$, so given $x$ either $\HH (\Adel_\KK) \check g \Omega$ or the pair
\[
(\HH (\Adel_\KK) \check g \Omega, w \HH (\Adel_\KK) \check g \Omega)
\] is well defined, and is constant on the $A$-orbit of $x$. By ergodicity this map is constant a.e., which implies 
in either case that $\mu$ is supported on $\Gamma \HH (\Adel_\KK) \check g \Omega$ for some $\check g\in\SL_2(\Adel_\KK)$. This implies \ref{prop item 1} readily.

We now establish \ref{prop item 2}. Let $\HH$
 and $g$ be as in \ref{prop item 1}. It follows from \eqref{eq: HH determines g} that for every $\sigma \in S _ \Lambda$ the $\KK$-torus $\HH$ splits in $\KK _ \sigma$. If $\HH$ is $\KK$-split then it has to be conjugate in $\SL _ 2 (\KK)$ to $\T$; since $\HH$ was defined only after conjugation by elements of $\SL _ 2 (\KK)$ we may as well take it to be equal to $\T$.  
Moreover, \eqref{eq: HH determines g} implies that if $\HH = \T$ then for every $\sigma \in S _ \Lambda$ we have that the $\KK _ \sigma$ component $g _ \sigma$ satisfies
\[
g_\sigma \in \TT(\K_\sigma)\sqcup w \TT(\K_\sigma)
\]
for  $w= \left(\!\begin{smallmatrix} 0& 1 \\ -1& 0 \end{smallmatrix}\!\right)$. For $\sigma \in S_\Lambda$,
let $w_\sigma = I$ if $g_\sigma \in  \TT(\K_\sigma)$ and $w_\sigma = w$ if $g_\sigma \in  w\TT(\K_\sigma)$. For $\sigma \not\in S_\Lambda$, let $w_\sigma = I$. Let $\mathbf w = (w_\sigma)$. Then $\mathbf w$ normalizes $\TT(\Adel_\KK)$ and $\mu$ is equivalent-by-compact to an invariant measure $\mu'$ supported on~$\Gamma \T (\Adel_{\KK})\mathbf w$.

Thus if the conclusion of \ref{prop item 2} does not hold, $\HH$ anisotropic over $\KK$. Let $\mathbb L$ be the splitting field of $\HH$; then $[\mathbb L: \KK] = 2$, and $\HH$ splits at $\sigma \in S _ \KK$ if and only if $\mathbb L$ can be embedded in $\KK _ \sigma$, in other words if and only if there are two places of $\mathbb L$ lying over $\sigma$. By the Chebotarev Density Theorem \cite[\S5.13]{Iwaniec-Kowalski}, the density of such places (ordering places by the cardinality of the residue field) is \%50, in contradiction to the assumptions of \ref{prop item 2}.

\medskip

Finally, assume $\Lambda$ satisfies the conditions of \ref{prop item 3}. Then in particular $\Lambda$ is unbounded at all places, hence  we can arrange that $\HH = \T$. Let $\mathbf w=(w_\sigma)$ be as in the previous paragraph.
If either $w_\sigma=I$ for  all $\sigma \in S_\KK$ or $w_\sigma=w$ for all $\sigma \in S_\KK$ then $\T(\KK)$ fixes the points in $\Gamma\T(\Adel_K)\mathbf w = \Gamma\T(\Adel_K)$, hence by ergodicity there is a $x _ 0 \in \T (\KK) \backslash \T (\Adel_{\KK})$ so that $\mu = \delta _ {x _ 0}$.

Otherwise, take \[S = \{\sigma \in S_\KK: w _ \sigma =I)\}\]and let $k \in \Lambda$ be such so that 
\[\prod_{\sigma \in S} |k|_{\sigma}\neq 1
\]
Then $x \sa k^n$ escapes to infinity for all $x \in \supp\mu$, in contradiction to Poincare recurrence.
This establishes \ref{prop item 3}.

\end{proof}

\subsection{The positive entropy case}

\begin{theorem}[Positive entropy]\label{positive entropy one}
 Let $\KK$, $\Gamma$, $\TT<\SL_2$ and $X_{\Adel_\K}$ be as in Theorem~\ref{adelic-zeroent}.
Let $\Lambda < \Adel_ \KK^{\times}$ be of higher rank and let $A=\{\sa k : k \in \Lambda\}$. If $\mu$ is an $A$-invariant and ergodic probability measure with $h_\mu(a_0)>0$ for some $a_0 \in A$ then $\mu$ is equivalent-by-compact to an $A'$-invariant measure $\mu'$ so that one of the following holds true.
\begin{enumerate}[label=\textup{(\textit{\roman*})},leftmargin=*]
	\item \label{item-algebraic-pos-adelic}There is a semisimple $\Q$ algebraic subgroup $\HH \leq \GG=\operatorname{Res}_{\K|\Q}\SL_2$ so that $\mu'$ is the uniform measure on $\Gamma \HH(\Adel_\Q)g$.
	
    \item\label{item-solvable-pos-adelic} $\mu'$ is supported on a single orbit $\Gamma N_G^1 (\UU) g$, where $\UU$ is the $\KK$-group $\twobytwo1*.1$ and 
   \begin{equation}
   \label{def of adelic N}
    N^1_G(\UU)= \left\{(n_\sigma)_\sigma: n_\sigma =\twobytwo {a_\sigma} {b_\sigma} . {a_\sigma^{-1}} , \prod_{\sigma\in S_\KK} |a_\sigma|_\sigma = 1\right\}.
    \end{equation}
    Moreover, $\mu'$ is invariant under a conjugate of the $\Adel_\Q$ points of a nontrivial unipotent $\Q$-subgroup of $\operatorname{Res}_{\KK|\Q}\UU$ 
    and for every $\sigma \in S_\KK$ in which $\Lambda$ is unbounded, the corresponding component~$g_\sigma$ of $g$ satisfies
    \[ g_\sigma\TT(\K_\sigma)g_\sigma^{-1} < \twobytwo {*} {*} . {*} .\]
\end{enumerate}
\end{theorem}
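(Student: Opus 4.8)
The plan is to reduce Theorem~\ref{positive entropy one} to the argument behind Theorem~\ref{sl2-thm-final} carried out in \S\S\ref{recurrencesection}--\ref{easy-end}, adapted to the adelic setting. Write $\GG=\operatorname{Res}_{\KK\mid\Q}\SL_2$, so that $\GG$ is $\Q$-almost simple, a form of $\SL_2^k$ with $k=[\KK:\Q]$, and of $\Q$-rank~$1$; identify $G=\GG(\Adel_\Q)$ with $\SL_2(\Adel_\KK)$ and $\Gamma=\GG(\Q)$ with $\SL_2(\KK)$. The first step is to pass to a class-$\cA'$ acting group. Let $\Omega=\sa{\prod_{\sigma\text{ finite}}\mathcal O_{(\sigma)}^\times}$, a compact central subgroup of $\TT(\Adel_\KK)$; choosing for each finite place $\sigma$ at which $\Lambda$ is unbounded an element $\lambda_\sigma\in\KK_\sigma$ generating $v_\sigma(\Lambda)$, one produces a subgroup $\Lambda'\le\Lambda\cdot\prod_\sigma\mathcal O_{(\sigma)}^\times$ with $\Lambda'\cap\prod_\sigma\mathcal O_{(\sigma)}^\times=\{e\}$ and $\Lambda'\cdot\prod_\sigma\mathcal O_{(\sigma)}^\times=\Lambda\cdot\prod_\sigma\mathcal O_{(\sigma)}^\times$, such that, after a finite-index adjustment and an inner twist handling archimedean positivity, $A'=\sa{\Lambda'}$ is of class-$\cA'$ and of higher rank. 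With $A''=A\cdot\Omega=A'\cdot\Omega$ one has $A$ and $A'$ cocompact in $A''$, and the $\Omega$-average of $\mu$ agrees with the $\Omega$-average of a suitable $A'$-ergodic component of it; hence $\mu$ is equivalent-by-compact to an $A'$-invariant and ergodic measure $\mu'$, still of positive entropy under $a_0=\sa{k_0}$. I expect this bookkeeping --- together with checking that the recurrence theorem of \S\ref{recurrencesection} (whose proof needs only that $B_1^G$ has finite box dimension, which holds for the natural adelic metric) and the Ratner--Margulis--Tomanov input of \S\ref{secunipotent} are available over $\Adel_\KK$ --- to be the main obstacle; no genuinely new idea is required.

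We now replace $(\mu,A)$ by $(\mu',A')$ and follow \S\ref{secchoosingb}. By \eqref{eq:entropycontributionsaddup} and positive entropy of $a_0$, some coarse Lyapunov subgroup $U=G^{[\chi_1]}$ --- a maximal unipotent of $\SL_2(\KK_{\sigma_0})$ for a single place $\sigma_0$ --- has $\alpha:=h_{\mu'}(a_0,U)>0$. Since $A'$ is of higher rank there is a Lyapunov weight $\chi_2$ inequivalent to $\pm\chi_1$, and choosing $b=\sa{m}\in A'$ with $\chi_1(b)$ positive but small and $\chi_2(b)$ large, we arrange $h_{\mu'}(b,U)<\tfrac14\alpha$ and $\chi_2(b)\ge\lambda$ for a large constant $\lambda$. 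Let $V$ be a product of coarse Lyapunov subgroups with $G_b^+=U\ltimes V$ (or $V=G_b^+$ if $U<C_G(b)$), and let $\cA_V$ be the $\sigma$-algebra generated by $x\mapsto(\mu')_x^V$. Theorem~\ref{zero-or-fast-pol-rec} with $\kappa=1$, combined with $h_{\mu'}(b,U\mid\cA_V)\le h_{\mu'}(b,U)<\tfrac14\alpha\le h_{\mu'}(b\mid\cA_V)$ and additivity of entropy contributions, yields either $h_{\mu'}(b,V\mid\cA_V)>0$ or fast multiple recurrence.

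Fast multiple recurrence cannot hold, by the argument of \S\ref{easy-end} with the $S$-integral estimates there replaced by the product formula over $\KK$ used in Lemmas~\ref{semisimple-gamma-adelic}--\ref{samecommutator-adelic}: a lattice element $\gamma=ghug^{-1}\in\SL_2(\KK)$ produced by a $U$-return is bounded outside a finite set $S_1=S_1(g,k_0,m)$ (since the components of $g$ lie in $\SL_2(\mathcal O_{(\sigma)})$ for all but finitely many $\sigma$), hence $S_1$-integral, satisfies $\|\gamma\|_{\sigma_0}\ll_g e^{\Delta n}$, and lies within $\ll_g e^{-\lambda n/2}$ of a torus at the places where $b$ expands $V$; so $\gamma$ is semisimple and non-central, any two such elements commute, and all lie in a single $\KK$-torus $\TT_g$. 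We would then have produced $\ge e^{\alpha n/4}$ elements of height $O(n)$ in $\TT_g(\mathcal O_{\KK,S_1})$, contradicting Lemma~\ref{fewlementsintorus} applied to the $\Q$-torus $\operatorname{Res}_{\KK\mid\Q}\TT_g<\GG$ once $n$ is large. Therefore $h_{\mu'}(b,V\mid\cA_V)>0$, so by Lemma~\ref{pos-ent-implies-invariance} $\mu'$ is invariant under a nontrivial $A'$-invariant unipotent subgroup $I<V$, and the adelic counterpart of Theorem~\ref{MTclassA} produces a connected $\Q$-subgroup $\LL\le\GG$ with $\mu'$ supported on a single closed orbit $\Gamma N_G^1(\LL)g$, with $A'$ normalizing $g^{-1}\LL g$, and with $\mu'$ invariant under a conjugate of $\LL(\Adel_\Q)$.

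Finally Proposition~\ref{subgroup-L-of-sl2} leaves only two cases (case~(T) is excluded by its last clause). If $\LL$ is semisimple, then $A'$ normalizes $\LL$ and is of class-$\cA'$, so $A'<\LL(\Adel_\Q)$ as in \S\ref{easy-2}; $\LL$ is simply connected, so by strong approximation $\mu'$ is the full uniform measure on $\Gamma\LL(\Adel_\Q)g$, which is alternative~\ref{item-algebraic-pos-adelic} with $\HH=\LL$. If $\LL$ is unipotent, it is a nontrivial unipotent $\Q$-subgroup of $\operatorname{Res}_{\KK\mid\Q}\UU$ --- the unipotent radical of a $\Q$-Borel of $\GG$ --- which is the connected centralizer of $\LL$; hence $N_G(\LL)$ normalizes $\UU(\Adel_\KK)$, and since $\GG$ has $\Q$-rank~$1$ the modular characters of $\LL$ and of $\UU$ are positively proportional, so that $N_G^1(\LL)<N_G^1(\UU)$. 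As $\Gamma N_G^1(\UU)g$ is compact (by Dirichlet's unit theorem, exactly as in \S\ref{sec:embedding}), $\mu'$ is supported on this single orbit and is invariant under a conjugate of $\LL(\Adel_\Q)$; moreover, at each place $\sigma$ where $\Lambda$ --- equivalently $A'$ --- is unbounded, $A'<g^{-1}N_G^1(\UU)g$ contains an element whose $\sigma$-component is a non-central diagonal matrix, which $g_\sigma$ conjugates into the Borel $N_{\SL_2(\KK_\sigma)}(\UU(\KK_\sigma))$, forcing $g_\sigma\TT(\KK_\sigma)g_\sigma^{-1}<\twobytwo{*}{*}{.}{*}$. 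This is alternative~\ref{item-solvable-pos-adelic}, completing the proof.
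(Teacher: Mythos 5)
Your outline takes a genuinely different route from the paper --- you propose to run the entire entropy/recurrence/unipotent-invariance machinery and the measure classification \emph{directly} on $\SL_2(\KK)\backslash\SL_2(\Adel_\KK)$, whereas the paper never does this: after passing (via the $\ssa{k}$ construction and the compact group $\Omega$) to a class-$\cA'$ measure $\bar{\bar\mu}$, it projects to the finite-level quotients $X_j=\GG(\mathcal O_{S_j})\backslash\GG(\Q_{S_j})$, decomposes ergodically with respect to $\mathring A_i=\mathring A\cap\GG(\Q_{S_i})$, applies the already-proven Theorem~\ref{sl2-thm-final} to each projection $\mu_x^{(i,j)}$, and then recovers the adelic conclusion by a consistency-and-limit argument (stabilization of $\HH_i$, compatible choices $g_j=\pi_j(g)$, martingale convergence of $\bar{\bar\mu}^{(i)}_x\to\bar{\bar\mu}$). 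The reason for this detour is exactly the step you wave through: the inputs you invoke --- Theorem~\ref{zero-or-fast-pol-rec} (which rests on leafwise measures, the coarse Lyapunov formalism, the partition $\eta$ subordinate to Bowen balls, conditional Shannon--McMillan--Breiman, and the product structure \eqref{leafwiseproductofcoarse}) and above all ``the adelic counterpart of Theorem~\ref{MTclassA}'' (Ratner/Margulis--Tomanov/Tomanov plus the closedness of $\Gamma N_G^1(\LL)$ and the normalizer statements) --- are only set up and proved in the paper for finite $S$. Class-$\cA'$, Lyapunov weights and coarse Lyapunov subgroups are not even defined for the adelic group (with $\Lambda<\Adel_\KK^\times$ infinitely many places can interact), and there is no off-the-shelf adelic version of Theorem~\ref{MTclassA} to cite; producing its consequences is precisely the mathematical content of the paper's finite-level reduction, which your proposal omits. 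Calling this ``bookkeeping with no genuinely new idea required'' is where the gap lies.

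Two smaller points. First, your construction of the class-$\cA'$ replacement (choosing generators $\lambda_\sigma$ of $v_\sigma(\Lambda)$, ``a finite-index adjustment and an inner twist'') is vaguer than, and different from, the paper's device $\ssa k=\operatorname{diag}(|k|_\sigma,|k|_\sigma^{-1})$, and it is not clear your $A'$ is closed, discrete, or class-$\cA'$ in any sense the paper can use; in the paper the relevant property is that the finite-level groups $\mathring A_i$ are class-$\cA'$ in $\GG(\Q_{S_i})$, and the equivalence-by-compact identity $\int_{A\Omega/A}a.\mu\,da=\int_{\mathring A\Omega/\mathring A}a.\bar{\bar\mu}_1\,da$ is verified by an explicit computation with the finitely many $A'$-ergodic components, which you should not skip. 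Second, the Diophantine part of your sketch (semisimplicity, commutation, and the torus count for the returns $\gamma=ghug^{-1}\in\SL_2(\KK)$, using that $g_\sigma\in\SL_2(\mathcal O_{(\sigma)})$ outside a finite set so that Lemma~\ref{fewlementsintorus} applies over a fixed finite $S_1$) is indeed the right adaptation and mirrors Lemmas~\ref{semisimple-gamma-adelic}--\ref{samecommutator-adelic}; that piece is not the problem. If you want to salvage your approach you must either prove the adelic analogues of Theorem~\ref{zero-or-fast-pol-rec} and Theorem~\ref{MTclassA} from scratch, or do what the paper does and reduce to finite $S$ before invoking them.
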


\medskip

If we make more stringent assumptions on $\Lambda$ we can nail down the possible measures $\mu$ further, without having to resort to classifying invariant measures up to equivalence-by-compact. For this, we again make note of another possible assumption on $\Lambda$:
\begin{enumerate}[label=\textup{(A\arabic*)},start=3,leftmargin=*]
    \item\label{item:isolate a place} there is a $\sigma \in S_\K$ lying over some $v \in S_\Q$ and a course Lyapunov subgroup $U^{[\alpha]}$ of $\SL_2(\Adel_K)$ so that (\textit{a}) $\KK_\sigma =\Q_v$, \ (\textit{b}) $U^{[\alpha]}$ intesects $\SL_2(\K_\sigma)$ non-trivially, and (\textit{c}) $U^{[\alpha]}$ does \emph{not} intersect $\SL_2(\K_{\sigma'})$ for any other $\sigma' |v$.
\end{enumerate}  

\begin{theorem}[Positive entropy]\label{positive entropy two}
 Let $\KK$, $\Gamma$, $\TT<\SL_2$ and $X_{\Adel_\K}$ be as in Theorem~\ref{adelic-zeroent}.
Let $\Lambda < \KK^{\times}$ be of higher rank satisfying both \ref{item:indep-2} and \ref{item:isolate a place}, and let $A=\{\sa k : k \in \Lambda\}$. Then any $A$-invariant and ergodic probability measure $\mu$ with $h_\mu(a_0)>0$ for some $a_0 \in A$ satisfies either
\begin{enumerate}[label=\textup{(\textit{\roman*'})},leftmargin=*]
\item\label{item-algebraic-pos-adelic'} $\mu$ is the uniform Haar measure on $X_{\Adel_\K}$.
\item \label{item-solvable-pos-adelic'} $\mu$ is the uniform Haar measure on the closed orbit $\Gamma\UU(\Adel_\KK) a$, where $a \in\TT(\Adel_\K)$ and $\UU$ is is one of the two unipotent $\K$-subgroups  $\twobytwo 1 * . 1$ or $\twobytwo 1 . * 1$.
\end{enumerate}
\end{theorem}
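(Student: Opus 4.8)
The plan is to derive Theorem~\ref{positive entropy two} from Theorem~\ref{positive entropy one}, using the extra hypotheses \ref{item:indep-2} and \ref{item:isolate a place} to (a) pin down the algebraic group, resp.\ the unipotent orbit, exactly, and (b) remove the ``equivalence-by-compact'' clause. Applying Theorem~\ref{positive entropy one} yields a class-$\cA'$ group $A'$, an abelian group $A''$ containing $A$ and $A'$ as cocompact subgroups, and an $A'$-invariant ergodic probability measure $\mu'$ of type \ref{item-algebraic-pos-adelic} or \ref{item-solvable-pos-adelic} with $\int_{A''/A}a.\mu\, da=\int_{A''/A'}a.\mu'\, da=:\bar\mu$. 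Since a homomorphism $A''\to\R$ is determined by its restriction to any cocompact subgroup, the coarse Lyapunov weight decomposition of $\SL_2(\Adel_\KK)$ is the same for $A$, $A'$ and $A''$; in particular the place $\sigma_0$ and the coarse Lyapunov subgroup $U^{[\alpha]}$ provided by \ref{item:isolate a place} retain their meaning for $A'$. I record the \emph{removal mechanism}: if $\mu'$ turns out to be the normalized Haar measure $m_Y$ on a closed $A$-invariant orbit $Y$ on which \emph{some} single element $a_0\in A$ acts ergodically, then $\bar\mu=m_Y$ (as $m_Y$, being invariant under the ambient homogeneous group, is $A''$-invariant), so $m_Y=\int_{A''/A}a.\mu\, da$ exhibits the $a_0$-ergodic — hence $A$-ergodic — measure $m_Y$ as an average of the $A$-ergodic measures $a.\mu$; by uniqueness of the ergodic decomposition $a.\mu=m_Y$ for Haar-a.e.\ $a\in A''/A$, and by weak-$*$ continuity of $a\mapsto a.\mu$ together with the fact that a nonempty open subset of the compact group $A''/A$ has positive Haar measure, the equality holds for every $a$, so $\mu=m_Y$. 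In both cases the required $a_0$-ergodicity will follow from Howe--Moore (resp.\ its abelian analogue via Pontryagin duality) once we know $\Lambda$ is unbounded at some place, which is guaranteed by \ref{item:indep-2}.

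In case \ref{item-algebraic-pos-adelic}, $\mu'=m_{\Gamma\HH(\Adel_\Q)g}$ with $\HH\leq\GG=\operatorname{Res}_{\KK|\Q}\SL_2$ semisimple. By Proposition~\ref{subgroup-L-of-sl2}(S) and transitivity of the Galois action on the almost-simple factors, $\HH=\operatorname{Res}_{\LL|\Q}\SL_2$ for an intermediate field $\Q\subseteq\LL\subseteq\KK$, the Galois blocks all of size $[\KK:\LL]$; under $\GG(\Adel_\Q)=\prod_\sigma\SL_2(\KK_\sigma)$ the subgroup $\HH(\Adel_\Q)=\SL_2(\Adel_\LL)$ consists of tuples constant on each block $\{\sigma:\sigma\mid w\}$. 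Exactly as in \S\ref{easy-2} the normalization statement of Theorem~\ref{MTclassA} upgrades to $A'<g^{-1}\HH(\Adel_\Q)g$ (using that $\HH$ has finite index in its normalizer and that $A'$ is class-$\cA'$), so after conjugating by $g$ we have $gA'g^{-1}\subseteq\SL_2(\Adel_\LL)$, whence comparing components the Lyapunov weights at places in a common block coincide. Since $A'$ (equivalently $A$) does not centralize $\SL_2(\KK_{\sigma_0})$ — otherwise $U^{[\alpha]}$ could not meet $\SL_2(\KK_{\sigma_0})$ nontrivially — the block of $\sigma_0$, were it of size $\geq2$, would contain another place $\sigma'\mid v$ with the same Lyapunov weight as $\sigma_0$, i.e.\ in the coarse class $[\alpha]$, contradicting property (c) of \ref{item:isolate a place}. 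Therefore the block of $\sigma_0$ is a singleton, hence all blocks are, $\LL=\KK$, $\HH=\GG$, and $\mu'=m_{X_{\Adel_\K}}$. By \ref{item:indep-2} some $a_0=\sa{k_0}\in A$ leaves every compact subset of $\TT(\Adel_\KK)$ and so acts mixingly on $X_{\Adel_\K}$ by Howe--Moore; the removal mechanism gives $\mu=m_{X_{\Adel_\K}}$, which is \ref{item-algebraic-pos-adelic'}.

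In case \ref{item-solvable-pos-adelic}, $\mu'$ lives on the compact orbit $\Gamma N^1_G(\UU)g$ and is invariant under a conjugate $I$ of $\LL(\Adel_\Q)$ for a nontrivial unipotent $\Q$-subgroup $\LL$ of $\operatorname{Res}_{\KK|\Q}\UU$, that is (as the group of $\Q$-points of $\operatorname{Res}_{\KK|\Q}\UU$ is $\cong\KK$) a nonzero $\Q$-subspace of $\KK$. By \ref{item:indep-2} every place is unbounded, so all $g_\sigma$ lie in the upper Borel and $I\subseteq\UU(\Adel_\KK)\cong\Adel_\KK$; as in \S\ref{secunipotent} we may take $I$ to be $A'$-normalized, so $\mu'$ is invariant under the $A'$-invariant closed subgroup $I^+\leq\UU(\Adel_\KK)$ generated by $I$. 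By Proposition~\ref{prop: local structure of closed groups} and since $A'$ scales $\UU(\KK_\sigma)\cong\KK_\sigma$ nontrivially at the relevant places, $I^+$ is, up to a subgroup of a small neighbourhood of $0$, of the form $\prod_{\sigma\in T}\UU(\KK_\sigma)$ for some set $T$ of places. To prove $T=S_\KK$ one re-runs the dichotomy of Theorem~\ref{zero-or-fast-pol-rec} \emph{inside} the compact orbit, taking $U$ to be the place-isolated $U^{[\alpha]}$ of \ref{item:isolate a place} — whose only ``bad'' place $\sigma_0$ satisfies $\KK_{\sigma_0}=\Q_v$, so that $\UU(\KK_{\sigma_0})=\UU(\Q_v)$ is the full $\Q_v$-points of a $\Q$-factor of $\operatorname{Res}_{\KK|\Q}\UU$: the fast-recurrence alternative is excluded precisely as in \S\ref{sec:not so fast} by the arithmeticity of $\SL_2(\KK)$, forcing positive conditional entropy and, via \S\ref{secunipotent}, genuine invariance under $\UU(\KK_{\sigma_0})$, which together with $A'$-equivariance and \ref{item:indep-2} (preventing concentration on a proper place-substructure, as in Theorem~\ref{adelic-zeroent}\ref{prop item 3}) bootstraps to $I^+=\UU(\Adel_\KK)$. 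Now $\mu'$ is $\UU(\Adel_\KK)$-invariant; the compact orbit $\Gamma N^1_G(\UU)g$ fibres $\Gamma$-equivariantly over a compact abelian group on which $A'$ acts trivially, with fibres torsors over $\KK\backslash\Adel_\KK$, so $a_0'$-ergodicity of $\mu'$ forces it onto a single fibre: $\mu'=m_{\Gamma\UU(\Adel_\KK)a'}$ for some $a'\in\TT(\Adel_\KK)$ (the basepoint's unipotent part being absorbed into $\UU(\Adel_\KK)$), with $\UU$ equal to $\twobytwo1*.1$ or $\twobytwo1.*1$ according to which coarse Lyapunov weight carried the entropy. The orbit $\Gamma\UU(\Adel_\KK)a'$ is $A$-invariant since $A\subseteq\TT(\KK)\subseteq\SL_2(\KK)=\Gamma$ normalizes $\UU(\Adel_\KK)$, and $a_0=\sa{k_0}$ with $\Lambda$ unbounded acts on $\KK\backslash\Adel_\KK$ by scaling by $k_0^2$, which is ergodic because the dual action of $k_0^2$ on $\KK$ by multiplication has no nontrivial finite orbit, $k_0^2$ being of infinite order; the removal mechanism yields $\mu=m_{\Gamma\UU(\Adel_\KK)a'}$, which is \ref{item-solvable-pos-adelic'}.

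The step I expect to be the main obstacle is the promotion, in case \ref{item-solvable-pos-adelic}, of the a priori merely $\Q$-rational and possibly small unipotent invariance $I$ to the full $\UU(\Adel_\KK)$: this needs the recurrence/arithmeticity machinery of \S\ref{recurrencesection}--\S\ref{sec:not so fast} re-deployed inside a compact solvable orbit with the place-isolated coarse Lyapunov, and then a careful application of \ref{item:indep-2} to rule out concentration on a proper place-substructure of $\UU(\Adel_\KK)$ and of the torus direction — the adelic counterpart of the elimination of the Dirac mass in Theorem~\ref{adelic-zeroent}\ref{prop item 3}. A secondary, bookkeeping, difficulty is making the passage from $A$ to $A'$ and the inclusion $A'<g^{-1}\HH(\Adel_\Q)g$ precise enough that \ref{item:isolate a place} can be legitimately invoked for $A'$.
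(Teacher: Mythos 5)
Your overall strategy (deduce the theorem from Theorem~\ref{positive entropy one}, then use \ref{item:indep-2} and \ref{item:isolate a place} to pin down the measure and strip off the equivalence-by-compact) is the paper's, but your execution of the solvable case \ref{item-solvable-pos-adelic} has two genuine gaps. First, your ``removal mechanism'' needs $m_Y$ to be $A''$-invariant, which you justify by saying $m_Y$ is invariant under the ambient homogeneous group; this is fine for $Y=X_{\Adel_\K}$ but false for $Y=\Gamma\UU(\Adel_\KK)a'$: an element $\omega$ of the compact part $\Omega\subset A''$ (all $\sa k$ with $|k_\sigma|_\sigma=1$) sends this orbit to $\Gamma\UU(\Adel_\KK)a'\omega^{-1}=\Gamma\UU(\Adel_\KK)\omega^{-1}a'$, a \emph{different} $\UU(\Adel_\KK)$-orbit in general, so $Y$ is not $A''$-invariant and the identity $\int_{A''/A}a.\mu\,da=m_Y$ you rely on is unavailable. (Relatedly, your fibration argument uses that $A'$ acts trivially on the space of $\UU(\Adel_\KK)$-orbits inside $\Gamma N_G^1(\UU)g$; it is $A\subseteq\TT(\KK)\subseteq\Gamma$ that acts trivially there, not $A'=\mathring A$.) The paper avoids this by first transferring the information back to $\mu$ itself: the set $\Gamma N^1_G(\UU)a$ \emph{is} $\Omega$-invariant and $\UU(\Adel_\KK)$ is normalized by $\Omega$, so $\mu$ is supported on that orbit and is $\UU(\Adel_\KK)$-invariant; since $\Lambda\leq\KK^\times$, each element of $A$ fixes every set $\Gamma\UU(\Adel_\KK)a'$, so $A$-ergodicity of $\mu$ itself localizes $\mu$ to one such orbit, where $\UU(\Adel_\KK)$-invariance forces it to be Haar. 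Second, the step you flag as the main obstacle --- upgrading the invariance group to all of $\UU(\Adel_\KK)$ --- is not carried out, and the route you sketch (re-running Theorem~\ref{zero-or-fast-pol-rec} inside the compact orbit) is not what is needed: the paper uses \ref{item:isolate a place} exactly as in the semisimple case to get invariance under $\UU(\KK_\sigma)$ at the isolated place (where $\KK_\sigma=\Q_v$), and then density of $\KK+\KK_\sigma$ in $\Adel_\KK$ (strong approximation for the additive group) yields full $\UU(\Adel_\KK)$-invariance; no further entropy or recurrence input is required. You also omit the step, needed to decide between $\twobytwo1*.1$ and $\twobytwo1.*1$, that the Weyl components $w_\sigma$ (with $g_\sigma\in w_\sigma\TT(\KK_\sigma)$ after normalizing $g$) agree at all places; this uses \ref{item:indep-2} via the Poincar\'e-recurrence argument ending the proof of Theorem~\ref{adelic-zeroent}, not ``which coarse Lyapunov weight carried the entropy''.

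In the semisimple case your route is genuinely different from the paper's but not tight as written. Proposition~\ref{subgroup-L-of-sl2}(S) does not give $\HH=\operatorname{Res}_{\LL|\Q}\SL_2$ with $\HH(\Adel_\Q)$ ``constant on blocks'': $\HH$ may be $\operatorname{Res}_{\LL|\Q}$ of a quaternionic form of $\SL_2$ split by $\KK$. More importantly, ``the block of $\sigma_0$ is a singleton, hence all blocks are, $\LL=\KK$'' is a non sequitur --- one fibre of $S_\KK\to S_\LL$ being a singleton says nothing about the others; the correct patch is that $\KK_{\sigma_0}=\Q_v$ forces $\LL_{w_0}=\Q_v$, so $[\KK:\LL]=\sum_{\sigma'|w_0}[\KK_{\sigma'}:\LL_{w_0}]>1$ would produce a second place $\sigma'|w_0$, and then your weight comparison contradicts condition (\textit{c}). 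Even so, your argument needs $gA'g^{-1}\subseteq\HH(\Adel_\Q)$, which is not part of the statement of Theorem~\ref{positive entropy one} and would require an additional limiting argument. The paper's own proof is shorter and bypasses identifying $\HH$: it only uses that $\HH$ projects onto each factor, so that the Haar measure on $\Gamma\HH(\Adel_\Q)g$ is invariant both under a nontrivial unipotent subgroup of $\SL_2(\KK_\sigma)$ (via \ref{item:isolate a place}) and under a group projecting onto $\SL_2(\KK_\sigma)$, hence under all of $\SL_2(\KK_\sigma)$, and strong approximation then gives $m_{X_{\Adel_\K}}$ directly; the passage back to $\mu$ is the extreme-point argument you also describe, which is valid in this case because $m_{X_{\Adel_\K}}$ really is $A''$-invariant and $A$-ergodic.
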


\begin{proof} [Proof of Theorem~\ref{positive entropy one}]
Let $S_{\mathrm{ram}}$ denote the set of ramified places of $\KK$. For any $k \in \Adel_{\KK} ^ \times$ and every finite place  $\sigma \in S_{\KK,\mathrm{f}} \setminus S_{\mathrm{ram}}$ we have that $\absolute {k _ \sigma}_\sigma \in p ^ \Z $ where $p$ is the rational prime divided by $\sigma$. Since $S_{\mathrm{ram}}$ is finite, it follows that there is a finite index subgroup $\Lambda ' \leq \Lambda$ so that for every $k \in \Lambda '$ and every finite place $\sigma$ we have that $\absolute {k _ \sigma} \in p^ \Z$ for the corresponding rational prime $p$.

Let $\Omega < \T (\Adel_{\KK})$ be the compact group given by \eqref{eq: Omega}. For $k \in \Lambda '$ let
\[
\ssa k = \left (\twobytwo {\absolute k _ \sigma} .. {{\absolute k _ \sigma}^{-1}} \right)_{\sigma } \in \TT(\Adel_\KK),
\]
and let $A ' = \left\{ \sa k: k \in \Lambda ' \right\}$ and $\mathring A = \left\{ \ssa k: k \in \Lambda ' \right\}$.
Then $\mathring A$, considered as a subgroup of $\GG (\Adel_\Q)$ for~$\GG = \operatorname{Res} _ {\KK| \Q} \SL _ 2$, is of class-$\cA'$, and for every $k \in \Lambda '$
\[
\sa k (\ssa k) ^{-1} \in \Omega
.\]
In particular, both $A'$ and $\mathring{A}$ are cocompact subgroups of $A\Omega$.

Replacing $a _ 0$ with $a _ 0 ^ \ell$ if necessary, we may assume that $a _ 0 = \sa {k _ 0}$ with $k _ 0 \in \Lambda '$. Let $\mathring a_0=\ssa{k_0}$.
Let $\bar \mu$ be the image of $\mu$ on $X_{\Adel_\K}/\Omega$, and note that both $A'$ and $\mathring A$ act on $X_{\Adel_\K}/\Omega$ and indeed these actions coincide. Since $A '$ is of finite index in $A$ and $\mu$ is $A$-ergodic, the ergodic decomposition of $\mu$ with respect to $A '$ has at most finitely many ergodic components. Let $\mu_1$ be one of these and let $\bar \mu_1$ denote the corresponding measure on $X_{\Adel_\K}/\Omega$. Then 
\[\mu=\textstyle{\frac 1{[\Lambda:\Lambda']}\sum_{a \in A/A'} a.\mu_1 \qquad \bar \mu=\frac 1{[\Lambda:\Lambda']}\sum_{a \in A/A'} a. \bar \mu_1.}\]
As the image of an $A'$-ergodic measure on $ X_{\Adel_\K}$ the measure $\bar \mu_1$ is $A'$-ergodic. Let $\bar{\bar\mu}_1$ be an $\mathring A$-ergodic and invariant lift of $\bar \mu$ to $ X_{\Adel_\K}$. Since $X_{\Adel_\K} \to X_{\Adel_\K}/\Omega$ is a compact extension
in the sense of Furstenberg (with an isometric action on the fibres), 
\begin{equation*}
h_{\mu }(a_0)=h_{\mu_1 }(a_0)=h_{\bar \mu_1}(a_0)=h_{\bar \mu}(\mathring a_0)=h_{\bar {\bar \mu}_1}(\mathring a_0)
.\end{equation*}
Moreover if $m_\Omega$ is the Haar measure on $\Omega$ then
\begin{multline*}
\int_ {A \Omega / A} a. \mu \, da = \mu*m_\Omega = \frac{1}{[\Lambda:\Lambda']} \sum_ {a_1 \in A / A '} a_1. (\mu_1 *m_\Omega) = \\
\frac{1}{[\Lambda:\Lambda']} \sum_ {a_1 \in A / A '} \int_ {\mathring{A} \Omega / \mathring{A}} a_1a. \bar{\bar \mu}_1\, da = \int_ {\mathring A \Omega / \mathring{A}} a. \bar{\bar \mu}_1\, da,
\end{multline*}
so $\mu$ is equivalent-by-compact to the $\mathring{A}$-invariant and ergodic measure $\bar{\bar \mu}_1$ that has positive entropy under $\mathring a _ 0$.

Let $S_i \nearrow S_\Q$ be an increasing set of places,
\[\bar S_i = \{\sigma \in S_\K:\sigma |v \text{ for some $v\in S_i$}\},\]
and assume $S_1 \ni \infty$ and moreover
\begin{enumerate}[label=\textup{(\alph*)},leftmargin=*]
     \item\label{enough places for a_0} if $a_0 = \sa {k_0}$ then all $\sigma \in S_\K$ for which $\absolute{k_0}_\sigma\neq 1$ are in $\bar S_1$
    \item\label{higher rank already at 1} $\mathring A \cap \GG(\Q_{S_1})$ is of higher rank
\end{enumerate}
\noindent
(note our assumptions of $A$ being of higher rank implies that \ref{higher rank already at 1} is satisfied as long as $S _ 1$ includes enough places.)

Set
\begin{align*}
X_i &= \GG(\mathcal O_{S_i}) \backslash \GG(\Q_{S_i}) = \SL_2(\mathcal O_{\K,\bar S_i}) \backslash \SL_2(\K_{\bar S_i}),\\
\mathring A _i &= \mathring A \cap \GG(\Q_{S_i}).
\end{align*}
Let $\int_ {X _ {\Adel_{\KK}}} \bar{\bar \mu} _ x ^ {(i)} \,d \bar{\bar \mu} (x)$ is ergodic decomposition of $\bar{\bar \mu}$ with respect to $\mathring A _ i$. Since $\mathring A$ is abelian and acts ergodically on $\bar{\bar \mu}$, and $\mathring a _ 0 \in \mathring A _ i$ we have that 
\[h _ {\bar{\bar \mu} _ x ^ {(i)}} (\mathring a _ 0) = h _ {\bar{\bar \mu}} (\mathring a _ 0) \qquad \text{a.e.}
\]
For $j \geq i$, \ let $\mu _ x ^ {(i,j )}$ denote the push forward of $\bar{\bar \mu} _ x ^ {(i)}$ under the projection map~$\pi_j :X _ {\Adel_{\KK}} \to X _ j$.

Note that condition \ref{enough places for a_0} implies that the action of $\mathring a_0$ on $X_{\Adel_\KK}$ is a compact extension of the action of the projection $\mathring a^{(j)}$ of~$\mathring a_0$ to $\GG(\Q_{S_j})$ on  $\SL_2(\mathcal O_{\K,\bar S_i}) \backslash \SL_2(\K_{\bar S_i})$, and hence $h_{\mu_x^{(i,j)}}(\mathring a^{(i)})$ a.s.\ equals $h _ {\bar{\bar \mu} _ x ^ {(i)}} (\mathring a _ 0)$, hence to $h _ \mu (a _ 0) >0$.

Applying Theorem~\ref{sl2-thm-final} to $\mu^{(i,j)}_x$ (cf. also \S\ref{sec:embedding} for more explicit description of the solvable case), we see that either $\mu^{(i,j)}_x$ is the uniform measure on $\GG(\mathcal O_{S_j}) \HH (\Q _ {S _ j})g_j$ for some $\Q$-semisimple group $\HH$ that conceivably could depend on $x \in X_{\Adel_\KK}$ and on $i,j$, or $\mu^{(i,j)}_x$ is supported on a single compact orbit $\GG(\mathcal O_{S_j}) N^1_{\GG(\Q_{S_j})} (\UU) g_j$, is invariant under the $\Q_{S_j}$-points of a conjugate of a nontrivial unipotent $\Q$-subgroup of $\GG$ and furthermore 
\[
g_j \mathring {A} _i g_j^{-1} < N^1_{\GG(\Q_{S_j})} (\UU).
\]
For $j'>j$ the image of $\mu _ x ^ {(i,j' )}$ under the natural projection $X_{j'} \to X_j$ is $\mu _ x ^ {(i,j )}$; e.g. by considering for every $\sigma \in S_\KK$ what are the $\KK _ \sigma$-unipotent subgroups of $\SL _ 2 (\KK _ \sigma)$ preserving $\mu _ {j'}$ one sees that
\begin{itemize}
\item  whether $\mu^{(i,j)}_x$ satisfies the semisimple or solvable case of Theorem~\ref{sl2-thm-final} does not depend on $j$;
\item in the semisimple case $\HH$ can be chosen to be independent of $j$;
\item the $g_j$ can be chosen so that $g_j = \pi_j(g)$ for a fixed $ g \in \GG (\Adel_{\KK})$. Moreover, since a.s.\ $x \in \supp\bar{\bar\mu}^{(i)}_x$, the element  $g$ can be chosen so that $\SL_2(\KK)g=x$.
\end{itemize}
It follows that that either 
\begin{enumerate}[label=\textup{(${\bar{\arabic*}}_x$)},leftmargin=*]
    \item\label{the semisimple case at x} $\bar{\bar\mu}^{(i)}_x$ is the uniform measure on $\GG(\KK) \HH (\Adel_{\KK})g$ for a $\Q$-semisimple group $\HH_x$, possibly depending on $x$, 
    \item\label{the solvable case at x} $\bar{\bar\mu}^{(i)}_x$ is supported on a single compact orbit $\GG(\KK) N^1_{G} (\UU) g$, is invariant under a conjugate of the $\Adel_\Q$-points of a nontrivial unipotent $\Q$-subgroup of $\GG$ and furthermore
\[
g \mathring {A} _i g^{-1} < N^1_{G} (\UU).
\]
\end{enumerate}

Assume for the moment \ref {the semisimple case at x} holds for $\bar{\bar\mu}^{(i)}_x$. The group $\HH _ x$ is uniquely defined up to conjugation by $\GG(\KK) $. If the semisimple case holds for $\bar{\bar\mu}^{(i)}_x$, it also has to hold for $\bar{\bar\mu}^{(i)}_{a.x}$ and up to conjugation by an element of $\GG(\KK) $, \ $\HH _ {x}$ is the same as $\HH _ {a.x}$ for $a \in A$.

It follows from ergodicity of $\bar{\bar \mu}$ under $\mathring A$ that either a.s.~$\bar{\bar\mu}^{(i)}_{x}$ satisfies \ref{the semisimple case at x} or a.s.~$\bar{\bar\mu}^{(i)}_{x}$ satisfies \ref{the solvable case at x}, and (since there are only countably many $\Q$-subgroups of $\GG$) if \ref {the semisimple case at x} holds a.s.\ one may take $\HH_x$ to be independent of $x$ (it may still depend on $i$ at this stage). Let $\HH_i$ denote this group.

Since $\mathring A _ i < \mathring A _ j$ for $i < j$, for $\bar{\bar\mu}$-almost every $x$, the $\mathring A _ i$-ergodic component of $\bar{\bar\mu}^{(j)}_x$ coincides with $\bar{\bar\mu}^{(i)}_x$. It is easy to see this implies that at least for $i$ large enough which case of Theorem~\ref{sl2-thm-final} applies a.s.\ to $\bar{\bar\mu}^{(i)}_x$ does not depend on $i$. 

If \ref{the semisimple case at x} holds, possibly after modifying $\HH_i$ by conjugating it with an element of $\SL_2(\KK)$, we have that for $i<j$, \ $\HH_i \leq \HH_{j}$. Thus, in this case, for $i$ large $\HH_i=\HH$ does not depend on $i$, establishing \ref{item-algebraic-pos-adelic} of Theorem~\ref{positive entropy one}.

If, on the other hand, \ref{the solvable case at x} holds, then
fixing $x$ we see that $\bar{\bar\mu}^{(i)}_x$ is supported on single compact orbit $\GG(\KK) N^1_{G} (\UU) g$ where $g$ has to be independent of $i$ because $\bar{\bar\mu}^{(i)}_x$ is an ergodic component of $\bar{\bar\mu}^{(j)}_x$ for $j>i$. By the martingale convergence theorem and ergodicity of $\bar{\bar\mu}$ under $\mathring A$, $\bar{\bar\mu}^{(i)}_x \to \bar{\bar\mu} $ in the weak$^*$ topology, hence $\bar{\bar\mu} $ is supported on the single compact orbit $\GG(\KK) N^1_{G} (\UU) g$. Since $g \mathring {A} _i g ^{-1} < N^1_{G} (\UU)$ for all~$i$ 
\[
g \mathring {A} g^{-1} < N^1_{G} (\UU).
\]
It follows that for every $\sigma\in S_\K$ for which $\Lambda$ is unbounded, hence the projection of $\mathring A$ to $\TT(\K_\sigma)$ is Zariski dense,
\[ g_\sigma\TT(\K_\sigma)g_\sigma^{-1} < \twobytwo {*} {*} . {*} .\]
Since each $\bar{\bar\mu}^{(i)}_x$ is invariant under  a conjugate of the $\Adel_\Q$-points of a nontrivial unipotent $\Q$-subgroup of $\GG$ the same holds for the limiting measure $\bar{\bar\mu}$. This establishes \ref{item-solvable-pos-adelic} of Theorem~\ref{positive entropy one}.
\end{proof}

Assuming that $\Lambda<\KK^\times$ and that it satisfies both \ref{item:indep-2} and \ref{item:isolate a place} we can now establish Theorem~\ref{positive entropy two}:

\begin{proof} [Proof of Theorem~\ref{positive entropy two}]
Let $\bar{\bar\mu}$, $\mathring A$ and $\Omega$ be as in the proof of Theorem~\ref{positive entropy one}.

Suppose first that $\bar{\bar\mu}$ satisfies \ref{item-algebraic-pos-adelic} of that theorem.
The group $\GG = \operatorname{Res} _ {\KK | \Q}\SL_2$ is (by definition) isomorphic over $\KK$ to $\SL _ 2 ^ {[\KK: \Q]}$. By Proposition~\ref{subgroup-L-of-sl2}, $\HH$ projects onto each of these $\SL _ 2$-factors. Let $v \in S_\QQ$ and $\sigma \in S_\KK$ be as in \ref{item:isolate a place}. Then $\HH (\Q _ v)$ projects onto the factor $\GG (\KK _ \sigma)$ of $G$.

Let $U ^ {[\alpha]}<G$ be a coarse Lyapunov subgroup as in \ref{item:isolate a place}. Then $\bar{\bar\mu}$ is invariant under $g ^{-1} \HH (\Q _ v) g$ and so $\bar{\bar\mu}$-a.s.\ we have that $\bar{\bar\mu} ^ {[\alpha]} _ x$ is invariant under $g ^{-1} \HH (\Q _ v) g \cap U ^ {[\alpha]}$.  By \ref{item:isolate a place}
\[
0 \lneq U ^ {[\alpha]} \cap \GG (\Q _ v) \leq U ^ {[\alpha]} \cap \SL _ 2 (\KK _ \sigma).
\]
Thus $\bar{\bar\mu}$ is invariant under a nontrivial unipotent subgroup of $\SL _ 2 (\KK _ \sigma)$. Since it is also invariant under the group $g ^{-1} \HH (\Q _ v) g$ that projects onto $\SL _ 2 (\KK _ \sigma)$, we have that $\bar{\bar\mu}$ is invariant under $\SL _ 2 (\KK _ \sigma)$. By strong approximation, $\bar{\bar\mu}$ has to be the uniform measure $m_{X_{\Adel_\K}}$ on $X_{\Adel_\KK}$.

It follows that the original $A$-invariant measure $\mu$ satisfies that $\mu * m _ \Omega = m_{X_{\Adel_\K}}$. Since $A \Omega$ is abelian, this presents $m_{X_{\Adel_\K}}$ as the average of the $A$-invariant measures $a.\mu$ for $a \in \Omega$. Since $m_{X_{\Adel_\K}}$ is $A$-ergodic, the only way this can happen is if $a.\mu = m_{X_{\Adel_\K}}$ for almost every $a$, hence $\mu = m_{X_{\Adel_\K}}$ establishing \ref{item-algebraic-pos-adelic'} of Theorem~\ref{positive entropy two}.

\medskip

Now suppose that $\bar{\bar\mu}$ satisfies \ref{item-solvable-pos-adelic} of Theorem~\ref{positive entropy one}. Multiplying $g$ by an element of $\UU(\Adel_{\KK})$ on the left, we may ensure that
for every $\sigma \in S_\K$ for which $\Lambda$ is unbounded,
\[ g_\sigma\TT(\K_\sigma)g_\sigma^{-1} < \twobytwo {*} . . {*} .\]
Since \ref{item:indep-2} implies that $\Lambda$ is unbounded at all places this is true for all $\sigma \in S_\KK$.
It follows  that for any place $\sigma \in S_\KK$, we have that the $\KK _ \sigma$ component $g _ \sigma$ of $g$ satisfies $g_\sigma \in w_\sigma \TT(\K_\sigma)$ with $ w _ \sigma = \twobytwo 1..1$ or $\twobytwo .1{-1}.$.

Now arguing as in the last paragraph of the proof of Theorem~\ref{adelic-zeroent}, it follows from \ref{item:indep-2} that either for all places $\sigma$ we have that $ w _ \sigma = \twobytwo 1..1$ or for all places $\sigma$ we have that $ w _ \sigma =\twobytwo .1{-1}.$. In either case $\mathbf w=(w_\sigma)\in \GG(\K)$ hence
in the first case $\bar{\bar\mu}$ is supported on a single orbit of
$\Gamma N^1_G(\UU)a$ for some $a\in \TT(\Adel_\K)$, and in the second on a single orbit of $\Gamma N^1_G(\tilde \UU)a$ for $\tilde \UU= \twobytwo 1.*1 = \mathbf w\UU \mathbf w$ (and again $a\in \TT(\Adel_\K)$).

Assume for simplicity we are in the case $\mathbf w = I$; the other case is handled identically. Using the assumption \ref{item:isolate a place} similarly to the semisimple case, we may conclude that if $\sigma$ is as in \ref{item:isolate a place}, the measure $\bar{\bar\mu}$ is invariant under $\UU(\KK _ \sigma)$ hence by a simple cases of strong approximation under all of $\UU(\Adel_{\KK})$.

Passing back to $\mu$ what we have discovered about $\bar{\bar\mu}$ implies that $\mu$ is supported on the single orbit $\Gamma N^1_G(\UU)a$ (since this set is $\Omega$-invariant), as well as $\UU(\Adel_{\KK})$-invariant (since this group is normalized by $\Omega$).

Now
\begin{equation} \label{foliating solvable by unipotent}
\Gamma N^1_G(\UU)a \subset \bigcup_ {a' \in \T (\Adel_{\KK})} \Gamma\UU(\Adel_{\KK})a',
\end{equation}
and one can identify the sets we are taking union over as atoms of some countably generated Borel $\sigma$-algebra of subsets of $\Gamma N_G(\UU)$.

Since $\Lambda \leq K ^ \times$, each element in $A$ fixes each of the sets on the right hand side of \eqref{foliating solvable by unipotent}. Since $\mu$ is ergodic, it must be supported on a single $\Gamma\UU(\Adel_{\KK})a'$, and since it is $\UU(\Adel_{\KK})$-invariant it is the uniform measure on this set.

Taking care of the case $\mathbf w = \twobytwo .1{-1}.$ in an identical fashion, this establishes \ref{item-solvable-pos-adelic'}.

\end{proof}

\section{A Diopantine Application}

In this section we prove Theorem~\ref{thm:diophantine} (Theorem \ref{dioph-cor} from the introduction).

\begin{theorem}[A Diophantine result]\label{thm:diophantine}
For any $\varepsilon>0$, and integer $\ell>0$ there exists an $N\in\N$, so that for every $\alpha\in\R$,
for every $Q\geq 10$ there exist some integers $n\leq N$ and $q\leq Q$ for which
\[
\langle qn^{2\ell}\alpha\rangle\leq \min\bigl(\tfrac{\varepsilon}q,\tfrac{1+\varepsilon}{Q}\bigr)
\]
\end{theorem}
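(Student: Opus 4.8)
The plan is to derive the theorem from the adelic measure classification Theorem~\ref{thm:adelic 2} by a compactness argument, taking the number field to be $\KK=\Q$ and $\Lambda=\{k^\ell:k\in\Q^\times\}\subset\Adel_\Q^\times$. As noted in the excerpt this $\Lambda$ satisfies conditions (A1)--(A3), and its height growth is exponential so it is of higher rank; hence Theorem~\ref{thm:adelic 2} applies to $A=\{\sa k:k\in\Lambda\}$ acting on $X=X_{\Adel_\Q}=\SL_2(\Q)\backslash\SL_2(\Adel_\Q)$. Suppose the statement fails for our $\varepsilon,\ell$: for every $N$ there are $\alpha_N\in\R$ and $Q_N\ge 10$ with $\langle q n^{2\ell}\alpha_N\rangle>\min(\varepsilon/q,(1+\varepsilon)/Q_N)$ for all $n\le N$, $q\le Q_N$. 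I would encode each pair as the point $x_N=\Gamma\,u(\alpha_N)\,d(Q_N)$, where $u(\beta),d(t)\in\SL_2(\Adel_\Q)$ are supported at the archimedean place with real components $\twobytwo 1\beta.1$ and $\twobytwo{t^{-1}}..t$. Using $u(\beta)\,\sa m=\sa m\,u(m^{-2}\beta)$ together with $\sa{n^{-\ell}}\in\SL_2(\Q)$ one gets $\sa{n^\ell}.x_N=\Gamma\,u(n^{2\ell}\alpha_N)\,d(Q_N)$; since the finite component of $u(n^{2\ell}\alpha_N)d(Q_N)$ is trivial, strong approximation identifies this point with the unimodular lattice $L_{n,N}=\Z^2\twobytwo{Q_N^{-1}}{n^{2\ell}\alpha_N Q_N}.{Q_N}$, which for each $q\in\N$ contains a vector $(q/Q_N,\theta)$ with $|\theta|=\langle q n^{2\ell}\alpha_N\rangle\,Q_N$. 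Thus, with
\[
\mathcal O_\varepsilon=\bigl\{x\in X:\ L(x)\text{ has a vector }(u,v)\text{ with }0<|u|<1,\ |v|<1+\varepsilon,\ |uv|<\varepsilon\bigr\}
\]
($L(x)$ the lattice attached to the real component of $x$), $\mathcal O_\varepsilon$ is open, the conclusion of the theorem for $(\alpha_N,Q_N)$ is exactly ``$\sa{n^\ell}.x_N\in\mathcal O_\varepsilon$ for some $n\le N$'', and the failure hypothesis says precisely that $\sa{n^\ell}.x_N\notin\mathcal O_\varepsilon$ for all $1\le n\le N$.

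Next I would observe that the failure hypothesis itself yields non-escape of mass: if $(q/Q_N,\theta)\in L_{n,N}$ is nonzero with $1\le q\le Q_N$ then $|\theta|=\langle q n^{2\ell}\alpha_N\rangle\,Q_N>\min(\varepsilon Q_N/q,\,1+\varepsilon)$, so every nonzero vector of $L_{n,N}$ has length $\ge\varepsilon/(1+\varepsilon)=:\rho(\varepsilon)$; by Mahler's criterion (and since $X$ is compact away from the archimedean cusp) all the points $\sa{n^\ell}.x_N$ with $n\le N$ lie in one compact set $\mathcal K_\varepsilon\subset X$. I would then average along a sequence $F_R\subset\N$ of finite sets asymptotically invariant under multiplication by every prime (a F\o{}lner sequence for $\bigoplus_p\N$, e.g.\ $F_R=\{\prod_{i\le R}p_i^{e_i}:0\le e_i\le R\}$): choosing $N_R\ge\max F_R$ and $\mu_R=\frac1{|F_R|}\sum_{n\in F_R}\delta_{\sa{n^\ell}.x_{N_R}}$, each $\mu_R$ is supported in $\mathcal K_\varepsilon$ and asymptotically $\sa{p^\ell}$-invariant for every prime $p$. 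A weak-$*$ subsequential limit $\mu$ is then a probability measure, is $A$-invariant (the $\sa{p^\ell}$ topologically generate $A$), and satisfies $\mu(\mathcal O_\varepsilon)=0$, since $\mathcal O_\varepsilon$ is open and $\mu_R(\mathcal O_\varepsilon)=0$ for every $R$.

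Then I would decompose $\mu$ into $A$-ergodic components and apply Theorem~\ref{thm:adelic 2}: each is (i) Haar on $X$, (ii) Haar on a closed orbit $\Gamma\,\UU(\Adel_\Q)\,a$ of one of the two unipotent $\Q$-subgroups, or (iii) a point mass $\delta_{x_0}$ with $x_0\in\TT(\Q)\backslash\TT(\Adel_\Q)$. Cases (i) and (ii) are excluded by $\mu(\mathcal O_\varepsilon)=0$: (i) because $\mathcal O_\varepsilon$ is nonempty and open, and (ii) because pushing the orbit measure to $\SL_2(\Z)\backslash\SL_2(\R)$ gives a $\UU(\R)$-invariant probability measure, and an elementary geometry-of-numbers computation on (periodic) horocycles shows every such measure charges $\mathcal O_\varepsilon$ with mass bounded below by a constant depending only on $\varepsilon$, uniformly even as the horocycle recedes into the cusp. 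So $\mu$ would be a mixture of type-(iii) measures. Here classification alone does not help --- because $\Lambda\subset\Q^\times$, the group $A$ fixes $\TT(\Q)\backslash\TT(\Adel_\Q)$ pointwise, so such measures are genuinely $A$-invariant --- and I would instead exploit the origin of $\mu$. If $\mu$ had an atom of mass $c>0$ at an $x_0\in\TT(\Q)\backslash\TT(\Adel_\Q)\cap\mathcal K_\varepsilon$ of modulus $r\ge\rho(\varepsilon)$, then for every small $\delta'>0$ and all large $R$ some $n\in F_R$ has $\sa{n^\ell}.x_{N_R}$ within $\delta'$ of $x_0$; unwinding what it means for the coset $\SL_2(\Z)\twobytwo{Q_{N_R}^{-1}}{n^{2\ell}\alpha_{N_R}Q_{N_R}}.{Q_{N_R}}$ to be $\delta'$-close to $\SL_2(\Z)\twobytwo r..{r^{-1}}$ forces both $|r\,Q_{N_R}-1|\ll\delta'$ and $\langle n^{2\ell}\alpha_{N_R}\rangle\ll\delta'$, while the failure hypothesis at $q=1$ gives $\langle n^{2\ell}\alpha_{N_R}\rangle>\min(\varepsilon,(1+\varepsilon)/Q_{N_R})$, which is bounded below in terms of $\varepsilon$ once $Q_{N_R}$ is known bounded --- a contradiction for $\delta'$ small enough in terms of $\varepsilon$. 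Hence $\mu$ has no ergodic components, contradicting $\mu(X)=1$; this proves the theorem.

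I expect the main obstacle to be the third step: the uniform positive lower bound for $\mathcal O_\varepsilon$ under every closed unipotent orbit (a concrete geometry-of-numbers estimate that must remain uniform high in the cusp) and, more delicately, the exclusion of the point-mass components, which lies outside the reach of measure classification and requires translating ``$\sa{n^\ell}.x_{N_R}$ close to a torus orbit'' back into Diophantine information and confronting it with the failure hypothesis. The non-escape-of-mass step, by contrast, is essentially free here, so no separate quantitative non-divergence input is needed.
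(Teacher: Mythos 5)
Your first half — encoding $(\alpha_N,Q_N)$ as lattices, observing that the failure hypothesis itself gives non-divergence, averaging over a F\o lner set of $\{\sa{n^\ell}\}$ and passing to an $A$-invariant weak$^*$ limit $\mu$ with $\mu(\mathcal O_\varepsilon)=0$ — is exactly the paper's argument. The endgame, however, has a genuine gap, and it sits precisely where you flagged the main obstacle. Your exclusion of the unipotent case rests on the claim that \emph{every} closed-horocycle (unipotent-orbit) measure charges $\mathcal O_\varepsilon$ by a constant $c(\varepsilon)>0$; this is false. For $c>1$ consider the closed orbit of the upper unipotent $\twobytwo 1*.1$ through the lattices $\Z^2\twobytwo{c}{s}{.}{c^{-1}}$, $s\in\R$: every nonzero vector is of the form $(qc,\,qs+pc^{-1})$, so its first coordinate is either $0$ or of absolute value $\geq c>1$, and the whole orbit misses $\mathcal O_\varepsilon$ (which demands $0<|u|<1$); yet for $1<c<(1+\varepsilon)/\varepsilon$ its shortest vector has length $c^{-1}\geq\rho(\varepsilon)$, so the orbit lies inside your compact set $\mathcal K_\varepsilon$. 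Worse, the borderline orbit $c=1$ even survives the stronger containment $\supp\mu\subseteq\Omega=\{[h]:\Q^2h\cap C_\varepsilon=\{0\}\}$ that your failure hypothesis actually yields, so no open-set/mass argument can exclude case (ii): the constraint $q\leq Q$, i.e.\ $|u|\leq 1$, carries no $\varepsilon$-slack. Your point-mass case also has two problems: a mixture of Dirac ergodic components need not have an atom (the Diracs sit at $A$-fixed points and may form a continuum), and closeness of $\sa{n^\ell}.x_{N_R}$ to a torus point whose lattice contains $(r,0)$ does \emph{not} force $|rQ_{N_R}-1|\ll\delta'$; it only produces \emph{some} integer $q\approx rQ_{N_R}$ with $\langle qn^{2\ell}\alpha_{N_R}\rangle\ll\delta'/Q_{N_R}$, so $Q_{N_R}$ need not be bounded, and the contradiction must be drawn from the failure hypothesis at that $q$ (which in turn needs $r\leq1$, a fact your $\mathcal K_\varepsilon$ does not give but $\Omega$ does).

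The paper's endgame repairs all of this at once and needs neither a horocycle mass estimate nor an atom: applying the adelic classification to an ergodic component, one observes that in \emph{every} case — Haar, the solvable/unipotent orbit $\Gamma\UU(\Adel_\KK)a$ with $a\in\TT(\Adel_\KK)$ (which contains the torus point $\Gamma a$), or the torus-supported case — the support contains a point $[y]$ with $y\in\TT(\Adel_\KK)$, hence $[y]\in\supp\mu\subseteq\Omega$. Then $y_\infty$ is diagonal and $\Z^2y_\infty$ contains $(b,0)$ with $0<|b|\leq1$, and weak$^*$ convergence yields $n_j\in F_j$ with $\sa{n_j^\ell}.x_j\to[y]$; matching a lattice vector $\bigl(q/Q_j,\,(qn_j^{2\ell}\alpha_j-p)Q_j\bigr)$ to $(b,0)$ produces some $q<\varepsilon Q_j$ with $\langle qn_j^{2\ell}\alpha_j\rangle<(1+\varepsilon)/Q_j$ and $q\langle qn_j^{2\ell}\alpha_j\rangle<\varepsilon$, contradicting the failure hypothesis at that $q$, with the $\varepsilon$-slack in the conclusion absorbing the boundary case $|b|=1$. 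Your own unwinding, corrected to allow general $q$ and run at a support point rather than an atom, is exactly this argument; adopting it makes the case analysis, and in particular the problematic uniform horocycle estimate, unnecessary.
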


The proof relies on Theorem \ref{thm:adelic}, applied on the space
\[
X_{\Adel}=\SL_2(\Q)\backslash\SL_2(\Adel).
\]
Let $S$ be the set of places of $\QQ$.
For $\alpha \in \R$, let $h(\alpha)=(h_v(\alpha))_{v \in S}$ be the element
\[
h_v(\alpha)= \begin{cases}
\twobytwo 1 . \alpha 1
& v=\infty\\
\hspace{0.5cm} e & v\neq\infty\end{cases}
\]
and $[h(\alpha)]$ the corresponding coset in $X_{\Adel}$.
For $k\in \Q$, set $m(k)=\twobytwo {\tfrac1{k}} . . k,$ embedded diagonally in $\SL_2(\Adel)$, and let 
\[
A=\left\{
    m(k^\ell)
:
m\in\Q^\times\right\}.
\]
Note that if $k \in \Q^{\times}$, since $m(k)\in \SL_2(\Q)$,
\begin{equation}\label{eq:action of m}
m(k).[h(\alpha)]=[h(\alpha)]m(k)^{-1} = [m(k)h(\alpha)m(k)^{-1}]=[h(k^2\alpha)].
\end{equation}

By unique factorization into prime powers $\Q^\times$ is isomorphic to the abelian group $\Z/(2)\times\sum_{p}\Z$. Hence we can define a F{\o}lner sequence $(F_j)_j$ in $\Q^\times$ as follows. We define $F_j$ to consist of all  $n\in\Z$ for which the prime factorisation of $n$ only uses primes $p\leq j$ with multiplicity $\leq j$. This implies that $F_j$ is almost invariant under $p$ for all primes $p\leq j$. 

Let $a_t=\twobytwo{e^t} . . {e^{-t}} \in \SL_2(\R)$, considered as an element in $\SL_2(\mathbb A)$ with all entries at finite primes the identity. Then
 \[
\min_{1\leq q\leq Q} \langle q\alpha\rangle\leq \tfrac{c}Q
\]
if and only if the lattice 
\[
\Z^2 h_\infty(\alpha) a_{\log Q}=\left\langle (Q,0),(Q\alpha,Q^{-1})\right\rangle
\]
has a nonzero vector in $[-c,c]\times [-1,1]$, i.e. if and only if there is a nonzero \[(w_v)_{v \in S} \in \Q^2 h(\alpha)a_{\log Q}\] with $w_\infty \in [-c,c]\times [-1,1]$ and $w_p \in \Z_p^2$ for all primes $p$.

\begin{proof}[Proof of Theorem \ref{thm:diophantine}]
Suppose Theorem \ref{thm:diophantine} fails for some $\varepsilon>0$ and $\ell$. This implies that for any $j\geq 1$, there is  an $\alpha_j \in \R$ and $t_j\geq 1$ so that for all $n \in F_j$,
\[
\langle qn^{2\ell}\alpha_j\rangle> \min\bigl(\tfrac{\varepsilon}q,(1+\varepsilon)e^{-t_j}\bigr) \qquad\text{for all $q\leq e^{t_j}$}.
\]
In particular, we have 
\begin{equation}\label{eq:less interesting bound}
\min_{q\leq e^{t_j}}\langle qn^{2\ell}\alpha_j\rangle>{\varepsilon}e^{-t_j},
\end{equation}
for all $n \in F_j$. By the correspondence discussed before the proof we have equivalently that
for all $n\in F_j$ the point 
\[a_{-t_j}m(n^\ell).[h(\alpha_j)]\]
belongs to the compact subset
\[
\Omega  = \bigl\{[h]\in X_{\mathbb A}: \Q^2 h \cap C_\varepsilon = \{0\}\bigr\} 
\]
with \[C_\varepsilon=\{(w_v)_{v \in S}: w_\infty \in (-\varepsilon, \varepsilon)\times(-1,1), w_p \in \Z_p^2 \quad\text{for all primes $p$}\}.\]

Let $\mu_j$ be the probability measure
\[
\mu_j = \frac1{|F_j|}\sum_{n \in F_j} \delta_{a_{-t_j}m(n^\ell).[h(\alpha_j)]}.
\]
As all the measures $\mu_j$ are supported on the compact set $\Omega$, there is a subsequence, say $\{\mu_j\}_{j \in J}$, which converges weak$^{*}$ to a probability measure $\mu$ on $\Omega$.

As $F_j$ is a F\o lner sequence in $\Q^{\times}$, the sequence of sets $\{m(n^\ell):n \in F_j\}$ form a F\o lner sequence in $A$. Thus $\mu$ is $A$-invariant.
Let $\mu_x^{\mathcal E}$ be an $A$-ergodic component of $\mu$ for which $\supp \mu_x^{\mathcal E} \subseteq \supp \mu$ (this holds for a.e.\ ergodic component).
Applying Theorem~\ref{thm:adelic} we conclude that $\mu_x^{\mathcal E}$ is one of
three types of measures, all having in common that there is a $y \in \TT(\mathbb A_K)$ so that the corresponding point $[y] \in \TT(\Q)\backslash\TT(\mathbb A_K)$ is in $\supp \mu_x^{\mathcal E} \subseteq \supp \mu$.

Thus there is for any $j \in J$ a choice of $n_j \in F_j$ so that $a_{-t_j}m(n_j^\ell).[h(\alpha_j)] \to [y]$. Let $y_\infty \in \TT(\R)$ be the
point (unique up to sign) so that
\[
y\in \{y_\infty\}\times\prod_{p\ \mathrm{prime}} \TT(\Z_p).
\]
As $\supp\mu \subseteq \Omega$ we have 
$[y] \in \Omega$, which implies that
\[
\Z^2 y_\infty \cap (-\varepsilon, \varepsilon)\times(-1,1) = \{0\}.
\]
Since $y_\infty$ is a diagonal matrix of determinant 1, it follows that $(b,0)\in \Z^2y_\infty$ for some $0<|b|\leq 1$.

Since for $j\in J$ large enough $a_{-t_j}m(n_j^\ell).[h(\alpha_j)] = a_{-t_j}.[h(n_j^{2\ell}\alpha_j)]$ is very close to $[y]$, the lattice 
\[
\Z^2 \begin{pmatrix}
     1&\\ n^{2\ell}\alpha&1
 \end{pmatrix} a_{t_j} = \Z^2 \begin{pmatrix}e^{t_j}&0\\e^{t_j} n_j^{2\ell}\alpha_j& e^{-t_j}\end{pmatrix}
\]
has a vector, say $\bigl(e^{t_j}(qn_j^{2\ell}\alpha_j-p),e^{-t_j}q\bigr)$ with $q>0$, very close to $(b,0)$. In particular, for $j$ large enough, 
\begin{align*}
e^{t_j}\langle qn_j^{2\ell}\alpha_j\rangle &< 1+\varepsilon\\
e^{-t_j}q &<\varepsilon.
\end{align*}
So for $Q_j = e^{t_j}$ there is a $q<\epsilon Q$ so that
\[
\langle qn_j^{2\ell}\alpha_j\rangle < \tfrac {1+\varepsilon}{Q_j}
\]
in contradiction to the definition of $\alpha_j$
at the beginning of the proof.
\end{proof}

\end{document}